\documentclass[11pt,a4paper,reqno]{amsart}
\usepackage{mathtools}
\usepackage{amsfonts}
\usepackage{amsmath,amssymb,amsthm,amsxtra}
\usepackage{booktabs}
\usepackage{array}
\usepackage{multirow}
\usepackage{mathrsfs}
\usepackage{float}
\usepackage[colorlinks, linkcolor=blue!72,anchorcolor=orange,
    citecolor=red,urlcolor=Emerald, bookmarksopen,bookmarksdepth=2]{hyperref}
\usepackage[usenames,dvipsnames]{xcolor}
\usepackage{enumitem}
\usepackage{geometry,array}
\usepackage{graphicx}
\usepackage{subfigure}

\usepackage{bookmark}
\usepackage{tabularx} 
\usepackage{tikz}
\usepackage{tikz-cd}
\usepackage{subcaption}
\usepackage{url}

\usetikzlibrary{matrix,positioning,decorations.markings,arrows,decorations.pathmorphing, 
    backgrounds,fit,positioning,shapes.symbols,chains,shadings,fadings,calc}
\tikzset{->-/.style={decoration={  markings,  mark=at position #1 with
    {\arrow{>}}},postaction={decorate}}}
\tikzset{-<-/.style={decoration={  markings,  mark=at position #1 with
    {\arrow{<}}},postaction={decorate}}}
\usepackage{cleveref}
\usepackage{verbatim}
\usepackage[all]{xy}

\usepackage{pdflscape}
\usepackage{booktabs}

\usepackage{aliascnt}

\theoremstyle{plain}
\newtheorem{theorem}{Theorem}[section]

\newaliascnt{lemma}{theorem}
\newtheorem{lemma}[lemma]{Lemma}
\aliascntresetthe{lemma}

\newaliascnt{corollary}{theorem}
\newtheorem{corollary}[corollary]{Corollary}
\aliascntresetthe{corollary}

\newaliascnt{proposition}{theorem}
\newtheorem{proposition}[proposition]{Proposition}
\aliascntresetthe{proposition}

\newaliascnt{Fact}{theorem}
\newtheorem{Fact}[Fact]{Fact}
\aliascntresetthe{Fact}

\theoremstyle{definition}

\newaliascnt{definition}{theorem}
\newtheorem{definition}[definition]{Definition}
\aliascntresetthe{definition}

\newaliascnt{construction}{theorem}
\newtheorem{construction}[construction]{Construction}
\aliascntresetthe{construction}

\newaliascnt{example}{theorem}

\aliascntresetthe{example}

\newaliascnt{remark}{theorem}
\newtheorem{remark}[remark]{Remark}
\aliascntresetthe{remark}

\newaliascnt{notations}{theorem}

\aliascntresetthe{notations}

\newaliascnt{convention}{theorem}
\newtheorem{convention}[convention]{Convention}
\aliascntresetthe{convention}

\newaliascnt{assumption}{theorem}

\aliascntresetthe{assumption}

\numberwithin{equation}{section}

\crefname{theorem}{theorem}{theorems}
\Crefname{theorem}{Theorem}{Theorems}

\crefname{lemma}{lemma}{lemmas}
\Crefname{lemma}{Lemma}{Lemmas}

\crefname{corollary}{corollary}{corollaries}
\Crefname{corollary}{Corollary}{Corollaries}

\crefname{proposition}{proposition}{propositions}
\Crefname{proposition}{Proposition}{Propositions}

\crefname{definition}{definition}{definitions}
\Crefname{definition}{Definition}{Definitions}

\crefname{construction}{construction}{constructions}
\Crefname{construction}{Construction}{Constructions}

\crefname{example}{example}{examples}
\Crefname{example}{Example}{Examples}

\crefname{remark}{remark}{remarks}
\Crefname{remark}{Remark}{Remarks}

\crefname{notations}{notations}{notations}
\Crefname{notations}{Notations}{Notations}

\crefname{convention}{convention}{conventions}
\Crefname{convention}{Convention}{Conventions}

\crefname{assumption}{assumption}{assumptions}
\Crefname{assumption}{Assumption}{Assumptions}

\numberwithin{equation}{section}

\newcommand\Old{\bgroup\markoverwith{\textcolor{red}{\rule[0.5ex]{2pt}{0.4pt}}}\ULon}

\def\<{\langle}
\def\>{\rangle}
\def\shk{F\x}
\def\NN{\mathbb{N}}
\def\ZZ{\mathbb{Z}}
\def\QQ{\mathbb{Q}}
\def\CC{\mathbb{C}}
\def\XX{\mathbb{X}} 
\newcommand{\cC}{\mathcal{C}}
\newcommand{\cD}{\mathcal{D}}
\newcommand{\QQi}{\mathbb{Q}_\infty}
\newcommand{\vc}{\vec{c}}
\newcommand{\vn}{\vec{w}}  
\newcommand{\vx}{\vec{x}}
\newcommand{\lcm}{\operatorname{lcm}}
\newcommand{\rk}{\operatorname{rk}}   

\newcommand{\genus}[1]{g_{#1}}

\renewcommand{\mod}{\mathsf{mod}\hspace{.01in}}
\newcommand{\Int}{\operatorname{Int}\nolimits}
\newcommand{\Hom}{\operatorname{Hom}\nolimits}

\newcommand{\Ext}{\operatorname{Ext}\nolimits}

\newcommand{\id}{\operatorname{id}\nolimits}

\newcommand{\Ind}{\operatorname{Ind}\nolimits}
\newcommand{\D}{\operatorname{\mathcal{D}}}
 \newcommand{\Pic}{\operatorname{Pic}\nolimits}
\def\surf{\mathbf{S}} 
\def\surfi{\surf^\circ}

\newcommand{\oInt}{\overrightarrow{\operatorname{Int}}}

\newcommand{\gind}{\operatorname{ind}\nolimits}
\def\numbers{\begin{enumerate}[label=\arabic*{$^\circ$}.]}
\def\ends{\end{enumerate}}

\def\wt{\omega}

\newcommand{\bv}{\mathbf{v}}
\newcommand{\bh}{\mathbf{h}}
\def\nn{node{$\bullet$}}

\def\grad{\eta}

\def\gmsx{\gms\x}         

\def\x{_\vot}

\newcommand{\MCG}{\operatorname{MCG}}

\def\coh{\operatorname{coh}}

\def\Dcoh{\D^b(\coh \XX)}

\def\Aut{\operatorname{Aut}}
\def\Ind{\operatorname{Ind}}

\def\Hom{\operatorname{Hom}}

\def\Ext{\operatorname{Ext}}

\def\Br{\operatorname{Br}}
\newcommand{\tubR}{\mathcal{S}}
\newcommand{\tubL}{\mathcal{L}}

\def\PSL{\operatorname{PSL}}
\def\Dfang{\mathrm{D}^2_\vot}
\newcommand{\typ}[1]{\mathrm{t}(#1)}

\newcommand{\cO}{\mathcal{O}}

\newcommand{\gms}{\surf^\grad}
\def\surf{\mathbf{S}}                       
\def\M{\mathbf{M}} 
\def\A{\mathbf{A}} 
\def\coh{\operatorname{coh}}

\def\CPone{\mathbb{CP}^1_{\wt}}
\newcommand\DC[1]{\D^b(\coh(#1))}
\def\CCP{\mathcal{C}(\CPone)}

\def\TA{\mathbf{T}\mathbf{A}}
\def\wA{\widetilde{\A}}
\def\wX{\widetilde{X}}

\renewcommand\P{\mathbf{P}} 

\newcommand{\bfH}{\mathbb{H}}
\newcommand{\bfh}{\mathbf{h}}
\renewcommand{\nn}{node{$\bullet$}}            %

\usepackage{marvosym}
\usepackage{rotating}
\usepackage{marvosym}
\def\vot{\text{\Biohazard}}
\def\sx{\text{\Cancer}}
 
\begin{document}
\title [Top models: the weighted
projective lines  of type $(2,2,2,2)$]
 {Topological models for   categories  associated with the weighted
projective lines  of type $(2,2,2,2)$}

\author{Jianmin Chen}
\address{Cj: School of Mathematical Sciences, Xiamen University, 361005 Xiamen,  China.}
\email{chenjianmin@xmu.edu.cn}


\author{Li Fan}
\address{Fl: Morningside Center of Mathematics, Chinese Academy of Sciences, 100190 Beijing, China.}
\email{fan-l17@tsinghua.org.cn}

 \author{Yu Qiu}
\address{Qy:
	Yau Mathematical Sciences Center and Department of Mathematical Sciences,
	Tsinghua University,
    100084 Beijing,
    China.
    \&
    Beijing Institute of Mathematical Sciences and Applications, Yanqi Lake, Beijing, China.}
\email{yu.qiu@bath.edu}

\author{Yiting Zheng}
\address{Zy: School of Mathematical Sciences, Xiamen University, 361005 Xiamen, China.}
\email{ytzhengxmu@163.com}

\dedicatory{}
\subjclass[2020]{Primary: 05E10, 18G80; Secondary: 57K20.
}
\keywords{Sphere with four binaries, weighted projective lines, derived categories, cluster categories, mapping class groups}

\begin{abstract}
We construct a geometric model for the derived category of the weighted projective line $\mathbb{CP}^1_{\omega}$ of type $(2,2,2,2)$ using a graded sphere with four binaries. 
More precisely, we give a bijection between the set of indecomposable rigid objects and the set of certain arcs, such that the dimensions of  $\operatorname{Hom}$-spaces are computed by oriented intersection numbers.
As applications, we provide a geometric realization of the indecomposable rigid objects in the cluster category $\mathcal{C}(\mathbb{CP}^1_{\omega})$ via tagged arcs on the four-punctured sphere, with $\operatorname{Hom}$-space dimensions   given by tagged intersection numbers. 
When the weighted points of $\mathbb{CP}^1_{\omega}$ are $(0,1,\infty,\frac{1}{2})$,  both   correspondences are compatible with the natural actions of the automorphism groups and the corresponding mapping class groups.
\end{abstract}

\maketitle
\tableofcontents
\addtocontents{toc}{\protect\setcounter{tocdepth}{1}}

\setlength\parindent{0pt}
\setlength{\parskip}{5pt}

\section{Introduction}
\subsection{Motivation}
  Weighted projective lines and their categories of coherent sheaves were introduced by Geigle and Lenzing~\cite{GL87}, providing a geometric framework for the representation theory of canonical algebras in the sense of Ringel~\cite{Ringel1984}. 
The study of weighted projective lines is closely  connected with several branches of mathematics, including Lie theory~\cite{Crawley-Boevey2010,DengRuanXiao2020,DouJiangXiao2012,Schiffmann2004}, representation theory~\cite{M2004,CLLR2021,FG383,HR1999}, and singularity theory~\cite{EbelingPloog2010,Hubner1989,Hubner1996,Lenzing1994,Lenzing1998,Lenzing2011}. 
In particular, it plays an important role  in the understanding of Arnold's strange duality~\cite{EbelingPloog2010,EbelingTakahashi2013} and homological mirror symmetry~\cite{Ebeling2003,Scerbak1978}.

The category of coherent sheaves on a weighted projective line is derived equivalent to the module category of a canonical algebra of the same type. 
Moreover, as shown in~\cite{GL87,Lenzing2004,CCZ}, the category of coherent sheaves on a weighted projective line of tubular type is equivalent to the category of equivariant coherent sheaves on an elliptic curve with respect to a suitable finite cyclic group action; see also~\cite{P2006}.

In this paper, we focus on the weighted projective line
\[
\CPone \coloneqq \bigl(\mathbb{P}^{1},\,(0,1,\infty,\lambda),\,(2,2,2,2)\bigr)
\]
of tubular type $(2,2,2,2)$ over the field $\CC$, with $\lambda\in \CC\setminus\{0,1\}$.
Our aim is to construct a geometric model, via a marked surface, for the bounded derived category $\DC{\CPone}$ of the coherent sheaf category on $\CPone$.

Surface models for categories have been extensively studied in recent years. Fomin--Shapiro--Thurston (FST)~\cite{FST} introduced the notion of a marked surface $\surf$ with punctures.
Given any tagged triangulation of $\surf$, they associate  a quiver to it, while tagged arcs correspond to cluster variables. Moreover, flips of tagged triangulations correspond to mutations of quivers. Such a triangulation is obtained by gluing puzzle pieces of types I--III, except for the type-IV piece, which is a triangulation of the sphere $S_{0,4}$ with four punctures; see \Cref{fig:fst-puzzle-pieces}. Qiu--Zhou~\cite{QZ1}
studied the skewed-gentle algebras associated with an admissible triangulation of a punctured marked surface with nonempty boundary, and established a correspondence between tagged curves and string objects. Their construction therefore does not cover $S_{0,4}$. In this paper, we study this exceptional case by realizing the indecomposable rigid objects in the cluster category associated with $\CPone$ as tagged arcs on $S_{0,4}$.

\begin{figure}[htbp]
\centering
\resizebox{.98\textwidth}{!}{%
\begin{tikzpicture}[
  x=.92cm,y=.92cm,
  pcurve/.style={draw=black!75,line width=.45pt},
  ppoint/.style={circle,fill=black,inner sep=1.35pt},
  qedge/.style={->,>=stealth,draw=red,line width=.6pt,
    line cap=round,line join=round,
    shorten <=2.5pt,shorten >=2.5pt},
  qpoint/.style={circle,fill=red,inner sep=1.6pt},
  qlabel/.style={text=red,font=\scriptsize,inner sep=.5pt},
  every node/.style={font=\scriptsize}
]
\begin{scope}[shift={(0,0)}]
\node[font=\small] at (1.30,-3.40) {Type I};
\coordinate (Ibl) at (.25,.35);
\coordinate (Ibr) at (2.35,.35);
\coordinate (It) at (1.30,2.40);
\draw[pcurve] (Ibl)--(It)--(Ibr)--cycle;
\foreach \p in {Ibl,Ibr,It}\node[ppoint] at (\p) {};
\coordinate (IqB) at (.45,-1.05);
\coordinate (IqC) at (2.15,-1.05);
\coordinate (IqD) at (1.30,-2.45);
\draw[qedge,line width=.6pt,shorten <=2.5pt,shorten >=2.5pt] (IqB)--(IqC);
\draw[qedge,line width=.6pt,shorten <=2.5pt,shorten >=2.5pt] (IqC)--(IqD);
\draw[qedge,line width=.6pt,shorten <=2.5pt,shorten >=2.5pt] (IqD)--(IqB);
\foreach \p in {IqB,IqC,IqD}
  \node[qpoint,inner sep=1.6pt] at (\p) {};
\end{scope}
\begin{scope}[shift={(3.45,0)}]
\node[font=\small] at (1.45,-3.40) {Type II};
\coordinate (IIc) at (1.45,.25);
\coordinate (IIt) at (1.45,2.65);
\draw[pcurve] (1.45,1.45) circle[radius=1.20];
\draw[pcurve] (1.45,1.05) circle[radius=.80];
\coordinate (IIP) at (1.45,.92);
\draw[pcurve] (IIP)--(IIc);
\foreach \p in {IIc,IIt,IIP}\node[ppoint] at (\p) {};
\node[left=2pt of IIP] {$P$};
\coordinate (IIqZ) at (.25,-.75);
\coordinate (IIqW) at (2.65,-.75);
\coordinate (IIqAplus) at (1.45,-1.55);
\coordinate (IIqAminus) at (1.45,-2.45);
\draw[qedge,line width=.6pt,shorten <=2.5pt,shorten >=2.5pt]
  (IIqZ)--(IIqW);
\draw[qedge,line width=.6pt,shorten <=2.5pt,shorten >=2.5pt]
  (IIqW)--(IIqAplus);
\draw[qedge,line width=.6pt,shorten <=2.5pt,shorten >=2.5pt]
  (IIqAplus)--(IIqZ);
\draw[qedge,line width=.6pt,shorten <=2.5pt,shorten >=2.5pt]
  (IIqW)--(IIqAminus);
\draw[qedge,line width=.6pt,shorten <=2.5pt,shorten >=2.5pt]
  (IIqAminus)--(IIqZ);
\foreach \p in {IIqZ,IIqW,IIqAplus,IIqAminus}
  \node[qpoint,inner sep=1.6pt] at (\p) {};
\end{scope}
\begin{scope}[shift={(7.05,0)}]
\node[font=\small] at (1.45,-3.40) {Type III};
\coordinate (IIIo) at (1.45,.20);
\draw[pcurve] (1.45,1.525) circle[radius=1.325];
\draw[pcurve] (IIIo)
  .. controls (1.22,.78) and (1.20,1.82) .. (.72,1.85)
  .. controls (.24,1.82) and (.30,.72) .. (IIIo);
\draw[pcurve] (IIIo)
  .. controls (1.68,.78) and (1.70,1.82) .. (2.18,1.85)
  .. controls (2.66,1.82) and (2.60,.72) .. (IIIo);
\coordinate (IIIPone) at (.82,1.02);
\coordinate (IIIPtwo) at (2.08,1.02);
\draw[pcurve] (IIIPone)--(IIIo)--(IIIPtwo);
\foreach \p in {IIIo,IIIPone,IIIPtwo}\node[ppoint] at (\p) {};
\node[above=2pt of IIIPone] {$P_1$};
\node[above=2pt of IIIPtwo] {$P_2$};
\coordinate (IIIqC) at (1.45,-.65);
\coordinate (IIIqAonePlus) at (.35,-1.55);
\coordinate (IIIqAtwoPlus) at (2.55,-1.55);
\coordinate (IIIqAoneMinus) at (.65,-2.45);
\coordinate (IIIqAtwoMinus) at (2.25,-2.45);
\draw[qedge] (IIIqC)--(IIIqAonePlus);
\draw[qedge] (IIIqAonePlus)--(IIIqAtwoPlus);
\draw[qedge] (IIIqAtwoPlus)--(IIIqC);
\draw[qedge] (IIIqC)--(IIIqAoneMinus);
\draw[qedge] (IIIqAoneMinus)--(IIIqAtwoMinus);
\draw[qedge] (IIIqAtwoMinus)--(IIIqC);
\draw[qedge] (IIIqAonePlus)--(IIIqAtwoMinus);
\draw[qedge] (IIIqAoneMinus)--(IIIqAtwoPlus);
\foreach \p in {IIIqC,IIIqAonePlus,IIIqAtwoPlus,
  IIIqAoneMinus,IIIqAtwoMinus}\node[qpoint] at (\p) {};
\end{scope}
\begin{scope}[shift={(10.65,0)}]
\node[font=\small] at (1.45,-3.40) {Type IV};
\coordinate (IVo) at (1.45,1.45);
\draw[pcurve,dashed] (1.45,1.48) circle[radius=1.43];
\begin{scope}[shift={(1.45,1.45)},rotate=120]
\draw[pcurve] (0,0)
  .. controls (.32,.20) and (1.25,.55) .. (1.25,0)
  .. controls (1.25,-.55) and (.32,-.20) .. (0,0);
\coordinate (IVPone) at (.70,0);
\draw[pcurve] (0,0)--(IVPone);
\end{scope}
\begin{scope}[shift={(1.45,1.45)},rotate=240]
\draw[pcurve] (0,0)
  .. controls (.32,.20) and (1.25,.55) .. (1.25,0)
  .. controls (1.25,-.55) and (.32,-.20) .. (0,0);
\coordinate (IVPtwo) at (.70,0);
\draw[pcurve] (0,0)--(IVPtwo);
\end{scope}
\begin{scope}[shift={(1.45,1.45)}]
\draw[pcurve] (0,0)
  .. controls (.32,.20) and (1.25,.55) .. (1.25,0)
  .. controls (1.25,-.55) and (.32,-.20) .. (0,0);
\coordinate (IVPthree) at (.70,0);
\draw[pcurve] (0,0)--(IVPthree);
\end{scope}
\foreach \p in {IVo,IVPone,IVPtwo,IVPthree}\node[ppoint] at (\p) {};
\node at (.94,2.28) {$P_1$};
\node at (.94,.62) {$P_2$};
\node at (2.40,1.45) {$P_3$};
\coordinate (IVqAthreeMinus) at (.65,-.70);
\coordinate (IVqAtwoMinus) at (2.25,-.70);
\coordinate (IVqAonePlus) at (.25,-1.55);
\coordinate (IVqAoneMinus) at (2.65,-1.55);
\coordinate (IVqAtwoPlus) at (.65,-2.45);
\coordinate (IVqAthreePlus) at (2.25,-2.45);
\draw[qedge] (IVqAtwoMinus)--(IVqAthreeMinus);
\draw[qedge] (IVqAthreeMinus)--(IVqAonePlus);
\draw[qedge] (IVqAonePlus)--(IVqAtwoPlus);
\draw[qedge] (IVqAtwoPlus)--(IVqAthreePlus);
\draw[qedge] (IVqAthreePlus)--(IVqAoneMinus);
\draw[qedge] (IVqAoneMinus)--(IVqAtwoMinus);
\draw[qedge] (IVqAonePlus)--(IVqAtwoMinus);
\draw[qedge] (IVqAtwoMinus)--(IVqAthreePlus);
\draw[qedge] (IVqAthreePlus)--(IVqAonePlus);
\draw[qedge] (IVqAthreeMinus)--(IVqAoneMinus);
\draw[qedge] (IVqAoneMinus)--(IVqAtwoPlus);
\draw[qedge] (IVqAtwoPlus)--(IVqAthreeMinus);
\foreach \p in {IVqAthreeMinus,IVqAtwoMinus,IVqAonePlus,
  IVqAoneMinus,IVqAtwoPlus,IVqAthreePlus}\node[qpoint] at (\p) {};
\end{scope}
\end{tikzpicture}
}
\caption{The four puzzle pieces and associated quiver in \cite{FST} 
}
\label{fig:fst-puzzle-pieces}
\end{figure}  
Haiden--Katzarkov--Kontsevich (HKK)~\cite{HKK} later realized the derived categories of graded gentle algebras as topological Fukaya categories of graded  marked surfaces, in which  the surface grading encodes the shift functor.
For graded skew-gentle algebras, Qiu--Zhang--Zhou~\cite{QZZ} constructed a geometric model for perfect derived categories using graded marked surfaces with binaries.
 At each binary, the relation $\mathrm{D}^2=\mathrm{id}$ gives a topological realization of the $\mathbb{Z}_2$-symmetry, where $\mathrm{D}$ is the Dehn twist along the binary. By adding one decoration in each triangle of a triangulation, Qiu~\cite{QQ,QiuB} constructed a topological model for the $3$-Calabi--Yau category associated with the corresponding quiver with potential. This model is compatible with the cluster category model of Qiu--Zhou~\cite{QZ1}, in the sense that forgetting the decorations corresponds to passing to the cluster category~\cite{QiuB}. A new topological realization of the $\mathbb{Z}_2$-symmetry at punctures was introduced by Qiu--Zhou~\cite{QZ3} via decorated marked surfaces with vortices. Thus tagging, binaries, and vortices provide three realizations of the same local $\mathbb{Z}_2$-symmetry. Geometrically, the vortex model is obtained from the binary model by moving the closed marked point into the interior as a decoration and regarding the open marked point as a puncture, see the introduction of \cite{QZ3} for further discussion.

\subsection{Main results}

Motivated by the   work  of  Barot--Geiss  \cite{BG}, which   established a bijection between indecomposable rigid objects in the category $\coh(\CPone)$ and tagged arcs on $S_{0,4}$, we aim to construct a geometric model for the derived category $\DC{\CPone}$ in terms of a  graded sphere with four binaries.
Let    $\widetilde{S\x^2}$ be  the graded  sphere with four binaries. Here a binary is    a boundary component with one marked point and satisfies the condition that the square of the Dehn twist along it is the identity. We give a topological model for the indecomposable rigid objects in $\DC{\CPone}$ as follows.
\begin{theorem}[{\Cref{thm:X}, \Cref{autd=mag}}]
  There is a bijection $\wX$ between graded simple arcs on $\widetilde{S\x^2}$ and indecomposable rigid objects in $\DC{\CPone}$. In particular, if $\CPone=(\mathbb{P}^1,(0,1,\infty,\frac{1}{2}),(2,2,2,2))$, then $\Phi: \MCG^\sx(\widetilde{S\x^2})\to \Aut\DC{\CPone}$ is a group isomorphism,
   \begin{equation}
\begin{tikzpicture}[xscale=.6,yscale=.6]
\draw(180:3)node(o){$\wA^\circ(\widetilde{S\x^2} )$}(-3,2.2)node(b){\small{$\MCG^\sx(\widetilde{S\x^2})$}}
(-.8,.5)node{$\wX$} (-.8,-.3)node{$\sim$};
\draw(0:3)node(a){$\Ind^\circ\DC{\CPone}$}(3,2.2)node(s){\small{$\Aut\DC{\CPone}$}};
\draw[->,>=stealth](o)to(a);\draw[->,>=stealth](b)to(s);
\draw[->,>=stealth](-3.2,.6).. controls +(135:2) and +(45:2) ..(-3+.2,.6);
\draw[->,>=stealth](3-.2,.6).. controls +(135:2) and +(45:2) ..(3+.2,.6);
\draw (-.5,2.6)node{$\Phi$} (-.5,1.9)node{$\sim$};
\end{tikzpicture}
\end{equation}
and $\wX$ is compatible with $\Phi$, where $\MCG^\sx(\widetilde{S\x^2})$ is the binary mapping class group of $\widetilde{S\x^2}$.
\end{theorem}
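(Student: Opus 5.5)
The plan is to reduce everything to a known skew-gentle model and then use the tubular structure of $\DC{\CPone}$. First, I would realize $\DC{\CPone}$ as the perfect derived category of a graded skew-gentle algebra. The canonical algebra of type $(2,2,2,2)$ is derived equivalent to a skew-gentle algebra with four special loops: an $\widetilde{A}$-type Kronecker-like gentle core with four special vertices, corresponding to the four binaries. Then I would invoke the Qiu--Zhang--Zhou model~\cite{QZZ}. Its graded marked surface with binaries is exactly $\widetilde{S\x^2}$: a sphere whose four boundary components each carry one marked point and are declared binaries. This gives an injection from graded arcs, with the $\mathbb{Z}_2$-identifications at binaries, to string objects. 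It also gives the dimension formula $\dim\Hom(X(\gamma),X(\delta)[i])=\oInt^i(\gamma,\delta)$. The bijection $\wX$ is then the restriction of this map to graded simple arcs.

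Second, I must show that the image is precisely $\Ind^\circ\DC{\CPone}$. For one direction, a simple arc $\gamma$ has no self-intersection in the interior. By the intersection formula this gives $\Ext^1(X(\gamma),X(\gamma))=0$, so $X(\gamma)$ is rigid. For the converse I would use the tubular classification: every indecomposable in $\DC{\CPone}$ is a shift of a sheaf, and every indecomposable sheaf is semistable of some slope $q\in\QQ\cup\{\infty\}$. The rigid indecomposables are the exceptional objects: the quasi-simple and length-one objects at the rank-two tubes of each slope. The group of autoequivalences acts transitively on slopes through $\PSL_2(\ZZ)$, or more precisely a congruence-type subgroup together with the tubular mutations. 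So it suffices to check slope $\infty$, i.e. the torsion sheaves concentrated at the four exceptional points, plus the line bundles. There I would match them by hand with the arcs joining pairs of binaries and the arcs around a binary.

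On the surface side, simple arcs on $S_{0,4}$ are classified by slopes in $\QQ\cup\{\infty\}$ through the pillowcase picture. Dehn twists along the two standard curves realize the $\PSL_2(\ZZ)$-action, and these twists correspond to the tubular mutations, i.e. the twist functors along the tubes $\tubR$ at slopes $0$ and $\infty$. Matching generators on both sides and checking that the functor takes twists to spherical-type twists gives surjectivity onto rigid objects. I expect this slope-matching step to be the main obstacle. Here the binary identifications ($\mathrm{D}^2=\mathrm{id}$) must be shown to correspond exactly to the $\mathbb{Z}_2$-ambiguity of the two quasi-simples in each rank-two tube.

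Finally, for $\lambda=\frac12$, I would construct $\Phi$ on generators. Half-twists swapping binaries go to the automorphisms of $(\mathbb{P}^1,\{0,1,\infty,\frac12\})$; this configuration is harmonic, so the stabilizer is large enough to contain them. The Dehn twists go to the tubular mutations, the binary twists $\mathrm{D}$ go to the reflections given by the Picard-type line-bundle twists, and the grading shift goes to $[1]$. I would check the braid-type relations on both sides. Compatibility with $\wX$ follows from the generator computations. Injectivity follows because an element acting trivially on all graded simple arcs is trivial, since the arcs fill the surface. Surjectivity follows from the known presentation of $\Aut\DC{\CPone}$ as generated by $\Aut\CPone$, $\Pic$, the tubular mutations and the shift (Lenzing--Meltzer).
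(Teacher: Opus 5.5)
Your proposal has two genuine gaps. The first is the reduction to Qiu--Zhang--Zhou, which cannot be carried out. A skew-gentle algebra with four special loops has at least eight simple modules, because each special loop $\epsilon$ with $\epsilon^2=\epsilon$ splits its vertex into two primitive idempotents. By contrast, $K_0(\coh(\CPone))$ has rank $2+\sum_i(p_i-1)=6$. Derived equivalence preserves this rank, so no such algebra is derived equivalent to $\CPone$, and there is no model to restrict. Consequently three of your steps lose their foundation: the injection into string objects, the rigidity argument via absence of self-intersections, and the slope-by-slope surjectivity.

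The paper never passes through a skew-gentle algebra. It proceeds as follows:
\begin{enumerate}
\item It takes Barot--Geiss's parametrization $(p,x)\mapsto E_p^x$ of $\Ind^\circ\coh(\CPone)$ by $\QQi\times\bfH$.
\item It labels the tagged arcs of $S_{0,4}$ by the same set: the slope comes from the pillowcase cover $\tilde\pi\colon\mathbb{R}^2\to S_{0,4}$, and the quaternion label from the tag table.
\item It identifies $\wA^\circ(\widetilde{S\x^2})$ with $\TA^\times(S_{0,4})\times\ZZ$, matching winding parity at a binary with the tag, and the grading with the shift.
\item It sets $\wX(\varsigma(\alpha_{p,x},n))=E_p^{x}[n]$ or $E_p^{-x}[n]$ according to the parity of $n$.
\end{enumerate}

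The second gap is that your construction of $\Phi$ assigns generators incorrectly. A half-twist has infinite order in the mapping class group; for instance, $B_\infty$ maps to $t_1\in\PSL(2,\ZZ)$. So it cannot be sent injectively into the finite group $\Aut(\CPone)\cong\mathbb{D}_4$. Moreover, the square of a half-twist is, up to binary twists, the Dehn twist about the curve enclosing its arc. Your recipe would therefore force that Dehn twist into a finite group while also sending it to a tubular mutation, so it does not even define a homomorphism.

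Compatibility with $\wX$ rules this assignment out as well. The Dehn twist about the slope-$\infty$ curve lifts to the square of a Dehn twist on $T^2$ and acts on slopes by $p\mapsto p+2$, whereas $\tubR(E_p^x)=E_{p+1}^y$. For the same reason, the two standard Dehn twists do not realize the $\PSL(2,\ZZ)$-action. They only generate a level-$2$ subgroup, which preserves the type $\typ{p}$, so they are not transitive on slopes. It is the half-twists that realize the action.

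The correct matching uses Lenzing--Meltzer's decomposition $\Aut\DC{\CPone}\cong\Br_3\ltimes(\Aut(\CPone)\ltimes\Pic_0\CPone)$ together with the parallel one $\MCG^{\sx}(\widetilde{S\x^2})\cong\Br_3\ltimes(\langle\tilde\iota_1,\tilde\iota_2\rangle\ltimes\overline{\mathrm{D}}_{\vot})$. The braid factors match via $\tubR\leftrightarrow B_\infty$ and $\tubL\leftrightarrow B_0$, so $[1]=(B_0B_\infty)^3$ already lies in this $\Br_3$. The finite factor $\Aut(\CPone)\ltimes\Pic_0\CPone$ matches the group generated by the hyperelliptic involutions and the binary twists.

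This last identification is not the naive ``permute the binaries'' one. For example, the transposition $(0,1)$ goes to the single binary twist $\overline{\mathrm{D}}_{p_4}$. It has to be read off from how each generator permutes the quaternion labels at slopes $0,1,\infty$. The paper's tables verify exactly this before the result is extended to all slopes.
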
 

If we forget the grading  on $\widetilde{S\x^2}$,  the underlying surface is obtained from    $S_{0,4}$ via
Construction~\ref{cons:binary}.  
However, unlike the construction in  \cite{BG}, the
  image of an indecomposable rigid object in $\DC{\CPone}$ under $\widetilde{X}$ is determined by
 the universal covering map $\tilde{\pi} \colon\mathbb{R}^2 \to S_{0,4} $ described in Construction~\ref{arcs pair}.
 Using this covering description, we can compute the number of interior intersections between graded simple arcs on $\widetilde{S\x^2}$. 
Consequently, we have the following result.
\begin{theorem}[{\Cref{thm:X2}}]
  Under the bijection $\wX$, the dimensions of the Hom-space between two rigid objects in $\DC{\CPone}$ are given by oriented intersection numbers:
\begin{equation}
\oInt^d(\widetilde{\sigma},\widetilde{\gamma})
=\dim\Hom_{\cD}(\wX(\widetilde{\sigma}),\wX(\widetilde{\gamma})[d])
\end{equation}
for graded simple arcs $\widetilde{\sigma},\widetilde{\gamma}$ on $\widetilde{S\x^2}$ and $d\in\ZZ$.
\end{theorem}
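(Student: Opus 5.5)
We will reduce the statement to a computation on both sides that can be carried out in parallel, using the transitivity provided by the first theorem (\Cref{thm:X}, \Cref{autd=mag}).

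\textbf{Step 1: the fixed-pair reduction.} Both sides of the formula are invariant under simultaneous transformations. On the topological side, the binary mapping class group $\MCG^\sx(\widetilde{S\x^2})$ and the shift of gradings preserve $\oInt^d$. On the categorical side, $\Aut\DC{\CPone}$ preserves $\dim\Hom(-,-[d])$. Using the compatibility of $\wX$ with $\Phi$ and with the shift $[1]$, we may move $\widetilde{\sigma}$ to a fixed reference arc, say the graded arc $\widetilde{\sigma}_0$ with $\wX(\widetilde{\sigma}_0)=\cO$. Here the automorphism group acts transitively on the slopes of rigid indecomposables, up to shift, through the $\mathrm{SL}_2(\ZZ)$-action by tubular mutations. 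For general parameter $\lambda$, where the isomorphism $\Phi$ is not available, we instead use the tubular mutations (spherical twists along the tubes), which exist for every $\lambda$. We check that they are realized by Dehn twists, and that the construction of $\wX$ via the covering $\tilde\pi$ in Construction~\ref{arcs pair} intertwines them. It then suffices to compute $\oInt^d(\widetilde{\sigma}_0,\widetilde{\gamma})$ for every graded simple arc $\widetilde{\gamma}$.

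\textbf{Step 2: the categorical side.} Every rigid indecomposable in $\DC{\CPone}$ is a shift of a sheaf that is either a line bundle, a rigid exceptional vector bundle, or an exceptional torsion sheaf $S_{i}$ at one of the four weighted points. For sheaves $E,F$ we have $\Hom(E,F[d])=0$ unless $d\in\{0,1\}$. By Serre duality, $\Ext^1(E,F)\cong D\Hom(F,\tau E)$, and $\dim\Hom(E,F)-\dim\Ext^1(E,F)$ is given by the Euler form, which is a function of the classes in $K_0$. Since the category is tubular, stability implies that only one of the two terms is nonzero whenever the slopes differ. So $\dim\Hom(\cO,X[d])$ equals $|\langle[\cO],[X]\rangle|$, concentrated in the degree determined by the slope of $X$. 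The cases of equal slope are handled by the tube structure, and the four binaries correspond to the four $\tau$-orbits of exceptional simples in the tubes of rank $2$.

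\textbf{Step 3: the topological side, and the main obstacle.} Lift $\widetilde{\gamma}$ through the universal cover $\tilde\pi\colon\mathbb{R}^2\to S_{0,4}$, where arcs become segments of rational slope $p/q$ between lattice points. The interior intersections with $\sigma_0$ are then counted by a lattice determinant $|pq'-p'q|$ adjusted by the endpoint and binary contributions. The grading difference at each intersection fixes the degree $d$, and we expect it to match the slope comparison from Step 2. The main obstacle is to show that this count, including the correct weighting of intersections at binaries and shared endpoints, equals the Euler-form value $|\langle[\cO],[X]\rangle|$, and that every intersection lands in a single degree. We will first compute the class in $K_0$ of $\wX(\widetilde{\gamma})$ from $(p,q)$ and the endpoints via the construction of $\wX$. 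We will then compare the resulting bilinear form with the intersection determinant directly, treating separately the arcs that end at a binary shared with $\sigma_0$.
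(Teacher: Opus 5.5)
Your proposal has genuine gaps: the reduction in Step 1 fails as stated, and Step 3, which carries the content of the theorem, is not carried out.

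\textbf{Step 1 fails for general $\lambda$.} The tubular mutations generate a $\Br_3$ that is transitive on slopes but not on rigid indecomposables up to shift. The stabiliser of slope $0$ in $\Br_3$ is $\langle\tubL,(\tubL\tubR)^3\rangle$. By \Cref{tubular left mutations} and Table~\ref{tab:bt-tag-action}, $\tubL$ swaps $E_0^{1}$ and $E_0^{-1}$ and fixes $E_0^{\pm i},E_0^{\pm j},E_0^{\pm k}$. Running through the same table gives $(\tubL\tubR)^3\cO=E_0^{-1}[1]$. So the $\Br_3$-orbit of $\cO$ meets slope $0$ only in $\{E_0^{\pm1}[n]\}$, and for instance $\cO(\vec{x}_2-\vec{x}_4)=E_0^{i}$ is never moved to $\cO$.

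To reach a single reference arc you also need the twists by $\Pic_0\CPone\cong(\ZZ/2)^3$. These exist for every $\lambda$ and act simply transitively on the eight rigid objects of slope $0$, which are the degree-zero line bundles. You also need their realisation by mapping classes, namely the map $f_2$ of \Cref{keep slope}, which does not use $\lambda=\tfrac12$.

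\textbf{Invariance of $\oInt^d$ is not formal.} You assert that $\MCG^\sx(\widetilde{S\x^2})$ preserves $\oInt^d$, but \Cref{def:boint} assigns index $0$ or $1$ by comparing slopes $p<q$ (and winding parities at binaries). $B_0$ does not preserve that order: it sends $(1,\infty]$ to negative slopes. Invariance therefore depends on the compensating shift in \Cref{prop:braid-twist-slope-action} and on the tag changes in Table~\ref{tab:bt-tag-action}. You need an argument identifying the index split with an intrinsic intersection index. For interior points this is $\gind_a=0\Leftrightarrow p<q$, which the paper extracts from \Cref{prop:farey-braid-recursion} and \Cref{rem:farey-smoothing-index}. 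At the binaries you would have to supply it yourself.

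\textbf{Step 3 is not carried out.} What you call the main obstacle is the entire content of the theorem. The paper proves it directly for an arbitrary pair, with no reduction:
\begin{itemize}
\item \Cref{hom dim} gives $\langle\bv_p^x,\bv_q^y\rangle=\langle\bv_{\typ{p}}^x,\bv_{\typ{q}}^y\rangle+\tfrac12\Delta(p,q)-\tfrac12\Delta(\typ{p},\typ{q})$, with base values at slopes $0,1,\infty$ from \eqref{eq:formula1}--\eqref{eq:formula2}.
\item \Cref{inner int} gives $(|\Delta(p,q)|-r)/2$ interior intersections, where $r$ is the number of shared punctures.
\item Stability puts the Euler form entirely in degree $0$ or $1$ according as $p<q$ or $p>q$.
\item The endpoint contributions are read off from the tags in Table~\ref{kappa}, split into the cases $\typ{p}=\typ{q}$ ($\Delta$ even, two or no shared punctures) and $\typ{p}\neq\typ{q}$ ($\Delta$ odd, exactly one shared puncture).
\end{itemize}
With these formulas in hand, the pair $(\cO,X)$ is no easier than a general pair. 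Your reduction therefore costs you \Cref{autd=mag}, which is available only for $\lambda=\tfrac12$, plus an invariance lemma, and saves no work. What you still have to produce is the formula for $\langle\bv_0^1,\bv_q^y\rangle$ in terms of $a(q)$, $\typ{q}$ and $y$. You then have to check that its correction term matches the binary intersections dictated by Table~\ref{kappa}.
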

 
As an application, we provide the geometric realization of the indecomposable rigid objects in the cluster category $\CCP$ via tagged arcs on the sphere with four punctures $S_{0,4}$, although this realization can also be obtained directly, without passing through the derived category.
\begin{theorem}[{\Cref{thm:C}, \Cref{thm:C2}, \Cref{autc=mac}}]
There is a bijection $X_c$ between the set of tagged arcs on $S_{0,4}$ and the set of indecomposable rigid objects in $\CCP$.  The dimensions of $\Hom$-spaces between any two such objects equal the tagged intersection numbers of the corresponding tagged arcs. 

In particular, if $\CPone=(\mathbb{P}^1,(0,1,\infty,\frac{1}{2}),(2,2,2,2))$, then $\Phi': \MCG^\times  (S_{0,4})\to \Aut\CCP$ is a group isomorphism, 
\begin{equation}
\begin{tikzpicture}[xscale=.6,yscale=.6]
\draw(180:3)node(o){$\TA^\times(S_{0,4})$}(-3,2.2)node(b){\small{$ \MCG^\times  (S_{0,4})$}}
(-.2,.45)node{$X_c$} (-.2,-.3)node{$\sim$};
\draw(0:3)node(a){$\Ind^\circ\CCP$}(3,2.2)node(s){\small{$\Aut\CCP$}};
\draw[->,>=stealth](o)to(a);\draw[->,>=stealth](b)to(s);
\draw[->,>=stealth](-3.2,.6).. controls +(135:2) and +(45:2) ..(-3+.2,.6);
\draw[->,>=stealth](3-.2,.6).. controls +(135:2) and +(45:2) ..(3+.2,.6);
\draw (0,2.6)node{$\Phi'$} (0,1.9)node{$\sim$};
\end{tikzpicture}
\end{equation}
and $X_c$ is compatible with $\Phi'$, where $\MCG^\times  (S_{0,4})$ is the tagged mapping class group of $S_{0,4}$.
\end{theorem}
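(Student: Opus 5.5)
The plan is to deduce the cluster statement from the derived one, \Cref{thm:X} and \Cref{thm:X2}, by passing to orbits. The cluster category $\CCP$ is the orbit category $\DC{\CPone}/\tau^{-1}[1]$. Indecomposable rigid objects of $\CCP$ are therefore the $\langle\tau^{-1}[1]\rangle$-orbits of indecomposable rigid objects of $\DC{\CPone}$, and each orbit has a representative in $\coh(\CPone)$. Morphisms in $\CCP$ are computed by
\[
\Hom_{\CCP}(X,Y)=\bigoplus_{i\in\ZZ}\Hom_{\cD}(X,(\tau^{-1}[1])^iY),
\]
and for sheaves $X,Y$ only $i=0,1$ contribute. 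This gives
\[
\dim\Hom_{\CCP}(X,Y)=\dim\Hom(X,Y)+\dim\Ext^1(Y,X),
\]
by Serre duality $\Ext^1(X,\tau Y)\cong D\Hom(Y,X)$ (or dually).

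\textbf{Step 1: topological counterparts of $[1]$ and $\tau$.} On the topological side, $[1]$ acts on graded simple arcs of $\widetilde{S\x^2}$ by shifting the grading. I expect $\tau$ to correspond under $\Phi$ to a specific binary mapping class, presumably the rotation or half-twist induced by the order-two automorphism coming from the hyperelliptic involution, which exchanges the two tags at each binary. I would identify $\tau$ explicitly on the generating arcs used to define $\wX$ and check it against the known action of $\tau$ on $\coh(\CPone)$, namely the action on the line-bundle lattice together with the tube rotations of order $2$. Forgetting the grading and collapsing each binary to a puncture (the inverse of Construction~\ref{cons:binary}) then sends graded simple arcs to arcs of $S_{0,4}$. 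The binary data, that is, which way the arc winds at the binary end, becomes the plain-or-notched tag. The orbits of $\tau^{-1}[1]$ should be exactly the fibres of this forgetful map to tagged arcs, and this yields the bijection $X_c$.

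\textbf{Step 2: intersection numbers.} For the Hom-dimension formula I would sum the identity of \Cref{thm:X2} over the orbit:
\[
\dim\Hom_{\CCP}(X_c\sigma,X_c\gamma)=\sum_{i}\oInt^{d_i}\bigl(\widetilde{\sigma},(\tau^{-1}[1])^i\widetilde{\gamma}\bigr).
\]
The task is to show that this sum equals the tagged intersection number of \cite{FST} (in the QZ1 form). Interior intersections are counted once each regardless of grading, and the graded orientations split them between $\Hom$ and $\Ext^1$. Endpoint contributions at a binary become the tag-mismatch contributions at the puncture.

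\textbf{Main obstacle.} I expect this endpoint and tag bookkeeping to be the hardest step. It requires a case analysis of how two arcs meet a common binary: same tag, different tag, or the same arc with different tags. I would try first to verify the count via the universal cover $\tilde\pi$ from Construction~\ref{arcs pair}, using slopes and parametrising arcs by rationals, where both sides reduce to determinant-type formulas $|ad-bc|$ with parity corrections.

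\textbf{Step 3: automorphisms.} When $\lambda=\tfrac12$, \Cref{autd=mag} gives $\Phi$. Every automorphism of $\DC{\CPone}$ commutes with $\tau$ and $[1]$, so it descends to $\CCP$. On the surface side, the mapping class group modulo the subgroup generated by the grading shift and the image of $\tau$ is identified with $\MCG^\times(S_{0,4})$, including tag-changing elements coming from binary involutions. This gives a surjective map $\Phi'$ compatible with $X_c$. For injectivity, I would show that an element acting trivially on all tagged arcs, equivalently on all rigid objects of $\CCP$, is trivial in the tagged mapping class group. This holds because a mapping class of $S_{0,4}$ fixing all arcs with both tags is the identity.
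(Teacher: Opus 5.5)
\textbf{Verdict.} Your handling of the bijection and the Hom-dimensions is essentially the paper's argument, apart from an incorrect formula. Step 3, however, has a genuine gap: the wrong kernel, and an unjustified surjectivity claim.

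\textbf{Hom-dimensions.} Your route matches the paper's. It writes $\Hom_{\cC}(E_p^x,E_q^y)=\Hom_{\cD}(E_p^x,E_q^y)\oplus\Hom_{\cD}(E_p^x,E_q^{-y}[1])$ and applies \Cref{thm:X2} to each summand. This gives $i(\alpha_{p,x}^\circ,\alpha_{q,y}^\circ)+s$ in even degree and $i(\alpha_{p,x}^\circ,\alpha_{q,y}^\circ)+t$ in odd degree, where $s$ and $t$ count common punctures with equal and with different tags respectively. So the bookkeeping you flag as the main obstacle is short.

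Your displayed formula, though, is wrong. The $i=1$ summand is $\Ext^1(X,\tau^{-1}Y)\cong D\Hom(Y,\tau^2X)$. Since $2\vec{\omega}=0$ in type $(2,2,2,2)$, $\tau^2\cong\id$, so $\dim\Hom_{\cC}(X,Y)=\dim\Hom(X,Y)+\dim\Hom(Y,X)$, not $\dim\Hom(X,Y)+\dim\Ext^1(Y,X)$. For $X=Y=E_p^x$ the correct value is $2=\Int^0(\alpha_{p,x},\alpha_{p,x})$ (both endpoints, equal tags), whereas your formula gives $1$.

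\textbf{Step 1 dictionary.} Your Step 1 is internally inconsistent. If the grading shift realizes $[1]$ and the tag flip realizes $\tau$, then a $\tau^{-1}[1]$-orbit alternates tags with the parity of the shift. It is then not a fibre of the forgetful map, although taking degree-$0$ representatives still gives a bijection. In the paper, $\wX$ is normalized so that the grading shift of arcs realizes $\tau^{-1}[1]$; note $E_p^{-x}[n]$ for odd $n$ in \Cref{thm:X}. The functor $\tau$ is realized by $\overline{\mathrm{D}}_{p_1}\overline{\mathrm{D}}_{p_2}\overline{\mathrm{D}}_{p_3}\overline{\mathrm{D}}_{p_4}$, which fixes every puncture. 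It is not a hyperelliptic involution, since $\iota_1,\iota_2$ permute the punctures.

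\textbf{The gap in Step 3.} You quotient by both the grading shift and the image of $\tau$. But $\tau$ does not become trivial in $\CCP$: there $\tau\cong[1]$ sends $E_p^x$ to $E_p^{-x}$. On the surface side it is the nontrivial central element $(\id,(-1,-1,-1,-1))$ of $\MCG^\times(S_{0,4})$, the global change of tags. So $\MCG^\sx(\widetilde{S\x^2})$ modulo the shift and $\tau$ is $\MCG^\times(S_{0,4})$ divided by this $\ZZ_2$. Your claimed identification is false, and the resulting map cannot be an isomorphism onto $\Aut\CCP$.

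The fix is to kill only the grading shift $[1]=(B_0B_\infty)^3$. This generates the centre of $\Br_3$ and corresponds to $\tau^{-1}[1]$. Since $\Br_3/Z(\Br_3)\cong\PSL(2,\ZZ)$ and $\overline{\mathrm{D}}_{\vot}\cong\{\pm1\}^{\P}$, the quotient has exactly the shape $(\PSL(2,\ZZ)\ltimes_{\rm conj}\langle\iota_1,\iota_2\rangle)\ltimes_1\{\pm1\}^{\P}$ of $\MCG^\times(S_{0,4})$.

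Even after this correction, your surjectivity claim is unsupported. Descending from $\Aut\DC{\CPone}$ only reaches those autoequivalences of $\CCP$ that lift to the derived category, and you give no argument that every one does. This lifting statement is precisely the external input the paper uses: the structure result $\Aut\CCP\cong\PSL(2,\ZZ)\ltimes_{\rm conj}(\Aut(\CPone)\ltimes_2\Pic_0\CPone)$ recalled before \Cref{autc=mac}. The paper then matches factors as in the proof of \Cref{autd=mag}. Your injectivity argument, via compatibility and the faithful action of $\MCG^\times(S_{0,4})$ on tagged arcs, is fine once the map is correctly defined.
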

Although  the bijection $X_c$ was  already established by  Barot and Geiss in  \cite{BG}, its geometric interpretation of the $\Hom$-space  dimensions and automorphism group of $\CCP$ remained implicit.
The present work  fills the gap. In addition, we show  that every loop in    the exchange graph for cluster-tilting objects in $\CCP$ decomposes into squares and pentagons, see \Cref{cor:5.6}.

\subsection{Contents}
The paper is organized as follows. In \Cref{sec:2}, we review background material on coherent sheaves on weighted projective lines and graded marked surfaces with binaries. In \Cref{sec:3}, we provide some properties of rigid sheaves in $\coh(\CPone)$. In \Cref{sec:4}, we give a surface model for indecomposable rigid objects in  the derived category $\DC{\CPone}$ via a graded sphere with four binaries. Moreover, we provide geometric interpretations for the dimensions of $\Hom$-spaces by oriented intersection
numbers, and for the automorphism group of   $\DC{\CPone}$ through the binary mapping class group. In \Cref{sec:5}, we show that the sphere with four punctures gives a geometric model of 
 the cluster category  $\CCP$, where  the dimensions of  $\Hom$-spaces  are expressed by tagged intersection numbers, and the automorphism group is described by the tagged mapping class group.
 
In \Cref{tab_1}, we summarize the notations used in this paper.
\begin{center}
\begin{table}[htbp]
\caption{List of notations}\label{tab_1}
\centering
\renewcommand{\arraystretch}{1.15}
\begin{tabular}{c@{\qquad}p{0.8\textwidth}}
\hline
$\CPone$ & the weighted projective line of type $(2,2,2,2)$\\
$\coh(\CPone)$ & the category of coherent sheaves on $\CPone$\\
$\DC{\CPone}$ & the bounded derived category of $\coh(\CPone)$\\
$\CCP$ & the cluster category associated with $\CPone$\\
$E_p^x$ & the indecomposable rigid sheaf indexed by $(p,x)$ in $\coh(\CPone)$\\
$\bv_p^x$ & the positive real Schur root associated with $E_p^x$\\
$S_{0,4}$ & the sphere with four punctures\\
$\widetilde{S}_{0,4}$ & the graded sphere with four punctures\\
$\widetilde{S\x^2}$ & the graded sphere with four binaries\\
$\alpha_p^\epsilon$ & the simple arc of slope $p$ on $S_{0,4}$, where $\epsilon\in\{+,-\}$\\
$\widetilde{\alpha}_p^\epsilon$ & the graded simple arc corresponding to $\alpha_p^\epsilon$\\
$\alpha_{p,x}$ & the tagged arc associated with $p\in\QQi$ and $x\in\bfH$\\
$\widetilde{\alpha}_{p,x}$ & the graded tagged arc associated with $\alpha_{p,x}$\\
$\oInt^d(\widetilde{\sigma},\widetilde{\gamma})$ & the oriented intersection number  of index $d$ from $\widetilde{\sigma}$ to $\widetilde{\gamma}$\\
$\Int^d(\alpha_{p,x},\alpha_{q,y})$ & the tagged intersection number of index $d$ between $\alpha_{p,x}$ and $\alpha_{q,y}$\\
$\MCG^\sx(\widetilde{S\x^2})$ & the binary mapping class group of $\widetilde{S\x^2}$\\
$\MCG^\times(S_{0,4})$ & the tagged mapping class group of $S_{0,4}$\\
\hline
\end{tabular}
\end{table}
\end{center}
\subsection*{Acknowledgments} J. Chen and Y. Zheng were partially  supported by   the  National Natural Science Foundation of China (Grants Nos. 12371040 and 12131018). Y. Qiu was supported by the National Natural Science Foundation of China (Grant No. 12425104). 

\section{Preliminaries}\label{sec:2}
  
\subsection{Coherent sheaves on weighted projective lines} 
Let $\mathbf{k}$ be an algebraically closed field and denote by $\mathbb{P}^1$ the projective  
line over $\mathbf{k}$.  
A \emph{weighted projective line} $\XX=(\mathbb{P}^1,\boldsymbol{\lambda},\mathbf{p})$ over $\mathbf{k}$ is specified by giving a  \emph{weight sequence} $\mathbf{p}=(p_1,\ldots,p_t)$ of integers $p_i\in\NN_{\geq 2}$ and a \emph{parameter sequence }
$\boldsymbol{\lambda}=(\lambda_1,\ldots,\lambda_t)$ of pairwise distinct points in $\mathbb{P}^1$, which can be normalized as  
$\lambda_1=0$, $\lambda_2=\infty$ and $\lambda_3=1$ for $t\geq 3$.
Each such  weighted projective line is associated  with a  \emph{weight function}  defined by
\[
p_\lambda:\mathbb{P}^1\rightarrow\NN,\mu\mapsto\begin{cases}
p_i, &\text{if } \mu=\lambda_i \text{ for some } i,\\
1,   &\text{else.}\end{cases} 
\]

  The genus  of   $\XX$, denoted by $\genus{\XX}$,   is given by
$$\genus{\XX} =1+\frac{1}{2}\left((t-2)\lcm(p_{1}, \cdots, p_{t})-\sum_{i=1}^t \frac{\lcm(p_{1}, \cdots, p_{t})}{p_i}\right).$$ 
This genus gives the following classification: a weighted projective line of genus $\genus\XX< 1$($\genus\XX= 1$, resp. $\genus\XX> 1$) is called of \emph{domestic (tubular, resp. wild)} type. 
Up  to permutation of weights,  it is elementary to verify that $\XX$ is 
 domestic   if and only if
  \[
\mathbf{p}\in\{( ), (p_1), (p_1 , p_2) ,(2, 2, p_3) ,(2, 3, 3), (2, 3, 4),  (2, 3, 5)  \},
 \] where $()$ corresponds to the case   $t=0$, and  $\XX$ is tubular if and only if 
 \[
\mathbf{p}\in\{(6,3,2),\ (4,4,2),\ (3,3,3),\ (2,2,2,2)\}.
 \]   

We now revisit  the definition of the category $\coh\XX$ of coherent sheaves on $\mathbb{X}$,  as   introduced in  \cite[Section 1]{GL87}. 
Let $\mathbb{L}(\mathbf{p})$ be the rank 1 additive group
\[
\mathbb{L}\coloneqq\mathbb{L}(\mathbf{p})= \<\vx_1,\ldots,\vx_t\mid p_1\vx_1=\cdots=p_t\vx_t=\vc\>,
\] where $\vc$ is called the \emph{canonical element} of $\mathbb{L}$,
and $S(\mathbf{p},\boldsymbol{\lambda})$ be the $\mathbb{L}$-graded commutative algebra
\[
{\rm S}\coloneqq S(\mathbf{p},\boldsymbol{\lambda}) = \mathbf{k}[x_1,\ldots,x_t]/
\<x_i^{p_i}-x_2^{p_2}-\lambda_i x_1^{p_1}\mid i=1,\ldots,t\>, 
\]
where $\deg x_i=\vx_i$ and $\lambda_i \in\mathbb{P}^1$.
   Let ${\rm mod}^{\mathbb{L}}\ {\rm S}$ be the abelian category of finitely generated $\mathbb{L}$-graded ${\rm S}$-modules, and 
${\rm mod}_0^{\mathbb{L}}\ {\rm S}$ be its Serre subcategory formed by finite dimensional modules. Denote by $${\rm qmod}^{\mathbb{L}}\ {\rm S}:={\rm mod}^{\mathbb{L}}\ {\rm S}/{\rm mod}_0^{\mathbb{L}}\ {\rm S}$$ the quotient abelian category. By \cite[Theorem 1.8]{GL87}, the sheafification functor yields an equivalence
\[
{\rm qmod}^{\mathbb{L}}\ {\rm S}\xrightarrow{\sim} \coh\XX.
\] The free module ${\rm S}$ gives the structure
sheaf $\cO$, and shifting the grading gives twists $E(\vx)$ for any sheaf
$E\in\coh\XX$ and $\vx\in \mathbb{L}$. 
From now on, we will identify these two categories.  

It is well known that $\coh\XX$ is a hereditary  $\Hom$-finite abelian category  satisfying
Serre duality
\[
D\Ext^1_\XX(E,F)\cong\Hom_\XX(F,E(\vn)),
\]
 where $\vn = \sum_{i=1}^t(\vc-\vx_i)-2\vc$ is called the \emph{dualizing element} of $\mathbb{L}$. This  implies the existence of almost split sequences for the category $\coh\XX$  with the Auslander-Reiten translation $\tau:\coh\XX\to\coh\XX$ given by the grading shift with $\vn$. 
 Moreover, 
$\coh\XX$ admits a splitting torsion pair $({\rm vect}\mbox{-}\mathbb{X}, {\rm coh}_{0}\mbox{-}\mathbb{X})$, 
where ${\rm vect}\mbox{-}\mathbb{X}$ (resp.  ${\rm coh}_{0}\mbox{-}\mathbb{X}$) denotes the full subcategory of $\coh\XX$ consisting of coherent sheaves without any simple subobjects (resp. coherent sheaves of finite length).
The objects in ${\rm vect}\mbox{-}\mathbb{X}$ are called \emph{vector bundles}.  
There is a specific type of vector bundles  called \emph{line bundles}. Up to isomorphism, each line bundle has the form $\cO(\vec{x})$ for a uniquely determined $\vec{x}\in\mathbb{L}$.    

By \cite[Proposition 1.1]{LR}, the torsion subcategory ${\rm coh}_0\mbox{-}\mathbb{X}$ of $\coh\XX$ decomposes into a coproduct
$\coprod_{\mu\in \mathbb{P}^1}\mathcal{T}_{\mu}$, where $\mathcal{T}_{\mu}$ is a connected uniserial length category, whose associated Auslander-Reiten quiver is a stable tube $\mathbb{ZA}_{\infty}/(\tau^r)$ for some $r\in \mathbb{Z}_{\geq 1}$ (c.f.\cite{ASS}). Here, the integral $r$ is called the rank of the stable tube $\mathbb{ZA}_{\infty}/(\tau^r)$ and depends on $\mu$. 
Precisely, 
\[
r=\begin{cases}
p_i, &\text{if }   \mu=\lambda_i,\\
1,   &\text{if $\mu\in   \mathbf{k}\setminus \{ \lambda_1,\ldots,\lambda_t\}$.}\end{cases} 
\] 
Objects that lie at the bottom of the stable tubes are all simple objects of $\coh\XX$. Each $\mu\in   \mathbf{k}\setminus \{ \lambda_1,\ldots,\lambda_t\}$ is associated with a unique simple sheaf $S_{\mu}$, called \emph{ordinary simple}; while $\mu=\lambda_i$   is associated with $p_i$  simple objects $ S_{\lambda_i,j} \, (j\in \mathbb{Z}/p_i\mathbb{Z}) $ 
called \emph{exceptional simples}. 
 For each $\vec{x}= \sum_{i=1}^t l_i\vx_i\in \mathbb{L}$ , the twists act on the simple sheaves by  
\begin{equation}\label{shift}
 S_{\lambda_i,j} (\vec{x})=S_{\lambda_i, j+l_{i}},\;\;   S_{\mu}(\vec{x})=S_{\mu}\;\;{\rm for}\;\, \mu\in  \mathbf{k}\setminus \{ \lambda_1,\ldots,\lambda_t\}.    
\end{equation}
Besides, for each ordinary simple sheaf $S_{\mu}$, there is an
 exact sequence
$$0\longrightarrow \mathcal{O}\stackrel{x_2^{p_2}-\mu x_1^{p_1}}{\longrightarrow}\mathcal{O}(\vec{c})\longrightarrow S_{\mu}\longrightarrow 0.$$
For exceptional simple sheaf  $ S_{\lambda_i,j} $, there is an exact sequence 
\begin{equation}\label{es}
    0\longrightarrow \mathcal{O}((j-1)\vec{x}_i)\stackrel{x_i}{\longrightarrow} \mathcal{O}(j\vec{x}_i)\longrightarrow  S_{\lambda_i,j} \longrightarrow 0,\;\;j\in\mathbb{Z}/p_i\mathbb{Z}.
\end{equation}

The Grothendieck group $K_{0}(\XX)$ of $\coh\XX$  was computed by Geigle and Lenzing \cite{GL87}. In this paper, we   write  $[E] \in  K_{0}(\XX)$
 for the class of an object $E \in \coh\XX$. The Euler form on $K_{0}(\XX)$ is defined as follows on classes of objects $E,F \in \coh\XX$
\[
\<[E],[F]\>= \dim\Hom_\XX(E,F)-\dim\Ext^1_\XX(E,F).
\] 

There are several important $\mathbb{Z}$-linear maps on $K_0(\XX)$, including the \emph{determinant} $\det$, the \emph{rank} $\rk$, and the \emph{degree} $\deg$.
The determinant map is the group homomorphism
\[
\det \colon K_0(\XX) \to \mathbb{L}
\]
given by $\det[\mathcal{O}](\vec{x}) = \vec{x}$.
The degree function
is determined by
\[ \deg [S]=\frac{\lcm(p_{1}, \cdots, p_{t})}{p_\lambda(\mu)},\quad 
\deg[\mathcal{O}(\vec{x})] = \delta(\vec{x}),
\]
where $S$ is a simple sheaf concentrated at $\mu\in\mathbb{P}^1$  and $\delta: \mathbb{L} \to \ZZ$ is  the group homomorphism defined on generators by $\delta(\vx_{i}) = \lcm(p_{1}, \cdots, p_{t})/p_{i}$. 
 The rank function $\rk \colon K_0(\XX) \to \mathbb{Z}$ is characterized by
\[
\rk[\mathcal{O}(\vec{x})] = 1.
\] 
For each non-zero object $E \in \coh\XX$,  define the \emph{slope} of   $E$  as
\[
\mu E \coloneqq \frac{\deg [E]}{\rk [E]} \in \QQi\coloneqq \mathbb{Q} \cup \{\infty\}.
\]  
An indecomposable object $E \in \coh\XX$ is called \emph{semistable (resp. stable)} if for each non-trivial subbundle $E'$ of $E$, we have $\mu E' \leq \mu E$ (resp. $\mu E' <\mu E$). 
For each $q \in \QQi$, we denote by $\mathcal{C}_q$ the full subcategory of $\coh\XX$ consisting of all semistable coherent sheaves of slope $q$.

\begin{proposition} \label{coh:prp-stab} Let $\XX$ be a  weighted projective line.   Then the following hold:
\begin{itemize}
\item[(a)]
For each $q\in\QQ$, $\cC_q$ is an extension closed exact abelian finite length subcategory of
$\coh\XX$ with the simple objects being precisely the stable vector bundles.
\item[(b)] For $q, q' \in\QQi$ such that $q > q'$,
$\Hom_\XX(\cC_q,\cC_{q'})=0$.
 \end{itemize}
 In particular, if  $\XX$ is tubular, then the following additional properties hold:
\begin{itemize}
\item[(c)] 
For any $q\in\QQi$, the subcategory $\cC_q\subset\coh\XX$ 
 is closed under the formation of Auslander-Reiten sequences and it is equivalent to $\cC_\infty$.
\item[(d)]
For any indecomposable  sheaf $E\in\coh\XX$, $E\in\cC_q$ for some $q\in\QQi$.
\end{itemize}
\end{proposition}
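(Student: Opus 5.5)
We would prove this proposition by following the standard approach of Geigle--Lenzing and Lenzing--Meltzer. The tools are the Riemann--Roch formula for the Euler form, Serre duality, and, in the tubular case, the action of the telescoping tubular mutations on $\Dcoh$.

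For (a) and (b), the key input is the behaviour of slopes in short exact sequences. Since $\rk$ and $\deg$ are additive on $K_0(\XX)$, for $0\to E'\to E\to E''\to 0$ with nonzero terms of positive rank, the slope $\mu E$ lies between $\mu E'$ and $\mu E''$. Part (b) then follows first. Given a nonzero $f\colon X\to Y$ with $X\in\cC_q$ and $Y\in\cC_{q'}$, semistability gives $\mu X\le\mu(\operatorname{Im} f)\le \mu Y$, a contradiction. When the image has rank zero, it is torsion, hence a subsheaf of $Y$ only if $q'=\infty$, and this case is handled directly. For (a), we verify the following:
\begin{itemize}
\item Kernels, images and cokernels of maps in $\cC_q$ stay in $\cC_q$, by the same see-saw inequalities. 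Hence $\cC_q$ is abelian and exact, and it is clearly extension closed.
\item $\cC_q$ has finite length, since the rank strictly drops along proper subobjects of the same slope.
\item The simple objects of $\cC_q$ are exactly the stable bundles: a stable bundle has no proper subobject of equal slope, and conversely.
\end{itemize}
This part is routine and valid for any $\XX$.

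For (c) and (d) in the tubular case, we use that $\vn$ has finite order in $\mathbb{L}$, so $\tau$ is periodic up to the rank-preserving degree shift $\delta(\vn)=0$. Thus $\tau$ preserves slope and semistability, and Auslander--Reiten sequences starting in $\cC_q$ stay in $\cC_q$. For (d), we take the Harder--Narasimhan filtration of an indecomposable $E$ with factors $F_1,\dots,F_n$ of strictly decreasing slopes. Serre duality gives $\Ext^1(F_i,F_j)\cong D\Hom(F_j,\tau F_i)$. For $i<j$, i.e.\ $\mu F_i>\mu F_j$, this vanishes by (b), because $\mu\tau F_i=\mu F_i$. Hence the filtration splits, and indecomposability forces $n=1$.

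For the equivalence $\cC_q\simeq\cC_\infty$ in (c), we use the group generated by the line-bundle twists $\cO(\vx)$, which shifts slope by $\delta(\vx)$, and the tubular mutation (the spherical twist along the $\tau$-orbit of $\cO$ in $\Dcoh$). Together these act on $\QQi$ through $\PSL_2(\ZZ)$ acting on the pair $(\rk,\deg)$, and this action is transitive on $\QQi$. Each such autoequivalence of $\Dcoh$ is shown to send $\cC_q$, viewed in degree $0$ or $1$, onto some $\cC_{q'}$. The main obstacle is this last step: proving that the tubular mutation is exact on the relevant slope window, so that it maps semistable sheaves of slope $q$ to semistable sheaves (up to shift). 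We would attempt it via the characterization that a sheaf is semistable if and only if it admits no nonzero morphisms from higher slope, which is transported by derived equivalences. Alternatively, we would simply cite Lenzing--Meltzer \cite{LR} for this step.
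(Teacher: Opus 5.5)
The paper gives no proof of this proposition; it is recalled as background from Geigle--Lenzing and Lenzing--Meltzer, and your outline follows that standard route. Parts (a) and (b) are essentially fine. For (a) you still need one line showing that the cokernel of a map $f\colon X\to Y$ in $\cC_q$ is torsion-free: otherwise its torsion-free quotient would be a quotient of $Y$ of the same rank and smaller degree, hence of slope $<q$.

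The step in (d) is wrong as written, because the vanishing goes the other way. With $\mu F_1>\cdots>\mu F_n$ and $i<j$, the group $\Ext^1(F_i,F_j)\cong D\Hom(F_j,\tau F_i)$ is dual to a Hom from slope $\mu F_j$ to the larger slope $\mu F_i$. Part (b) says nothing about such Homs, and they are nonzero in general. In type $(2,2,2,2)$ one has $2\vn=0$, so $\Ext^1(\cO(\vc+\vn),\cO)\cong D\Hom(\cO,\cO(\vc))\neq 0$, although $\mu\cO(\vc+\vn)=2>0=\mu\cO$.

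It is also not the group that controls splitting. The extension $0\to E_{n-1}\to E\to F_n\to 0$ lies in $\Ext^1(F_n,E_{n-1})$. This vanishes once $\Ext^1(F_n,F_i)\cong D\Hom(F_i,\tau F_n)=0$ for all $i<n$ (induct along the filtration of $E_{n-1}$). That vanishing does follow from (b), since $\mu F_i>\mu F_n=\mu\tau F_n$. With the indices swapped, your argument for (d) is correct.

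For the equivalence $\cC_q\simeq\cC_\infty$ in (c), the step you call the main obstacle is left open, and your proposed fix is circular. Transporting the criterion ``no nonzero maps from semistable sheaves of larger slope'' along a derived autoequivalence $\Phi$ already presupposes that $\Phi$ sends every $\cC_{q''}$ into a shift of some $\cC_{q'''}$, which is the claim itself. Since you prove (d) first, you can close the gap directly as follows.

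\begin{enumerate}
\item Because $\coh\XX$ is hereditary, each indecomposable $E\in\cC_q$ satisfies $\Phi(E)=F[n_E]$ with $F$ an indecomposable sheaf. By (d), $F$ is semistable, of slope $g\cdot q$, where $g$ is the matrix by which $\Phi$ acts on $(\rk,\deg)$.
\item The parity of $n_E$ is forced by $[\Phi(E)]=(-1)^{n_E}[F]$. Indeed, $(\rk F,\deg F)$ is a nonzero vector with $\rk F\ge0$ and $\deg F>0$ when $\rk F=0$, while $(\rk E,\deg E)$ is a positive multiple of $(b(q),a(q))$. So the parity is constant on $\cC_q$.
\item To pin down $n_E$ itself, fix a line bundle $L$ with $\mu L<q$. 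Serre duality and (b) give $\Ext^1(\tau^kL,E)=0$, and Riemann--Roch gives $\sum_k\langle[\tau^kL],[E]\rangle>0$. Hence $\Hom(\tau^kL,E)\neq0$ for some $k$.
\item Since $\Phi$ commutes with $\tau$, writing $\Phi(L)=G[m]$ forces $n_E\in\{m,m+1\}$. Together with the fixed parity, $n_E=n$ is constant on $\cC_q$.
\item Thus $\Phi$ restricts to a fully faithful functor $\cC_q\to\cC_{g\cdot q}[n]$. Applying the same argument to $\Phi^{-1}$ gives essential surjectivity.
\end{enumerate}

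Without this argument, or an explicit citation for it, your proposal does not yet establish (c).
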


\subsection{Derived category of $\coh\XX$} 
Denote by $\Dcoh$ the   bounded derived category of $\coh\XX$,  and by $[1]$ the   shift   functor of $\Dcoh$.  
Since $\coh\XX$ is hereditary,  
 the following proposition is well known.

\begin{proposition}\label{prop:db}
    The category $\Dcoh$ is naturally equivalent to the repetitive category  $\lor_{n\in\ZZ}\coh\XX[n]$,
where each  $\coh\XX[n]$ is a copy of $\coh\XX$, with objects written $E[n]$ for $E$ in  $\coh\XX$, and
morphisms given by
\[
\Hom_{\cD}(E[n], F[m])=\Ext_{\XX}^{m-n}(E,F).
\]  
 Here, the expression   $\lor_{n\in\ZZ}\coh\XX[n]$ has two meanings: first, it denotes the additive closure   ${\rm add}(\cup_{n\in\ZZ}\coh\XX[n])$  of the union of all  $\coh\XX[n]$; second, it indicates that there are no nonzero morphisms  from  $\coh\XX[n]$  to $\coh\XX[m]$ 
whenever $n > m$.
\end{proposition}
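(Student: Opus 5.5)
This is a standard fact for hereditary abelian categories; the plan is to give the usual argument, specialised to $\coh\XX$.

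\textbf{Step 1: Ext-vanishing.} First record that $\coh\XX$ is hereditary, so $\Ext^i_\XX(E,F)=0$ for $i\ge 2$. This follows from Serre duality $D\Ext^1_\XX(E,F)\cong\Hom_\XX(F,E(\vn))$ together with the standard vanishing of higher Ext groups in $\mathrm{qmod}^{\mathbb L}\,\mathrm{S}$ (cf.\ \cite{GL87}).

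\textbf{Step 2: splitting complexes into cohomology.} Show that every object $X\in\Dcoh$ is isomorphic to $\bigoplus_n H^n(X)[-n]$. Induct on the number of nonzero cohomology sheaves. Let $n$ be maximal with $H^n(X)\neq 0$. The truncation triangle
\[
\tau_{<n}X\to X\to H^n(X)[-n]\to \tau_{<n}X[1]
\]
has connecting morphism in $\Hom_{\cD}(H^n(X)[-n],\tau_{<n}X[1])$. By induction, $\tau_{<n}X$ is a direct sum of terms $H^m(X)[-m]$ with $m<n$. The relevant summands are therefore
\[
\Hom_{\cD}\bigl(H^n(X)[-n],H^m(X)[-m+1]\bigr)=\Ext^{\,n-m+1}_\XX\bigl(H^n(X),H^m(X)\bigr).
\]
Since $n-m+1\ge 2$, these vanish by Step 1. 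So the triangle splits, which proves the decomposition. This shows that $\Dcoh$ is the additive closure of $\bigcup_n \coh\XX[n]$.

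\textbf{Step 3: morphisms.} For sheaves $E,F$ placed in a single degree, the standard identification $\Hom_{\cD}(E,F[k])=\Ext^k_\XX(E,F)$ holds, valid in any derived category of an abelian category. It gives
\[
\Hom_{\cD}(E[n],F[m])=\Ext_\XX^{m-n}(E,F).
\]
When $n>m$ the degree is negative, so this group is zero. That is exactly the second meaning of $\lor$.

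Finally, the full embedding $\coh\XX\to\Dcoh$ and Steps 2–3 together identify $\Dcoh$ with the repetitive category. The only nontrivial input is the hereditary property, which is the one point to cite carefully; the rest is formal.
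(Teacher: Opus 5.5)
Your proof is correct and follows the route the paper implicitly takes. The paper gives no argument beyond noting that the statement is well known because $\coh\XX$ is hereditary, and your splitting of the truncation triangles via $\Ext^{\ge 2}_\XX=0$, together with $\Hom_{\cD}(E,F[k])=\Ext^k_\XX(E,F)$, is the standard way that fact yields the result. As in the paper, hereditariness is the only real input.
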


  Recall that   a sheaf $E\in\coh\XX$ is   \emph{rigid} if $\Ext^1_\XX(E,E)=0$, and 
 an object $E$ in $\Dcoh$ is  \emph{rigid} if $\Hom_{\cD}(E, E[i])= 0$ for all $i\ne0$.  Proposition~\ref{prop:db} implies that every indecomposable rigid object of  $\Dcoh$  has the form $E[n]$, where $E $ is an indecomposable rigid sheaf in    $\coh\XX$ and $n$ is an integer.

\subsection{Cluster category of   $\coh\XX$} 
 Let $\tau:\Dcoh\to \Dcoh$ be the Auslander-Reiten translation functor.
The {\it cluster category of   $\coh\XX$} is defined as the orbit category \[\cC_\XX\coloneqq\Dcoh/\tau^{-1}[1].\] 

 Its objects  
are the same as those of $\Dcoh$, 
and morphism spaces are given by \[\Hom_\cC( {X}, {Y})\coloneqq
 \bigoplus_{i\in\ZZ}\Hom_{\cD}(X,(\tau^{-1}[1])^iY)
 \]
 with obvious composition. 
 The cluster category of $\coh\XX$ is a 
triangulated 2-Calabi-Yau category \cite{K}
admitting a cluster structure in the sense of \cite{BIRS}. 
 As explained in \cite{BKL},
  the composition of the canonical functors
  \[
  \coh\XX \hookrightarrow    \Dcoh \twoheadrightarrow\cC_\XX
  \]
   allows us to think of $\coh\XX$ as a non-full subcategory of  $ \cC_\XX$  that shares the same isomorphism classes of indecomposable objects.

An object $E$ in $\cC_\XX$ is called \emph{rigid} if $\Hom_\cC(E,E[1])=0$. 
 A straightforward calculation shows that 
the rigid objects in $\cC_\XX$ coincide with those in  $\coh\XX$.
Recall from \cite{BMRRT} that a \emph{cluster-tilting} object $T$ in $\cC_\XX$  is an object satisfying $\Hom_\cC(T,X[1])=0$ if and only if $X\in {\rm add }\ T$.
Let $T=\overline{T}\oplus X$ and $T^{\prime}=\overline{T}\oplus X^{\prime}$ be two cluster-tilting objects in $\cC_\XX$, where $X$ and $X^{\prime}$ are two non-isomorphic indecomposable objects in $\cC_\XX$. Then $T^{\prime}$ is called the \emph{mutation} of $T$ at $X$.
 The \emph{exchange
graph for cluster tilting objects} in $\cC_\XX$ has as vertices the isomorphism classes of  cluster-tilting objects in $\cC_\XX$, while two vertices are connected by an edge if and only if the associated cluster-tilting objects differ by precisely one indecomposable direct summand.


\subsection{Marked surfaces and mapping class groups}
In this subsection, we review some background material on marked surfaces and their mapping class groups. For further details, we refer the reader to \cite{FST,DL,FM}.
\begin{definition} 
    A \emph{marked surface with punctures} $\surf= (\surf,\M,\P) $ is   a compact connected oriented  surface $ \surf $ with 
    \begin{itemize}
        \item   a finite set $ \M \subset \partial \surf$ of   \emph{marked points} such that  each boundary component contains at least one marked point;
        \item   a finite set $ \P  $ of  \emph{punctures} in  $\surf \backslash \partial \surf$.
    \end{itemize}
\end{definition}

\begin{definition}  A \emph{curve} on $\surf$ is an immersion $\gamma :[0,1]\to \surf$.  A \emph{closed curve} is one with  $\gamma(0)=\gamma(1)$, and a  \emph{simple curve} is one where $\gamma$  is injective, except possibly at endpoints.  
\end{definition}

\begin{definition}
   A \emph{simple  arc} $ \gamma $ in $ \surf $ is a simple curve   such that  
	\begin{itemize}
	     \item the endpoints  of $\gamma  $ lie in $ \M\cup \P $;
        \item  $ \gamma $ is disjoint from $\M$ and from the boundary of $\surf$, except for the  endpoints;
        \item  $ \gamma $ does not cut out an unpunctured monogon or an unpunctured digon. 
	\end{itemize}
\end{definition}
In this paper, we consider curves and  simple arcs up to homotopy.
 Denote by  $\A(\surf) $ 
     the subset consisting of simple arcs with distinct endpoints in $\surf $.

\begin{definition} \label{tarc}
A \emph{tagged arc} on $\surf$ is a pair $(\gamma,\kappa)$ of an  arc $\gamma\in\A(\surf)$ and a \emph{tagged function} 
$$\kappa:\{\gamma(t)\in\P\mid  t\in \{0,1\}\}\to\{-1, 1\}.$$
Denote by $\TA^\times(\surf)$ the set of  tagged arcs on $\surf$.
\end{definition}

 \begin{definition}[{\cite[Definition~7.4]{FST}}]
    For two tagged arcs $(\gamma,\kappa)$  and $(\beta,\kappa')$ in  $\TA^\times(\surf)$, they are \emph{compatible} if one of the following holds:
\begin{enumerate}
    \item $\beta \neq \gamma$, and   $\kappa(p) = \kappa'(p)$ for every puncture $p \in \{\gamma(0), \gamma(1)\} \cap \{\beta(0), \beta(1)\}$.
    \item $\beta = \gamma$, and there exists an endpoint $p \in \{\gamma(0), \gamma(1)\}$ such that $\kappa(p) = \kappa'(p)$.
\end{enumerate} 
A \emph{tagged triangulation} is   a maximal collection of pairwise compatible tagged arcs. 
 \end{definition}

\begin{definition}[{\cite[Section~7]{FST}}]
 For a tagged triangulation $\mathcal{T}$ of $\surf$ and each tagged arc $(\gamma,\kappa)$ of $\mathcal{T}$, 
   a \emph{flip of $\mathcal{T}$    along $(\gamma,\kappa)$}  is a transformation of  $\mathcal{T}$  that  replaces  $(\gamma,\kappa)$ with  a (unique) different tagged arc $(\beta,\kappa')$ such that $\mu_{(\gamma,\kappa)}(\mathcal{T})=\mathcal{T}\setminus\{(\gamma,\kappa)\}\cup\{(\beta,\kappa')\} $ is again a tagged triangulation. The \emph{exchange graph} of $\surf$ has as vertices the triangulations of $\surf$, with an edge between two vertices $\mathcal{T}$ and $\mathcal{T}^{\prime}$ whenever $\mathcal{T}^{\prime}$ is obtained from $\mathcal{T}$ by the flip of a tagged arc.  
\end{definition}

\begin{definition}\label{defmcg}
	The \emph{mapping class group} for a  marked surface  $ \surf $ is defined by
	\[ \MCG(\surf) \coloneqq \text{\rm Homeo}^{+}(\surf)/\text{\rm Homeo}^{+}_{0}(\surf) ,\]
	where $ \text{\rm Homeo}^{+}(\surf) $ is the group of orientation-preserving homeomorphisms such that $ g(\M\cup \P)=\M\cup \P $ for any  $ g \in  \text{\rm Homeo}^{+}(\surf) $, and $ \text{\rm Homeo}^{+}_{0}(\surf) $ is its subgroup of homeomorphisms homotopic to the identity.
\end{definition}

\begin{definition}[{\cite[Section~3.1]{BQ}}]\label{tagmcg}
The \emph{tagged mapping class group} of $\surf$ is defined to be
\[
\MCG^\times(\surf)=\MCG(\surf)\ltimes_1 \{\pm1\}^{\P}
\]
such that
\[
(g_1,\delta_1)*(g_2,\delta_2)
\coloneqq(g_1g_2,(g_2\cdot\delta_1)\delta_2),\quad
(g_1,\delta_1),(g_2,\delta_2)\in\MCG^\times(\surf).
\]
Here $\{\pm1\}^{\P}$ is the group of maps $\delta:\P\to\{\pm1\}$ with pointwise multiplication, and $(g\cdot\delta)(p)=\delta(g^{-1}(p))$.
\end{definition}

\subsection{Graded marked surfaces with binaries}\label{sec:gms}
 In this subsection, we recall 
 some concepts of graded marked surfaces with binaries
 from \cite{QZZ,HKK,LP,IQZ}.
\begin{definition}
   A \emph{grading} $\grad$ on $\surf$ is a homotopy class of sections
\[
\grad:\surfi\to \mathbb{P}T(\surfi),
\]
where $\surfi=\surf\setminus\partial\surf$ and $\mathbb{P}T(\surfi)$ is the projectivized tangent bundle. 
\end{definition}
Choose a representative of this homotopy class, still denoted by $\grad$. Let
\[
\operatorname{cov}\colon \mathbb{R}T(\surfi) \longrightarrow \mathbb{P}T(\surfi)
\]
be the associated $\ZZ$-cover: for $\ell\in\mathbb{P}T_a(\surfi)$, the fiber
$\operatorname{cov}^{-1}(\ell)$ consists of homotopy classes of paths in
$\mathbb{P}T_a(\surfi)$ from $\grad(a)$ to $\ell$.

\begin{definition}
    A \emph{graded marked surface $\gms$ with punctures}  is a pair of a marked surface $\surf$ with punctures and a grading $\grad$ on it.
\end{definition}
We now recall how the grading of $\gms$ induces gradings on curves.
 
\begin{definition}\label{def:gcurve} 
A \emph{grading} $\widetilde{\gamma}$ on a curve $\gamma$ is given by a homotopy class of paths in $\mathbb{P}T_{\gamma(t)}(\surfi)$ from $\grad(\gamma(t))$ to $\dot{\gamma}(t)$, varying continuously with $0\leq t\leq 1$, where $\dot{\gamma}(t)$ is the tangent of $ {\gamma}(t)$. The pair $(\gamma,\widetilde{\gamma})$   is called a {\it graded curve},  also denoted simply by $\widetilde{\gamma}$.  
When the underlying curve $\gamma$  is a simple arc, we call  
 $\widetilde{\gamma}$ 
 a \emph{graded simple arc}.

For any graded curve $\widetilde{\gamma}$ and any $\rho\in\ZZ$, denote by $\widetilde{\gamma}[\rho]$ the graded curve whose underlying curve is the same as $\widetilde{\gamma}$ and whose grading is the composition of $\widetilde{\gamma}(t):\grad(\gamma(t))\to\dot{\gamma}(t)$ and the path from $\dot{\gamma}(t)$ to itself given by clockwise rotation by $\rho\pi$.

Additionally, we  define $\widetilde{\gamma}^0$  as the graded curve sharing the same underlying curve as $\widetilde{\gamma}$, obtained by taking the unique 0-lifts  of the tangents $\dot{\gamma}(t),0\leq t\leq 1$  under the covering
$\operatorname{cov}\colon \mathbb{R}T(\surfi) \longrightarrow \mathbb{P}T(\surfi)$.

\end{definition}

\begin{definition}[{\cite[Section~2.1]{HKK}}]\label{def:gind}
Let $\widetilde{\sigma}$ and $\widetilde{\tau}$ be graded simple arcs on $\gms$, with underlying arcs $\sigma$ and $\tau$. Let $a\in\surf$ be a transverse intersection point, say $\sigma(s)=\tau(t)=a$, and $\dot{\sigma}(s)\ne\dot{\tau}(t)$ in $\mathbb{P}T_a(\surfi)$. Let $\lambda_a$ be the path in $\mathbb{P}T_a(\surfi)$ from $\dot{\sigma}(s)$ to $\dot{\tau}(t)$ given by counterclockwise rotation through an angle strictly less than $\pi$. The \emph{intersection index} of $\widetilde{\sigma}$ and $\widetilde{\tau}$ at $a$ is defined by
\begin{equation}\label{eq:gind}
\gind_a(\widetilde{\sigma},\widetilde{\tau})=\widetilde{\sigma}(s)\cdot\lambda_a\cdot\widetilde{\tau}(t)^{-1}\in\pi_1(\mathbb{P}T_a(\surfi),\grad(a))\cong\ZZ.
\end{equation}
\end{definition}

\begin{remark}\label{rem:gind}
For a transverse intersection point $a$, the index satisfies
\begin{equation}\label{eq:gind-sym}
\gind_a(\widetilde{\sigma},\widetilde{\tau})+\gind_a(\widetilde{\tau},\widetilde{\sigma})=1,
\end{equation}
and, for any $m,n\in\ZZ$,
\begin{equation}\label{eq:gind-shift}
\gind_a(\widetilde{\sigma}[m],\widetilde{\tau}[n])=\gind_a(\widetilde{\sigma},\widetilde{\tau})+m-n.
\end{equation}
\end{remark}

\begin{definition}\label{def:mcgofgms}
The \emph{graded mapping class group} $\MCG(\gms)$ of a graded marked surface $\gms$ is the group of isotopy classes of pairs $(f,\widetilde f)$, where $f\colon \surf \to \surf$ is an orientation-preserving homeomorphism preserving the marked points and punctures setwise such that $f$ is \emph{compatible with the grading}, namely $f^*\eta$ is homotopic to $\eta$ as sections of $\mathbb{P}T(\surfi)$, and $\widetilde f$ is a lift of $df$ such that the following diagram commutes:
\begin{equation}\label{eq:cov}
\begin{tikzcd}
\mathbb{R}T(\surfi) \arrow[r, "\widetilde f"] \arrow[d, "\operatorname{cov}"']
& \mathbb{R}T(\surfi) \arrow[d, "\operatorname{cov}"] \\
\mathbb{P}T(\surfi) \arrow[r, "df"]
& \mathbb{P}T(\surfi) .
\end{tikzcd}
\end{equation}
\end{definition}
By definition, for any graded arc $\widetilde{\gamma}$ in $\gms$, we have that
\[(f,\widetilde f)(\widetilde{\gamma})[1]=(f,\widetilde f)(\widetilde{\gamma}[1]).\]
For any graded simple arc $\widetilde{\sigma}$ in $\gms$, the associated \emph{braid twist} $B_{\widetilde{\sigma}}\in\MCG(\gms)$ is defined in \Cref{fig:bt}. We have the formula
\begin{equation}\label{eq:bt}
B_{\Psi(\widetilde{\sigma})}=\Psi\circ B_{\widetilde{\sigma}}\circ\Psi^{-1}
\end{equation}
for any $\Psi\in\MCG(\gms)$. 
\begin{figure}[ht]\centering
	\begin{tikzpicture}[scale=.3]
    \draw[very thick, dashed](0,0)circle(6)node[above,black]{$\widetilde{\sigma}$};
	\draw[very thick,dashed](0,0)circle(6);
	\draw(-2,0)edge[red, very thick](2,0)  edge[cyan,very thick, dashed](-6,0);
	\draw(2,0)edge[cyan,very thick,dashed](6,0);
	\draw(-2,0)node[white] {$\bullet$} node[red] {$\circ$};
	\draw(2,0)node[white] {$\bullet$} node[red] {$\circ$};
	\draw(0:7.5)edge[very thick,-stealth](0:11);\draw(0:9)node[above]{$B_{\widetilde{\sigma}}$};
	\end{tikzpicture}\;
	\begin{tikzpicture}[scale=.3,yscale=-1]
	\draw[very thick, dashed](0,0)circle(6)node[above,black]{$\widetilde{\sigma}$};
	\draw[red, very thick](-2,0)to(2,0);
	\draw[cyan,very thick, dashed](2,0).. controls +(0:2) and +(0:2) ..(0,-2.5)
	.. controls +(180:1.5) and +(0:1.5) ..(-6,0);
	\draw[cyan,very thick,dashed](-2,0).. controls +(180:2) and +(180:2) ..(0,2.5)
	.. controls +(0:1.5) and +(180:1.5) ..(6,0);
	\draw(-2,0)node[white] {$\bullet$} node[red] {$\circ$};
	\draw(2,0)node[white] {$\bullet$} node[red] {$\circ$};
	\end{tikzpicture}
	\caption{The braid twist}
	\label{fig:bt}
\end{figure}
The action of a braid twist on graded arcs can be described by smoothing. Let $\widetilde{\sigma}$ and $\widetilde{\tau}$ be graded simple arcs in $\gms$ with $\widetilde{\sigma}(0)=\widetilde{\tau}(0)=a$. The arc $B_{\widetilde{\sigma}}(\widetilde{\tau})$ is the extension $\widetilde{\tau}\wedge\widetilde{\sigma}$ of $\widetilde{\tau}$ by $\widetilde{\sigma}$ with respect to the common starting point. This is the smoothing shown in \Cref{fig:ext.}. More precisely, if $\gind_a(\widetilde{\sigma},\widetilde{\tau})=n$, then $\widetilde{\tau}\wedge\widetilde{\sigma}$ is obtained by smoothing $\widetilde{\tau}\cup\widetilde{\sigma}[1-n]$ at $a$, with the grading inherited from $\widetilde{\tau}$. Thus $\gind_b(\widetilde{\tau},\widetilde{\tau}\wedge\widetilde{\sigma})=0$ with $\widetilde{\sigma}(0)=(\widetilde{\tau}\wedge\widetilde{\sigma})(0)=b$, cf. \Cref{fig:ext.}. Equivalently,
$$B_{\widetilde{\tau}}(\widetilde{\tau}\wedge\widetilde{\sigma})=\widetilde{\sigma}[1-n],\qquad B_{\widetilde{\tau}}^{-1}(\widetilde{\sigma}[1-n])=\widetilde{\tau}\wedge\widetilde{\sigma}.$$
Note that shifting the grading does not change the associated braid twist:
$B_{\widetilde{\sigma}[m]}=B_{\widetilde{\sigma}}$ for all $m\in\ZZ$.
	\begin{figure}[ht]\centering
		\begin{tikzpicture}[scale=1.2]
		\draw[NavyBlue,dashed,very thick](0,0)circle(2);
		\draw[thick,blue](-195:2)edge[bend left,>=stealth](0,0)
		(0,0)edge[bend right,>=stealth](30:2)
		(1,0)node{$\widetilde{\sigma}$}(-1,.1)node[above]{$\widetilde{\tau}$};
		\draw[thick,red](160:2)to[bend left](0,.5)(0.3,.7)
		node[above]{\small{$\widetilde{\tau}\wedge\widetilde{\sigma}$}};
		\draw[thick,red](0,0.5)edge[bend right=40,>=stealth](36:2);
		\draw(0,0)node[white] {$\bullet$} node[red](a){$\circ$} (0,0)node[below]{$a$};
		\end{tikzpicture}
		\caption{The extension as smoothing out}
		\label{fig:ext.}
	\end{figure}
\begin{lemma}\label{lem:1.13}
Let $\surf $ be the marked surface underlying $\gms$. The forgetful map $F:\gms\to \surf $ induces a group homomorphism
\[
F_*:\MCG(\gms)\longrightarrow \MCG(\surf),\qquad [(f,\widetilde f)]\longmapsto [f],
\]
and
\[
\ker F_*=\langle [1]\rangle\cong\ZZ.
\]
Consequently, there is a short exact sequence
\begin{equation}\label{eq:ses}
1\longrightarrow \ZZ \xrightarrow{\;\iota\;} \MCG(\gms)
\xrightarrow{\;F_*\;} \operatorname{Im}(F_*) \longrightarrow 1,
\end{equation}
where $\iota(n)=[n]$.
\end{lemma}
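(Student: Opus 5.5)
First I would check that $F_*$ is a well-defined homomorphism. If $(f,\widetilde f)$ and $(g,\widetilde g)$ are isotopic pairs, then in particular $f$ and $g$ are isotopic through homeomorphisms preserving $\M\cup\P$. Hence $[f]=[g]$ in $\MCG(\surf)$. Composition of pairs is $(f,\widetilde f)\circ(g,\widetilde g)=(fg,\widetilde f\widetilde g)$, and $\widetilde f\widetilde g$ is a lift of $d(fg)=df\,dg$. So $F_*$ respects products.

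Next I would show $\langle[1]\rangle\subseteq\ker F_*$. The shift $[1]$ is the pair $(\id,\widetilde{\id}[1])$, where the lift is the deck transformation of $\operatorname{cov}$ given by clockwise rotation by $\pi$. Its underlying homeomorphism is $\id$, so it lies in the kernel.

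For the reverse inclusion, suppose $[f]=1$, so $f$ is isotopic to $\id$. Replacing $(f,\widetilde f)$ within its isotopy class, I may lift the isotopy $f_t$ from $f$ to $\id$ to a continuous family of lifts $\widetilde f_t$ of $df_t$. Lifts of a fixed map along a covering are unique once one value is fixed, so this lifting exists and is unique. This gives $(f,\widetilde f)\simeq(\id,\widetilde h)$, where $\widetilde h$ is a lift of $d\,\id=\id$. Since $\surfi$ is connected, $\mathbb{P}T(\surfi)$ is connected. A lift of the identity is therefore a deck transformation of the $\ZZ$-cover $\operatorname{cov}$, and the deck group is generated by the fibrewise rotation by $\pi$. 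Thus $\widetilde h$ is the $n$-fold shift for some $n\in\ZZ$, and $(f,\widetilde f)=[n]$.

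It remains to show $\iota$ is injective, i.e.\ that $[n]$ is nontrivial for $n\neq0$. I expect this to be the main point requiring care. My plan is to test $[n]$ on a graded arc: for any graded simple arc $\widetilde\gamma$, $[n](\widetilde\gamma)=\widetilde\gamma[n]$. Isotopic pairs act identically on homotopy classes of graded arcs. The grading of $\widetilde\gamma[n]$ differs from that of $\widetilde\gamma$ by $n$ at every point, which I would detect through \eqref{eq:gind-shift} using an intersection index with a transverse arc. Hence $\widetilde\gamma[n]\neq\widetilde\gamma$ for $n\ne0$. Alternatively, one can argue that an isotopy of pairs starting at $(\id,\widetilde\id[n])$ and ending at $(\id,\id)$ would give a continuous family of lifts. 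Such a family has constant deck component on the fibres, so no such isotopy exists for $n\ne0$. Exactness of \eqref{eq:ses} then follows immediately by restricting the codomain of $F_*$ to its image.
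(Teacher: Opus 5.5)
Your first three steps are exactly the paper's proof: well-definedness of $F_*$, the inclusion $\langle[1]\rangle\subseteq\ker F_*$, and isotoping $f$ to $\id$ so that the lift becomes a deck transformation, hence some $[n]$. The paper then simply asserts $\langle[1]\rangle\cong\ZZ$. You are right that injectivity of $\iota$ needs an argument, but neither of your two arguments supplies one.

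\emph{The alternative argument.} It does not work as stated. An isotopy of pairs from $(\id,[n])$ to $(\id,\id)$ passes through pairs $(f_t,\widetilde f_t)$ with $f_t\neq\id$ for intermediate $t$. So $\widetilde f_t$ is not a deck transformation there, and there is no ``deck component'' that could stay constant. What must actually be excluded is a loop $t\mapsto f_t$ in $\mathrm{Homeo}^{+}_{0}(\surf)$ whose continuous family of lifts starts at $\id$ and ends at a nontrivial shift. Such loops do exist in general. Take the twice-punctured sphere $\CC^{*}$ with the constant horizontal line field and the rotations $f_t(z)=e^{2\pi i t}z$. In angle coordinates they lift to $\widetilde f_t(z,\tilde\phi)=(e^{2\pi i t}z,\tilde\phi+2\pi t)$, which ends at $[-2]$. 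Each $f_t$ is compatible with the grading, so $[2]$ is trivial in $\MCG(\gms)$ for this surface.

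\emph{The first argument.} It has the same hole. Concluding $\widetilde\gamma[n]\neq\widetilde\gamma$ from \eqref{eq:gind-shift} presupposes two things: that indices at essential transverse intersections are invariants of homotopy classes of graded arcs, and that such an intersection exists. That invariance is essentially the statement you are trying to prove. In the example above, the rotation carries the graded arc $\widetilde\gamma$ from $0$ to $\infty$ to $\widetilde\gamma[-2]$, and no arc meets $\gamma$ essentially. So the lemma's claim $\ker F_*\cong\ZZ$ is not true for every graded marked surface. Injectivity needs input about the particular surface. For example, for $\widetilde{S}_{0,4}$ and $\widetilde{S\x^2}$, which have negative Euler characteristic, $\mathrm{Homeo}^{+}_{0}$ is contractible (Hamstrom). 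Then every such loop is null-homotopic, its lifted monodromy is trivial, and $[n]\neq\id$ for $n\neq0$.
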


\begin{proof}
The map $F_*$ is obtained by forgetting the lift $\widetilde f$. If $[(f,\widetilde f)]\in\ker F_*$, then after changing the representative we may assume $f=\id_{\surf}$. Hence $\widetilde f$ is a deck transformation of the covering $\operatorname{cov}\colon\mathbb{R}T(\surfi)\to\mathbb{P}T(\surfi)$, so the class of $(f,\widetilde f)$ is $[n]$ for some $n\in\ZZ$. Conversely, every shift $[n]$ acts trivially on the underlying marked surface. Thus $\ker F_*=\langle[1]\rangle\cong\ZZ$, and the exact sequence follows.
\end{proof}

\begin{construction}\label{cons:binary} 
We construct a \emph{graded marked surface with binaries}  $\gmsx$ from $\gms$ by replacing each puncture $P\in\P$ by a boundary component $\sx_P$,
called a \emph{binary}, with one  marked point $m_P$  on it.  Denote by $\wA (\gmsx)$ the set of all  graded simple arcs on   $\gmsx$ with distinct endpoints,  and 
 $\vot$ 
 the set of all binaries $\sx_P$, $P\in\P$.
\end{construction}

\begin{remark}
By the construction, $\gmsx$ can be regarded as a graded marked surface without punctures $(\surf\x,\M\x,\grad)$, with $\M\x=\M\cup\M_\P$, where $\M_\P=\{m_P\mid P\in\P \}$. 
\end{remark}

Let $\mathrm{D}_{\vot}^{k}$ be the subgroup of the  usual mapping class group  of $\gmsx$  generated by the  $k$-th powers of Dehn twists $\mathrm{D}_{\sx}$ along all binaries $\sx\in\vot$, where $k\in\ZZ_{\geq0}$.   The binary mapping class group $\MCG^\sx(\gmsx)$ of  $\gmsx$ is
the quotient of the usual one by  $\mathrm{D}_{\vot}^{2}$.
\begin{definition}\label{defn:orbit}  
    The \emph{$\mathrm{D}_{\vot}^{k}$-orbit} $\mathrm{D}_{\vot}^{k}\cdot\widetilde{\sigma}$ of a graded simple arc $\widetilde{\sigma}\in\wA (\gmsx)$ consists of the graded simple arcs   obtained from $\widetilde{\sigma}$ by actions of  $\mathrm{D}_{\vot}^{k}$ on the ends (which are in binaries) separately.
\end{definition} 
\begin{definition}\label{wfunction}
Let $\widetilde{\sigma}$ be a
  graded simple arc   on $\gmsx$. The \emph {winding function} of  $\widetilde{\sigma}$   is the map
\[
f_{\widetilde{\sigma}} \colon \{\sigma(0), \sigma(1)\} \to \{0,1\}
\]  such that $f_{\widetilde{\sigma}}(\sigma(t))$ is equal to $w\ \mod 2$, where $w$ is the  number of clockwise winds that  $ {\sigma}$ makes around the binary $\sx_{\sigma(t)}$.
\end{definition}

 
\section{Coherent sheaves on weighted projective line of type (2,2,2,2)}\label{sec:3}

 In the remainder of this paper,  we focus on  the tubular weighted projective line   
 \[\CPone\coloneqq(\mathbb{P}^1,(0,1,\infty,\lambda),(2,2,2,2))\]
  over the   field  $\CC$ with $\lambda\in \CC \setminus\{0,1\}$.
A connection between  rigid sheaves in  $\coh(\CPone)$ and  the positive Schur roots associated to  $\CPone$
 was established in \cite{BG}. 
Let   $\Ind^\circ \coh(\CPone)$  denote the set of isomorphism classes of indecomposable rigid sheaves in $\coh(\CPone)$.
\begin{proposition}[{\cite[Lemma~4.2 and  Proposition~4.3]{BG}}]\label{prop:rigid-to-vectors} 
The map $E\mapsto  [E]$ induces a bijection from  
$\Ind^\circ \coh(\CPone)$ to the set of positive real Schur roots  $\{\bv_p^x \mid p\in\QQi, x \in\bfH\}$ in the Grothendieck group   of $\coh(\CPone)$, where $\bfH$ is  the quaternion group
$$
 \bfH=\{\pm 1,\pm i,\pm j,\pm k\}
$$
with  multiplication rules  $ij=k=-ji$, $jk=i=-kj$ and $ki=j=-ik$.
Moreover, the map $(p,x)\mapsto E_{p}^{x} $  defines a bijection from $\QQi\times \bfH$ to   $\Ind^\circ \coh(\CPone)$, where $E_{p}^{x}$ is the unique (up to isomorphism) rigid sheaf satisfying $  [E^x_p] =\bv_p^x$ and  $  [\tau E^x_p] =\bv_p^{-x}$.
\end{proposition}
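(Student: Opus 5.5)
The plan is to reduce the statement to two facts about tubular weighted projective lines: indecomposable rigid sheaves are determined by their classes in $K_0(\CPone)$ (Kac-type uniqueness for exceptional objects), and the set of real Schur roots can be described explicitly. First, since $\coh(\CPone)$ is hereditary, every indecomposable sheaf is either a vector bundle or of finite length. By \Cref{coh:prp-stab}(d) it is semistable of some slope $q\in\QQi$. By \Cref{coh:prp-stab}(c) the category $\cC_q$ is equivalent to $\cC_\infty$, which is a family of tubes of ranks $2,2,2,2,1,1,\dots$. Rigid indecomposables in a tube of rank $2$ are exactly the objects of length $1$, namely the simples; the other objects in such a tube and everything in homogeneous tubes have self-extensions. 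Hence for each $q$ there are exactly $4\cdot 2=8$ indecomposable rigid objects in $\cC_q$. This matches $|\bfH|=8$, so $\Ind^\circ\coh(\CPone)$ is in bijection with $\QQi\times(\text{an }8\text{-element set})$.

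Second, I will show that $E\mapsto[E]$ is injective on rigid indecomposables. For a hereditary abelian category with Serre duality, a rigid indecomposable object has $\operatorname{End}=\CC$. Indeed, if $E$ were rigid but had a nontrivial endomorphism, a standard Happel--Ringel argument produces a proper subobject giving a self-extension. So $E$ is exceptional with $\<[E],[E]\>=1$. If $E,F$ are exceptional with $[E]=[F]$, then $\<[E],[F]\>=1>0$, so $\Hom(E,F)\neq0$ or $\Hom(F,E)\neq0$. Since $E$ and $F$ have the same slope, both lie in the same $\cC_q$, where the eight exceptional simples have pairwise distinct classes. This distinctness can be checked by transporting along the equivalence $\cC_q\simeq\cC_\infty$, which is induced by autoequivalences acting on $K_0$: there the classes $[S_{\lambda_i,j}]$ are visibly distinct. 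Hence $E\cong F$.

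Third, I will identify the image. Positive real Schur roots are, by definition, the classes of exceptional sheaves. Using the explicit basis of $K_0$ (rank $6$) and the quadratic form, I will parametrize these classes by slope $p$ together with the label of the exceptional simple in the tube structure of $\cC_p$. I will then label the eight simples of $\cC_\infty$ by $\bfH$ so that $\tau$, which swaps $S_{\lambda_i,0}$ and $S_{\lambda_i,1}$ by \eqref{shift}, corresponds to $x\mapsto -x$. Concretely, $\pm1,\pm i,\pm j,\pm k$ index the four rank-$2$ tubes, with the sign recording the mouth position. Transporting this labelling to every $p$ via the tubular mutations (as in \cite{BG}) defines $\bv_p^x$. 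The characterization of $E_p^x$ by $[E_p^x]=\bv_p^x$ and $[\tau E_p^x]=\bv_p^{-x}$ then follows from the injectivity in the second step.

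The main obstacle is making the labelling by $\bfH$ consistent across all slopes. The equivalences $\cC_q\simeq\cC_\infty$ come from a $\mathrm{SL}_2(\ZZ)$-type action of tubular mutations, and one must check that the induced permutation of the eight simples is compatible with a fixed quaternion labelling, so that $\bv_p^x$ is well defined. I would first try to avoid the issue by defining the label intrinsically, via the vector of dimensions $\bigl(\dim\Hom(E,S_{\lambda_i,0})\bigr)_i$ reduced mod $2$, and then check that this vector takes the expected values.
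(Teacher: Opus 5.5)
\paragraph{Verdict.} The proposal has a genuine gap in its third step, which is where the content of the statement lies. The paper gives no proof of this proposition (it is quoted from Barot--Geiss), so your argument has to stand on its own.

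\paragraph{Steps 1 and 2 are sound.} Each $\cC_q$ contains exactly eight rigid indecomposables, namely the quasi-simples of the four rank-$2$ tubes, and these have pairwise distinct classes. Injectivity is easier than via $K_0$-transport, which needs more than the abstract equivalence in \Cref{coh:prp-stab}(c). The eight objects are simple in the abelian category $\cC_q$, and $[E]=[F]$ gives $\dim\Hom(E,F)\ge\langle[E],[E]\rangle=1$, hence $E\cong F$.

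\paragraph{The gap in step 3.} The vectors $\bv_p^x$ are prescribed in advance: $\bv_p^x=\bv_{\typ{p}}^x+\lfloor b(p)/2\rfloor\mathbf{h}_0+\lfloor a(p)/2\rfloor\mathbf{h}_\infty$, with the base values of \Cref{convention:roots}. The paper then computes with exactly these formulas. You never show that the rigid sheaves of slope $p$ have these classes, nor that $\tau$ exchanges $\bv_p^x$ and $\bv_p^{-x}$.

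Instead you propose to \emph{define} $\bv_p^x$ by transporting labels along tubular mutations. That is circular. It is also not well defined without fixing a path: by Table~\ref{tab:bt-tag-action}, the mutation $\tubR$ fixes slope $\infty$ but sends $E_\infty^{k}$ to $E_\infty^{-k}$.

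Your fallback intrinsic label fails as well. At slope $\infty$, the vector $(\dim\Hom(E,S_{\lambda_k,0}))_k$ is the unit vector $e_i$ for $E=S_{\lambda_i,0}$, but it is zero for all four $S_{\lambda_i,1}$, so it does not separate the eight objects.

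\paragraph{How to close it.} One route is a lattice count. Put $R_p=\{v\in K_0(\CPone):\langle v,v\rangle=1,\ (\rk v,\deg v)=(b(p),a(p))\}$.
\begin{enumerate}
\item[(i)] Each $\bv_p^x$ lies in $R_p$. The classes $\mathbf h_0,\mathbf h_\infty$ are $\tau$-invariant, so by Serre duality they lie in the radical of the symmetrized Euler form. Their $(\rk,\deg)$ are $(2,0)$ and $(0,2)$. Hence $\langle\bv_p^x,\bv_p^x\rangle=\langle\bv_{\typ{p}}^x,\bv_{\typ{p}}^x\rangle=1$, and the eight vectors $\bv_p^x$ are distinct.
\item[(ii)] $|R_\infty|=8$. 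The rank-zero part of $K_0$ has basis $\mathbf h_\infty,[S_{\lambda_k,0}]$. Writing $v=m\mathbf h_\infty+\sum_k n_k[S_{\lambda_k,0}]$, one gets $\langle v,v\rangle=\sum_k n_k^2$ and $\deg v=2m+\sum_k n_k$. This forces $v=[S_{\lambda_k,j}]$.
\item[(iii)] A derived autoequivalence $\Phi$ generated by tubular mutations (cf.\ \Cref{prop:aut-cohCPone}) carries $\cC_p$ onto a shift of $\cC_\infty$. It is an isometry of $K_0$ preserving the radical $\ZZ\mathbf h_0\oplus\ZZ\mathbf h_\infty$. Since $(\rk,\deg)=(\langle -,\mathbf h_\infty\rangle,-\langle -,\mathbf h_0\rangle)$, it acts on $(\rk,\deg)$ through a matrix in $\mathrm{GL}_2(\ZZ)$.
\item[(iv)] Compare with the images $S_{\lambda_k,j}[n]$ of the rigid objects, using that $\Phi$ commutes with $\tau$. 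This shows that every rigid $E$ of slope $p$ has $(\rk,\deg)[E]=(b(p),a(p))$ and $[E]+[\tau E]=\bfh_p$, and that $\Phi(R_p)=\pm R_\infty$.
\end{enumerate}
Hence both the eight classes of rigid sheaves of slope $p$ and the eight vectors $\bv_p^x$ exhaust $R_p$. Finally, $[\tau E_p^x]=\bfh_p-\bv_p^x=\bv_p^{-x}$.
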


\begin{convention}\label{convention:roots}
We assign a positive real Schur root to  rigid object $E$ whose slope satisfies $\mu E\in\{0,1,\infty\}$, according to the following correspondence:
\begin{table}[htbp]
  \centering
  \caption{Correspondence between rigid objects and positive   Schur roots}
  \label{tab:schur-roots}
   \renewcommand{\arraystretch}{1.2}
\setlength{\tabcolsep}{12pt}
  \begin{tabular}{cl} 
    \toprule
    $E$ & $[E]$ \\
    \midrule
    $\mathcal{O}$ & $\mathbf{v}_0^1$ \\
    $\mathcal{O}(\vec{x}_2 - \vec{x}_4)$ & $\mathbf{v}_0^i$ \\
    $\mathcal{O}(\vec{x}_2 - \vec{x}_3)$ & $\mathbf{v}_0^j$ \\
    $\mathcal{O}(\vec{x}_3 - \vec{x}_4)$ & $\mathbf{v}_0^k$ \\
    \bottomrule
  \end{tabular}
  \quad 
  \begin{tabular}{cl}
    \toprule
    $E$ & $[E]$ \\
    \midrule
    $\mathcal{O}(\vec{x}_1)$ & $\mathbf{v}_1^1$ \\
    $\mathcal{O}(\vec{x}_2)$ & $\mathbf{v}_1^j$ \\
    $\mathcal{O}(\vec{x}_3)$ & $\mathbf{v}_1^k$ \\
    $\mathcal{O}(\vec{x}_4)$ & $\mathbf{v}_1^i$ \\
    \bottomrule
  \end{tabular}
  \quad
  \begin{tabular}{cl}
    \toprule
    $E$ & $[E]$ \\
    \midrule
    $S_{0,0}$ & $\mathbf{v}_\infty^{-1}$ \\
    $S_{1,0}$ & $\mathbf{v}_\infty^i$ \\
    $S_{\infty,0}$ & $\mathbf{v}_\infty^j$ \\
    $S_{\lambda,0}$ & $\mathbf{v}_\infty^k$ \\
    \bottomrule
  \end{tabular}
  
\end{table}

\end{convention}

To compute the Euler form between rigid sheaves in $\coh(\CPone)$, we first recall some definitions from \cite{BG}.

\begin{definition}
     For  any $p \in \QQi$,   fix a coprime  integer pair $(a(p), b(p)) $ 
such that $p =\frac{a(p)}{b(p)}$ and $b(p)\geq 0$. 
Define
\[
\bfh_{p}=b(p)\bfh_0+a(p)\bfh_{\infty},
\]
     where $\mathbf{h}_0=[\mathcal{O}(\vx)]+[\mathcal{O}(\vx+\vec{\omega})]$  and $\mathbf{h}_{\infty}=[S_{\mu}]$  with $\vx\in \mathbb{L}$ and $\mu\in\mathbb{P}^1\setminus\{0,1,\infty,\lambda\}$.
     
The   type of $p$ is  defined by
$$\typ{p}= 
\begin{cases}
0, & \text{if } a(p) \equiv 0,\ b(p) \equiv 1  \quad \mod 2, \\
1, & \text{if } a(p) \equiv 1,\ b(p) \equiv 1 \quad  \mod 2, \\
\infty, & \text{if } a(p) \equiv 1,\ b(p) \equiv 0 \quad \mod 2.
\end{cases}
$$  
\end{definition} 

For any $p\in\QQi$ and $x\in\bfH$,  we have the decomposition
\[
\bv_{p}^{x}=\bv_{\typ{p}}^{x} +\lfloor\frac{b(p)}{2}\rfloor \mathbf{h}_0+\lfloor\frac{a(p)}{2}\rfloor \mathbf{h}_{\infty} \text{ and }\bfh_p=\bv_{p}^{x}+\bv_{p}^{-x},
\]
where $\lfloor r \rfloor$ denotes the integer part of $r\in \QQi $.
For any $ p, q \in \QQi$, let
$$\Delta(p,q)
=a(q)b(p)-a(p)b(q).$$ 
Then the Euler form can be explicitly computed as follows.

 \begin{proposition}\label{hom dim} The following identities  hold for all 
  $p, q\in\QQi$ and $x,y,h\in\bfH$
 \begin{equation}
    \label{eq:formula1}
     \<\bv_0^x,\bv_1^{hx}\>=
   \<\bv_1^x,\bv_\infty^{hx}\>=
    \<\bv_0^{-hx}, \bv_\infty^{x}\>=
      \begin{cases}
        1,&\text{if $h\in \{1,i,j,k\}$},\\
        0,&\text{otherwise};
      \end{cases}
  \end{equation}   
   \begin{equation}
    \label{eq:formula2}
     \<\bv_1^{hx},\bv_0^x\>=
   \<\bv_\infty^{hx},\bv_1^x\>=
    \<\bv_\infty^{x},\bv_0^{-hx}\>=
      \begin{cases}
        0,&\text{if $h\in \{1,i,j,k\}$},\\
        -1,&\text{otherwise};
      \end{cases}
  \end{equation}
and 
   \begin{equation}
    \label{eq:formula3}
    \langle \bv_{p}^x, \bv_{q}^y\rangle
=\langle \bv_{\typ{p}}^x, \bv_{\typ{q}}^y\rangle+\frac{1}{2}\Delta(p,q)-\frac{1}{2}\Delta(\typ{p},\typ{q}).
\end{equation}
\end{proposition}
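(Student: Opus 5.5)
The plan is to use that the Euler form $\<-,-\>$ on $K_0(\CPone)$ is bilinear and depends only on classes. This lets me prove \eqref{eq:formula1} and \eqref{eq:formula2} by a finite computation with explicit sheaves in slopes $0,1,\infty$. I then obtain \eqref{eq:formula3} by expanding the decomposition $\bv_p^x=\bv_{\typ p}^x+\lfloor b(p)/2\rfloor\bfh_0+\lfloor a(p)/2\rfloor\bfh_\infty$ bilinearly.

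\emph{Step 1 (formulas \eqref{eq:formula1}, \eqref{eq:formula2}).} By \Cref{convention:roots} together with $[\tau E_p^x]=\bv_p^{-x}$ from \Cref{prop:rigid-to-vectors}, all sixteen sheaves $E_0^{\pm x}$ and $E_1^{\pm x}$ are line bundles $\cO(\vec z)$, and the $E_\infty^{\pm x}$ are the exceptional simples $S_{\lambda_i,0}$ and $S_{\lambda_i,1}=\tau S_{\lambda_i,0}$. Here $\vn=2\vc-\sum_i\vx_i$ and $\tau=(\vn)$. I first reduce each pairing to a single $\Hom$ or $\Ext^1$. Serre duality gives $\Ext^1(E,F)\cong D\Hom(F,\tau E)$, and $\tau$ preserves slope, so \Cref{coh:prp-stab}(b) shows the following. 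If $\mu E<\mu F$ then $\<[E],[F]\>=\dim\Hom(E,F)$. If $\mu E>\mu F$ then $\<[E],[F]\>=-\dim\Hom(F,\tau E)$. With the slope order $0<1<\infty$, this turns \eqref{eq:formula1} into the statement that $\Hom(E_0^x,E_1^{hx})$ has dimension $1$ or $0$ according as $h\in\{1,i,j,k\}$ or not, and similarly for the other two pairs. It turns \eqref{eq:formula2} into the analogous statement with the answer shifted by $\tau$, i.e.\ $x\mapsto -x$, which is why the roles of $h$ and $-h$ swap.

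The remaining checks are explicit. For line bundles, $\dim\Hom(\cO(\vec y),\cO(\vec z))=\dim S_{\vec z-\vec y}$, which is $1$ exactly when $\vec z-\vec y\in\{0,\vx_1,\dots,\vx_4\}$ among the differences occurring here, and $0$ when $\vec z-\vec y$ has negative degree. For a simple, $\Hom(\cO(\vec y),S_{\lambda_i,j})\neq0$ iff $j\equiv l_i(\vec y)\pmod 2$, read off from \eqref{es} and \eqref{shift}. I would tabulate the $8\times 8$ cases once, writing each $E_0^{\pm x}$, $E_1^{\pm x}$ as $\cO(\vec z)$ with $\vec z$ normalized in $\mathbb L$. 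I then compare with left multiplication in $\bfH$; the labelling in \Cref{convention:roots} is designed so that this matches. \textbf{This bookkeeping, namely matching $\mathbb L$-degrees mod $2$ with quaternion multiplication consistently for all $x$, is the main obstacle.} I would organize it by noting that left multiplication by $i,j,k$ corresponds to a fixed twist pattern, which reduces the check to $x=1$ plus equivariance.

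\emph{Step 2 (auxiliary pairings).} Here $\deg$ is normalized with $\delta(\vx_i)=1$ and $\deg\vc=2$, so that $\rk\bv_p^x=b(p)$ and $\deg\bv_p^x=a(p)$; this uses $\deg\bfh_\infty=2$, $\rk\bfh_0=2$, $\deg\bfh_0=0$ since $\delta(\vn)=0$. I would show that for every class $v$:
\[
\<\bfh_\infty,v\>=-\rk v,\quad \<v,\bfh_\infty\>=\rk v,\quad \<\bfh_0,v\>=\deg v,\quad \<v,\bfh_0\>=-\deg v .
\]
It suffices to check these on the generators $[\cO(\vec y)]$ and $[S]$. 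The first pair follows from $\Hom(\cO,S_\mu)=\Ext^1(S_\mu,\cO)^*=\CC$. The second pair uses $\bfh_0=[\cO]+[\tau\cO]$ and the identity $\<u,[\tau w]\>=-\<w,u\>$, which is Serre duality on $K_0$.

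\emph{Step 3 (formula \eqref{eq:formula3}).} Write $m=\lfloor b(p)/2\rfloor$, $n=\lfloor a(p)/2\rfloor$, $m'=\lfloor b(q)/2\rfloor$, $n'=\lfloor a(q)/2\rfloor$, and $t=\typ p$, $t'=\typ q$. Expanding bilinearly and using Step 2 together with $\<\bfh_0,\bfh_0\>=\<\bfh_\infty,\bfh_\infty\>=0$, $\<\bfh_0,\bfh_\infty\>=2$ and $\<\bfh_\infty,\bfh_0\>=-2$, the correction term is
\[
-m'a(t)+n'b(t)+m\,a(q)-n\,b(q).
\]
Substituting $b(p)=b(t)+2m$, $a(p)=a(t)+2n$ and the same for $q$ into $\tfrac12\Delta(p,q)-\tfrac12\Delta(t,t')$ gives $n'b(t)+m\,a(t')+2mn'-n\,b(t')-m'a(t)-2nm'$. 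This equals the correction term above, which proves \eqref{eq:formula3}.
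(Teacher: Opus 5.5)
Your proof is correct, but it takes a different route from the paper's in two places.

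The paper takes \eqref{eq:formula1} from \cite[Lemma~4.1]{BG}. It derives \eqref{eq:formula2} from \eqref{eq:formula1} through the relation $\langle\bv_b^y,\bv_a^x\rangle=\langle\bv_a^x,\bv_b^y\rangle-1$ for $a<b$ in $\{0,1,\infty\}$. It proves \eqref{eq:formula3} case by case over the pairs $(\typ{p},\typ{q})$, importing the computation from the proof of \cite[Proposition~4.7]{BG} and treating most cases as ``analogous''.

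You instead do the following.
\begin{enumerate}
\item You verify \eqref{eq:formula1} directly on explicit line bundles and exceptional simples, after reducing each pairing to a single $\Hom$.
\item You obtain \eqref{eq:formula2} from $\langle[E],[F]\rangle=-\dim\Hom(F,\tau E)$ for $\mu E>\mu F$. This is the paper's Serre-duality idea, stated more directly.
\item You prove \eqref{eq:formula3} by one bilinear expansion, using $\langle\bfh_0,-\rangle=\deg$, $\langle-,\bfh_0\rangle=-\deg$, $\langle-,\bfh_\infty\rangle=\rk$ and $\langle\bfh_\infty,-\rangle=-\rk$. These are exactly the pairings the paper only uses later, in the corollary.
\end{enumerate}

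The third step is the real gain. It is uniform, needs no split by types, and is independent of \eqref{eq:formula1}--\eqref{eq:formula2}. It shows that \eqref{eq:formula3} is formal once the decomposition of $\bv_p^x$ is known. For this, read $\lfloor\cdot\rfloor$ as the floor, so that $a(p)=a(\typ{p})+2\lfloor a(p)/2\rfloor$ also holds when $a(p)<0$. Also take $\bfh_0$ with $\delta(\vx)=0$, e.g.\ $\bfh_0=[\cO]+[\tau\cO]$, as you implicitly do.

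The cost is that \eqref{eq:formula1} becomes a finite table that you still have to write out. Two remarks on it:
\begin{enumerate}
\item The points $0,1,\infty,\lambda$ must carry the weights $\vx_1,\vx_2,\vx_3,\vx_4$ in this order. With $1$ and $\infty$ swapped, $\langle\bv_1^j,\bv_\infty^j\rangle$ would come out as $0$ instead of $1$.
\item Your shortcut is not right as stated. The eight degree-zero twists do act simply transitively on the slope-$0$ labels. However, they act by two-sided multiplications that differ between slopes: twisting by $\vx_1-\vx_2$ acts as $x\mapsto jxi$ on slope-$0$ labels and as $y\mapsto kyi$ on slope-$1$ labels. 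So compatibility with $h=yx^{-1}$ would still have to be checked.
\end{enumerate}
It is simpler to do the table directly. You can halve it using $\tau$, which complements the parity vector $(l_1,\dots,l_4)$ and sends $(x,y)$ to $(-x,-y)$.
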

\begin{proof} 
Equation~\eqref{eq:formula1}   is proved in \cite[Lemma 4.1]{BG}.
By Serre duality  and \cite[Lemma~2.8 and Proposition~2.9]{BG}, we have
 \[
  \<\bv_p^{-x},\bv_q^{-y}\>=\<\bv_p^x,\bv_q^y\>=-\<\bv_q^y,\bv_p^{-x}  \>.
 \]
   Combining this with the identities 
$$\<\bv_0^x,\bh_\infty\>=\<\bv_1^x,\bh_\infty\>=
\< \bh_0,\bv_1^x\>=\<\bh_0,\bv_\infty^x\>=1,$$  which also follow from \cite[Lemma 4.1]{BG}, we obtain that for  $a,b\in\{0,1,\infty\}$ and $a<b$,
\[
\<\bv_b^y,\bv_a^x\>=\<\bv_a^x,\bv_b^y\>-1.
\]
  This implies that Equation~\eqref{eq:formula2} holds.
The proof of Equation~\eqref{eq:formula3}  is considered case by case according to  the types $\typ{p},\typ{q}\in\{0,1,\infty\}$ respectively.

First suppose $\typ{p}=\typ{q}=0$. In this case we have $\Delta(\typ{p},\typ{q})=0$. Moreover, by  the proof of  \cite[Proposition 4.7]{BG},
$$\langle \bv_{p}^x, \bv_{q}^y\rangle=\langle \bv_{0}^x, \bv_{0}^y\rangle  +\frac{1}{2}\Delta(p,q),
$$
which agrees with the formula.
If $\typ{p}=1$ and $\typ{q}=0$, then  $\Delta(\typ{p},\typ{q})=-1$. A similar computation shows
$$\langle \bv_{p}^x, \bv_{q}^y\rangle=\langle \bv_{1}^x, \bv_{0}^y\rangle   +\frac{1}{2}\Delta(p,q)+\frac{1}{2}.
$$   The remaining cases are verified analogously.
\end{proof}

\begin{corollary}
For any $p, q\in\QQi$ and $x,y\in\bfH$,
$$\langle \bv_{p}^x, \bv_{q}^y\rangle
+\langle \bv_{p}^x, \bv_{q}^{-y}\rangle
=\Delta(p,q)\quad \text{and} \quad
|\langle \bv_{p}^x, \bv_{q}^y\rangle
-\langle \bv_{p}^x, \bv_{q}^{-y}\rangle|
\leq 2,$$
where $| n|$ denotes the absolute  value of $n$.
\end{corollary}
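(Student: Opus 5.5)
The first identity is linear in the second argument. Proposition~\ref{prop:rigid-to-vectors} and the decomposition recalled before Proposition~\ref{hom dim} give $\bv_q^y+\bv_q^{-y}=\bh_q=b(q)\bh_0+a(q)\bh_\infty$ for every $y\in\bfH$. Hence
\[
\langle \bv_{p}^x, \bv_{q}^y\rangle+\langle \bv_{p}^x, \bv_{q}^{-y}\rangle=b(q)\langle\bv_p^x,\bh_0\rangle+a(q)\langle\bv_p^x,\bh_\infty\rangle ,
\]
and it remains to compute $\langle\bv_p^x,\bh_0\rangle$ and $\langle\bv_p^x,\bh_\infty\rangle$. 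I would not redo this by hand. Instead, I would apply \eqref{eq:formula3} twice, once with $y$ and once with $-y$, and add. The correction terms $\frac12\Delta(p,q)-\frac12\Delta(\typ{p},\typ{q})$ appear twice, so
\[
\langle \bv_{p}^x, \bv_{q}^y\rangle+\langle \bv_{p}^x, \bv_{q}^{-y}\rangle=\Delta(p,q)-\Delta(\typ{p},\typ{q})+\langle \bv_{\typ p}^x, \bv_{\typ q}^y\rangle+\langle \bv_{\typ p}^x, \bv_{\typ q}^{-y}\rangle .
\]
This reduces the claim to the nine base cases $a=\typ{p}$, $b=\typ{q}\in\{0,1,\infty\}$, where one must show $\langle\bv_a^x,\bv_b^y\rangle+\langle\bv_a^x,\bv_b^{-y}\rangle=\Delta(a,b)$.

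For the base cases I would use $\bv_b^y+\bv_b^{-y}=\bh_b$, with $\bh_0$, $\bh_1=\bh_0+\bh_\infty$ and $\bh_\infty$. I would then feed in the identities $\langle\bv_0^x,\bh_\infty\rangle=\langle\bv_1^x,\bh_\infty\rangle=\langle\bh_0,\bv_1^x\rangle=\langle\bh_0,\bv_\infty^x\rangle=1$ quoted in the proof of Proposition~\ref{hom dim}. The relation $\langle\bv_p^{-x},\bv_q^{-y}\rangle=-\langle\bv_q^y,\bv_p^{-x}\rangle$ converts $\langle \bv_a^x,\bh_b\rangle$ into $-\langle \bh_b,\bv_a^{x}\rangle$ up to replacing $x$ by $-x$, which handles the remaining orderings. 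I also need $\langle\bv_a^x,\bh_a\rangle=0$: summing this relation over $\pm x$ gives $\langle\bh_a,\bh_a\rangle=0$, and the single-term statement can be checked directly from \eqref{eq:formula1}--\eqref{eq:formula2}. For instance, $\langle\bv_0^x,\bv_0^{\pm y}\rangle$ is a $\Hom$-dimension between line bundles of degree $0$, and $\Delta(0,0)=0$. Each case then gives, for example, $\langle\bv_0^x,\bh_1\rangle=\langle\bv_0^x,\bh_0\rangle+\langle\bv_0^x,\bh_\infty\rangle=0+1=\Delta(0,1)$. Alternatively, and more uniformly, I could read all nine cases off \eqref{eq:formula1} and \eqref{eq:formula2}: as $h$ ranges over $\bfH$, exactly one of $h$ and $-h$ lies in $\{1,i,j,k\}$, so the two terms sum to $1$, $-1$ or $0$ as appropriate. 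This bookkeeping of signs across the nine cases is the main (if routine) obstacle.

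For the inequality, I would again use \eqref{eq:formula3} with $y$ and $-y$ and subtract. The $\Delta$ terms cancel, giving
\[
\langle \bv_{p}^x, \bv_{q}^y\rangle-\langle \bv_{p}^x, \bv_{q}^{-y}\rangle=\langle \bv_{\typ p}^x, \bv_{\typ q}^y\rangle-\langle \bv_{\typ p}^x, \bv_{\typ q}^{-y}\rangle .
\]
It therefore suffices to bound the base values. When $\typ p\neq\typ q$, \eqref{eq:formula1} and \eqref{eq:formula2} show each value lies in $\{0,1\}$ or in $\{-1,0\}$, so the difference has absolute value at most $1$. When $\typ p=\typ q=a$, the values are Euler forms between rigid objects of the same slope. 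Those of slope $0$ are the line bundles of Convention~\ref{convention:roots}; those of slope $1$ and $\infty$ are obtained via the equivalences of Proposition~\ref{coh:prp-stab}(c). Using Serre duality with $\tau$ acting as $x\mapsto -x$, the value is $1$ if $y=x$, $-1$ if $y=-x$, and $0$ otherwise. Hence the difference lies in $\{-2,0,2\}$, and in all cases $|\cdot|\le 2$.
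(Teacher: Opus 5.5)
Your proposal is correct and is essentially the paper's argument: you apply \eqref{eq:formula3} with $y$ and with $-y$, so the correction terms double in the sum and cancel in the difference, and you then settle the base types through $\bfh_b=\bv_b^y+\bv_b^{-y}$ and the pairings of $\bv_a^x$ with $\bfh_0$ and $\bfh_\infty$. Your explicit check of the bound (the difference is $\pm1$ for distinct types, and same-type values lie in $\{1,-1,0\}$) supplies a step the paper merely asserts; that same-type computation, rather than \eqref{eq:formula1}--\eqref{eq:formula2}, which only cover distinct types, is also what justifies your diagonal base cases for the sum.
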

\begin{proof} According to Proposition \ref{hom dim}, we have
\begin{align*}&\langle \bv_{p}^x, \bv_{q}^y\rangle
+\langle \bv_{p}^x, \bv_{q}^{-y}\rangle
-\Delta(p,q)\\
=&(\langle \bv_{p}^x, \bv_{q}^y\rangle-\frac{1}{2}\Delta(p,q))
+(\langle \bv_{p}^x, \bv_{q}^{-y}\rangle
-\frac{1}{2}\Delta(p,q))\\
=&\langle (\bv_{\typ{p}}^x, \bv_{\typ{q}}^y\rangle-\frac{1}{2}\Delta(\typ{p},\typ{q}))+(\langle \bv_{\typ{p}}^x, \bv_{\typ{q}}^{-y}\rangle-\frac{1}{2}\Delta(\typ{p},\typ{q}))\\
=&\langle \bv_{\typ{p}}^x, \bv_{\typ{q}}^y+\bv_{\typ{q}}^{-y}\rangle-\Delta(\typ{p},\typ{q})\\
=&\langle \bv_{\typ{p}}^x, \bfh_{\typ{q}}\rangle-\Delta(\typ{p},\typ{q})\\
=&\langle \bv_{\typ{p}}^x, b(\typ{q})\bfh_{0}+a(\typ{q})\bfh_{\infty}\rangle-\Delta(\typ{p},\typ{q}). 
\end{align*} 
Note that $\langle \bfh_{0},  \bv_{p}^x\rangle=a(p)$ and $\langle \bv_{p}^x, \bfh_{\infty}\rangle=b(p)$ for any $p\in\QQi$ and $x\in\bfH$. Then
\begin{align*}&\langle \bv_{\typ{p}}^x, b(\typ{q})\bfh_{0}+a(\typ{q})\bfh_{\infty}\rangle\\
 =&b(\typ{q})\langle \bv_{\typ{p}}^x, \bfh_{0}\rangle
+a(\typ{q})\langle \bv_{\typ{p}}^x, \bfh_{\infty}\rangle\\
 =&-b(\typ{q})a(\typ{p})+a(\typ{q})b(\typ{p})\\
 =&\Delta(\typ{p},\typ{q}).
\end{align*}

It follows that $\langle \bv_{p}^x, \bv_{q}^y\rangle
+\langle \bv_{p}^x, \bv_{q}^{-y}\rangle
=\Delta(p,q)$.
Besides, also by Proposition~\ref{hom dim},
\[
|\langle \bv_{p}^x, \bv_{q}^y\rangle
-\langle \bv_{p}^x, \bv_{q}^{-y}\rangle|
=|\langle \bv_{\typ{p}}^x, \bv_{\typ{q}}^y\rangle
-\langle \bv_{\typ{p}}^x, \bv_{\typ{q}}^{-y}\rangle|
\leq 2. 
\qedhere
\]
\end{proof}

\section{A surface model  for the rigid  objects in derived category $\DC{\CPone}$}\label{sec:4}

 In this section, we  characterize indecomposable  rigid  objects in $\DC{\CPone}$ via graded simple arcs on a sphere with four binaries, and then
  the dimensions of Hom-spaces between two
 rigid  objects are identified with 
 the oriented intersection numbers between the corresponding  graded simple arcs.

\subsection{Simple arcs in the sphere with four punctures} 

Let $S_{0,4}$ denote the sphere with four punctures.
There is a  close geometric relation between $S_{0,4}$ and the torus  $T^2$.  Indeed, we realize $ T^2$  as the unit square $[0,1]^2$ with opposite sides identified. The hyperelliptic involution $\iota$  is the $ \pi$-rotation about the square’s center. This map has four fixed points:
\[
\{( \frac{1}{2}, \frac{1}{2} ), ( \frac{1}{2}, 0 ) \sim ( \frac{1}{2}, 1 ),( 0, \frac{1}{2}) \sim ( 1, \frac{1}{2} ),(0,0) \sim (0,1) \sim (1,0) \sim (1,1)\},
\]
  where the notation $\sim$ indicates identifications under the boundary gluings of the square.  
Thus the quotient by $\iota$  is topologically a sphere with four distinguished points, which we identify with  $S_{0,4}$, see \Cref{fig:sphere}. 
 \begin{figure}[h]
     \centering
      \begin{tikzpicture}[scale=2.5, >=Stealth]
    \draw[thick,blue] (0,0) rectangle (1,1);
    \node[below left, font=\tiny] at (0,0) {$(0,0)$};
    \node[below right, font=\tiny] at (1,0) {$(1,0)$};
    \node[above left, font=\tiny] at (0,1) {$(0,1)$};
    \node[above right, font=\tiny] at (1,1) {$(1,1)$};
    \filldraw[red] (0.5,0.5) circle (0.6pt) node[above  , font=\tiny] {$\left(\frac{1}{2},\frac{1}{2}\right)$};
    \filldraw[green] (0.5,1) circle (0.6pt) node[above, font=\tiny] {$\left(\frac{1}{2},1\right)$};
    \filldraw[orange] (1,0.5) circle (0.6pt) node[right, font=\tiny] {$\left(1,\frac{1}{2}\right)$};
    \filldraw[green, font=\tiny] (0.5,0) circle (0.6pt) node[below] {$\left(\frac{1}{2},0\right)$};
    \filldraw[orange] (0,0.5) circle (0.6pt) node[left, font=\tiny] {$\left(0,\frac{1}{2}\right)$};
    \filldraw[black] (0,0) circle (0.6pt);
    \filldraw[black] (1,0) circle (0.6pt);
    \filldraw[black] (0,1) circle (0.6pt);
    \filldraw[black] (1,1) circle (0.6pt);
\draw[->, thick] (0,-.25) --  (0,1.35);
 \draw[->, thick] ( -.25,0) --  (1.35,0);
 \draw[-> ,  thick, blue ] (0,0) --  (0,0.8);
 \draw[-> ,  thick,blue ] (1,0) --  (1,0.8);
  \draw[-> ,  thick, blue ] (0,1) --  ( 0.8,1);
 \draw[-> ,  thick,blue ] (0,0) --  ( 0.8,0);
  \draw[-> ] (1.8,.5) --  ( 2.5,.5);
      \node[above] at (2.1,0.5) {$\iota$};
\end{tikzpicture}  
\hspace{0.5cm}
\begin{tikzpicture}[scale=1.5]

\draw[thick] (0,0) circle (1cm);

\draw[thick, dashed, fill=gray!10] (0,0) ellipse (1cm and 0.3cm);

\draw[thick] (-1,0) arc (180:360:1cm and 0.3cm);

\filldraw[blue] (0.7,0.7) circle (1pt) node[above right] {$p_2$};
\filldraw[blue] (-0.7,0.7) circle (1pt) node[above left] {$p_1$};

\filldraw[blue] (0.7,-0.7) circle (1pt) node[below right] {$p_3$};
\filldraw[blue] (-0.7,-0.7) circle (1pt) node[below left] {$p_4$};
 \node[above] at (0,-1.2) {}; 
\end{tikzpicture}
  \caption{The sphere with four punctures $p_1$, $p_2$, $p_3$ and  $p_4$}
     \label{fig:sphere}
 \end{figure}

 To better describe the homotopy classes of simple arcs on  $S_{0,4}$, we first recall the classification of simple closed curves on  $T^2$.

 \begin{Fact}
The covering $\tilde{p} \colon\mathbb{R}^2\to T^2$ identifies $\pi_1(T^2)$ with $\mathbb{Z}^2$. Thus each nontrivial simple closed curve on $T^2$ is represented by a primitive vector $(b,a)\in\mathbb{Z}^2$, and after forgetting the orientation it is determined by the slope $a/b\in\QQi$.
\end{Fact}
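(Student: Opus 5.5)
The plan is to use covering space theory for the universal cover $\tilde p\colon \mathbb{R}^2\to T^2=\mathbb{R}^2/\mathbb{Z}^2$. Since $\mathbb{R}^2$ is simply connected and the deck group is $\mathbb{Z}^2$ acting by translations, $\pi_1(T^2,0)\cong\mathbb{Z}^2$. The identification sends the homotopy class of a based loop $\gamma$ to $\tilde\gamma(1)-\tilde\gamma(0)\in\mathbb{Z}^2$, where $\tilde\gamma$ is the lift starting at the origin. This group is abelian, so free homotopy classes of closed curves coincide with conjugacy classes, which here are just elements of $\mathbb{Z}^2$. Hence every closed curve determines a well-defined vector $(b,a)\in\mathbb{Z}^2$.

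Next I will show that a nontrivial simple closed curve gives a primitive vector. Suppose $(b,a)=n(b',a')$ with $n\ge 2$. By the standard argument, a simple closed curve in the class $n$ times a primitive class on $T^2$ cannot be simple. One way is through intersection numbers: the algebraic self-intersection is zero, so that does not settle it directly. Instead I will use the classification of surfaces. Cutting $T^2$ along a nonseparating simple closed curve $\gamma$ gives an annulus. Then an element of $\mathrm{SL}_2(\mathbb{Z})$, realized by a linear homeomorphism, sends $\gamma$ to the standard curve $(1,0)$, which is primitive. A separating simple closed curve on the torus bounds a disc, by the genus count, and so is trivial.

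Conversely, for primitive $(b,a)$ the image of the straight line through the origin with direction $(b,a)$ is a simple closed curve realizing the class. Reversing orientation replaces $(b,a)$ by $(-b,-a)$, so unoriented classes correspond to $\pm$-pairs of primitive vectors. These pairs are in bijection with slopes $a/b\in\QQi$, with $b=0$ giving $\infty$. Finally, two simple closed curves with the same vector are freely homotopic, hence isotopic by Baer's theorem.

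The main obstacle is the primitivity step, that is, showing simple implies primitive. The cleanest route is the change-of-coordinates argument: the cut surface is an annulus, so $\gamma$ extends to a symplectic basis, and its class is part of a $\mathbb{Z}$-basis of $H_1$, hence primitive. Since this is stated as a Fact, I would cite \cite{FM} for the details.
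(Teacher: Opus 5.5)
Your argument is correct and is the standard one. The paper gives no proof, treating this as a known fact about the torus (as in \cite{FM}, which you also cite). Your outline (deck group $\mathbb{Z}^2$, simple $\Rightarrow$ primitive, realization by straight lines, and homotopy $\Rightarrow$ isotopy) is exactly what that reference supplies.

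One wording fix is needed in the primitivity step. The homeomorphism taking $\gamma$ to the $(1,0)$-curve comes from the classification of surfaces and is not a priori linear. Invoking an element of $\mathrm{SL}_2(\mathbb{Z})$ at that point would presuppose primitivity, which is what you are trying to prove. Primitivity instead follows because that homeomorphism acts on $H_1(T^2)\cong\mathbb{Z}^2$ by an element of $\mathrm{GL}_2(\mathbb{Z})$. Equivalently, a curve meeting $\gamma$ exactly once has determinant pairing $\pm1$ with $[\gamma]$. Your final paragraph already says this correctly.
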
  

\begin{construction}  \label{arcs pair}
Let $\tilde{\pi} \colon\mathbb{R}^2 \to S_{0,4} $  be the composition of the universal covering map  $\tilde{p} \colon\mathbb{R}^2 \to T^2$  and the quotient map   $T^2 \to T^2 / \iota \cong S_{0,4}$ induced by the hyperelliptic involution $\iota$.
 For each $p \in \QQi$, 
 we define a pair of simple arcs  $\tilde{\pi}\circ \overline{\alpha}_p^-$ and $\tilde{\pi}\circ \overline{\alpha}_p^+$, where  
\[
\overline{\alpha}_p^- : [0,1] \to \mathbb{R}^2, \quad t \mapsto
\begin{cases}
\left( \dfrac{b(p)}{2}t, \dfrac{a(p)}{2}t \right), & \text{if } b(p) \text{ is odd}, \\
\left( \dfrac{b(p)}{2}t+\dfrac{1}{2}, \dfrac{a(p)}{2}t \right), & \text{if } b(p) \text{ is even},
\end{cases}
\]
\[
\overline{\alpha}_p^+ : [0,1] \to \mathbb{R}^2, \quad t \mapsto
\begin{cases}
\left( \dfrac{b(p)}{2}t, \dfrac{a(p)}{2}t+\dfrac{1}{2} \right), & \text{if } b(p) \text{ is odd}, \\
\left( \dfrac{b(p)}{2}t, \dfrac{a(p)}{2}t \right), & \text{if } b(p) \text{ is even}.
\end{cases}
\] 
We denote by $\alpha_p^\epsilon$ the simple arc $\tilde{\pi}\circ\overline{\alpha}_p^\epsilon$ on $S_{0,4}$, for $\epsilon\in\{+,-\}$.
\end{construction}

\begin{proposition}\label{inner int}
Let $p, q\in\QQi$  and $\epsilon,\varphi\in\{+,-\}$ such that $\alpha_p^\epsilon$ and $\alpha_q^\varphi$ are distinct. The  intersection number  $i(\alpha_{p}^{\epsilon},\alpha_{q}^{\varphi})$  of the   simple arcs $\alpha_{p}^{\epsilon}$ and $\alpha_{q}^{\varphi}$ in the interior of $S_{0,4}$ is determined by the following formula:
\begin{itemize}
\item If  $\varphi=\epsilon$, then
$$i(\alpha_{p}^{\varphi}, \alpha_{q}^{\epsilon})= \lfloor \frac{|\Delta(p,q)|-1}{2} \rfloor;$$
\item If  $\varphi=-\epsilon$, then
$$i(\alpha_{p}^{\varphi}, \alpha_{q}^{\epsilon})=\begin{cases}
   \frac{|\Delta(p,q)| }{2}  & \text{if $\Delta(p,q)$ is even,} \\
  \frac{|\Delta(p,q)|-1 }{2} &\text{if $\Delta(p,q)$ is odd,}
\end{cases} $$
\end{itemize}
where $\lfloor u \rfloor$ denotes the integer part of $u$ and $|u|$ denotes the absolute  value of $u$.
\end{proposition}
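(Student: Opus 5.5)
Our plan is to lift everything to the universal cover $\tilde{\pi}\colon\mathbb{R}^2\to S_{0,4}$ of Construction~\ref{arcs pair} and count intersection points there. The preimage $\tilde{\pi}^{-1}(\alpha_p^\epsilon)$ is the union of all images of $\overline{\alpha}_p^\epsilon$ under the group $\Gamma$ generated by the translations of $\ZZ^2$ and the point reflections $v\mapsto -v$ (equivalently, all point reflections in half-integer points, so $\Gamma=\{v\mapsto \pm v + w\mid w\in\ZZ^2\}$). Since the segment $\overline{\alpha}_p^\epsilon$ has endpoints in $\frac12\ZZ^2$ and direction $(b(p),a(p))$ with no half-lattice points in its interior (by coprimality), its $\Gamma$-orbit tiles a family of parallel lines of slope $p$. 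These are the lines through half-lattice points of a given parity class: for $\epsilon=-$ (resp. $+$) they are the lines $a(p)X-b(p)Y\in c_\epsilon+\ZZ$ with $c_\epsilon\in\{0,\tfrac12\}$ determined by the parity of $a(p),b(p)$ and the offset in the construction. Each line is cut at the half-lattice points into segments projecting onto $\alpha_p^\epsilon$. The arcs are geodesic for the flat orbifold metric on $S_{0,4}=T^2/\iota$, hence in minimal position (no bigons, as straight segments in a flat cone-surface cannot bound a bigon). Thus $i(\alpha_p^\epsilon,\alpha_q^\varphi)$ equals the number of transverse interior crossings of the two projected segments.

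We count these crossings in the torus first and then divide by the involution. In $T^2$ the preimage of $\alpha_p^\epsilon$ is a single simple closed curve $c_p^\epsilon$ of class $(b(p),a(p))$ passing through exactly two of the four fixed points of $\iota$. Likewise $c_q^\varphi$ has class $(b(q),a(q))$ and passes through two fixed points. Two straight closed geodesics of these classes meet in exactly $|\Delta(p,q)|$ points. The involution $\iota$ acts freely on the intersection points that are not fixed points, pairing them, and each such pair maps to one interior crossing in $S_{0,4}$. Intersection points at fixed points are common endpoints (punctures), which are not counted. Hence
\[
i(\alpha_p^\epsilon,\alpha_q^\varphi)=\tfrac12\bigl(|\Delta(p,q)|-N\bigr),
\]
where $N$ is the number of fixed points of $\iota$ lying on both $c_p^\epsilon$ and $c_q^\varphi$, i.e. the number of common punctures of the two arcs. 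Here we use that every common fixed point is a transverse intersection of the closed curves, which holds since $p\neq q$. The case $p=q$ with $\epsilon\neq\varphi$ gives $\Delta=0$ and disjoint curves, consistent with the formula.

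It remains to compute $N\in\{0,1,2\}$ from the parity data. Label the four fixed points by $\frac12\ZZ^2/\ZZ^2\cong(\ZZ/2)^2$. The curve $c_p^\epsilon$ passes through the coset $\{s,\ s+\tfrac12(b(p),a(p))\}$, where $s=(0,0)$ or $(\tfrac12,0)$ or $(0,\tfrac12)$ according to Construction~\ref{arcs pair}. Checking by the types $\typ{p},\typ{q}$ is the main bookkeeping step. We expect the following outcome. If $\varphi=\epsilon$, both arcs contain a common base point (for example $(0,0)$ when both $b$'s are odd, or the appropriate choice in the other cases), so $N\ge1$, and $N=2$ iff $\typ{p}=\typ{q}$. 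Since $\Delta(p,q)$ is odd exactly when $\typ{p}\neq\typ{q}$, this yields $\frac{|\Delta|-1}{2}$ in the odd case and $\frac{|\Delta|-2}{2}$ in the even case, i.e. $\lfloor\frac{|\Delta|-1}{2}\rfloor$. If $\varphi=-\epsilon$, the two base points differ, and we check that $N=0$ when $\Delta$ is even and $N=1$ when $\Delta$ is odd, which gives the second formula. The main obstacle is this parity case check (nine type combinations, reducible by symmetry), together with rigorously justifying minimal position. For the latter we would cite that geodesic representatives on a flat orbifold realize geometric intersection numbers, or argue directly via the bigon criterion in the cover.
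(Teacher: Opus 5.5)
Your proposal is correct and follows essentially the same route as the paper. Both arguments lift to $T^2$, where the two closed curves of classes $(b(p),a(p))$ and $(b(q),a(q))$ meet in $|\Delta(p,q)|$ points; both discard the $r$ common fixed points of $\iota$ (the shared punctures) and halve the rest, so that $i=\frac{|\Delta(p,q)|-r}{2}$.

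The paper settles the parity bookkeeping you deferred with a table of endpoints by type and sign. It gets $r=2$ (same sign) or $r=0$ (opposite sign) when $\typ{p}=\typ{q}$, and $r=1$ otherwise, exactly as you predicted. Your flat-geodesic argument for minimal position fills in a point the paper only asserts.
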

\begin{proof}
Let
$$\P=\{p_1=\tilde{\pi}(0,0),p_2=\tilde{\pi}(\frac{1}{2},0),p_3=\tilde{\pi}(\frac{1}{2},\frac{1}{2}),p_4=\tilde{\pi}(0,\frac{1}{2})\}$$
  denote the set of punctures.
From Construction~\ref{arcs pair}, the endpoints of the arcs are
$$\begin{array}{c|cc}
\typ{p} & \alpha_p^- & \alpha_p^+\\ \hline
0 & \{p_1,p_2\} & \{p_4,p_3\}\\
1 & \{p_1,p_3\} & \{p_4,p_2\}\\
\infty & \{p_2,p_3\} & \{p_1,p_4\}.
\end{array}$$
Let $r$ be the number of common endpoints of $\alpha_p^\epsilon$ and $\alpha_q^\varphi$. The inverse image in $T^2$ of $\alpha_p^\epsilon$ is one half of a closed curve of primitive class $(b(p),a(p))$, and similarly for $q$. By the usual determinant formula on $T^2$, these two primitive closed curves can be chosen in minimal position and meet in
$$\left|\det\begin{pmatrix} b(p)&b(q)\\ a(p)&a(q)\end{pmatrix}\right|=|\Delta(p,q)|$$
points on $T^2$. Among them, exactly $r$ are fixed points of the involution, namely the common endpoints above. The other $|\Delta(p,q)|-r$ points occur in pairs under the involution and give the interior intersections on $S_{0,4}$. Hence
$$i(\alpha_p^\epsilon,\alpha_q^\varphi)=\frac{|\Delta(p,q)|-r}{2}.$$
If $\typ{p}=\typ{q}$, then $\Delta(p,q)$ is even. Here $\typ{p}$ means the type of $p$. The table gives $r=2$ when $\varphi=\epsilon$, and $r=0$ when $\varphi=-\epsilon$. Therefore
$$i(\alpha_p^\epsilon,\alpha_q^\epsilon)=\frac{|\Delta(p,q)|-2}{2}= \lfloor\frac{|\Delta(p,q)|-1}{2}\rfloor,$$
and
$$i(\alpha_p^\epsilon,\alpha_q^{-\epsilon})=\frac{|\Delta(p,q)|}{2}.$$
If $\typ{p}\ne\typ{q}$, then checking the three pairs of types $(0,1)$, $(0,\infty)$ and $(1,\infty)$ shows that $\Delta(p,q)$ is odd. The table gives $r=1$ for all choices of signs. Thus
$$i(\alpha_p^\epsilon,\alpha_q^\varphi)=\frac{|\Delta(p,q)|-1}{2}$$
in both the same-sign and opposite-sign cases. Combining the two cases gives the stated formulas.
\end{proof}

We now consider the mapping class group of $S_{0,4}$. We first recall the group
\begin{equation*}
\PSL(2,\ZZ)=\left\{\begin{pmatrix}a&b\\c&d\end{pmatrix}\bigg| a,b,c,d\in\ZZ,ad-bc=1\right\},
\end{equation*}
which is generated by
\[
t_1=\begin{pmatrix}
1 & 1 \\
0 & 1
\end{pmatrix}\quad\text{and}\quad
t_2=\begin{pmatrix}
1 & 0 \\
-1 & 1
\end{pmatrix}.
\]
The group $\PSL(2,\ZZ)$ acts on $\QQi$ by linear fractional transformations:
\begin{equation}\label{eq:fl}
\begin{pmatrix}a&b\\c&d\end{pmatrix}\cdot\bigg(\cfrac{r}{s}\bigg)=\frac{ar+bs}{cr+ds},
\end{equation}
where $p=\frac{r}{s}\in\QQi$. In particular,
\[
t_1(p)=p+1,\qquad t_2(p)=\frac{p}{1-p}.
\]
By \Cref{lem:even-cf}, every $p$ has an even continued fraction expansion $p=[a_1,\ldots,a_{2m}]$, equivalently
\begin{equation}\label{eq:expre}
p=t_1^{a_1}t_2^{-a_2}t_1^{a_3}t_2^{-a_4}\cdots t_1^{a_{2m-1}}t_2^{-a_{2m}}\left(\frac{1}{0}\right).
\end{equation}

\begin{proposition}[{\cite[Proposition~2.7]{FM}}]\label{prop:MCGofS42}
The mapping class group  $\MCG(S_{0,4})$ of the marked surface $S_{0,4}$ is isomorphic to the semidirect product 
\begin{equation}\label{mcg of S_2^4}
\PSL(2,\ZZ)\ltimes_{\rm conj} \<\iota_1,\iota_2\>,  
\end{equation}
where the group $ \<\iota_1,\iota_2\>$ is generated by the  hyperelliptic involutions $\iota_1$ and $\iota_2$  on
$S_{0,4}$, 
with lifts $(a,b)\mapsto (a,b+\frac{1}{2})$ and $(a,b)\mapsto (a+\frac{1}{2},b)$ under $\tilde{\pi}\colon\mathbb{R}^2\to S_{0,4}$, respectively.
\end{proposition}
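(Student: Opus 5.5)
The plan is to lift everything to the torus through the branched double cover $T^2\to T^2/\iota\cong S_{0,4}$ of Construction~\ref{arcs pair}, so that $\MCG(S_{0,4})$ becomes a quotient of a group of affine maps of $\mathbb{R}^2$.

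\textbf{The affine group.} Let $\mathcal{A}$ be the group of affine maps $x\mapsto Ax+v$ of $\mathbb{R}^2$ with $A\in \mathrm{SL}(2,\ZZ)$ and $v\in\frac12\ZZ^2$. Each such map normalizes the deck group of $\tilde{\pi}$, which is generated by $\ZZ^2$-translations and $x\mapsto -x$. Indeed, conjugating $-x$ by $Ax+v$ gives $x\mapsto -x+2v$, and $2v\in\ZZ^2$. Hence every element of $\mathcal{A}$ descends to an orientation-preserving homeomorphism of $S_{0,4}$ permuting $\P$, and we get a homomorphism $\Theta\colon\mathcal{A}\to\MCG(S_{0,4})$.

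The deck group lies in $\ker\Theta$; it is the subgroup $\{\pm I\}\ltimes\ZZ^2$. The quotient $\mathcal{A}/(\{\pm I\}\ltimes\ZZ^2)$ is $\PSL(2,\ZZ)\ltimes(\frac12\ZZ^2/\ZZ^2)$. This is well defined because $-I$ acts trivially on $\frac12\ZZ^2/\ZZ^2$. The linear action of $A$ on this group is exactly conjugation of the translation $x\mapsto x+v$ by $x\mapsto Ax$. The two generators of $\frac12\ZZ^2/\ZZ^2$ are the lifts of $\iota_1,\iota_2$ given in the statement. This produces the semidirect product~\eqref{mcg of S_2^4}, provided we show two things: $\ker\Theta$ is exactly the deck group, and $\Theta$ is surjective.

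\textbf{Injectivity.} Suppose $g(x)=Ax+v$ descends to a class isotopic to the identity. Then $g$ fixes every puncture, so $v\in\ZZ^2$ because $v$ controls where $p_1=\tilde{\pi}(0,0)$ goes. Moreover $g$ fixes the homotopy class of every simple arc $\alpha_p^\epsilon$. By Construction~\ref{arcs pair}, the class of $\alpha_p^\epsilon$ determines the slope $p$, and $A$ maps $\alpha_p^\epsilon$ to an arc of slope $A\cdot p$. So $A$ fixes every $p\in\QQi$, which forces $A=\pm I$, and $g$ is a deck transformation.

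\textbf{Surjectivity.} Given a homeomorphism $f$ of $S_{0,4}$, first show that $f$ lifts to $T^2$. The double cover of $S_{0,4}$ corresponds to the homomorphism $\pi_1(S_{0,4})\to\ZZ/2$ sending each peripheral loop to $1$. Any homeomorphism permutes the peripheral classes, so it preserves this homomorphism, and hence $f$ lifts to a homeomorphism $\tilde f$ of $T^2$ commuting with $\iota$. After composing with a half-translation we may assume $\tilde f$ fixes $(0,0)$. Then $\tilde f$ acts on $H_1(T^2)$ by some $A\in \mathrm{SL}(2,\ZZ)$. The map $\tilde f\circ A^{-1}$ fixes $(0,0)$, acts trivially on homology, and commutes with $\iota$, so it descends to a class on $S_{0,4}$ fixing $p_1$ and every arc slope.

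\textbf{Main obstacle.} The hardest step is to show that this descended class is trivial, i.e.\ that isotopies on $T^2$ can be chosen $\iota$-equivariantly. This is the Birman--Hilden property for the hyperelliptic cover. I would try a direct argument rather than quoting the general theorem. A class on $S_{0,4}$ fixing $p_1$, fixing the other three punctures (it permutes them through $\mathrm{SL}(2,\ZZ/2)$, which is trivial here), and fixing two arcs that cut $S_{0,4}$ into a disk is isotopic to the identity by the Alexander lemma. A natural choice of two such arcs is $\alpha_0^-$ and $\alpha_\infty^-$ from Construction~\ref{arcs pair}. It remains to check that such a class also preserves the orientation data of these arcs, which should follow from the fact that it acts trivially on homology.
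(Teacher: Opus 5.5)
The paper gives no argument for this statement; it imports it from \cite[Proposition~2.7]{FM}. There, roughly, one lets $\MCG(S_{0,4})$ act on slopes (the Farey graph) to get a surjection onto $\PSL(2,\ZZ)$. This surjection is split by the linear maps of $\mathbb{R}^2$, and the kernel is identified, via the Alexander method, with the Klein four-group of half-translations.

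Your proposal is correct in outline but runs the other way. You start upstairs with the affine group $\mathcal{A}$ and compute $\mathcal{A}/D\cong\PSL(2,\ZZ)\ltimes(\tfrac12\ZZ^2/\ZZ^2)$ (this computation is right). You then show $\ker\Theta=D$ using slopes, and show $\Theta$ is onto by lifting to $T^2$ and reading $A$ off $H_1(T^2)$. This avoids having to prove that the action on the Farey graph is through $\PSL(2,\ZZ)$. It also shows directly that every mapping class descends from some $x\mapsto Ax+v$, which is the form in which the paper later uses the proposition (half-translation lifts of $\iota_1,\iota_2$, linear lifts of $t_1,t_2$). The price is the covering-space lifting step. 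Both routes rest on the same two inputs: the classification of arcs on $S_{0,4}$ by endpoints and slope, and the Alexander method.

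Your final step needs three adjustments.

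\textbf{First, make the classification of arcs explicit.} Trivial action on $H_1(T^2)$ only says that the lift of $h(\alpha_0^-)$ is homologous to the lift of $\alpha_0^-$. To conclude $h(\alpha_0^-)\simeq\alpha_0^-$ in $S_{0,4}$ you need two facts:
\begin{enumerate}
\item[(a)] every arc between distinct punctures is isotopic to some $\alpha_q^{\pm}$ (the paper's remark $\A(S_{0,4})=\{\alpha_p^\epsilon\}$);
\item[(b)] $q$ is the slope of the $\iota$-invariant lift, which is an isotopy invariant.
\end{enumerate}
This classification, not an equivariant-isotopy statement, is what substitutes for Birman--Hilden, so invoke it explicitly. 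It also fixes the punctures for free. Since $h(p_1)=p_1$ and the slope-$0$ and slope-$\infty$ arcs have endpoint sets $\{p_1,p_2\},\{p_4,p_3\}$ and $\{p_2,p_3\},\{p_1,p_4\}$, you get $h(\alpha_0^-)=\alpha_0^-$ and $h(\alpha_\infty^+)=\alpha_\infty^+$. Hence $h(p_2)=p_2$ and $h(p_4)=p_4$, and the $\mathrm{SL}(2,\ZZ/2)$ remark is unnecessary.

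\textbf{Second, the ``orientation data'' check is vacuous.} The endpoints of $\alpha_0^-$ and $\alpha_\infty^-$ are distinct punctures fixed by $h$, and only two arcs meet at $p_2$. So orientation and cyclic order are automatically preserved.

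\textbf{Third, the two arcs do not cut $S_{0,4}$ into a disk.} $\alpha_0^-\cup\alpha_\infty^-$ is the path $p_1p_2p_3$, and its complement is a disk containing the puncture $p_4$. The Alexander lemma for a once-punctured disk still finishes the argument. Alternatively, add $\alpha_0^+$ (from $p_4$ to $p_3$), which is disjoint from $\alpha_0^-$ and meets $\alpha_\infty^-$ only at $p_3$; the three arcs then cut the sphere into a genuine disk.
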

 The semidirect product  $\ltimes_{\rm conj}$ in \eqref{mcg of S_2^4} indicates that   $\PSL(2,\ZZ)$  acts on the normal subgroup $\<\iota_1,\iota_2\>$ by conjugation within the
larger group $\MCG(S_{0,4})$.
Consequently, every element of $\MCG(S_{0,4})$   can be written as
$h \jmath$, where  $h \in \PSL(2,\ZZ)$  and  $ \jmath \in \<\iota_1,\iota_2\>$.
This  means that  the action of $h\jmath$
   on   $S_{0,4}$ is obtained by first applying the   homeomorphism $h $  and then applying the hyperelliptic involution $\jmath$.  
\subsection{Graded simple arcs and braid twists}
We first fix the grading and the initial graded arcs used in this subsection. On $S_{0,4}$ we use the grading locally represented by the horizontal line field. We denote the resulting graded surface by $\widetilde{S}_{0,4}$. For $\epsilon\in\{+,-\}$, let $\widetilde{\alpha}_\infty^\epsilon$ be the zero lift of $\alpha_\infty^\epsilon$ with respect to this grading. We choose $\widetilde{\alpha}_0^\epsilon$ to be the unique shift of the zero lift of $\alpha_0^\epsilon$ such that
\[
\gind_{p_2}(\widetilde{\alpha}_0^{-},\widetilde{\alpha}_\infty^{-})=
\gind_{p_4}(\widetilde{\alpha}_0^{+},\widetilde{\alpha}_\infty^{+})=0.
\]
We write
\[
B_\infty \coloneqq B_{\widetilde{\alpha}_{\infty}^{-}}
\quad\text{and}\quad
B_0 \coloneqq B_{\widetilde{\alpha}_{0}^{-}}
\]
for the braid twists associated with the graded arcs of slopes $\infty$ and $0$, respectively. Under $F_*$ in \eqref{eq:ses}, one has $F_*(B_\infty)=t_1$ and $F_*(B_0)=t_2$.

We call the following lifts the braid-normalized lifts.
For $p=[a_1,\ldots,a_{2m}]\in\QQ$, set
\[
W_p\coloneqq B_\infty^{a_1}B_0^{-a_2}\cdots B_\infty^{a_{2m-1}}B_0^{-a_{2m}},
\]
and set $W_\infty=1$. With the conventions $0=[-1,1]$ and $\infty=[]$, the graded arcs
\begin{equation}\label{eq:braid-normalized-lifts}
\widetilde{\alpha}_p^\epsilon[n]\coloneqq
\begin{cases}
W_p(\widetilde{\alpha}_\infty^\epsilon[n]),& p\ge 0\text{ or }p=\infty,\\[4pt]
W_p(\widetilde{\alpha}_\infty^\epsilon[n-1]),& p<0
\end{cases}
\end{equation}
are the lifts of $\alpha_p^\epsilon$ in \Cref{arcs pair}.

\begin{proposition}\label{prop:braid-twist-slope-action}
For every $p\in\QQi$, $\epsilon\in\{+,-\}$ and $n\in\ZZ$, the braid twists satisfy
\[
 B_\infty(\widetilde{\alpha}_p^\epsilon[n])
 =\widetilde{\alpha}_{p+1}^\epsilon[n],
\]
and
\[
 B_0(\widetilde{\alpha}_p^\epsilon[n])
 =
\begin{cases}
\widetilde{\alpha}_{\frac{p}{1-p}}^\epsilon[n],& p\le 1,\\[4pt]
\widetilde{\alpha}_{\frac{p}{1-p}}^\epsilon[n+1],& 1<p\le\infty.
\end{cases}
\]
\end{proposition}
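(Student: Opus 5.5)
The plan is to separate the statement into two parts. First, the underlying (ungraded) arcs behave as claimed. Second, the grading shifts are as claimed. Since $F_*(B_\infty)=t_1$ and $F_*(B_0)=t_2$, the underlying arcs are handled by lifting to $\mathbb{R}^2$ via $\tilde\pi$.

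\textbf{Underlying arcs.} The mapping classes $t_1,t_2$ lift to the linear maps with matrices $\begin{pmatrix}1&1\\0&1\end{pmatrix}$ and $\begin{pmatrix}1&0\\-1&1\end{pmatrix}$ acting on $(b,a)$-coordinates. These send the segment $\overline{\alpha}_p^\epsilon$ of direction $(b(p),a(p))/2$ to a segment of direction $(b(p'),a(p'))/2$, where $p'=p+1$, respectively $p'=p/(1-p)$. I will check, using the endpoint table in the proof of \Cref{inner int}, that the starting half-lattice point is carried to the starting point prescribed in \Cref{arcs pair} for $\alpha_{p'}^\epsilon$. This forces the sign $\epsilon$ to be preserved. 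The check reduces to a finite verification over the three types $\typ{p}$, since $t_1,t_2$ permute types compatibly with the action on the fixed points of $\iota$. If the naive linear lift does not fix $p_1$, I would compose with the appropriate element of $\langle\iota_1,\iota_2\rangle$ from \Cref{prop:MCGofS42}.

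\textbf{Gradings.} By \eqref{eq:braid-normalized-lifts} every $\widetilde{\alpha}_p^\epsilon[n]$ is $W_p$ applied to a shift of $\widetilde{\alpha}_\infty^\epsilon$, and braid twists commute with $[1]$. So it suffices to compare $B_\infty W_p$ with $W_{p+1}$, and $B_0W_p$ with $W_{p/(1-p)}$, on $\widetilde{\alpha}_\infty^\epsilon$. When $p\ge0$ and $p+1$ has expansion $[a_1+1,a_2,\dots]$, the first comparison is literally an equality $B_\infty W_p=W_{p+1}$, and the claim follows. The delicate cases are:
\begin{enumerate}
\item[(a)] the crossing $p<0\le p+1$, where the convention $n-1$ versus $n$ in \eqref{eq:braid-normalized-lifts} must exactly compensate;
\item[(b)] $B_0$, where $p/(1-p)$ corresponds to prepending $t_2$, i.e.\ a block $[0,-1]$ before $[a_1,\dots]$, which then has to be renormalized into the even continued fraction of \Cref{lem:even-cf}.
\end{enumerate}

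\textbf{Main obstacle.} Two words in $B_\infty,B_0$ with the same image in $\PSL(2,\ZZ)$ need not agree on graded arcs. By \Cref{lem:1.13} and \eqref{eq:ses}, they differ by an element of the kernel of $F_*$ restricted to arcs. Concretely, they differ by a shift $[k]$ and possibly by stabilizer elements of $\widetilde{\alpha}_\infty^\epsilon$ such as $B_\infty$ itself. I would determine $k$ using the fact that intersection indices are $\MCG(\gms)$-invariant.
\begin{enumerate}
\item Compute $\gind$ at a common endpoint between the candidate arc and a reference arc, $\widetilde\alpha_\infty^\epsilon$ or $\widetilde\alpha_0^\epsilon$.
\item Evaluate these indices via the smoothing description $B_{\widetilde\tau}^{-1}(\widetilde\sigma[1-n])=\widetilde\tau\wedge\widetilde\sigma$.
\item Use the normalization $\gind_{p_2}(\widetilde\alpha_0^-,\widetilde\alpha_\infty^-)=0$ together with \eqref{eq:gind-sym} and \eqref{eq:gind-shift} to pin down the shift.
\end{enumerate}
The basic relation to verify is the braid relation $B_\infty B_0^{-1}B_\infty=B_0^{-1}B_\infty B_0^{-1}$ on graded arcs, and what this element does to the grading. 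I expect this shift bookkeeping, in particular the extra $[1]$ for $B_0$ when $1<p\le\infty$ (where $p/(1-p)$ passes through $\infty$ to negative slopes), to be the hardest step. I would first verify it on $p=\infty$ and $p=1$ directly in the universal cover, tracking the rotation of the horizontal line field along the arcs. Then I would propagate it to all $p$ by induction on the length of the continued fraction expansion.
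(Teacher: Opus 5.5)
Your reduction is the same as the paper's. Since $\widetilde{\alpha}_p^\epsilon[n]$ is \emph{defined} as $W_p(\widetilde{\alpha}_\infty^\epsilon[\nu(p,n)])$, with $\nu(p,n)=n$ for $p\ge 0$ or $p=\infty$ and $\nu(p,n)=n-1$ for $p<0$, both formulas are identities between $B_\infty W_p$, $B_0W_p$ and $W_{p+1}$, $W_{p/(1-p)}$ evaluated on shifts of $\widetilde{\alpha}_\infty^\epsilon$. So for the proposition as stated, the separate underlying-arc check is not needed. (If you do it, your criterion ``fix $p_1$'' is wrong for $B_0=B_{\widetilde{\alpha}_0^-}$, which swaps $p_1$ and $p_2$. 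The linear lift $(x,y)\mapsto(x-y,y)$ is the half-twist along $\alpha_0^+$ and sends $\alpha_\infty^-$ to $\alpha_{-1}^+$, flipping $\epsilon$. The correct lifts fix $p_4$, e.g.\ $(x,y)\mapsto(x-y+\tfrac12,y)$ for $B_0$.)

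The genuine gap is in the only non-formal step, the grading shift. The relation you single out, $B_\infty B_0^{-1}B_\infty=B_0^{-1}B_\infty B_0^{-1}$, is false. Its two sides map to $\left(\begin{smallmatrix}2&3\\1&2\end{smallmatrix}\right)$ and $\left(\begin{smallmatrix}2&1\\3&2\end{smallmatrix}\right)$ in $\PSL(2,\ZZ)$, and they send $\alpha_\infty^\epsilon$ to slopes $2$ and $\tfrac23$. What your plan never identifies is the pair of relations $B_0B_\infty B_0=B_\infty B_0B_\infty$ and $(B_0B_\infty)^3=[1]$. The latter is the one geometric input of the whole proof: $(B_0B_\infty)^3$ acts trivially on the underlying surface, so it equals some $[m]$ by \Cref{lem:1.13}, and $m=1$ because it turns each end of an arc once clockwise about its endpoint. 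Together with $B_\infty(\widetilde{\alpha}_\infty^\epsilon[n])=\widetilde{\alpha}_\infty^\epsilon[n]$, this turns every case into a finite word computation. For example, for $p=-1$:
\[
B_\infty B_0(\widetilde{\alpha}_\infty^-[n-1])=B_0B_\infty B_0B_\infty(\widetilde{\alpha}_\infty^-[n-1])=B_\infty^{-1}B_0^{-1}(B_0B_\infty)^3(\widetilde{\alpha}_\infty^-[n-1])=W_0(\widetilde{\alpha}_\infty^-[n]).
\]

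Your substitute, reading off $k$ from $\gind$ against a reference arc and inducting on the length of the continued fraction, supplies no mechanism. To compare $B_\infty W_p(\widetilde{\alpha}_\infty^\epsilon)$ with $W_{p+1}(\widetilde{\alpha}_\infty^\epsilon)$ that way, you need the indices of $W_r(\widetilde{\alpha}_\infty^\epsilon)$ against the reference for all $r$. That is essentially what is being proved, and you give no recursion for them.

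You also place the difficulty in the wrong spot. The cases $p=\infty$ and $p=1$ are tautological, because $W_{-1}=B_0$ and $W_1=B_0^{-1}$; e.g.\ $B_0(\widetilde{\alpha}_\infty^\epsilon[n])=W_{-1}(\widetilde{\alpha}_\infty^\epsilon[\nu(-1,n+1)])=\widetilde{\alpha}_{-1}^\epsilon[n+1]$ by definition. Likewise, for $B_0$ with $p>1$ the conjugation identities give $B_0W_p(\widetilde{\alpha}_\infty^\epsilon[n])=W_q(\widetilde{\alpha}_\infty^\epsilon[n])$ with no central element. The extra $[1]$ there is only the normalization $\nu(q,\cdot)$ for $q<0$. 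The central element is forced exactly in the crossing case $-1\le p<0$ of the $B_\infty$-identity, which you flag but do not treat.

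Your own observation that two words agreeing on $\alpha_\infty^\epsilon$ differ by $B_\infty^j[k]$ becomes effective once $(B_0B_\infty)^3=[1]$ is known. Then $j$ is read off in $\PSL(2,\ZZ)$, and $k$ from exponent sums, since $[1]$ has exponent sum $6$ in $\Br_3$.
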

\begin{proof}
  We use the graded braid-word identities in \Cref{lem:bt-normal-forms}, which keep track of the shifts in the grading cover. In particular, the extra shift in the second formula comes from the normal form of $B_0W_p$.
With the notation $\nu(p,n)$ used in \Cref{lem:bt-normal-forms}, the definition reads
\[
\widetilde{\alpha}_p^\epsilon[n]=W_p(\widetilde{\alpha}_\infty^\epsilon[\nu(p,n)]).
\]
The identities in \Cref{lem:bt-normal-forms} hold for both choices of
$\epsilon$. Hence the first one gives
\[
B_\infty(\widetilde{\alpha}_p^\epsilon[n])
=B_\infty W_p(\widetilde{\alpha}_\infty^\epsilon[\nu(p,n)])
=W_{p+1}(\widetilde{\alpha}_\infty^\epsilon[\nu(p+1,n)])
=\widetilde{\alpha}_{p+1}^\epsilon[n].
\]
For the second formula, put $q=\frac{p}{1-p}$, with $q=-1$ when $p=\infty$.
The second identity in \Cref{lem:bt-normal-forms} gives
\[
B_0(\widetilde{\alpha}_p^\epsilon[n])
=
\begin{cases}
W_q(\widetilde{\alpha}_\infty^\epsilon[\nu(q,n)]),& p\le 1,\\[4pt]
W_q(\widetilde{\alpha}_\infty^\epsilon[\nu(q,n+1)]),& 1<p\le\infty.
\end{cases}
\]
By the definition of $\widetilde{\alpha}_q^\epsilon[\cdot]$, these two terms are
$\widetilde{\alpha}_q^\epsilon[n]$ and $\widetilde{\alpha}_q^\epsilon[n+1]$,
respectively. Replacing $q$ by $\frac{p}{1-p}$ gives the required formula.
\end{proof}

\begin{proposition}\label{prop:mcgS42}
Assume that $\iota_1$ and $\iota_2$ admit lifts
$\tilde\iota_1,\tilde\iota_2\in \MCG(\widetilde{S}_{0,4})$ preserving the zero section of the grading cover. Then there is a split short exact sequence
\begin{equation}\label{eq:ses-mcgS42}
1\longrightarrow\langle\tilde\iota_1,\tilde\iota_2\rangle
\longrightarrow \MCG(\widetilde{S}_{0,4})
\xrightarrow{\overline{F_*}} \Br_3\longrightarrow 1,
\end{equation}
where the group  $\langle\tilde\iota_1,\tilde\iota_2\rangle$ is generated by the hyperelliptic involutions $\tilde\iota_1$  and $\tilde\iota_2$ on $\widetilde{S}_{0,4}$.
More precisely, the splitting is given by the subgroup
$\langle B_0,B_\infty\rangle\cong\Br_3$. Hence,
\[
\MCG(\widetilde{S}_{0,4})\cong \Br_3\ltimes_{\operatorname{conj}}(\ZZ_2\times \ZZ_2).
\]
\end{proposition}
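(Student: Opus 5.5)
Our plan is to build the sequence from \Cref{lem:1.13} and \Cref{prop:MCGofS42}. Combining them, $F_*$ maps $\MCG(\widetilde{S}_{0,4})$ onto $\operatorname{Im}(F_*)\subseteq\MCG(S_{0,4})\cong\PSL(2,\ZZ)\ltimes\langle\iota_1,\iota_2\rangle$, with kernel $\langle[1]\rangle\cong\ZZ$.

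\textbf{Step 1: $F_*$ is surjective.} The horizontal line field is preserved up to homotopy by every class in $\MCG(S_{0,4})$. Indeed, $\iota_1,\iota_2$ lift to translations of $\mathbb{R}^2$, which preserve the horizontal field; this is what the hypothesis encodes, giving lifts $\tilde\iota_1,\tilde\iota_2$ fixing the zero section. The generators $t_1,t_2$ of $\PSL(2,\ZZ)$ are the images of the braid twists $B_\infty,B_0$, so they also lie in the image. Hence $\operatorname{Im}(F_*)=\MCG(S_{0,4})$.

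\textbf{Step 2: the map $\overline{F_*}$.} We define $\overline{F_*}$ as the quotient of $\MCG(\widetilde{S}_{0,4})$ by $\langle\tilde\iota_1,\tilde\iota_2\rangle$. We check that this subgroup is normal and isomorphic to $\ZZ_2\times\ZZ_2$:
\begin{itemize}
\item The relation $\tilde\iota_a^2=1$ holds because $\tilde\iota_a^2$ maps to $1$ under $F_*$, so it is a shift $[n]$; it preserves the zero section, so $n=0$.
\item Normality follows from conjugation-invariance of $\langle\iota_1,\iota_2\rangle$ downstairs, plus the same zero-section argument to rule out shifts.
\end{itemize}
The quotient then sits in the extension
\[
1\to\ZZ\to \MCG(\widetilde{S}_{0,4})/\langle\tilde\iota_1,\tilde\iota_2\rangle\to\PSL(2,\ZZ)\to1 .
\]

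\textbf{Step 3: the quotient is $\Br_3$.} We show that the subgroup $\langle B_0,B_\infty\rangle$ maps isomorphically onto the quotient, which gives both exactness at $\Br_3$ and the splitting.
\begin{itemize}
\item \emph{Braid relation.} We verify $B_0B_\infty B_0=B_\infty B_0B_\infty$ via \eqref{eq:bt}. It suffices to have $B_\infty B_0(\widetilde\alpha_\infty^-)=\widetilde\alpha_0^-$ up to a shift; braid twists are insensitive to shifts, so this is enough. The required identity is checked using \Cref{prop:braid-twist-slope-action}. This gives a homomorphism $\Br_3\to\MCG(\widetilde{S}_{0,4})$.
\item \emph{The shift lies in this subgroup.} We compare $(B_\infty B_0)^3$, which is central in $\Br_3$ and maps to $-I=1\in\PSL(2,\ZZ)$. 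Its underlying map therefore lies in $\langle\iota_1,\iota_2\rangle$; using \Cref{prop:braid-twist-slope-action} on $\widetilde\alpha_\infty^\pm$, we identify it as a hyperelliptic involution composed with $[1]$ (or a fixed odd shift). Thus the composite $\Br_3\to$ quotient carries the center $\langle(B_\infty B_0)^3\rangle\cong\ZZ$ onto the kernel $\ZZ$, and the quotient $\Br_3/Z\cong\PSL(2,\ZZ)$ onto $\PSL(2,\ZZ)$. By the five lemma, this composite is an isomorphism.
\item \emph{Intersection is trivial.} Injectivity of the composite implies $\langle B_0,B_\infty\rangle\cap\langle\tilde\iota_1,\tilde\iota_2\rangle=1$, so $\langle B_0,B_\infty\rangle\cong\Br_3$ is a complement. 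This yields the semidirect product decomposition.
\end{itemize}

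\textbf{Main obstacle.} The delicate step is computing $(B_\infty B_0)^3$ exactly as an element of $\MCG(\widetilde{S}_{0,4})$: both its hyperelliptic component and its shift must be determined. The shift must be nonzero, or else $[1]$ would not lie in the image of $\Br_3$. We would compute it by tracking $\widetilde\alpha_\infty^{\pm}[0]$ through six applications of \Cref{prop:braid-twist-slope-action}:
\[
\infty\mapsto -1\mapsto 0\mapsto 0\mapsto 1\mapsto\infty\mapsto\infty .
\]
We would record the $+1$ shifts that occur when $p>1$ in the $B_0$ formula, together with the $p<0$ convention in \eqref{eq:braid-normalized-lifts}.
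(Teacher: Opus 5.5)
Your proposal is correct and is essentially the paper's argument in different packaging: you show by the short five lemma that $\Br_3\to\MCG(\widetilde{S}_{0,4})/\langle\tilde\iota_1,\tilde\iota_2\rangle$ is an isomorphism, comparing the central extension $\Br_3\to\PSL(2,\ZZ)$ with the shift extension of \Cref{lem:1.13}; this is equivalent to the paper's unique factorization $g=h\widetilde\jmath$ with its snake-lemma diagram, and in both normality comes from killing a central shift. The one adjustment concerns your ``main obstacle'', which is precisely \eqref{eq:central-shift}, cited directly in the paper together with \eqref{eq:braid-relation-B0-Binfty} (\Cref{prop:braid-twist-slope-action} is itself proved from these identities, so recovering them from it is a detour): it gives $(B_\infty B_0)^3=[1]$ exactly, with no hyperelliptic factor because $B_0$ and $B_\infty$ both fix $p_4$ while every nontrivial element of $\langle\iota_1,\iota_2\rangle$ moves all four punctures, and your five-lemma step needs a shift of exactly $\pm1$, not merely a nonzero one.
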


\begin{proof}
By \Cref{eq:braid-relation-B0-Binfty,eq:central-shift}, we have
\[
\langle B_0,B_\infty\rangle\cong \Br_3.
\]
Under this identification, the projection $\pi:\Br_3\to\PSL(2,\ZZ)$ sends
$B_0$ to $t_2$ and $B_\infty$ to $t_1$.

 We now construct $\overline{F_*}$ in \eqref{eq:ses-mcgS42} using this copy of $\Br_3$.   First, the subgroup
$\langle\tilde\iota_1,\tilde\iota_2\rangle$ is normal in
$\MCG(\widetilde{S}_{0,4})$. Indeed, for
$g\in\MCG(\widetilde{S}_{0,4})$ and
$\widetilde\jmath\in\langle\tilde\iota_1,\tilde\iota_2\rangle$, the
image $F_*(g\widetilde\jmath g^{-1})$ lies in
$\<\iota_1,\iota_2\>$. Choose
$\widetilde\jmath'\in\langle\tilde\iota_1,\tilde\iota_2\rangle$ with
$F_*(\widetilde\jmath')=F_*(g\widetilde\jmath g^{-1})$. Then
$g\widetilde\jmath g^{-1}(\widetilde\jmath')^{-1}\in\langle[1]\rangle$. Since
this element is a central grading shift and both
$g\widetilde\jmath g^{-1}$ and $\widetilde\jmath'$ have finite order, the shift
must be trivial. Hence
$g\widetilde\jmath g^{-1}\in\langle\tilde\iota_1,\tilde\iota_2\rangle$.

Next, every $g\in\MCG(\widetilde{S}_{0,4})$ can be written uniquely as
$g=h\widetilde\jmath$, where
$h\in\langle B_0,B_\infty\rangle$ and
$\widetilde\jmath\in\langle\tilde\iota_1,\tilde\iota_2\rangle$. For
existence, write $F_*(g)=u\jmath$ with $u\in\PSL(2,\ZZ)$ and
$\jmath\in\<\iota_1,\iota_2\>$. Lift $u$ to some
$h\in\langle B_0,B_\infty\rangle$ and lift $\jmath$ to
$\widetilde\jmath\in\langle\tilde\iota_1,\tilde\iota_2\rangle$. Then
$(h\widetilde\jmath)^{-1}g\in\ker F_*=\langle[1]\rangle$, and this shift lies
in $\langle B_0,B_\infty\rangle$ because $[1]=(B_0B_\infty)^3$. Absorbing it
into $h$ gives the required expression. For uniqueness, if
$h\widetilde\jmath=h'\widetilde\jmath'$, then
${h'}^{-1}h\in\langle B_0,B_\infty\rangle\cap
\langle\tilde\iota_1,\tilde\iota_2\rangle$. Its image under $F_*$ lies
in both the $\PSL(2,\ZZ)$ factor and $\<\iota_1,\iota_2\>$, hence is
trivial. Since
$F_*:\langle\tilde\iota_1,\tilde\iota_2\rangle\to
\<\iota_1,\iota_2\>$ is an isomorphism, we get $h=h'$ and
$\widetilde\jmath=\widetilde\jmath'$.

Define $\overline{F_*}(g)$ to be the braid corresponding to the first factor
$h$ in this decomposition. The normality proved above shows that
$\overline{F_*}$ is a homomorphism. We therefore have the commutative diagram
\[
\begin{tikzcd}
            &                                & 1 \arrow[d]                                         & 1 \arrow[d]                               &   \\
            & 1 \arrow[r] \arrow[d]          & {\langle\tilde\iota_1,\tilde\iota_2\rangle} \arrow[r,"\sim"] \arrow[d]           & {\<\iota_1,\iota_2\>} \arrow[r] \arrow[d] & 1 \\
1 \arrow[r] & \mathbb{Z} \arrow[r] \arrow[d,equal] & \MCG(\widetilde{S}_{0,4}) \arrow[r,"F_*"] \arrow[d,"\overline{F_*}", dashed] & \MCG(S_{0,4}) \arrow[r] \arrow[d,"p"]           & 1 \\
1 \arrow[r] & \mathbb{Z} \arrow[r] \arrow[d] & \Br_3 \arrow[r,"\pi"] \arrow[d]                            & {\PSL(2,\ZZ)} \arrow[r] \arrow[d]  & 1 \\
            & 1                              & 1                                                   & 1                                         &  
\end{tikzcd}
\]
where the map $\mathbb Z\to\Br_3$ sends $1$ to
$(B_0B_\infty)^3$. Applying the Snake Lemma to the two middle rows,
we obtain that $F_*:\ker(\overline{F_*})\to\ker(p)$ is an isomorphism and that
$\overline{F_*}$ is surjective. Since
$\ker(p)=\<\iota_1,\iota_2\>$, it follows that
$\ker(\overline{F_*})=\langle\tilde\iota_1,\tilde\iota_2\rangle$:
indeed, $\overline{F_*}$ records the braid factor in the decomposition
$g=h\widetilde\jmath$. Hence
\[
1\to \langle\tilde\iota_1,\tilde\iota_2\rangle
\to \MCG(\widetilde{S}_{0,4})\xrightarrow{\overline{F_*}}\Br_3\to1
\]
is exact. Finally, the inclusion
$\langle B_0,B_\infty\rangle\cong\Br_3\hookrightarrow\MCG(\widetilde{S}_{0,4})$
is a section, so the sequence splits.
\end{proof}

   Note that for all  $p\in\QQi$ and  $\epsilon \in \{+, -\}$, the simple arc $\alpha_p^\epsilon$   connects the same pair of punctures as  $\alpha_{\typ{p}}^\epsilon$.
We now define a (graded) tagged arc for each $p\in\QQi$ and $x \in\bfH$.

\begin{definition} \label{def:tarc}
For $p\in\QQi$ and $x\in\bfH$, we first define a tagged arc $\alpha_{p,x}=(\alpha_{p,x}^\circ,\kappa)$, whose underlying arc is
\begin{equation*}
\alpha_{p,x}^\circ=
\begin{cases}
\alpha_p^{+},&\text{if }(\typ{p},x)\in\{(0,\pm i),(0,\pm j),(1,\pm 1), (1,\pm j),(\infty,\pm 1),(\infty,\pm i)\},\\
\alpha_p^{-},&\text{otherwise.} 
\end{cases}
\end{equation*}
The tagging function $\kappa$ is determined by Table~\ref{kappa} for $p\in\{0,1,\infty\}$ and $x\in\{1,i,j,k\}$; for other values of $p$, we use the same tagging as for $\alpha_{\typ{p},x}$. For negative $x$, it is fixed by the condition $\alpha_{p,-x}=(\alpha_{p,x}^\circ,-\kappa)$.

\begin{table}[htbp]
\centering
\caption{The value of $\kappa$}
\label{kappa}
\renewcommand{\arraystretch}{1.3}
\setlength{\tabcolsep}{4.5pt}

\begin{tabular}{l *{12}{c}}
\toprule
& $\alpha_{0,1}$ & $\alpha_{0,i}$ & $\alpha_{0,j}$ & $\alpha_{0,k}$
& $\alpha_{1,1}$ & $\alpha_{1,i}$ & $\alpha_{1,j}$ & $\alpha_{1,k}$
& $\alpha_{\infty,1}$ & $\alpha_{\infty,i}$ & $\alpha_{\infty,j}$ & $\alpha_{\infty,k}$ \\
\midrule
$p_1$ & $1$ &   &   & $1$ &   & $1$ &   & $1$ & $-1$ & $1$ &   &   \\
$p_2$ & $-1$ &   &   & $1$ & $-1$ &   & $-1$ &   &   &   & $-1$ & $-1$ \\
$p_3$ &   & $-1$ & $1$ &   &   & $-1$ &   & $1$ &   &   & $-1$ & $1$ \\
$p_4$ &   & $-1$ & $-1$ &   & $1$ &   & $-1$ &   & $1$ & $1$ &   &   \\
\bottomrule
\end{tabular}
\end{table}
Adding the tagging function $\kappa$ to the corresponding graded arc
$\widetilde{\alpha}_p^\pm$, we obtain the graded tagged arc
$\widetilde{\alpha}_{p,x}$.
 \end{definition}
\begin{remark}
By the preceding constructions, we have
   $$\A(S_{0,4})=\{\alpha_p^\epsilon| \epsilon \in \{+, -\}, p \in \QQi \} \text{ and }\TA^\times(S_{0,4})=\{\alpha_{p,x}|p\in\QQi,x \in\bfH\}.$$
 Moreover, two tagged arcs $\alpha_{p,x}$ and $\alpha_{q,y}$ are
isotopic if and only if $(p,x)=(q,y)$. In fact, this definition of  $\alpha_{p,x}$
      is essentially compatible with the construction given by Barot and Geiss in  \cite[Section~5]{BG}. 
 \end{remark}
\begin{corollary}\label{cor:graded-tag-action}
For $p\in\QQi$, $x\in\bfH$ and $n\in\ZZ$, write
\[
B_\infty(\widetilde{\alpha}_{p,x}[n])
=\widetilde{\alpha}_{p+1,y}[n],
\qquad
B_0(\widetilde{\alpha}_{p,x}[n])
=\widetilde{\alpha}_{\frac{p}{1-p},z}
\bigl[n+\mathbf{1}_{\{1<p\le\infty\}}\bigr],
\]
where the labels $y$ and $z$ are given in Table~\ref{tab:bt-tag-action}; for $-x$ they are replaced by $-y$ and $-z$, respectively.
\begin{table}[htbp]
\centering
\caption{The values of $y$ and $z$ in \Cref{cor:graded-tag-action}}
\label{tab:bt-tag-action}
\renewcommand{\arraystretch}{1.2}
\setlength{\tabcolsep}{6pt}

\begin{tabular}{ccccc ccccc ccccc}
\toprule
& \multicolumn{4}{c}{$\typ{p}=0$}
& \multicolumn{4}{c}{$\typ{p}=1$}
& \multicolumn{4}{c}{$\typ{p}=\infty$} \\
\cmidrule(lr){2-5} \cmidrule(lr){6-9} \cmidrule(lr){10-13}
$x$ & $1$ & $i$ & $j$ & $k$
    & $1$ & $i$ & $j$ & $k$
    & $1$ & $i$ & $j$ & $k$ \\
\midrule
$y$ & $i$ & $j$ & $-1$ & $k$
    & $-j$ & $1$ & $i$ & $k$
    & $1$ & $i$ & $j$ & $-k$ \\
$z$ & $-1$ & $i$ & $j$ & $k$
    & $1$ & $-k$ & $-i$ & $-j$
    & $1$ & $-j$ & $-k$ & $-i$ \\
\bottomrule
\end{tabular}
\end{table}
\end{corollary}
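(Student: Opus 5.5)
The graded part of \Cref{cor:graded-tag-action}, namely the new slope and the shift $[n+\mathbf{1}_{\{1<p\le\infty\}}]$, is exactly \Cref{prop:braid-twist-slope-action}. We apply it to the underlying graded arc $\widetilde{\alpha}_p^\epsilon[n]$ of $\widetilde{\alpha}_{p,x}[n]$, with $\epsilon$ chosen as in \Cref{def:tarc}. So the proof reduces to determining the label: the underlying arc sign $\epsilon'$ of the image and the tagging transported by the braid twist. The tagging is a function on the endpoints, so it is carried along by the induced permutation of the punctures $p_1,\dots,p_4$.

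First we reduce to $p\in\{0,1,\infty\}$ and $x\in\{1,i,j,k\}$. Both $\alpha_p^\epsilon$ and the tagging of $\alpha_{p,x}$ depend only on $(\typ{p},\epsilon)$ and $x$. The maps $p\mapsto p+1$ and $p\mapsto \frac{p}{1-p}$ induce well-defined maps on types, because they act on $(a(p),b(p))$ mod $2$ linearly via $t_1,t_2$. Concretely, $B_\infty$ sends type $0\mapsto 1$, $1\mapsto 0$, $\infty\mapsto\infty$, and $B_0$ sends $0\mapsto 0$, $1\mapsto\infty$, $\infty\mapsto 1$. Since a braid twist is a homeomorphism of $S_{0,4}$ determined up to isotopy, its permutation of the punctures is independent of $p$. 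Hence the table entry for general $p$ is the entry for $\typ{p}$. The negation rule $x\mapsto -x$ follows because $-x$ flips $\kappa$ at both ends while keeping the underlying arc, and the twist commutes with that flip.

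Next we compute the permutations. $B_\infty=B_{\widetilde{\alpha}_\infty^-}$ is the half twist along $\alpha_\infty^-$, which joins $p_2$ and $p_3$. It therefore swaps $p_2\leftrightarrow p_3$ and fixes $p_1,p_4$. Likewise $B_0$ is the half twist along $\alpha_0^-$, joining $p_1,p_2$, so it swaps $p_1\leftrightarrow p_2$. For each of the twelve pairs $(\typ{p},x)$ we then do the following. Read off $\epsilon$ and the endpoint pair from the endpoint table in the proof of \Cref{inner int}. Apply the transposition to obtain the new endpoint pair, and from it the sign $\epsilon'$ relative to the new type. Transport $\kappa$ through the transposition. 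Finally, look up in \Cref{kappa}, together with its negation rule, the unique $y$ (resp.\ $z$) in $\bfH$ whose arc sign and tagging match. For instance, take $\typ{p}=0$ and $x=1$: the arc is $\alpha_0^-$ on $\{p_1,p_2\}$ with $\kappa(p_1)=1$, $\kappa(p_2)=-1$. Under $B_\infty$ it goes to $\{p_1,p_3\}$, which is $\alpha_1^-$, with $\kappa(p_1)=1$, $\kappa(p_3)=-1$. This matches $\alpha_{1,i}$, giving $y=i$ as in \Cref{tab:bt-tag-action}.

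The main obstacle is ensuring that the half twist sends $\alpha_p^\epsilon$ to an arc whose endpoints are the transposed ones. The twist along an arc from $p_2$ to $p_3$ exchanges those punctures as points, but the image arc still needs to be identified with the specific representative $\alpha_{p+1}^{\epsilon'}$. I would check this on the torus cover, where $B_\infty$ lifts to $t_1$ composed with a suitable translation by a half-period, and then verify the resulting endpoint action on the four fixed points. The remaining entries are routine bookkeeping of the same kind.
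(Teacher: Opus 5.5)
Your proposal is correct and follows essentially the same route as the paper. In both, the slope and the grading shift come from \Cref{prop:braid-twist-slope-action}, and the new quaternion label is read off from Table~\ref{kappa} by carrying the tags along the puncture permutation induced by the braid twist ($p_2\leftrightarrow p_3$ for $B_\infty$, $p_1\leftrightarrow p_2$ for $B_0$), with negative labels handled by $\alpha_{p,-x}=(\alpha_{p,x}^\circ,-\kappa)$. The step you flag as the main obstacle is not really one: \Cref{prop:braid-twist-slope-action} already gives the image with the same sign $\epsilon$, and in any case $\alpha_q^{+}$ and $\alpha_q^{-}$ have disjoint endpoint pairs, so the transposed endpoints determine the image arc without a torus-cover check.
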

\begin{proof}
The slope and the grading shift are given by Proposition~\ref{prop:braid-twist-slope-action}. It remains only to read the new quaternion label. Applying $B_\infty$ or $B_0$ rotates the endpoints of the corresponding graded tagged arc, and the resulting tags are then read from Table~\ref{kappa}. This gives the entries for $y$ and $z$ in Table~\ref{tab:bt-tag-action}. The entries for negative labels follow from the convention $\alpha_{p,-x}=(\alpha_{p,x}^\circ,-\kappa)$.
\end{proof}
 

\subsection{Rigid indecomposable objects via graded simple arcs}\label{sec:ind=ca}
 From now on, we work with  the marked surface with binaries obtained from  $S_{0,4}$ via Construction~\ref{cons:binary},  which we denote   by $S\x^2$,     and  
$\widetilde{S\x^2} $ its graded version.  We will establish a correspondence between indecomposable  rigid  objects in $\DC{\CPone}$ and graded simple arcs on $\widetilde{S\x^2} $. Moreover, we   give the notion of oriented intersection number of graded simple arcs on $\widetilde{S\x^2} $, and show it is compatible with the dimensions of $\Hom$-spaces.
 
Define the quotient set
 \[
\wA^\circ (\widetilde{S\x^2} )\coloneqq\{\widetilde{\sigma}\in\wA (\widetilde{S\x^2} )\}/\< \Dfang\cdot\widetilde{\sigma}\>.
 \]  From now on, a graded simple arc $\widetilde{\sigma}$ on $\widetilde{S\x^2} $ means  an element in $\wA^\circ (\widetilde{S\x^2}).$
The following proposition then holds.

\begin{proposition}\label{bi:ba-ta} Let  $ \shk:S\x^2\to S_{0,4}$ be the  collapse map that
  shrinks each binary $\sx_{p_i}$ back to the corresponding puncture $p_i$ for $i = 1, 2, 3, 4$.
 There is a bijection
 \[\varkappa\colon
 \wA^\circ(\widetilde{S\x^2} )\to \TA^\times(S_{0,4})\times\ZZ, \quad \widetilde{\sigma} \longmapsto(\shk({\sigma}),\kappa,n),
 \]
 where  $n\in\ZZ$ is the unique integer such that $\widetilde{\sigma}= \widetilde{\sigma}^0[n]$ and $\kappa$ is the tagging function  
 \begin{equation}\label{winding and tag}
\kappa \colon \{\shk({\sigma})(t)|t=0,1 \} \to \{-1,1\}, \quad \shk( {\sigma})(t)\longmapsto  -(-1)^{f_{\widetilde{\sigma}}(\sigma(t))}   , 
\end{equation}
with $f_{\widetilde{\sigma}}$ the winding function of the graded simple arc $\widetilde{\sigma}$.
\end{proposition}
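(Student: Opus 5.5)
The plan is to build $\varkappa$ in three stages: define it, check that it is well defined on $\Dfang$-orbits, and then prove injectivity and surjectivity by reconstructing $\widetilde{\sigma}$ from its image.

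\emph{Well-definedness.} Let $\widetilde{\sigma}$ be a graded simple arc on $\widetilde{S\x^2}$ with distinct endpoints $m_{P}, m_{Q}$ on two different binaries.

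1. Collapsing each binary to its puncture sends $\sigma$ to a curve $\shk(\sigma)$ on $S_{0,4}$. This curve is simple, has the distinct endpoints $P\neq Q$, and still cuts out no unpunctured monogon or digon, because a disc in $S_{0,4}$ avoiding punctures lifts to a disc in $S\x^2$ avoiding binaries. Hence $\shk(\sigma)\in\A(S_{0,4})$.
2. A Dehn twist $\mathrm{D}_{\sx}$ along a binary changes the homotopy class of the arc only near that end, by one full winding. It therefore does not change $\shk(\sigma)$, and it changes $f_{\widetilde{\sigma}}$ at that end by $1 \bmod 2$. Consequently $\mathrm{D}_{\sx}^2$ fixes the tag defined in \eqref{winding and tag}.
3. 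The grading shift $n$ is unchanged by $\mathrm{D}_{\sx}^2$. The grading is a global line field on $\surfi$ and the twist is supported in a collar of the binary, so the twist applied to $\widetilde{\sigma}^0[n]$ gives the zero lift of the new arc shifted by the same $n$. To make this precise I would compute the winding of the line field along the collar curve of each binary; it equals the winding around the corresponding puncture in $\widetilde{S}_{0,4}$. Alternatively, I would take the grading on $S\x^2$ to be the pullback along $\shk$ of the horizontal line field, which is what the construction intends.
4. Uniqueness of $n$ with $\widetilde{\sigma}=\widetilde{\sigma}^0[n]$ follows from \Cref{def:gcurve}: gradings on a fixed curve form a $\ZZ$-torsor under $[1]$.

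\emph{Injectivity.} Suppose $\varkappa(\widetilde{\sigma})=\varkappa(\widetilde{\gamma})$. Then $\shk(\sigma)\simeq\shk(\gamma)$ in $S_{0,4}$.

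1. The key topological input is that two arcs on $S\x^2$ with the same collapse differ, up to homotopy, only by twists at the ends. Concretely, the preimages under $\shk$ of a homotopy class of arcs between $P$ and $Q$ form a torsor over $\ZZ\times\ZZ$, generated by $\mathrm{D}_{\sx_P}$ and $\mathrm{D}_{\sx_Q}$ acting on the two ends separately. I would prove this by lifting a homotopy in $S_{0,4}$. Away from small discs around the punctures it lifts directly. Near a puncture, a homotopy of the end may move it around the puncture, and this is recorded exactly as the winding number around the binary.
2. Equal tags mean the winding numbers at each end agree mod $2$. Hence $\gamma\in\Dfang\cdot\sigma$ as underlying arcs, in the sense of \Cref{defn:orbit}.
3. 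Equal shifts $n$, together with stage 3 of well-definedness, then give $\widetilde{\gamma}\in\Dfang\cdot\widetilde{\sigma}$ as graded arcs.

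\emph{Surjectivity.} Take a tagged arc $(\alpha,\kappa)$ and an integer $n$.

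1. Lift $\alpha$ to an arc $\sigma$ on $S\x^2$ ending at the marked points $m_P$ and $m_Q$.
2. At each end, apply $\mathrm{D}_{\sx}$ once if necessary so that $-(-1)^{f}$ equals the prescribed tag.
3. Take the graded arc $\sigma^0[n]$. Its image under $\varkappa$ is $((\alpha,\kappa),n)$.

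The main obstacle is the torsor statement in step 1 of injectivity: controlling homotopies of arcs near the collapsed binaries, so that the only ambiguity is end-twisting by $\mathrm{D}_{\sx}$, and showing that this ambiguity commutes with the zero-lift grading. For the grading part, I would first handle it via the explicit covering description in Construction~\ref{arcs pair} of arcs on $S_{0,4}$.
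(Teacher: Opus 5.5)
Your proposal is correct and follows essentially the same route as the paper. Both check that $n$ and the tag are well defined on $\Dfang$-orbits, then recover $\widetilde{\sigma}$ from its collapse $\shk(\sigma)$, its shift $n$ and the parity of its winding at each end; the paper packages your injectivity and surjectivity steps together as the explicit inverse $\varsigma$. Your version is more careful than the paper's, which simply asserts that these data determine a unique graded arc. Your $\ZZ\times\ZZ$-torsor statement for the lifts of an arc under $\shk$, and your collar-support argument that $n$ is invariant under $\mathrm{D}_{\sx}^2$, are exactly what justify that assertion.
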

\begin{proof}
According to Definition~\ref{def:gcurve}, for  any graded simple arc $\widetilde{\sigma}$ on $S\x^2$, there exists a unique integer $n$ such that $\widetilde{\sigma}= \widetilde{\sigma}^0[n]$. The map defined in \eqref{winding and tag} is a  tagged function for  simple arc $\shk( {\sigma} )$    according to  Definition~\ref{tarc}. Besides, \Cref{wfunction} implies that the winding function  $f_{\widetilde{\sigma}}$ 
does not depend on    the   $\Dfang$-orbit representative.
 Therefore,  $\varkappa$ is
    well-defined. 
 
   To show that $\varkappa$ is bijective, we construct its inverse. 
Define
 \[\varsigma\colon
 \TA^\times(S_{0,4})\times\ZZ \to  \wA^\circ(\widetilde{S\x^2} ), \quad 
 (\gamma,\kappa,n) \longmapsto 
 \widetilde{\sigma}  ,
 \]
 by the requirements
\[
\shk(\sigma)=\gamma,\qquad
\widetilde{\sigma}=\widetilde{\sigma}^0[n],
\]
and by prescribing the winding function of $\widetilde{\sigma}$ as
\[
f_{\widetilde{\sigma}}(\sigma(t))\equiv   \frac{1+\kappa(\shk({\sigma})(t))}{2}\pmod{2} ,\qquad t=0,1.
\]
This construction yields a unique   graded simple arc $\widetilde{\sigma}$, and one readily verifies that $\varkappa(\widetilde{\sigma})=(\gamma,\kappa,n) $. Hence, $\varsigma$ is indeed the inverse of  $\varkappa$
\end{proof}
By \Cref{prop:farey-braid-recursion} and \Cref{rem:farey-smoothing-index}, the braid-normalized lift $\widetilde{\alpha}_{p,x}[n]$ agrees with the standard lift $\varsigma(\alpha_{p,x},n)$.
\begin{theorem}\label{thm:X} There is a bijection
$$\wX\colon \wA^\circ(\widetilde{S\x^2} ) \to \Ind^\circ\DC{\CPone}$$
satisfying
 \[
\widetilde{X}\bigl(\varsigma(\alpha_{p,x},n)\bigr)=
\begin{cases}
E_p^x[n],   & n\ \text{even},\\[4pt]
E_p^{-x}[n], & n\ \text{odd},
\end{cases}
\qquad n\in\mathbb{Z},
\]
where $\alpha_{p,x} \in  \TA^\times(S_{0,4})$ is a tagged arc as defined in Definition~\ref{def:tarc}. 
\end{theorem}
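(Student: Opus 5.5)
The plan is to compose three bijections already available. By \Cref{bi:ba-ta}, $\varkappa$ (with inverse $\varsigma$) identifies $\wA^\circ(\widetilde{S\x^2})$ with $\TA^\times(S_{0,4})\times\ZZ$. By the remark following \Cref{def:tarc}, $(p,x)\mapsto\alpha_{p,x}$ is a bijection $\QQi\times\bfH\to\TA^\times(S_{0,4})$, since $\A(S_{0,4})=\{\alpha_p^\pm\}$, every tagged arc has the form $\alpha_{p,x}$, and $\alpha_{p,x}$ is isotopic to $\alpha_{q,y}$ only if $(p,x)=(q,y)$. Hence every element of $\wA^\circ(\widetilde{S\x^2})$ is $\varsigma(\alpha_{p,x},n)$ for a unique triple $(p,x,n)\in\QQi\times\bfH\times\ZZ$. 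So I would \emph{define} $\wX$ on $\varsigma(\alpha_{p,x},n)$ by the displayed formula. Well-definedness is then automatic, and it remains to show that the result is a bijection onto $\Ind^\circ\DC{\CPone}$.

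On the target side, \Cref{prop:db} and the remark after it say that every indecomposable rigid object of $\DC{\CPone}$ is $E[n]$ with $E$ an indecomposable rigid sheaf and $n\in\ZZ$ uniquely determined. Moreover, $E[n]\cong E'[n']$ forces $n=n'$ and $E\cong E'$, because the repetitive description has disjoint copies $\coh\XX[n]$. By \Cref{prop:rigid-to-vectors}, $(p,x)\mapsto E_p^x$ is a bijection $\QQi\times\bfH\to\Ind^\circ\coh(\CPone)$. Therefore $(p,x,n)\mapsto E_p^x[n]$ is a bijection $\QQi\times\bfH\times\ZZ\to\Ind^\circ\DC{\CPone}$.

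The only twist is the parity sign $x\mapsto -x$ for odd $n$. For each fixed $n$, the map $x\mapsto(-1)^n x$ is a permutation of $\bfH$, so $(p,x,n)\mapsto(p,(-1)^nx,n)$ is a bijection of $\QQi\times\bfH\times\ZZ$. Composing the three bijections gives that $\wX$ is bijective.

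I expect the main subtlety to be justifying that the map is genuinely well posed as a map on $\D^2_\vot$-orbits, that is, that $\varsigma$ is injective and surjective onto orbit classes. That is precisely \Cref{bi:ba-ta}, which I take as given. I would also remark, though it is not logically needed for bijectivity, why the sign twist is the natural choice. The sign twist is what makes $\wX$ compatible with shifts and braid twists: by \Cref{prop:braid-twist-slope-action} and \Cref{cor:graded-tag-action}, the extra shift $[1]$ produced by $B_0$ for $p>1$ must be matched with the change $x\mapsto -x$, which reflects $[\tau E_p^x]=\bv_p^{-x}$. This consistency will be checked later against Hom-dimensions, not here.
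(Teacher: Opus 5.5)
Your proposal is correct and follows the paper's own argument. The paper likewise writes every indecomposable rigid object as $E[n]$ and composes the bijection $\varkappa$ (with inverse $\varsigma$) of \Cref{bi:ba-ta} with the parametrizations $(p,x)\mapsto\alpha_{p,x}$ and $(p,x)\mapsto E_p^x$ of \Cref{prop:rigid-to-vectors}; you simply make explicit what the paper leaves implicit, namely the uniqueness of the shift $n$ and the fact that $x\mapsto(-1)^nx$ permutes $\bfH$.
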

 \begin{proof}
Every indecomposable rigid object of $\DC{\CPone}$ can be written as $E[n]$ with $E\in\Ind^\circ\coh(\CPone)$ and $n\in\mathbb{Z}$. 
The assertion now follows by combining the correspondence between rigid sheaves and tagged arcs given in Proposition~\ref{prop:rigid-to-vectors} with the bijection $\varkappa$ of Proposition~\ref{bi:ba-ta}.
 \end{proof}

\begin{remark} 
Let $\widetilde{\sigma}$ be a graded simple arc on $\widetilde{S\x^2} $ with 
endpoints  $m_{p_i},m_{p_j}\in \M_\P.$
Since $\tau {E}_p^x={E}_p^{-x}$ holds for any   $p\in\QQi$ and $x\in\bfH$, we have $$\wX(\mathrm{D}_{ {p_i}}\mathrm{D}_{ {p_j}}(\widetilde{\sigma}))=\tau\wX(\widetilde{\sigma}),$$
where $\mathrm{D}_{p_i}$ is  the Dehn twist along binary    $\sx_{p_i}$, $p_i\in\P$.
\end{remark}

Before defining oriented intersections, we record the local index of the graded arcs in \eqref{eq:braid-normalized-lifts}. Let $a$ be an interior intersection of $\widetilde{\alpha}_p^\epsilon$ and $\widetilde{\alpha}_q^\varphi$. By \Cref{prop:farey-braid-recursion} and \Cref{rem:farey-smoothing-index}, the Farey recursion preserves the grading fixed by the initial lifts. Hence, the defining counterclockwise rotation has index $0$ when $p<q$; the case $p>q$ follows from \eqref{eq:gind-sym}. Thus
\[
\gind_a(\widetilde{\alpha}_p^\epsilon,\widetilde{\alpha}_q^\varphi)=
\begin{cases}
0,&p<q,\\
1,&p>q.
\end{cases}
\]

\begin{definition} \label{def:boint}
Let $\widetilde{\sigma},\widetilde{\gamma}$ be two graded simple arcs on  $\widetilde{S\x^2} $ in   minimal position  with $\shk(\sigma)=\alpha_{p}^{\epsilon}$ and  $\shk(\gamma)=\alpha_{q}^{\varphi}$.
The \emph{binary oriented intersection} $\overrightarrow{\cap}^0(\widetilde{\sigma},\widetilde{\gamma})$   of index $0$  from $\widetilde{\sigma} $ to $\widetilde{\gamma}$  consists of the following three types of  intersections $a$ between $ \widetilde{\sigma}$ and $ \widetilde{\gamma}$: 
\begin{enumerate}
	\item[(I)] $a\notin \M_\P$,   as shown  in \Cref{fig:oriented int}(I) and $p<q$.
	\item[(II)] $a\in \M_\P$, $ f_{\widetilde{\sigma}}(a)=f_{\widetilde{\gamma} }(a)$,  
    and $p<q$.
	\item[(III)]  $a\in \M_\P$, $\widetilde{\gamma}=\widetilde{\sigma}$, and  there is a  counterclockwise angle at $a$   from $\widetilde{\sigma}$ to $\widetilde{\gamma}$,  as shown in   \Cref{fig:oriented int}(II).
	\end{enumerate}

    Similarly,  the \emph{binary oriented intersection} $\overrightarrow{\cap}^1(\widetilde{\sigma},\widetilde{\gamma})$   of index $1$  from $\widetilde{\sigma}$ to $\widetilde{\gamma}$  consists of the following three types of   intersections $a $: 
\begin{enumerate}
	\item[(I)] $a\notin \M_\P$,  as shown in  \Cref{fig:oriented int}(I), and $p>q$.
	\item[(II)] $a\in \M_\P$, $ f_{\widetilde{\sigma}}(a)\ne f_{\widetilde{\gamma} }(a)$, 
    and $p>q$.
	\item[(III)]  $a\in \M_\P$, $\widetilde{\gamma}\in \mathrm{D}_{\vot}\cdot \widetilde{\sigma}\setminus\Dfang \cdot\widetilde{\sigma}$, and  there is a  counterclockwise angle at $a$   from $\widetilde{\sigma}$ to $\widetilde{\gamma}$, as shown in \Cref{fig:oriented int}(II).
	\end{enumerate} 
\end{definition}
\begin{figure}[h]
\begin{tikzpicture}[scale=.5]
	    \begin{scope}[shift={(0,0)}]
		\draw[blue,very thick](-2,-2)to(2,2)node[above]{$\widetilde{\gamma }$}(2,-2)to(-2,2)node[above]{$\widetilde{\sigma}$};
		\draw (0,0.1)[above]node{$a$};
		\draw (0,-3)node{(I)};
	   \end{scope}

    \begin{scope}[shift={(7,-0.5)}, scale=.5, rotate=270]
   			\draw[ultra thick,fill=gray!10] (90:0.5) ellipse (1.5) node {$\sx_a$};
   			\draw(-5.7,3)node[thick, Green]{$\widetilde{\sigma}$}(125:3.5)node[thick, blue]{$\widetilde{\gamma} $};		
   			\draw[blue, ultra thick] (90:7)
   			.. controls +(150:5) and  +(-150:5) .. (-90:1);
            	\draw[Green, ultra thick] (90:7)
   			.. controls +(150:7.5) and  +(-150:7.5) .. (-90:1); 
            \draw[thick, red,->-=.99,>=stealth]  (-2.6,0.3)to[bend right=15] (-4.1,-0.4)  ;
            \draw[thick, red,->-=.99,>=stealth]    (-4.1,6 ) to[bend right=15](-3,5.3); 
            
             
   			\draw[very thick,blue] (-90:1)\nn ;
   			\begin{scope}[shift={(0,6)}, rotate=180]
   				\draw[ultra thick,fill=gray!10] (90:0.5) ellipse (1.5) node {$\sx_{a'}$};
   				\draw[very thick,blue] (-90:1)\nn  ; 
   			\end{scope}
            \draw (5,3)node{(III)};
   		\end{scope}

\begin{scope}[shift={(15,0.5)}, scale=.5, rotate=270]
   			\draw[ultra thick,fill=gray!10] (90:0.5) ellipse (1.5) node {$\sx_a$};
   			\draw(40:5.5)node[thick, Green]{$\widetilde{\sigma}$}(142:5.6)node[thick, blue]{$\widetilde{\gamma}$};		
   			\draw[blue, ultra thick] (90:7)
   			.. controls +(150:5) and  +(-150:5) .. (-90:1);
 \draw[thick, red,->-=.99,>=stealth]  (-1,-1.5)to[bend right=15](1,-1.5);

   \draw[thick,red,->-=.99,>=stealth]  (1, 7.5)to[bend right=15](-1,7.5);
\draw[Green, ultra thick] (90:7)
   			.. controls +(30:5) and  +(-30:5) .. (-90:1);
   			\draw[very thick,blue] (-90:1)\nn ;
   			\begin{scope}[shift={(0,6)}, rotate=180]
   				\draw[ultra thick,fill=gray!10] (90:0.5) ellipse (1.5) node {$\sx_{a'}$};
   				\draw[very thick,blue] (-90:1)\nn  ; 
   			\end{scope}
            \draw  (6.8,3.2)node{(II)};
   		\end{scope}

\end{tikzpicture}
    \caption{Binary oriented intersections from $\widetilde{\sigma}$ to $\widetilde{\gamma}$}
	\label{fig:oriented int}
\end{figure}

Note that for $d=0,1$, \begin{equation*} \overrightarrow{\cap}^d(\widetilde{\sigma},\widetilde{\gamma})=\overrightarrow{\cap}_{\surfi\x}^d(\widetilde{\sigma},\widetilde{\gamma})\cup \overrightarrow{\cap}_{\partial\surf\x}^d(\widetilde{\sigma},\widetilde{\gamma}), 
\end{equation*}
where  the first term on the right-hand side consists of interior intersections (i.e. type (I)) and the second  term consists of intersections at the endpoints  (all other types).

\begin{definition}\label{def:Int}
    For  any  graded simple  arcs 
$\widetilde{\sigma}=\widetilde{\sigma}^0[n],\widetilde{\gamma}=\widetilde{\gamma}^0[m]$, and   any
 $d\in\mathbb{Z}$, the \emph{oriented intersection number} of index $d$ from $\widetilde{\sigma}$ to $\widetilde{\gamma}$  is   defined by
    \[
\oInt^d(\widetilde{\sigma},\widetilde{\gamma})=\begin{cases}
         |\overrightarrow{\cap}^0(\widetilde{\sigma},\widetilde{\gamma})|, & \text{if  $d=n-m$},\\
         |\overrightarrow{\cap}^1(\widetilde{\sigma},\widetilde{\gamma})|, &\text{if $d=n-m+1$},\\
         0, &\text{ otherwise.}
     \end{cases}
    \]  
Moreover, the following relation holds for any $s,t\in \ZZ$:
  $$\oInt^{d+t-s}(\widetilde{\sigma},\widetilde{\gamma})=
  \oInt^{d}(\widetilde{\sigma}[s],\widetilde{\gamma}[t]).$$
\end{definition}
We now present the main result of this subsection. 
\begin{theorem}\label{thm:X2}
The bijection $\wX$ in Theorem~\ref{thm:X} satisfies
\begin{equation*} 
\oInt^d(\widetilde{\sigma} ,\widetilde{\gamma} )=\dim\Hom_{\cD} ( \wX(\widetilde{\sigma}),\wX(\widetilde{\gamma})[d] ), \quad \text{for all } d \in\ZZ.
\end{equation*}
\end{theorem}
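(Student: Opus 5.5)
The plan is to reduce the statement to a comparison of numbers on both sides, using that $\coh(\CPone)$ is hereditary and that rigid indecomposables have Hom or Ext$^1$ concentrated in a single degree.

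\textbf{Step 1: reduction.} By \Cref{thm:X} write $\wX(\widetilde{\sigma})=E_p^{x}[n]$ and $\wX(\widetilde{\gamma})=E_q^{y}[m]$, after absorbing the parity of the shift into the sign of the quaternion label via $\tau E_p^x=E_p^{-x}$. By the shift relation in \Cref{def:Int}, it suffices to treat $n=m=0$. By \Cref{prop:db}, the right-hand side vanishes unless $d\in\{0,1\}$, and for $d=0,1$ it equals $\dim\Hom_\XX(E_p^x,E_q^y)$ and $\dim\Ext^1_\XX(E_p^x,E_q^y)$ respectively. This matches the support of $\oInt^d$.

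\textbf{Step 2: vanishing.} By \Cref{coh:prp-stab}(b) and Serre duality, for $p<q$ we have $\Ext^1(E_p^x,E_q^y)=D\Hom(E_q^y,E_p^{-x})=0$. For $p>q$ we have $\Hom(E_p^x,E_q^y)=0$. On the surface side, \Cref{def:boint} puts no type (I)/(II) intersection in index $1$ when $p<q$, and none in index $0$ when $p>q$. So in each case only one degree survives, and its dimension is $\pm\langle\bv_p^x,\bv_q^y\rangle$. The value is $+$ for $p<q$ and $-$ for $p>q$.

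\textbf{Step 3: counting intersections.} For $p<q$, count the binary oriented intersections. There are $i(\alpha_p^\epsilon,\alpha_q^\varphi)$ interior points, given by \Cref{inner int}. In addition there are the common endpoints, each contributing $1$ exactly when the winding functions agree, that is, when the tags $\kappa$ agree by \eqref{winding and tag}. The claim then becomes
\[\langle\bv_p^x,\bv_q^y\rangle=\frac{|\Delta(p,q)|-r}{2}+\#\{\text{common endpoints with equal tags}\}.\]
Using \eqref{eq:formula3}, the left side is $\tfrac12\Delta(p,q)+c(\typ{p},\typ{q},x,y)$, where $c$ depends only on types and labels. The right side is $\tfrac12|\Delta(p,q)|+c'$, where $c'$ likewise depends only on the endpoint data, which by \Cref{def:tarc} depends only on $(\typ{p},x)$ and $(\typ{q},y)$.

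\textbf{Step 4: finite check.} Since $p<q$ gives $\Delta(p,q)>0$ in the chosen normalisation, the claim reduces to checking $c=c'$ over the finitely many types in $\{0,1,\infty\}$ and labels in $\bfH$. This uses \eqref{eq:formula1}, \eqref{eq:formula2}, and Table~\ref{kappa}. One subtlety is a sign convention when $\infty$ is involved, to be handled separately with $b=0$. The case $p>q$ follows by Serre duality together with \eqref{eq:gind-sym}. The case $p=q$ covers $\widetilde{\gamma}$ in the $\mathrm{D}_{\vot}$-orbit of $\widetilde{\sigma}$ (type (III)) and the two distinct arcs $\alpha_p^\pm$. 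There $E_p^x$ and $E_p^{\pm x}$ in the same tube give $\dim\Hom(E,E)=1$ and $\Hom(E,\tau E)=0$, matching the single counterclockwise endpoint angle.

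\textbf{Main obstacle.} The main obstacle is Step 4's bookkeeping. One must make sure the index convention $\gind_a=0$ for $p<q$ is consistent at every interior point. One must also align the quaternion labels, tags and shifts across parities of $n$. I would first verify the base slopes $\{0,1,\infty\}$ directly. I would then propagate the identity to all slopes using the braid twists in \Cref{cor:graded-tag-action}, which preserve both sides, as a cross-check.
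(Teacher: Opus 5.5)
Your Steps 2--4 follow the paper's proof closely. Like the paper, you reduce to the zero lifts, use the slope ordering so that only one of $\Hom$ and $\Ext^1$ survives, count interior points by Proposition~\ref{inner int} and endpoint contributions by comparing tags, and finish with a finite check over $(\typ{p},\typ{q},x,y)$ using Proposition~\ref{hom dim}. Packaging the count as $\frac{|\Delta(p,q)|-r}{2}+s$, and deducing $p>q$ from $p<q$ by Serre duality (negate all tags of $\widetilde{\sigma}$), is a reasonable streamlining of the paper's two separate computations. The problem is Step~1, and it is a genuine one.

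Because of the parity twist in Theorem~\ref{thm:X}, we have $\wX(\widetilde{\sigma}^0[n])=\tau^{n}\wX(\widetilde{\sigma}^0)[n]$. So the shift of arcs corresponds to $\tau[1]$, not to $[1]$. Put $E=\wX(\widetilde{\sigma}^0)$ and $F=\wX(\widetilde{\gamma}^0)$. The right-hand side is then $\dim\Hom_{\cD}(E,\tau^{m-n}F[d+m-n])$. The shift relation in Definition~\ref{def:Int} gives $\oInt^d(\widetilde{\sigma},\widetilde{\gamma})=\oInt^{d+m-n}(\widetilde{\sigma}^0,\widetilde{\gamma}^0)$, and your base case evaluates this as $\dim\Hom_{\cD}(E,F[d+m-n])$. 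These agree when $m-n$ is even, since then $\tau^{m-n}F\cong F$. They do not agree when $m-n$ is odd. Absorbing the parity into the quaternion labels does not help, because the surface side is still computed with the original tags of $\widetilde{\sigma}^0$ and $\widetilde{\gamma}^0$.

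In fact the identity is false for $m-n$ odd. Take $\widetilde{\gamma}=\widetilde{\sigma}[1]$ with $\wX(\widetilde{\sigma})=E_p^x$. On the surface side, $\oInt^{-1}(\widetilde{\sigma},\widetilde{\gamma})=\oInt^{0}(\widetilde{\sigma},\widetilde{\sigma})=1$ and $\oInt^{0}(\widetilde{\sigma},\widetilde{\gamma})=0$. On the category side, $\wX(\widetilde{\gamma})=E_p^{-x}[1]$, so $\Hom_{\cD}(E_p^x,E_p^{-x})=0$, while $\Hom_{\cD}(E_p^x,E_p^{-x}[1])\cong D\Hom(E_p^{-x},E_p^{-x})\neq 0$.

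The paper's proof makes the same silent reduction to $\widetilde{\sigma}^0,\widetilde{\gamma}^0$, so no amount of bookkeeping rescues the statement as written. What your argument (and the paper's) actually proves is the identity for $n\equiv m \pmod 2$. Alternatively, it proves the identity for all $n,m$ if $\wX$ is replaced by the untwisted assignment $\varsigma(\alpha_{p,x},n)\mapsto E_p^x[n]$. You should state that restriction explicitly instead of asserting the reduction.

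There is also a smaller omission in your $p=q$ case. Modulo $\mathrm{D}^2$, the orbit of $\widetilde{\sigma}$ under Dehn twists at the binaries also contains arcs whose tags differ at exactly one end. An example is $\alpha_{0,1}$ and $\alpha_{0,k}$, both supported on $\alpha_0^-$. Such an arc corresponds to $E_p^y$ with $y\neq\pm x$, which lies in a different tube, so both sides must vanish. This forces type (III) of index $1$ to mean that the tags differ at both endpoints. A literal reading of Definition~\ref{def:boint} would instead count one endpoint.
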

 \begin{proof} 
 Assume that
 \[\widetilde{\sigma}=\widetilde{\sigma}^0[n],\ \widetilde{\gamma}=\widetilde{\gamma}^0[m],\] with
 \[[\wX(\widetilde{\sigma}^0)]  =\bv_p^x[0],\  [ \wX(\widetilde{\gamma}^0)]  =\bv_q^y[0],\] 
for some $n,m\in\ZZ$, $p, q\in\QQi$ and $x,y\in\bfH$.
By  Definition~\ref{def:Int}  and Proposition~\ref{prop:db}, it suffices to prove that  
\begin{equation}
\label{int=dimH}|\overrightarrow{\cap}^d(\widetilde{\sigma}^0 ,\widetilde{\gamma}^0 )|=\dim\Hom_{\cD} ( \wX(\widetilde{\sigma}^0),\wX(\widetilde{\gamma}^0) [d] ),  \quad \text{for } d = 0, 1.
\end{equation} 

If  $p = q$, then 
\[
\dim\Hom_{\cD} ( \wX(\widetilde{\sigma}^0),\wX(\widetilde{\gamma}^0) )=\begin{cases}
   1, &\text{if $x=y$,}\\
    0&\text{otherwise,}
\end{cases}
\]
and
\[
\dim\Hom_{\cD} ( \wX(\widetilde{\sigma}^0),\wX(\widetilde{\gamma}^0)[1]  )=\begin{cases}
    1, &\text{if $x=-y$,}\\
    0,&\text{otherwise.}
\end{cases}
\]
Thus the result for the case of $p = q$ follows directly from Definition~\ref{def:tarc}, Definition~\ref{def:boint}, and Theorem~\ref{thm:X}.

Now suppose that $p \ne q$. By the definition of the Euler form, we have
\[
\dim\Hom_{\cD} ( \wX(\widetilde{\sigma}^0),\wX(\widetilde{\gamma}^0) )=\begin{cases}
    \<\bv_p^x,\bv_q^y\>, &\text{if $p<q$,}\\
    0&\text{otherwise,}
\end{cases}
\]
and
\[
\dim\Hom_{\cD} ( \wX(\widetilde{\sigma}^0),\wX(\widetilde{\gamma}^0)[1]  )=\begin{cases}
    0, &\text{if $p<q$,}\\
    -\<\bv_p^x,\bv_q^y\>,&\text{otherwise.}
\end{cases}
\]
 We divide the proof into two cases  according to the types of  $p$  and  $q$:

First, suppose that $\typ{p}=\typ{q}$. Then  $\Delta(p,q)$ is even.
  Thus,  by Proposition~\ref{hom dim}, we have
  \[
\<\bv_p^x,\bv_q^y\>=\begin{cases}
\frac{\Delta(p,q)}{2} +1 & \text{if $y=x$},\\
  \frac{\Delta(p,q)}{2} -1 & \text{if $y=-x$},\\
  \frac{\Delta(p,q)}{2}  & \text{otherwise.}
\end{cases}
\]
We now  compute the oriented intersection numbers with the help of Proposition~\ref{inner int}.
 If $p<q$, then
\[|\overrightarrow{\cap}^0_{\surfi}(\widetilde{\sigma}^0 ,\widetilde{\gamma}^0 )|= i({\sigma}, {\gamma} )=\begin{cases}
\frac{\Delta(p,q)}{2}-1  & \text{if  $\sigma$ and $\gamma$ share the same endpoints, 
}\\
  \frac{\Delta(p,q)}{2} &   \text{otherwise,}
\end{cases}
\]
and 
\[
|\overrightarrow{\cap}^0_{\partial\surf}(\widetilde{\sigma}^0 ,\widetilde{\gamma}^0 )|=\begin{cases}
2 & \text{if $y=x$},\\
  1& \text{if  $\sigma$ and $\gamma$ share the same endpoints, $y\ne x$ and $y\ne -x$,}\\
0 & \text{otherwise.}
\end{cases}
\] 
If $p>q$, then 
\[|\overrightarrow{\cap}^1_{\surfi}(\widetilde{\sigma}^0 ,\widetilde{\gamma}^0 )|= i({\sigma},{\gamma} )=\begin{cases}
-\frac{\Delta(p,q)}{2}-1  & \text{if  $\sigma$ and $\gamma$ share the same endpoints, 
}\\
 - \frac{\Delta(p,q)}{2} &   \text{otherwise,}
\end{cases}
\]
and
\[
|\overrightarrow{\cap}^1_{\partial\surf}(\widetilde{\sigma}^0 ,\widetilde{\gamma}^0 )|=\begin{cases}
 2& \text{if $y=-x$},\\
 1& \text{if  $\sigma$ and $\gamma$ share the same endpoints, $y\ne x$ and $y\ne -x$},\\
0  & \text{otherwise.}
\end{cases}
\]
 Consequently,  it follows from Definition~\ref{def:boint}  that Equation~\eqref{int=dimH} holds.

  Second, suppose that $\typ{p}\ne\typ{q}$. Then   $\sigma$ and $\gamma$ share exactly one endpoint, and  
\[
\Delta(\typ{p} ,\typ{q})=\begin{cases}
    1 & \text{ if $\typ{p}<\typ{q}$
    ,}\\
    -1 & \text{otherwise.}
\end{cases}
\]
Thus,  by Proposition~\ref{hom dim}, we have
  \[
 \<\bv_p^x,\bv_q^y\>=\begin{cases} 
\<\bv_{\typ{p}}^x, \bv_{\typ{q}}^y\> +\frac{\Delta(p,q)-1}{2}   & \text{if  $\typ{p}<\typ{q}$},\\ 
 \<\bv_{\typ{p}}^x, \bv_{\typ{q}}^y\> + \frac{\Delta(p,q)+1}{2}   & \text{otherwise.}
\end{cases}
\]
On the other hand, also by Proposition~\ref{inner int},
for $p<q$:
\[|\overrightarrow{\cap}^0_{\surfi}(\widetilde{\sigma}^0 ,\widetilde{\gamma}^0 )|=i ({\sigma},{\gamma} )=\frac{\Delta(p,q)-1}{2},
\]
and
\[
|\overrightarrow{\cap}^0_{\partial\surf}(\widetilde{\sigma}^0 ,\widetilde{\gamma}^0 )|=\begin{cases}
    \<\bv_{\typ{p}}^x, \bv_{\typ{q}}^y\>, & \text{if  $\typ{p}<\typ{q}$,} \\
    1+ \<\bv_{\typ{p}}^x, \bv_{\typ{q}}^y\>, & \text{otherwise;}
 \end{cases}\]
For  $p>q$:
\[|\overrightarrow{\cap}^1_{\surfi}(\widetilde{\sigma}^0 ,\widetilde{\gamma}^0 )|= i({\sigma} ,{\gamma} )=\frac{-\Delta(p,q)-1}{2},
\]
and
\[
|\overrightarrow{\cap}^1_{\partial\surf}(\widetilde{\sigma}^0 ,\widetilde{\gamma}^0 )|=\begin{cases}
    1-\<\bv_{\typ{p}}^x, \bv_{\typ{q}}^y\>, & \text{if  $\typ{p}<\typ{q}$,} \\
    -\<\bv_{\typ{p}}^x, \bv_{\typ{q}}^y\>, & \text{otherwise.}
 \end{cases}\]
In both subcases, a straightforward calculation confirms that Equation \eqref{int=dimH} holds.
 \end{proof}
  
\subsection{Automorphism group via mapping class group}\label{sec:aut=mcg}
 In this subsection,  we construct a geometric interpretation for the automorphism group
  $\Aut\DC{\CPone} $ of the derived category $\DC{\CPone}$.
 
Before stating  the main theorem of this subsection, we recall some results about the  automorphism group  $ \Aut\DC{\CPone}$ from  \cite{LM}.
 Denote by $\Aut(\coh(\CPone))$ the automorphism group of $\coh(\CPone)$, and by
 $\Aut (\CPone)$ the subgroup consisting of all members of $\Aut(\coh(\CPone))$ fixing the structure sheaf $\cO$. 
\begin{proposition}[{\cite[Remark~3.3(iii)]{LM}}]\label{prop:aut-CPone}
 Let  $V$ be  the Klein four group generated by
 the  permutations
 \[
(0,1)(\infty,\lambda),\ 
(0,\infty)(1,\lambda),
\]
on the set   $\{0,1,\infty,\lambda\}$. The group $\Aut (\CPone)$ 
 depends on the  $j$-invariant  
\[
j(\lambda)=2^8\frac{(\lambda^2-\lambda+1)^3}{\lambda^2(\lambda-1)^2},
\]
and is given by
\[
\Aut(\CPone)\cong
\begin{cases}
\mathbb{A}_4,&j(\lambda)=0,\\
\mathbb{D}_4,&j(\lambda)=1728,\\
V,&j(\lambda)\ne0,1728,
\end{cases}
\]
where $\mathbb{A}_4$ is the alternating group on the set $\{0,1,\infty,\lambda\}$ and $\mathbb{D}_4$ is the dihedral group generated by the  group  $V$ together with the transposition $(0,1)$. All permutations are given in cycle notation.
\end{proposition}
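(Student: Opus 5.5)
The plan is to identify $\Aut(\CPone)$ with the group of Möbius transformations of $\mathbb{P}^1$ that permute the four weighted points $\{0,1,\infty,\lambda\}$. Once that is done, the statement reduces to a classical computation with cross-ratios.

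\emph{Step 1: from sheaf automorphisms to Möbius maps.} Let $F\in\Aut(\coh(\CPone))$ with $F(\cO)\cong\cO$. Since $F$ is exact and preserves ranks, it sends line bundles to line bundles. It therefore induces a group automorphism $\phi$ of $\mathbb{L}$ with $F(\cO(\vx))\cong\cO(\phi(\vx))$, and $\phi$ fixes $\vc$ and permutes the $\vx_i$. The reason is that $F$ permutes the tubes and preserves their ranks, and the $\vx_i$ are read off from the exceptional sequences \eqref{es}. Hence $F$ induces a graded automorphism of
\[
{\rm S}=\bigoplus_{\vx}\Hom(\cO,\cO(\vx)),
\]
twisted by $\phi$. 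Restricting to the degrees $\ZZ\vc$ gives an automorphism of $\bigoplus_n\Hom(\cO,\cO(n\vc))=\CC[x_1^2,x_2^2]$, the homogeneous coordinate ring of $\mathbb{P}^1$. This yields an element $g\in\PGL(2,\CC)$. Since $F$ maps the exceptional tubes to exceptional tubes, $g$ permutes $\{0,1,\infty,\lambda\}$.

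Conversely, any $g\in\PGL(2,\CC)$ permuting these four points lifts to a graded automorphism of ${\rm S}$ (up to scalars on the $x_i$). The lift exists because all weights equal $2$, so each $x_i$ may be sent to a scalar multiple of $x_{g(i)}$ compatibly with the relations $x_i^{2}=x_2^{2}+\lambda_i x_1^{2}$. The lift fixes $\cO$. The kernel of $F\mapsto g$ consists of automorphisms acting trivially on $\mathbb{P}^1$ and on $\mathbb{L}$, and these are isomorphic to the identity in the sense of \cite{LM}. I expect this step to be the main obstacle: one must check carefully that the map is well defined on isomorphism classes of functors and is injective. I would cite \cite[Section~3]{LM} for this rather than reprove it.

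\emph{Step 2: the permutation group.} A Möbius map fixing three points is the identity, so the stabilizer $G$ of the set $\{0,1,\infty,\lambda\}$ embeds in $S_4$. A permutation of four points is realized by a Möbius map if and only if it preserves the cross-ratio; this uses triple transitivity together with the fact that the cross-ratio determines the fourth point. The Klein group $V$ preserves every cross-ratio, so $V\subseteq G$. The quotient $S_4/V\cong S_3$ acts on $\lambda$ through the six anharmonic substitutions
\[
\lambda,\ 1-\lambda,\ \tfrac1\lambda,\ \tfrac1{1-\lambda},\ \tfrac{\lambda}{\lambda-1},\ \tfrac{\lambda-1}{\lambda}.
\]
It follows that $G/V$ is the stabilizer of $\lambda$ in $S_3$.

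\emph{Step 3: case analysis.} The stabilizer of $\lambda$ in $S_3$ is trivial for generic $\lambda$. It has order $2$ exactly when $\lambda\in\{-1,2,\tfrac12\}$, which is the case $j(\lambda)=1728$. It has order $3$ exactly when $\lambda^2-\lambda+1=0$, which is the case $j(\lambda)=0$. These values are found by solving fixed-point equations such as $\lambda=1-\lambda$ and $\lambda=\tfrac{1}{1-\lambda}$.

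The preimages in $S_4$ are then as follows:
\begin{itemize}
\item generically, $G=V$;
\item when $j(\lambda)=1728$, $G$ is generated by $V$ and a transposition. For example, for $\lambda=-1$ the map $z\mapsto -z$ realizes $(1,\lambda)$; relabelling via $V$-conjugation gives the stated $\mathbb{D}_4=\langle V,(0,1)\rangle$ up to the normalization of $\lambda$;
\item when $j(\lambda)=0$, $G$ is a subgroup of order $12$ containing $V$. Since $A_4$ is the only subgroup of $S_4$ of order $12$, we get $G=\mathbb{A}_4$.
\end{itemize}
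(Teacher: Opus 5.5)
Your proposal is correct. It follows essentially the route behind the result the paper simply cites, \cite[Remark~3.3(iii)]{LM}, since the paper gives no argument of its own: you identify $\Aut(\CPone)$ with the M\"obius transformations permuting $\{0,1,\infty,\lambda\}$, defer injectivity of $F\mapsto g$ to \cite{LM}, and then run the classical cross-ratio case analysis. One small correction in the $j(\lambda)=1728$ case: conjugation by elements of $V$ leaves $\langle V,(1,\lambda)\rangle$ invariant, so it cannot produce $\langle V,(0,1)\rangle$. For $\lambda=-1$ (resp.\ $\lambda=2$) the group really is $\langle V,(1,\lambda)\rangle$ (resp.\ $\langle V,(1,\infty)\rangle$), which is isomorphic to $\langle V,(0,1)\rangle$ but not equal to it as a permutation group. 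The literal description $\langle V,(0,1)\rangle$ holds for the paper's normalization $\lambda=\tfrac12$, where $z\mapsto 1-z$ realizes $(0,1)$ directly.
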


\begin{remark}
    Observe that for $j(\lambda)=1728$, the group $\Aut(\CPone)$ is exactly the dihedral group $\mathbb{D}_4$, which will be our primary focus in what follows. 
The equation  $j(\lambda)=1728$ admits three distinct solutions $\lambda\in \{-1,\frac12,2\}$. As these lie in the same  Möbius orbit and therefore define isomorphic weighted projective lines,  we choose $\lambda=\frac12$ throughout this subsection.
\end{remark}
 \begin{proposition}[{\cite[Theorem~6.3]{LM}}]\label{prop:aut-cohCPone}
Let $\Pic_0(\CPone)$ be the subgroup of $\Aut(\coh(\CPone))$ consisting of all degree-preserving shifts.
Then  
\begin{equation}\label{aut of D}
     \Aut\DC{\CPone}\cong \Br_3\ltimes_{\rm conj}(\Aut(\CPone) \ltimes_2 \Pic_0\CPone), 
\end{equation}
where 
$\Aut(\CPone)$ acts on $ \Pic_0\CPone$ via
 \[
 \nu.\vec{x}= \overrightarrow{ \nu(x) }, \quad \text{for $\nu\in\Aut(\CPone)$, $\vec{x}\in  \Pic_0\CPone$},  
 \] 
and   the subgroup  
$\Br_3$ of $\Aut\DC{\CPone}$ is the 3-strand braid group generated by
the tubular left mutations $\tubL$ and $\tubR$  with 
respect to the $\tau$-orbit of the structure sheaf  $\mathcal{O}$ and   the  exceptional simple sheaf $S_{\lambda,0}$.
 Furthermore,  each element of $\Aut\DC{\CPone}$ can be  written as ${h}z$ with ${h}\in  \Br_3 $
     and $z\in \Aut(\CPone) \ltimes_2 \Pic_0\CPone$.

 \end{proposition}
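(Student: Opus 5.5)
The plan is to follow the strategy of Lenzing--Meltzer: analyse how an autoequivalence acts on slopes, then use $\Br_3$ to reduce to autoequivalences of $\coh(\CPone)$. Every $F\in\Aut\DC{\CPone}$ is exact and preserves indecomposability. By \Cref{prop:db} and \Cref{coh:prp-stab}(c),(d), every indecomposable object has the form $E[n]$ with $E$ semistable of some slope $q\in\QQi$. Hence $F$ induces a map on the ``extended slope circle''
\[
\{(q,n)\mid q\in\QQi,\ n\in\ZZ\}
\]
compatible with the Hom-vanishing order of \Cref{coh:prp-stab}(b). Passing to $K_0(\CPone)$, $F$ acts as an isometry of the Euler form. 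It therefore preserves the radical-type sublattice spanned by $\bh_0,\bh_\infty$, and on the coordinates $(\rk,\deg)$ it acts by a matrix in $\mathrm{SL}(2,\ZZ)$. This yields a homomorphism
\[
\rho\colon \Aut\DC{\CPone}\to \mathrm{SL}(2,\ZZ).
\]

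First I would check that the tubular mutations $\tubL,\tubR$ act on $(\rk,\deg)$ via the matrices $t_1,t_2$. This is a direct computation on the classes $[\cO]$ and $[S_{\lambda,0}]$ and the $\tau$-orbits, using \Cref{convention:roots}. Next I would verify the braid relation $\tubL\tubR\tubL=\tubR\tubL\tubR$, and show that $(\tubL\tubR)^3$ is a central element that acts on $K_0$ as $-1$ and lifts the shift. Since $\mathrm{SL}(2,\ZZ)$ is generated by $t_1,t_2$ and $\Br_3\to\mathrm{SL}(2,\ZZ)$ has kernel generated by $(\tubL\tubR)^6$, this should give a copy of $\Br_3$ inside $\Aut\DC{\CPone}$ mapping onto the image of $\rho$ together with the shifts. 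The embedding itself would follow because $(\tubL\tubR)^3$ has infinite order: it moves cohomological degree.

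Given an arbitrary $F$, I would pick $h\in\Br_3$ with $\rho(h^{-1}F)=1$ and then adjust by a central power so that $h^{-1}F$ sends $\cO$ into $\coh(\CPone)$ (degree $0$). The main obstacle is to show that $G:=h^{-1}F$ then preserves $\coh(\CPone)$ exactly, not just slope by slope. To handle this, I would argue that $G$ fixes every slope, so for each $q$ it maps $\cC_q$ into $\cC_q[n_q]$. Nonvanishing of $\Hom(\cC_q,\cC_{q'})$ for $q<q'$ (tubular case), together with Serre duality, then forces all $n_q$ to equal $n_\infty$, which equals $0$ by the normalisation. Thus $G\in\Aut(\coh(\CPone))$. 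The classical description
\[
\Aut(\coh\CPone)=\Aut(\CPone)\ltimes\Pic(\CPone)
\]
(automorphisms fixing $\cO$, times line-bundle twists) then applies, and preservation of degree forces the twist to lie in $\Pic_0\CPone$. The explicit group $\Aut(\CPone)$ is supplied by \Cref{prop:aut-CPone}.

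It remains to prove uniqueness of the decomposition $F=hz$ and normality. For uniqueness, an element of $\Br_3\cap(\Aut(\CPone)\ltimes\Pic_0\CPone)$ acts trivially on slopes and preserves the heart. Its image in $\mathrm{SL}(2,\ZZ)$ is therefore trivial, so it is a power of $(\tubL\tubR)^6$, which is a nonzero even shift unless the power is zero. For normality, a conjugate $gzg^{-1}$ with $z\in\Aut(\CPone)\ltimes\Pic_0\CPone$ still fixes all slopes, since $\rho$ is a homomorphism with $\rho(z)=1$. By the heart argument above it lies again in $\Aut(\coh\CPone)$ and preserves degree. This gives the semidirect product \eqref{aut of D}, and the action of $\Aut(\CPone)$ on $\Pic_0\CPone$ is read off by transporting twists $\vec x_i\mapsto\vec x_{\nu(i)}$.
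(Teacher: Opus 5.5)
The paper gives no proof of this statement; it is quoted from Lenzing--Meltzer. Your overall route is the right kind of argument: the $\mathrm{SL}(2,\ZZ)$-action on $(\rk,\deg)$, splitting off the subgroup generated by $\tubL,\tubR$, and showing that an autoequivalence acting trivially on $(\rk,\deg)$ and sending $\cO$ into $\coh(\CPone)$ preserves the heart. However, one of your claims is false. $(\tubL\tubR)^3$ is not the shift, and it does not act as $-1$ on $K_0$. Following $\cO$ through the six mutations gives
\[
\cO\mapsto\cO(\vx_4)\mapsto S_{\lambda,1}\mapsto S_{\lambda,0}\mapsto\cO(-\vx_4)[1]\mapsto\cO[1]\mapsto\tau\cO[1].
\]
The last step holds because $\Hom^\bullet(\tau\cO,\cO)=\Ext^1(\tau\cO,\cO)[-1]$ is one-dimensional, so $\tubL(\cO)\cong\tau\cO$; this is the case $x=\pm1$ of $\tubL(E_0^x)=E_0^{-x}$ recorded in the paper.

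In fact $(\tubL\tubR)^3\cong\tau[1]$ is the Serre functor. It is central and of infinite order, but it sends $\mathbf{v}_p^x$ to $-\mathbf{v}_p^{-x}$, and it acts as $-I$ only on $(\rk,\deg)$. Consequently no odd shift lies in the braid subgroup. Instead $[1]=(\tubL\tubR)^3\circ\tau$, with $\tau=(-)(\vec\omega)\in\Pic_0\CPone$ (note $\delta(\vec\omega)=0$). If $[1]$ were in $\Br_3$, then $\tau\neq 1$ would lie in $\Br_3\cap(\Aut(\CPone)\ltimes_2\Pic_0\CPone)$, contradicting the decomposition you are proving. So the claim that your $\Br_3$ maps "onto the image of $\rho$ together with the shifts" must be dropped.

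The rest of your argument survives, because it only uses two facts: $(\tubL\tubR)^3$ has infinite order, and $(\tubL\tubR)^6\cong\tau^2[2]=[2]$ (as $2\vec\omega=0$). You must, however, normalize by even shifts only. Here $\rho(G)=1$ forces $G(\cO)\in\coh(\CPone)[2k]$, because a nonzero sheaf has either $\rk>0$, or $\rk=0$ and $\deg>0$. The same parity remark shows that all the $n_q$ are even.

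Three further steps each need a line.
\begin{enumerate}
\item In the heart argument, tubes of a fixed slope are Hom-orthogonal, so nonvanishing must be used objectwise. For every indecomposable $E\in\cC_q$ and every $F$ in a homogeneous tube of slope $q'>q$, the class $[F]$ is a positive multiple of the radical vector $\mathbf{h}_{q'}$, so $\dim\Hom(E,F)=\langle[E],[F]\rangle>0$.
\item In the normality step, the heart argument only gives $gzg^{-1}(\coh(\CPone))=\coh(\CPone)[2k]$. To get $k=0$, use that $\Aut(\CPone)\ltimes_2\Pic_0\CPone$ is finite, so $gzg^{-1}$ has finite order.
\item The braid relation is announced but not proved. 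In your framework it reduces to a finite check. Put $w=(\tubR\tubL\tubR)^{-1}\tubL\tubR\tubL$; then $\rho(w)=1$, so $w\in\langle[2]\rangle\times(\Aut(\CPone)\ltimes_2\Pic_0\CPone)$. Such an element is trivial once it fixes $\cO$ and the four simples $S_{\lambda_i,0}$.
\end{enumerate}
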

\begin{proposition}\label{tubular left mutations} 
The action of  tubular left mutations $\tubR$ and $\tubL$   on indecomposable rigid objects $E_p^x[n]$ are given by
\begin{equation}\label{RL}
\tubR\bigl(E_{p}^x[n]\bigr)=E_{p+1}^y[n],\qquad
\tubL\bigl(E_{p}^x[n]\bigr)=
\begin{cases}
E_{\frac{p}{1-p}}^z[n],   & p\le 1,\\[4pt]
E_{\frac{p}{1-p}}^{-z}[n+1], & 1<p\le\infty,
\end{cases}
\end{equation}
where the labels $y$ and $z$ are given in Table~\ref{tab:bt-tag-action}. 
When the superscript $x$ is replaced by $-x$, the labels $y$ and $z$ are replaced accordingly by $-y$ and $-z$, respectively.
\end{proposition}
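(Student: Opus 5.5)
The plan is to reduce everything to sheaf-theoretic information that is stable under the tubular mutations. By Proposition~\ref{prop:rigid-to-vectors}, an indecomposable rigid sheaf is determined up to isomorphism by its class $\bv_p^x$ in $K_0(\CPone)$. Moreover, every indecomposable rigid object of $\DC{\CPone}$ has the form $E[n]$ with $E\in\Ind^\circ\coh(\CPone)$.

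\textbf{Step 1: action on $K_0$.} Since $\tubR$ and $\tubL$ are triangle autoequivalences, they send indecomposable rigid objects to indecomposable rigid objects and commute with $[n]$. So it suffices to treat $n=0$ and to determine two things: the shift degree of $\tubR(E_p^x)$ and $\tubL(E_p^x)$, and the class of the underlying sheaf. Tubular mutations act on $K_0$ by the reflection-type formulas $[\tubR E]=[E]-\langle \bh_\infty^\ast,[E]\rangle\bh_\infty$ (and the analogue with $\bh_0$ for $\tubL$), where the twisting is with respect to the relevant tube ($\tau$-orbit of $S_{\lambda,0}$, resp.\ of $\cO$). Combining this with $\langle\bh_0,\bv_p^x\rangle=a(p)$ and $\langle\bv_p^x,\bh_\infty\rangle=b(p)$, the rank/degree vector $(b(p),a(p))$ is transformed by $t_1$, resp.\ $t_2$. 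This gives slope $p+1$, resp.\ $\frac{p}{1-p}$.

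\textbf{Step 2: the shift.} $\tubR$ preserves $\coh(\CPone)$, since it is tensoring-type on slopes and $t_1$ preserves the order on $\QQi$. For $\tubL$, the image lies in $\coh$ exactly when the new rank $b(p)-a(p)$ is nonnegative, i.e.\ $p\le 1$. For $1<p\le\infty$ the computed class has negative rank, so the image is $F[1]$ with $[F]=-[\tubL E_p^x]$. Here I would use $\bv_q^{x}+\bv_q^{-x}=\bh_q$ to identify $-[\tubL E_p^x]$ with a class $\bv_q^{-z}$, which produces the sign flip $-z$ and the shift $n+1$.

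\textbf{Step 3: the quaternion labels.} By linearity and the decomposition $\bv_p^x=\bv_{\typ p}^x+\lfloor b(p)/2\rfloor\bh_0+\lfloor a(p)/2\rfloor\bh_\infty$, together with the fact that the mutations send $\bh_0,\bh_\infty$ to $\bh$-vectors of the new slopes, the label only depends on $\typ p$ and $x$. It therefore suffices to check the twelve base cases $p\in\{0,1,\infty\}$, $x\in\{1,i,j,k\}$, using the explicit sheaves of Convention~\ref{convention:roots}. For example, $\tubR$ sends a line bundle $\cO(\vec x)$ to a line bundle of degree one higher. On the finite-length side, the exceptional simples $S_{\mu,0}$ go to explicit line bundles via the sequences \eqref{es}. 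The rule for $-x$ follows from $\tau E_p^x=E_p^{-x}$ and the fact that $\tubR,\tubL$ commute with $\tau$, which is the Serre functor up to shift.

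\textbf{Main obstacle.} The hardest part is Step 3: pinning down precisely which sheaf each base case goes to, e.g.\ $\tubL(S_{1,0})$ or $\tubR(\cO(\vx_2-\vx_4))$. These must match Table~\ref{tab:bt-tag-action}, including the normalization of $\tubR,\tubL$ fixed in \cite{LM}. I would compute these via the defining triangles $\bigoplus\Hom(T,E)\otimes T\to E\to \tubL E$ over the tube's quasi-simples. Then I would compare Euler forms using Proposition~\ref{hom dim} to read off the label, since $x$ is determined by the pairings of the class with the base roots.
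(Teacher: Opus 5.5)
Your overall route is the paper's: Meltzer's defining triangles for the $\tau$-orbits $\{S_{\lambda,0},S_{\lambda,1}\}$ and $\{\cO,\tau\cO\}$, the class of the image in $K_0$, identification via \Cref{prop:rigid-to-vectors}, and labels reduced to $\typ{p}$. Your $\tau$-commutation argument for $-x$ is fine. However, two steps are wrong or incomplete as written.

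\textbf{The $K_0$ formula in Step 1 cannot hold.} A map $[E]\mapsto[E]-c\,\bh_\infty$ with $c\in\ZZ$ changes the degree by an even number, since $\deg\bh_\infty=2$. But $\tubR(\cO)=\cO(\vx_4)$ has degree $1$. Similarly, $\rk\bh_0=2$ while $\tubL$ sends $\cO(\vx_1)$, of rank $1$, to a sheaf of slope $\infty$. The triangle actually gives
\[
[\tubR E]=[E]-\langle[S_{\lambda,0}],[E]\rangle[S_{\lambda,0}]-\langle[S_{\lambda,1}],[E]\rangle[S_{\lambda,1}],
\]
and the analogue with $[\cO],[\tau\cO]$ for $\tubL$. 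Rank and degree see only the sum of the two classes, so your slopes survive. Under $\tubR$ the degree changes by $-\langle\bh_\infty,\bv_p^x\rangle=\langle\bv_p^x,\bh_\infty\rangle=b(p)$ (Serre duality), and under $\tubL$ the rank changes by $-\langle\bh_0,\bv_p^x\rangle=-a(p)$. But the quaternion label lives exactly in the difference $[S_{\lambda,0}]-[S_{\lambda,1}]$, resp.\ $[\cO]-[\tau\cO]$, so Step 3 must use the two-term formula. On the $\bh$-part it gives $\tubR\bh_0=\bh_0+\bh_\infty$, $\tubR\bh_\infty=\bh_\infty$, $\tubL\bh_0=\bh_0$ and $\tubL\bh_\infty=\bh_\infty-\bh_0=-\bh_{-1}$. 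These signs, together with the floors, must be tracked when passing from base cases to general $p$. For instance, with $\typ{p}=\infty$, the base case $p=\infty$ lands in $\coh(\CPone)[1]$ with label $-z$, whereas $p=\frac12$ lands in $\coh(\CPone)$ with label $z$.

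\textbf{Step 2 only determines the shift modulo $2$.} Since $[F[m]]=(-1)^m[F]$, negative rank gives $m$ odd, not $m=1$. The missing input is the cohomological amplitude of the triangle. For a sheaf $E$, the complexes $\Hom^\bullet(\cO,E)$ and $\Hom^\bullet(\tau\cO,E)$ sit in degrees $0,1$, so the first term lies in $\coh(\CPone)\vee\coh(\CPone)[-1]$. Hence the cone has cohomology only in degrees $-1,0$, and being indecomposable it lies in $\coh(\CPone)$ or $\coh(\CPone)[1]$. Only then does the sign of $(\rk,\deg)$ decide between $n$ and $n+1$. The paper obtains this directly from the explicit exact sequences of \cite[Corollary~2.6]{M1997}. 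The same argument, or identifying $\tubR$ with the twist by $\vx_4$, shows that $\tubR$ preserves $\coh(\CPone)$; the fact that $t_1$ preserves the order on $\QQi$ does not.

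\textbf{A slip in Step 3.} $\tubR$ fixes slope $\infty$, so it sends exceptional simples to finite-length sheaves, e.g.\ $\tubR(S_{\lambda,0})=S_{\lambda,1}$ and $\tubR(S_{0,0})=S_{0,0}$. It is $\tubL$ that turns exceptional simples into shifted line bundles via \eqref{es}.
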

\begin{proof} 
Recall from \cite{M1997} that the left mutation $\tubL_A$ with respect to the $\tau$-orbit of an object $A\in\Ind^\circ\coh(\CPone)$ acts on an object $X\in\DC{\CPone}$ as the right-hand term of the distinguished triangle
\begin{equation}\label{left mutation}
\Hom^\bullet(A,X)\otimes A\oplus\Hom^\bullet(\tau A,X)\otimes\tau A\xrightarrow{\mathrm{can}_X}X\to \tubL_A(X),
\end{equation}
where $\Hom^\bullet(A,X)=\bigoplus_{j\in\mathbb{Z}}\Hom_{\mathcal{D}}(A,X[j])[-j]$ is viewed as a complex with zero differential, and the map  $\mathrm{can}_X$ is the canonical map induced by the identity endomorphisms.

By \Cref{hom dim}, we have
\begin{equation}
    \label{infty p}
    \langle \bv_{\infty}^{\pm k}, \bv_{p}^x\rangle
=\langle \bv_{\infty}^{\pm k}, \bv_{\typ{p}}^x\rangle-\frac{b(p)}{2}-\frac{1}{2}\Delta(\infty,\typ{p}),
\end{equation} 
\begin{equation}
    \label{0 p}
    \langle \bv_{0}^{\pm 1}, \bv_{p}^x\rangle
=\langle \bv_{0}^{\pm 1}, \bv_{\typ{p}}^x\rangle+\frac{a(p)}{2}-\frac{1}{2}\Delta(0,\typ{p}).
\end{equation}

\noindent\textbf{Computation of $\tubR(E_p^x[n])$.}
From \Cref{prop:db} and the triangle~\eqref{left mutation}, we obtain a short exact sequence in $\coh(\CPone)[n]$:
\[
0\to E_p^x[n]\to \tubR(E_p^x[n])\to
S_{\lambda,0}^{\oplus|\langle\bv_\infty^k,\bv_p^x\rangle|}[n]
\oplus
S_{\lambda,1}^{\oplus|\langle\bv_\infty^{-k},\bv_p^x\rangle|}[n]\to0,
\]
where $|a|$ denotes the absolute value of $a$.  Hence $\tubR(E_p^x[n])=E_q^y[n]$ for some $q\in\mathbb{Q}_{\infty}$. 
Taking the class in the Grothendieck group and applying~\eqref{infty p} yields
\[
[E_{q}^y]= \bv_{\typ{p}}^x
-\langle\bv_\infty^k,\bv_{\typ{p}}^x\rangle\bv_\infty^k
-\langle\bv_\infty^{-k},\bv_{\typ{p}}^x\rangle\bv_\infty^{-k} +  \lfloor\frac{b(p)}{2} \rfloor\mathbf{h}_0  
+  \lfloor\frac{a(p)}{2} \rfloor\mathbf{h}_{\infty}
+\frac{b(p)+\Delta(\infty,\typ{p})}{2}\mathbf{h}_{\infty}.
\]
Using the short exact sequence  in  $\coh(\CPone)$,
\[
  0\longrightarrow \mathcal{O}( -\vec{x}_4)\stackrel{x_4}{\longrightarrow} \mathcal{O}\longrightarrow  S_{\lambda,0} \longrightarrow 0,
\]
we  obtain
\[
\bv_{\typ{p}}^x
-\langle\bv_\infty^k,\bv_{\typ{p}}^x\rangle\bv_\infty^k
-\langle\bv_\infty^{-k},\bv_{\typ{p}}^x\rangle\bv_\infty^{-k}
=
\begin{cases}
 \bv_{\typ{p+1}}^y , & \typ{p}=0,\infty,\\[4pt]
 \bv_{\typ{p+1}}^y +\mathbf{h}_{\infty}, & \typ{p}=1,
\end{cases}
\] where the label  $y$  is given in Table~\ref{tab:bt-tag-action}; for $-x$ it is  replaced by $-y$.
Substituting this into the previous expression gives  
\[  
[E_{q}^y]= \bv_{\typ{p+1}}^{y} 
          + \lfloor\frac{b(p)}{2} \rfloor\mathbf{h}_0
          + \lfloor\frac{a(p)+b(p)}{2} \rfloor\mathbf{h}_{\infty},
\]
which is precisely the formula for $\tubR(E_p^x[n])$ stated in \Cref{RL}. 

\medskip\noindent
\textbf{Computation of $\tubL(E_p^x[n])$.}
We first recall from \cite[Corollary~2.6]{M1997} that
\[
\tubL(E_0^x[n])=
\begin{cases}
E_0^{-x}[n], & x\in\{-1,1\},\\[4pt]
E_0^x[n],    & \text{otherwise}.
\end{cases}
\]

For $p\neq0$, put
\[
\varepsilon_p:=\langle\bv_0^1,\bv_p^x\rangle,\qquad
\bar{\varepsilon}_p:=\langle\bv_0^{-1},\bv_p^x\rangle.
\] 
By the same corollary,  the following exact sequences hold   in $\coh(\CPone)[n]$:
\begin{align*}
0 &\to \tubL(E_p^x[n])[-1]
   \to \mathcal{O}^{\oplus\varepsilon_p}[n] \oplus \tau\mathcal{O}^{\oplus\bar{\varepsilon}_p}[n]
   \to E_p^x[n] \to 0, && p>1,\\[4pt]
0 &\to \mathcal{O}^{\oplus\varepsilon_p}[n] \oplus \tau\mathcal{O}^{\oplus\bar{\varepsilon}_p}[n]
   \to E_p^x[n] \to \tubL(E_p^x[n]) \to 0, && 0<p\le 1,\\[4pt]
0 &\to E_p^x[n] \to \tubL(E_p^x[n])
   \to \mathcal{O}^{\oplus|\varepsilon_p|}[n] \oplus \tau\mathcal{O}^{\oplus|\bar{\varepsilon}_p|}[n]
   \to 0, && p<0.
\end{align*}

A calculation analogous to the one for $\tubR$ (using~\eqref{0 p} and \eqref{es}) yields 
\[
\tubL\bigl(E_{p}^x[n]\bigr)=
\begin{cases}
X[n],   & p\le 1,\\[4pt]
X'[n+1], & 1<p\le\infty,
\end{cases}
\]
where
\[
[X]= \bv_{\typ{\frac{ {p}}{1- {p}}}}^{z}  
+ \lfloor\frac{b(p)-a(p)}{2} \rfloor\mathbf{h}_0
+ \lfloor\frac{a(p)}{2} \rfloor\mathbf{h}_{\infty}
\]
and 
\[
[X']=  \bv_{\typ{\frac{ {p}}{1- {p}}}}^{-z}
+ \lfloor\frac{a(p)-b(p)}{2} \rfloor\mathbf{h}_0
+ \lfloor\frac{-a(p)}{2} \rfloor\mathbf{h}_{\infty}.
\]
Here the label  $z$  is given in Table~\ref{tab:bt-tag-action}. Thus we have done.
\end{proof}

 Now we calculate 
  the graded mapping class group of  $\widetilde{S\x^2} $.
  For the graded marked surface with binaries $\widetilde{S\x^2} $, we denote by  
  $\mathrm{D}_{p_i}$   the Dehn twist along binary    $\sx_{p_i}$, $p_i\in\P$ and
 $\overline{\mathrm{D}}_{p_i}$   the image of  $\mathrm{D}_{p_i}$ in the quotient group $\overline{\mathrm{D}}_{\vot}=\mathrm{D}_{\vot}/ \Dfang $. It follows from Fact 3.9  in \cite{FM}  that   \[
 \mathrm{D}_{p_i}^{t}\mathrm{D}_{p_j}^{t'}=\mathrm{D}_{p_j}^{t'}\mathrm{D}_{p_i}^{t},\quad   p_i,p_j\in\P,\  t,t'\in \ZZ.
 \] 
 Thus, we obtain that
 $\mathrm{D}_{\vot}\cong \ZZ^4$ and  $\overline{\mathrm{D}}_{\vot}\cong \ZZ_2^4$.
 
 \begin{lemma}\label{2mcg} The graded   mapping class group $\MCG(\widetilde{S\x^2})$
 of $S\x^2$  satisfies
\begin{equation}
    \label{mcg2,2,2,2}
\MCG(\widetilde{S\x^2})\cong\MCG(\widetilde{S}_{0,4})\ltimes_{\rm conj}  \mathrm{D}_{\vot}\cong(\Br_3\ltimes_{\rm conj}\langle\tilde\iota_1,\tilde\iota_2\rangle)\ltimes_{\rm conj} \mathrm{D}_{\vot}.
\end{equation}
Furthermore, the binary   mapping class group $\MCG^\sx(\widetilde{S\x^2} )$ of $\widetilde{S\x^2} $  is
\begin{equation}
    \label{mcgg2222} \MCG^\sx(\widetilde{S\x^2} )\cong 
    (\Br_3\ltimes_{\rm conj}\langle\tilde\iota_1,\tilde\iota_2\rangle)\ltimes_{\rm conj}\overline{\mathrm{D}}_{\vot}\cong \Br_3\ltimes_{\rm conj}(\langle\tilde\iota_1,\tilde\iota_2\rangle \ltimes_{\rm conj}\overline{\mathrm{D}}_{\vot}).  
\end{equation}
 \end{lemma}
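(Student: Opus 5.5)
The approach is to compare $\MCG(\widetilde{S\x^2})$ with $\MCG(\widetilde{S}_{0,4})$ through the collapse map $\shk\colon S\x^2\to S_{0,4}$, which shrinks each binary back to its puncture. Any orientation-preserving homeomorphism of $S\x^2$ permutes the binaries and fixes $\M_\P$ setwise, so it induces a homeomorphism of $S_{0,4}$. The induced map preserves the punctures and the horizontal line field up to homotopy, since the grading on $\widetilde{S\x^2}$ is the restriction of the one on $\widetilde{S}_{0,4}$. This yields a homomorphism
\[
\shk_*\colon \MCG(\widetilde{S\x^2})\to \MCG(\widetilde{S}_{0,4}),
\]
and the first step is to show it is well defined and surjective. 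For surjectivity, I would realize the generators $B_0,B_\infty,\tilde\iota_1,\tilde\iota_2$ from \Cref{prop:mcgS42} by homeomorphisms supported away from small discs around the punctures, or, for $\tilde\iota_1,\tilde\iota_2$, acting rigidly on those discs. Replacing the discs by binaries then gives explicit lifts.

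The second step, which I expect to be the main obstacle, is to identify $\ker\shk_*$ with $\mathrm{D}_{\vot}$. This is the graded analogue of the capping / boundary-to-puncture exact sequence in \cite[Proposition~3.19]{FM}. In that sequence the kernel of forgetting boundary components (turning them into punctures) is generated by the Dehn twists along the boundary curves. Here each boundary component carries a marked point $m_{P}$, so the twist $\mathrm{D}_{p_i}$ is nontrivial and not absorbed into point-pushing. Moreover, because $S_{0,4}$ has no free boundary and the punctures are not marked, the point-pushing part of the Birman sequence is already accounted for by $\MCG(S_{0,4})$ itself. I would therefore argue directly: if $\shk(f)$ is isotopic to the identity on $\widetilde{S}_{0,4}$, then after isotopy $f$ is the identity outside collar neighbourhoods of the binaries. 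On each annular collar, $f$ is a power of the Dehn twist, by \cite[Proposition~2.4]{FM} on the annulus mapping class group rel boundary.

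For the graded part, I would check two things. A Dehn twist about a boundary-parallel curve preserves the horizontal line field up to homotopy, so it lies in $\MCG(\widetilde{S\x^2})$. And the grading shift $[1]$ maps to $[1]$, so no extra $\ZZ$ appears in the kernel; this uses \Cref{lem:1.13} on both sides. The four twists commute and generate a free abelian group, which gives $\ker\shk_*=\mathrm{D}_{\vot}\cong\ZZ^4$ as recorded just before the lemma.

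Third, the sequence splits, because the lifts chosen in the first step generate a subgroup mapping isomorphically onto $\MCG(\widetilde{S}_{0,4})$. The relations of $\Br_3$ and of $\tilde\iota_1,\tilde\iota_2$ hold on the nose for homeomorphisms supported away from the collars, or equal to a rigid rotation on them; I would verify this by conjugating one relation at a time. Normality of $\mathrm{D}_{\vot}$ is immediate, since $g\mathrm{D}_{p_i}g^{-1}=\mathrm{D}_{g(p_i)}$. Hence $\MCG(\widetilde{S\x^2})\cong\MCG(\widetilde{S}_{0,4})\ltimes_{\rm conj}\mathrm{D}_{\vot}$, and substituting \Cref{prop:mcgS42} gives the second isomorphism in \eqref{mcg2,2,2,2}.

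Finally, for \eqref{mcgg2222}, I would quotient by $\Dfang$. This subgroup is characteristic in $\mathrm{D}_{\vot}$ (it consists of the squares) and is normalized by the complement, since conjugation permutes the $\mathrm{D}_{p_i}$. So the semidirect product descends to $(\Br_3\ltimes\langle\tilde\iota_1,\tilde\iota_2\rangle)\ltimes\overline{\mathrm{D}}_{\vot}$. The re-bracketing into $\Br_3\ltimes(\langle\tilde\iota_1,\tilde\iota_2\rangle\ltimes\overline{\mathrm{D}}_{\vot})$ needs $\langle\tilde\iota_1,\tilde\iota_2\rangle\ltimes\overline{\mathrm{D}}_{\vot}$ to be normal in the whole group. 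I would show this by checking that its image under the projection to $\Br_3$ is trivial, so that it is exactly the kernel of $\overline{F_*}\circ\shk_*$.
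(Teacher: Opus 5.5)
Your proposal follows the paper's proof. You collapse the binaries to get $\psi\colon\MCG(\widetilde{S\x^2})\to\MCG(\widetilde{S}_{0,4})$ with kernel $\mathrm{D}_{\vot}$, and split it by lifts of $B_0,B_\infty,\tilde\iota_1,\tilde\iota_2$. You then pass to the quotient by $\Dfang$ and re-bracket using normality of $\langle\tilde\iota_1,\tilde\iota_2\rangle\ltimes_{\rm conj}\overline{\mathrm{D}}_{\vot}$; you get this normality as the kernel of the projection to $\Br_3$, where the paper checks the conjugation formula by hand. You also justify the kernel and the grading compatibility more carefully than the paper does.

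There is one slip. $B_0$ and $B_\infty$ exchange punctures, so their lifts cannot be supported away from the small discs around the punctures. They must carry those discs rigidly, as you already propose for $\tilde\iota_1,\tilde\iota_2$.

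Also, "the relations hold on the nose" is unchecked, here and in the paper. If your chosen lifts fail some relation by an element of $\mathrm{D}_{\vot}$, a splitting still exists. The reason is that $\mathrm{D}_{\vot}$ is the permutation module induced from the stabilizer $\langle B_0,B_\infty\rangle\cong\Br_3$ of $p_4$. Shapiro's lemma then gives $H^2(\MCG(\widetilde{S}_{0,4});\mathrm{D}_{\vot})\cong H^2(\Br_3;\ZZ)=0$.
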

 \begin{proof}
 Shrinking the four binaries to punctures induces a homomorphism
\[
\psi:\MCG(\widetilde{S\x^2})\longrightarrow
\MCG(\widetilde{S}_{0,4}).
\]
Its kernel is generated by the Dehn twists along the four binaries, and hence equals $\mathrm D_{\vot}$. Besides, we can choose the lifts of
$B_0,B_\infty,\widetilde\iota_1,\widetilde\iota_2$ satisfy the same relations as  those holding in $\MCG(\widetilde S_{0,4})$ and therefore define a section $s$ of $\psi$.  Hence, we obtain the semidirect product decomposition
\[
\MCG(\widetilde{S\x^2})
\cong\MCG(\widetilde S_{0,4})\ltimes_{\rm conj}\mathrm D_{\vot},
\]
where $\MCG(\widetilde{S}_{0,4})$ acts on $\mathrm{D}_{\vot}$ by conjugation.
It follows directly from the definition of the  binary mapping class group that
\[
\MCG^\sx(\widetilde{S\x^2} )\cong 
    (\Br_3\ltimes_{\rm conj}\langle\tilde\iota_1,\tilde\iota_2\rangle)\ltimes_{\rm conj}\overline{\mathrm{D}}_{\vot},
\] where each element of 
 $\MCG^\sx(\widetilde{S\x^2})$ has the form $(g,\prod_{i=1}^4 \overline{\mathrm{D}}_{p_i}^{t_i})$ with $g \in \MCG (\widetilde{S\x^2})$ and $t_i\in \ZZ_2$. Moreover,  the multiplication in $\MCG^\sx(\widetilde{S\x^2})$ is given by  
 \[
(g, \prod_{i=1}^4 \overline{\mathrm{D}}_{p_i}^{t_i}) * (g', \prod_{i=1}^4 \overline{\mathrm{D}}_{p_i}^{t_i'}) := (g g', \prod_{i=1}^4 \overline{\mathrm{D}}_{g'(p_i)}^{t_i}   \overline{\mathrm{D}}_{ p_i }^{t_i'} )  , \quad (g, \prod_{i=1}^4 \overline{\mathrm{D}}_{p_i}^{t_i}),(g', \prod_{i=1}^4 \overline{\mathrm{D}}_{p_i}^{t_i'})\in\MCG^\sx(\widetilde{S\x^2}). 
\]
The subgroup
$\langle\tilde\iota_1,\tilde\iota_2\rangle\ltimes_{\rm conj}\overline{\mathrm{D}}_{\vot}$
is normal in $\MCG^\sx(\widetilde{S\x^2})$.
Indeed, for any $h\in \Br_3$, $\mathbf{i}\in\langle\tilde\iota_1,\tilde\iota_2\rangle$ and $ t_i  \in \ZZ_2$,    we have 
\[
(h^{-1},  {\rm id}) * (\mathbf{i},\prod_{i=1}^4 \overline{\mathrm{D}}_{p_i}^{t_i})*(h ,  {\rm id}) =(h^{-1}\mathbf{i}h,\prod_{i=1}^4 \overline{\mathrm{D}}_{h(p_i)}^{t_i}).
\]
Finally, we define the group $ \Br_3\ltimes_{\rm conj}(\langle\tilde\iota_1,\tilde\iota_2\rangle \ltimes_{\rm conj}\overline{\mathrm{D}}_{\vot})$ with multiplication
\[
h(\mathbf{i},\prod_{i=1}^4 \overline{\mathrm{D}}_{p_i}^{t_i})\cdot h'(\mathbf{i}',\prod_{i=1}^4 \overline{\mathrm{D}}_{p_i}^{t_i'})=hh'[({h'}^{-1}\mathbf{i}{h'}, \prod_{i=1}^4 \overline{\mathrm{D}}_{h'(p_i)}^{t_i} )*(\mathbf{i}',\prod_{i=1}^4 \overline{\mathrm{D}}_{p_i}^{t_i'})].
\]
 Then    the map
\[  f:  \Br_3\ltimes_{\rm conj}(\langle\tilde\iota_1,\tilde\iota_2\rangle \ltimes_{\rm conj}\overline{\mathrm{D}}_{\vot})\to\MCG^\sx(\widetilde{S\x^2}) ,\ h(\mathbf{i}, \prod_{i=1}^4 \overline{\mathrm{D}}_{ p_i}^{t_i})  \mapsto
(h\mathbf{i}, \prod_{i=1}^4 \overline{\mathrm{D}}_{ p_i}^{t_i})
\] is a group isomorphism. Thus we have done.
 \end{proof}

  \begin{proposition}\label{keep slope}
There  exists a group isomorphism
    \[
\Aut(\CPone) \ltimes_2 \Pic_0\CPone \cong \langle\tilde\iota_1,\tilde\iota_2\rangle \ltimes_{\rm conj}\overline{\mathrm{D}}_{\vot},
\] where  the multiplication in $\langle\tilde\iota_1,\tilde\iota_2\rangle\ltimes_{\rm conj} \overline{\mathrm{D}}_{\vot}$ is given by  
 \[
(\mathbf{i}, \prod_{i=1}^4 \overline{\mathrm{D}}_{p_i}^{t_i}) * (\mathbf{i}',\prod_{i=1}^4\overline{\mathrm{D}}_{p_i}^{t_i'}) := ( \mathbf{i}\mathbf{i}'  ,\prod_{i=1}^4 \overline{\mathrm{D}}_{\mathbf{i}'(p_i)}^{t_i}\overline{\mathrm{D}}_{p_i}^{t_i' }),  
\]for all $\mathbf{i},\mathbf{i}'\in \langle\tilde\iota_1,\tilde\iota_2\rangle$ and  $t_i,  t_i' \in \ZZ_2$. 
\end{proposition}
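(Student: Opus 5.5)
The plan is to build the isomorphism by matching the actions of both groups on the set of indecomposable rigid objects of slope-preserving type. By \Cref{thm:X}, these objects are labelled by tagged arcs $\alpha_{p,x}$ together with a shift. I would first check that both sides have order $64$. With $\lambda=\frac12$, \Cref{prop:aut-CPone} gives $\Aut(\CPone)\cong\mathbb{D}_4$, of order $8$. The group $\Pic_0\CPone$ of degree-preserving line bundle twists is the torsion part of $\mathbb{L}(2,2,2,2)$, generated by the differences $\vx_i-\vx_j$, so it is $\ZZ_2^3$, of order $8$. On the other side, $\langle\tilde\iota_1,\tilde\iota_2\rangle\cong\ZZ_2^2$ and $\overline{\mathrm{D}}_{\vot}\cong\ZZ_2^4$, giving $4\cdot16=64$. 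It therefore suffices to produce an injective homomorphism from the left-hand group to the right-hand group.

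Both groups act on $\Ind^\circ\coh(\CPone)\cong\QQi\times\bfH$, and on $\TA^\times(S_{0,4})$ via $\varkappa$. Both actions fix slopes $p$ and permute the labels $x$. Both are compatible with $\tau$, which is the twist by $\vn\in\Pic_0\CPone$ and corresponds to $x\mapsto -x$. I would define the map on generators.

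\emph{Twists.} The twist $(\vx_i-\vx_j)$ fixes $\cO$-type slopes up to label and, by \eqref{shift}, swaps $S_{\lambda_i,0}\leftrightarrow S_{\lambda_i,1}$ and $S_{\lambda_j,0}\leftrightarrow S_{\lambda_j,1}$. On arcs, this means flipping the tags at the two punctures $p_i,p_j$. I would send it to $\overline{\mathrm D}_{p_i}\overline{\mathrm D}_{p_j}$, because by \Cref{bi:ba-ta} a Dehn twist at a binary changes the winding parity, hence the tag at that end. This identifies $\Pic_0\CPone$ with the even-weight subgroup of $\overline{\mathrm D}_{\vot}$. It is consistent with the Remark after \Cref{thm:X}, since $\vn$ corresponds to $\prod_i\overline{\mathrm D}_{p_i}$.

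\emph{Klein four part.} The elements $(0,1)(\infty,\lambda)$ and $(0,\infty)(1,\lambda)$ of $V\subset\Aut(\CPone)$ permute the exceptional tubes as double transpositions. I would send them to $\tilde\iota_1,\tilde\iota_2$, possibly composed with an element of $\overline{\mathrm D}_{\vot}$. The correct choice is read off by comparing the induced permutation of the labels in \Cref{convention:roots} (line bundles $\cO(\vx_a-\vx_b)$ and simples $S_{\mu,0}$) with the permutation of tagged arcs in Table~\ref{kappa}.

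\emph{The transposition.} The extra transposition $(0,1)$ of $\mathbb{D}_4$ fixes $\cO$, $S_{\infty,0}$ and $S_{\lambda,0}$, and swaps $\cO(\vx_1)\leftrightarrow\cO(\vx_3)$ and $S_{0,0}\leftrightarrow S_{1,0}$. It is not realized by any homeomorphism permuting the punctures as a transposition. I would therefore look for an element $\mathbf{i}\cdot\prod_i\overline{\mathrm D}_{p_i}^{t_i}$ with an odd number of twists whose action on the twelve arcs $\alpha_{q,x}$, $q\in\{0,1,\infty\}$, $x\in\{1,i,j,k\}$, agrees with it. This is a finite check using Table~\ref{kappa} and the endpoint table in the proof of \Cref{inner int}. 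It then extends to all slopes, because each element on either side commutes with the $\Br_3$-actions compatibly: compare \Cref{cor:graded-tag-action} and \Cref{tubular left mutations}, which share the same labels $y,z$. Determining this element is the step I expect to be the main obstacle.

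Once the generators are assigned, I would verify the relations of $\mathbb{D}_4\ltimes_2\ZZ_2^3$ on the image. The cleanest route is to observe that both actions on $\TA^\times(S_{0,4})$ agree generator by generator, so the map is just the identification of two subgroups of the permutation group of $\TA^\times(S_{0,4})$. The multiplication rule $(\mathbf{i},\cdot)*(\mathbf{i}',\cdot)$ in the statement is exactly the composition of these permutations, since $\mathbf{i}'$ relabels the binaries. Faithfulness of the right-hand action follows from \Cref{bi:ba-ta}: a nontrivial element either moves some puncture or flips some tag on an arc ending there. This gives injectivity, and the order count $64=64$ gives the isomorphism.
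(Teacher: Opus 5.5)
Your framework (realize both groups faithfully on $\TA^\times(S_{0,4})$, then use $64=64$) is reasonable. However, the generator images you commit to are wrong, and the step you treat as routine is the one that fails.

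\textbf{The Twists step is false.} By Convention~\ref{convention:roots} and Definition~\ref{def:tarc}:
\begin{itemize}
\item $\cO=E_0^1$ corresponds to $\alpha_{0,1}$, whose underlying arc is $\alpha_0^-$, with endpoints $p_1,p_2$;
\item $\cO(\vx_2-\vx_4)=E_0^i$ corresponds to $\alpha_{0,i}$, whose underlying arc is $\alpha_0^+$, with endpoints $p_3,p_4$.
\end{itemize}
So the twist by $\vx_2-\vx_4\in\Pic_0\CPone$ moves an underlying arc, which no element of $\overline{\mathrm D}_{\vot}$ can do.

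The mistake is identifying the exceptional point $\lambda_i$ with the puncture $p_i$. A simple sheaf $S_{\lambda_i,0}$ is a slope-$\infty$ tagged arc with two endpoints, and $\tau$ flips both of its tags. For example, $\vx_1-\vx_4$ exchanges $\alpha_{\infty,1}\leftrightarrow\alpha_{\infty,-1}$ but fixes $\alpha_{\infty,i}$, which lies on the same arc $\alpha_\infty^+$. That is a swap of the tags at $p_1$ and $p_4$, realized by an involution rather than by tag flips.

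\textbf{No isomorphism can repair this.} No isomorphism whatsoever can send $\Pic_0\CPone$ into $\overline{\mathrm D}_{\vot}$. Since $\overline{\mathrm D}_{\vot}$ is abelian, conjugation by $(\mathbf i,\overline{\mathrm D})$ on it depends only on $\mathbf i$. Hence the conjugation action on any subgroup of $\overline{\mathrm D}_{\vot}$ factors through the order-$4$ group $\langle\tilde\iota_1,\tilde\iota_2\rangle$. On the other side, $\Aut(\CPone)\cong\mathbb D_4$ acts faithfully on $\Pic_0\CPone$ by permuting the indices of the $\vx_i-\vx_j$. A faithful action of a group of order $8$ cannot factor through a group of order $4$.

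\textbf{The correct assignment.} The paper's Table~\ref{2-iso} mixes the two factors:
\begin{itemize}
\item $(0,1)\mapsto\overline{\mathrm D}_{p_4}$;
\item $(0,1)(\infty,\tfrac12)\mapsto\overline{\mathrm D}_{p_3}\overline{\mathrm D}_{p_4}$;
\item $(0,\infty)(1,\tfrac12)\mapsto(\tilde\iota_1,\overline{\mathrm D}_{p_1}\overline{\mathrm D}_{p_2})$;
\item $\vx_1-\vx_2\mapsto\overline{\mathrm D}_{p_1}\overline{\mathrm D}_{p_4}$;
\item $\vx_1-\vx_3\mapsto(\tilde\iota_2,\overline{\mathrm D}_{p_2}\overline{\mathrm D}_{p_3})$;
\item $\vx_1-\vx_4\mapsto(\tilde\iota_2,\mathrm{id})$.
\end{itemize}
Three consequences follow:
\begin{itemize}
\item One hyperelliptic involution lies in the image of $\Pic_0\CPone$.
\item The Klein group $V$ is not sent onto a twisted copy of $\langle\tilde\iota_1,\tilde\iota_2\rangle$: one of its generators preserves every underlying arc and becomes a pure twist.
\item The transposition you flagged as the main obstacle is the easy case, a single tag flip.
\end{itemize}
The paper then checks directly that these assignments give isomorphisms $\Aut(\CPone)\cong G$ and $\Pic_0\CPone\cong G'$, compatibly with the two semidirect structures.

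\textbf{How to fix your argument.} Actually carry out your action-matching, reading Table~\ref{kappa} against Convention~\ref{convention:roots} for every generator rather than presupposing the images. You would then get assignments of this shape, and your faithfulness-plus-order argument would finish the proof.

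For extending from slopes $\{0,1,\infty\}$ to all slopes, do not appeal to commutation with $\Br_3$: these subgroups are only normalized by $\Br_3$, and the paper derives that compatibility from this very proposition. Use instead two facts:
\begin{itemize}
\item the underlying arc and the tags of $\alpha_{p,x}$ depend only on $(\typ{p},x)$;
\item $\bv_p^x=\bv_{\typ{p}}^x+\lfloor b(p)/2\rfloor\bfh_0+\lfloor a(p)/2\rfloor\bfh_\infty$.
\end{itemize}
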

\begin{proof}
  Define 
\[ G:=
\{ ( { \rm id}, \overline{\mathrm{D}} ), (\tilde\iota_1, 
\overline{\mathrm{D}}_{p_1}\overline{\mathrm{D}}_{p_2}\overline{\mathrm{D}})|\overline{\mathrm{D}}\in\{ { \rm id}, \overline{\mathrm{D}}_{p_3} ,\overline{\mathrm{D}}_{p_4},\overline{\mathrm{D}}_{p_3}\overline{\mathrm{D}}_{p_4} \}
\}\subseteq \langle\tilde\iota_1,\tilde\iota_2\rangle\ltimes_{\mathrm{conj}}\overline{\mathrm{D}}_{\vot},
\]
with multiplication given by
 \[
(\tilde\iota_1^s , \prod_{i=1}^4 \overline{\mathrm{D}}_{p_i}^{t_i}) * (\tilde\iota_1^{s'} ,\prod_{i=1}^4\overline{\mathrm{D}}_{p_i}^{t_i'}) := (  \tilde\iota_1^{s+s'} ,\prod_{i=1}^4 \overline{\mathrm{D}}_{\tilde\iota_1^{s'}(p_i)}^{t_i}\overline{\mathrm{D}}_{p_i}^{t_i'}),  
\]for all  $  t_i ,{s}, t_i', {s'} \in \ZZ_2$.

Let $G'=\<\tilde\iota_2\>\times \<\overline{\mathrm{D}}_{p_1}\overline{\mathrm{D}}_{p_4},\overline{\mathrm{D}}_{p_2}\overline{\mathrm{D}}_{p_3}\>$.
Then  the  group homomorphisms
\[
f_1:\Aut(\CPone)\to G,\nu  \mapsto(\tilde\iota_1^s,\prod_{i=1}^4 \overline{\mathrm{D}}_{p_i}^{t_i}) \text{ and }  f_2: \Pic_0\CPone\to G',\vx\mapsto(\tilde\iota_2^u,\prod_{i=1}^4 \overline{\mathrm{D}}_{p_i}^{v_i}  )
\] defined by Table~\ref{2-iso} are group isomorphisms, where the generators of $\Aut(\CPone)$ are given in \Cref{prop:aut-CPone}.

\begin{table}[htbp]
\centering
\caption{Images of generators under $f_1$ and $f_2$}
\label{2-iso}
\renewcommand{\arraystretch}{1.3}
\setlength{\tabcolsep}{10pt}

\begin{tabular}{cc}
\toprule
$\nu$ &
$f_1(\nu)=\bigl(\tilde\iota_1^{\,s},\,\prod_{i=1}^{4}\overline{\mathrm{D}}_{p_i}^{\,t_i}\bigr)$ \\
\midrule
$(0,1)$ &
$(\mathrm{id},\,\overline{\mathrm{D}}_{p_4})$ \\
$(0,1)(\infty,\frac12)$ &
$(\mathrm{id},\,\overline{\mathrm{D}}_{p_3}\overline{\mathrm{D}}_{p_4})$ \\
$(0,\infty)(1,\frac12)$ &
$(\tilde\iota_1,\,\overline{\mathrm{D}}_{p_1}\overline{\mathrm{D}}_{p_2})$ \\
\bottomrule
\end{tabular}
\hfill
\begin{tabular}{cc}
\toprule
$\vx$ &
$f_2(\vx)=\bigl(\tilde\iota_2^{\,u},\,\prod_{i=1}^{4}\overline{\mathrm{D}}_{p_i}^{\,v_i}\bigr)$ \\
\midrule
$\vec{x}_1-\vec{x}_2$ &
$(\mathrm{id},\,\overline{\mathrm{D}}_{p_1}\overline{\mathrm{D}}_{p_4})$ \\
$\vec{x}_1-\vec{x}_3$ &
$(\tilde\iota_2,\,\overline{\mathrm{D}}_{p_2}\overline{\mathrm{D}}_{p_3})$ \\
$\vec{x}_1-\vec{x}_4$ &
$(\tilde\iota_2,\,\mathrm{id})$ \\
\bottomrule
\end{tabular}
\end{table}
Note that 
 \[f_2(\nu.\vx)=(\tilde\iota_2^u, \prod_{i=1}^4\overline{\mathrm{D}}_{p_i}^{t_i} \overline{\mathrm{D}}_{\tilde\iota_2^u(p_i)}^{t_i} \overline{\mathrm{D}}_{\tilde\iota_1^s(p_i)}^{v_i}),\]
 holds for all $\nu\in\Aut(\CPone) $  and $\vx\in\Pic_0\CPone$, which is verified on generators in Table~\ref{compatible}. 
 \begin{table}[htbp]
\centering
\caption{Values of  $\nu.\vx$ and $(\tilde\iota_2^u, \prod_{i=1}^4\overline{\mathrm{D}}_{p_i}^{t_i} \overline{\mathrm{D}}_{\tilde\iota_2^u(p_i)}^{t_i} \overline{\mathrm{D}}_{\tilde\iota_1^s(p_i)}^{v_i})$ on generators} 
\label{compatible}
\renewcommand{\arraystretch}{1.3}
\setlength{\tabcolsep}{12pt}
 \begin{tabular}{cccc}
\specialrule{\heavyrulewidth}{0pt}{0pt}
\multirow{2}{*}{$\nu$}& \multicolumn{3}{c}{$\vx$} \\[-2pt]
\cmidrule(lr){2-4}
  &$ \vec{x}_1-\vec{x}_2$ & $\vec{x}_1-\vec{x}_3$ & $\vec{x}_1-\vec{x}_4$ \\
\midrule
  $(0,1)$ & $ \vec{x}_1-\vec{x}_2$ & $\vec{x}_2-\vec{x}_3$ & $\vec{x}_2-\vec{x}_4$ \\
$(0,1)(\infty,\frac{1}{2})$ & $ \vec{x}_1-\vec{x}_2$ & $\vec{x}_2-\vec{x}_4$ & $\vec{x}_2-\vec{x}_3$\\ 
$(0,\infty)(1,\frac{1}{2})$ & $  \vec{x}_3-\vec{x}_4$ & $\vec{x}_1-\vec{x}_3$ & $\vec{x}_3-\vec{x}_2$\\
\bottomrule
\end{tabular}
 
\begin{tabular}{cccc}
\specialrule{\heavyrulewidth}{0pt}{0pt}
\multirow{2}{*}{$f_1(\nu)$}& \multicolumn{3}{c}{$f_2(\vx)$} \\[-2pt]
\cmidrule(lr){2-4}
  & $( \mathrm{id},\overline{\mathrm{D}}_{p_1}\overline{\mathrm{D}}_{p_4})$ & $(\tilde\iota_2,\overline{\mathrm{D}}_{p_2}\overline{\mathrm{D}}_{p_3})$ & $(\tilde\iota_2,\mathrm{id})$ \\
\midrule
$( \mathrm{id},\overline{\mathrm{D}}_{p_4})$ & $ ( \mathrm{id},\overline{\mathrm{D}}_{p_1}\overline{\mathrm{D}}_{p_4})$ & $(\tilde\iota_2,\overline{\mathrm{D}}_{p_1}\overline{\mathrm{D}}_{p_2}\overline{\mathrm{D}}_{p_3}\overline{\mathrm{D}}_{p_4})$ & $(\tilde\iota_2,\overline{\mathrm{D}}_{p_1}\overline{\mathrm{D}}_{p_4})$ \\
$( \mathrm{id},\overline{\mathrm{D}}_{p_3}\overline{\mathrm{D}}_{p_4})$ & $ ( \mathrm{id},\overline{\mathrm{D}}_{p_1}\overline{\mathrm{D}}_{p_4})$ & $(\tilde\iota_2,\overline{\mathrm{D}}_{p_1}\overline{\mathrm{D}}_{p_4})$ & $(\tilde\iota_2,\overline{\mathrm{D}}_{p_1}\overline{\mathrm{D}}_{p_2}\overline{\mathrm{D}}_{p_3}\overline{\mathrm{D}}_{p_4})$\\ 
$(\tilde\iota_1,\overline{\mathrm{D}}_{p_1}\overline{\mathrm{D}}_{p_2})$ & $  ( \mathrm{id},\overline{\mathrm{D}}_{p_2}\overline{\mathrm{D}}_{p_3})$ & $(\tilde\iota_2,\overline{\mathrm{D}}_{p_2}\overline{\mathrm{D}}_{p_3})$ & $(\tilde\iota_2,\overline{\mathrm{D}}_{p_1}\overline{\mathrm{D}}_{p_2}\overline{\mathrm{D}}_{p_3}\overline{\mathrm{D}}_{p_4})$\\
\bottomrule
\end{tabular}
\end{table} 
Now we define the  group $G\ltimes_3G'$
with multiplication
\begin{align*}
    &(\tilde\iota_1^{s},\prod_{i=1}^4 \overline{\mathrm{D}}_{p_i}^{t_i},\tilde\iota_2^{u},\prod_{i=1}^4 \overline{\mathrm{D}}_{p_i}^{v_i})*(\tilde\iota_1^{s'}, \prod_{i=1}^4 \overline{\mathrm{D}}_{p_i}^{t_i'},\tilde\iota_2^{u'},\prod_{i=1}^4 \overline{\mathrm{D}}_{p_i}^{v_i'})\\
&\quad = 
(\tilde\iota_1^{s+s'}, \prod_{i=1}^4 \overline{\mathrm{D}}_{\tilde\iota_1^{s}(p_i)}^{t_i}\overline{\mathrm{D}}_{p_i}^{t_i'},\tilde\iota_2^{u+u'},  \prod_{i=1}^4 \overline{\mathrm{D}}_{\tilde\iota_2^{u+u'}(p_i)}^{t_i}\overline{\mathrm{D}}_{\tilde\iota_2^{u'}(p_i)}^{t_i} \overline{\mathrm{D}}_{\tilde\iota_1^{s'}(p_i)}^{v_i }\overline{\mathrm{D}}_{p_i}^{v_i'} ),
\end{align*}
where $t_i,t_i',v_i,v_i', {s},{u}, {s'}, {u'}\in \ZZ_2$. 
Then one may check that the maps
\[
\varrho:\Aut(\CPone) \ltimes_2 \Pic_0\CPone\to G\ltimes_3G',(\nu,\vx)\mapsto(f_1(\nu),f_2(\vx))
\]
and
\[
\varrho':G\ltimes_3G'\to \langle\tilde\iota_1,\tilde\iota_2\rangle \ltimes_{\rm conj} \overline{\mathrm{D}}_{\vot}, (\tilde\iota_1^{s},\prod_{i=1}^4 \overline{\mathrm{D}}_{p_i}^{t_i},\tilde\iota_2^{u},\prod_{i=1}^4 \overline{\mathrm{D}}_{p_i}^{v_i})\mapsto (\tilde\iota_1^{s}\tilde\iota_2^{u}, \prod_{i=1}^4 \overline{\mathrm{D}}_{\tilde\iota_2^{u}(p_i)}^{t_i}\overline{\mathrm{D}}_{p_i}^{v_i}  )
\] 
are group isomorphisms.  In particular, their composition
\begin{equation}\label{eq:rho-normal-subgroup}
\rho\coloneqq  \varrho \circ \varrho':
\Aut(\CPone)\ltimes_2\Pic_0\CPone
\longrightarrow
\langle\tilde\iota_1,\tilde\iota_2\rangle\ltimes_{\rm conj}
\overline{\mathrm{D}}_{\vot}
\end{equation}
is the required group isomorphism.
\end{proof}

 \begin{theorem}\label{autd=mag}
For $\CPone=(\mathbb{P}^1,(0,1,\infty,\frac{1}{2}),(2,2,2,2))$ over $\CC$, there is a
 group isomorphism
 \[ \phi: \Aut\DC{\CPone}  \to     \MCG^\sx(\widetilde{S\x^2} ), \]     such that for any $\xi\in\Aut\DC{\CPone}$ and any indecomposable rigid sheaf $E_{p}^x[n]$ in  $\DC{\CPone}$,  \[\phi(\xi )(\wX^{-1}(E_{p}^x[n]))=\wX^{-1}(\xi (E_{p}^x[n])).\]
 \end{theorem}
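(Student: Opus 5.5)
The plan is to build $\phi$ factor by factor, using the two semidirect product decompositions already available. By \Cref{prop:aut-cohCPone},
\[
\Aut\DC{\CPone}\cong \Br_3\ltimes_{\rm conj}(\Aut(\CPone)\ltimes_2\Pic_0\CPone),
\]
with $\Br_3=\langle \tubR,\tubL\rangle$. By \Cref{2mcg},
\[
\MCG^\sx(\widetilde{S\x^2})\cong\Br_3\ltimes_{\rm conj}(\langle\tilde\iota_1,\tilde\iota_2\rangle\ltimes_{\rm conj}\overline{\mathrm{D}}_{\vot}),
\]
with $\Br_3=\langle B_\infty,B_0\rangle$ (lifted to the binary surface via the section $s$).

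On the braid factor, define $\phi(\tubR)=B_\infty$ and $\phi(\tubL)=B_0$. This is an isomorphism of the two copies of $\Br_3$, since both are presented by the single braid relation and the identification of $\langle B_0,B_\infty\rangle$ with $\Br_3$ is recorded in the proof of \Cref{prop:mcgS42}. On the normal factor, define $\phi$ to be the isomorphism $\rho$ of \Cref{keep slope}. Then set $\phi(hz)=\phi(h)\rho(z)$, using the unique decomposition $hz$ from \Cref{prop:aut-cohCPone}.

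\textbf{Compatibility on generators.} Comparing \Cref{tubular left mutations} with \Cref{cor:graded-tag-action}, $\tubR$ and $B_\infty$ act by the same rule $(p,x,n)\mapsto(p+1,y,n)$. Likewise $\tubL$ and $B_0$ agree, with the same shift $\mathbf 1_{\{1<p\le\infty\}}$. The only wrinkle is the sign convention of \Cref{thm:X}, where odd shifts are sent to $E^{-x}$. For $1<p$, $\tubL$ produces $E^{-z}[n+1]$; the parity flip of the shift in $\wX$ turns the label $z$ on the arc side into exactly $-z$ on the sheaf side. So $\wX\circ B_0=\tubL\circ\wX$ on all $\varsigma(\alpha_{p,x},n)$. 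The check for $B_\infty$ is immediate, since it involves no shift.

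For the normal factor, I will verify on the generators in Table~\ref{2-iso} that each $\nu\in\Aut(\CPone)$ and each $\vx\in\Pic_0\CPone$ acts on $\{E_p^x\}$ the way $f_1(\nu)$ and $f_2(\vx)$ act on tagged arcs. These automorphisms preserve slope and fix the shift, so only the quaternion label changes. Geometrically, $\tilde\iota_1,\tilde\iota_2$ permute the punctures, hence swap $\alpha_p^\pm$ or fix them, and $\overline{\mathrm D}_{p_i}$ flips the tag at $p_i$ by \Cref{bi:ba-ta}. Algebraically, $\nu$ permutes the exceptional points and $\vx$ twists the exceptional simples via \eqref{shift}.

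\textbf{Reduction to slopes $0,1,\infty$.} It suffices to check these actions for $p\in\{0,1,\infty\}$, using Convention~\ref{convention:roots} and Table~\ref{kappa}. The reason is that both sides commute with the braid generators up to the same conjugation action: once compatibility holds for $\Br_3$, an identity $\phi(\xi)\wX^{-1}(E)=\wX^{-1}(\xi E)$ at slope $q$ propagates to every slope $h(q)$ with $h\in\Br_3$. This works because $\xi h=h(h^{-1}\xi h)$, and the conjugates correspond under $\phi$ because $\phi$ is a homomorphism.

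\textbf{Homomorphism property.} I need the conjugation action of $\Br_3$ on the normal factor to match on both sides, i.e. $\phi(h z h^{-1})=\phi(h)\phi(z)\phi(h)^{-1}$. Since $\wX$ is a bijection and automorphisms of $\DC{\CPone}$ are determined on rigid indecomposables modulo the identification in \Cref{prop:aut-cohCPone}, this follows from the compatibility already proved. Both sides act on $\Ind^\circ$ in the same way, and the action of $\MCG^\sx$ on $\wA^\circ$ is faithful, since a mapping class fixing all arcs of a triangulation together with tags and gradings is trivial.

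\textbf{Main obstacle.} I expect the main obstacle to be the bookkeeping in the sign/shift convention, namely reconciling the $(-1)^n$ twist in $\wX$ with the extra shift of $B_0$ and with $\tau=\mathrm D_{p_i}\mathrm D_{p_j}$. A related point is checking the faithfulness claim, which is needed to conclude that $\phi$ is well defined as a group map rather than merely compatible on objects. For faithfulness I would use the triangulation $\{\alpha_{0,x}\}\cup\{\alpha_{\infty,x}\}$.
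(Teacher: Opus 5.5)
Your overall architecture is the paper's: $\phi(hz)=\beta(h)\rho(z)$ with $\beta(\tubR)=B_\infty$, $\beta(\tubL)=B_0$, $\rho$ taken from \Cref{keep slope}, and table checks for the finite normal factor at slopes $0,1,\infty$. Your parity bookkeeping for $\tubL$ versus $B_0$ under the sign convention of $\wX$ is also correct. The problem is that your two middle steps are circular.

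In the reduction to slopes $0,1,\infty$, you move the identity for $z$ in the normal factor from slope $q$ to slope $h(q)$ by writing $zh=h(h^{-1}zh)$. This requires $\rho(h^{-1}zh)=\beta(h)^{-1}\rho(z)\beta(h)$, which you justify by ``$\phi$ is a homomorphism''. In the homomorphism step you then derive exactly this identity from the compatibility ``already proved'' together with faithfulness. Faithfulness only helps if you know how $\beta(h)^{-1}\rho(z)\beta(h)$ acts on arcs. Since $\beta(h)$ carries arcs of slope $0,1,\infty$ to arcs of arbitrary slope, that needs $\rho(z)$ on every slope, which is exactly what the reduction was supposed to supply. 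So the tables at $p\in\{0,1,\infty\}$ are never propagated, and neither compatibility of $\rho$ nor the homomorphism property is established.

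The missing ingredient is a direct, slope-independent argument for the finite normal factor; this is how the paper breaks the circle.

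On the surface side, elements of $\langle\tilde\iota_1,\tilde\iota_2\rangle\ltimes_{\rm conj}\overline{\mathrm D}_{\vot}$ preserve slope and grading shift. They act on $\varsigma(\alpha_{p,x},n)$ only through its endpoints and tags, which by \Cref{def:tarc} coincide with those of $\alpha_{\typ{p},x}$. Hence $g(\varsigma(\alpha_{\typ{p},x},n))=\varsigma(\alpha_{\typ{p},y},n)$ forces $g(\varsigma(\alpha_{p,x},n))=\varsigma(\alpha_{p,y},n)$.

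On the sheaf side, $(\nu,\vx)$ preserves slope and fixes $\bh_0$ and $\bh_\infty$ in $K_0$. Combine this with the decomposition $\bv_p^x=\bv_{\typ{p}}^x+\lfloor b(p)/2\rfloor\bh_0+\lfloor a(p)/2\rfloor\bh_\infty$ and with \Cref{prop:rigid-to-vectors}, which says rigid indecomposables are determined by their classes. Then $(\nu,\vx)(E_p^x[n])=E_p^z[n]$ whenever $(\nu,\vx)(E_{\typ{p}}^x[n])=E_{\typ{p}}^z[n]$.

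Once compatibility of $\rho$ holds at every slope, your faithfulness argument legitimately gives $\rho(h^{-1}zh)=\beta(h)^{-1}\rho(z)\beta(h)$, so $\phi$ is a homomorphism. The paper uses this faithfulness step only implicitly. Compatibility for a general $\xi=hz$ then follows by composing the braid and normal-factor cases.
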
 
 \begin{proof}
 For an indecomposable rigid object $E_p^x[n]$, we may assume, without loss of generality, that
\[
\widetilde{X}\bigl(\varsigma(\alpha_{p,x},n)\bigr)=E_p^x[n].
\]
Under this assumption, we claim that for any $(\nu,\vx)\in\Aut(\CPone)\ltimes_2\Pic_0\CPone$,
\begin{equation}\label{communication}
(\nu,\vx)(E_p^x[n])=\widetilde{X}(\rho(\nu,\vx)(\varsigma(\alpha_{p,x},n))),
\end{equation}
where $\rho$ is as defined in the proof of Proposition~\ref{keep slope}.

   \begin{table}[htbp]
\centering
\caption{Action of  $\nu\in\Aut(\CPone)$ and $\vx\in\Pic_0\CPone$ on the objects $E_{p}^x[n]$}
\label{aut}
\renewcommand{\arraystretch}{1.3}
\setlength{\tabcolsep}{6pt}

\begin{minipage}[t]{0.48\textwidth}
\centering
\begin{tabular}{ccccc}
\specialrule{\heavyrulewidth}{0pt}{0pt}
\multirow{2}{*}{$\nu$}& \multicolumn{4}{c}{$E_0^x$} \\[-2pt]
\cmidrule(lr){2-5}
  & $E_{0}^1$ & $E_{0}^i$ & $E_{0}^j$ & $E_{0}^k$  \\ 
\midrule
$(0,1)$ & $E_{0}^{1}$ & $E_{0}^{-j}$ & $E_{0}^{-i}$ & $E_{0}^{k}$ \\ 
 
$(0,1)(\infty,\frac{1}{2})$  &$E_{0}^{1}$ & $E_{0}^{-i}$ & $E_{0}^{-j}$ & $E_{0}^{k}$   \\ 
 
  $(0,\infty)(1,\frac{1}{2})$ &$E_{0}^{1}$ & $E_{0}^{i}$ & $E_{0}^{-j}$ & $E_{0}^{-k}$ \\ 
\bottomrule
\end{tabular}
\end{minipage}
\begin{minipage}[t]{0.48\textwidth}
\centering
\begin{tabular}{ccccc}
\specialrule{\heavyrulewidth}{0pt}{0pt}
\multirow{2}{*}{$\vx$}& \multicolumn{4}{c}{$E_0^x$} \\[-2pt]
\cmidrule(lr){2-5}
  & $E_{0}^{1}$ & $E_{0}^{i}$ & $E_{0}^{j}$ & $E_{0}^{k}$  \\ 
\midrule
$\vx_1-\vx_2$ & $E_{0}^{-k}$ & $E_{0}^{-j}$ & $E_{0}^{-i}$ & $E_{0}^{-1}$  \\ 
 
$\vx_1-\vx_3$& $E_{0}^{-i}$ & $E_{0}^{-1}$ & $E_{0}^{-k}$ & $E_{0}^{-j}$   \\ 
 
$\vx_1-\vx_4$ &  $E_{0}^{-j}$ & $E_{0}^{-k}$ & $E_{0}^{-1}$ & $E_{0}^{-i}$  \\ 
\bottomrule
\end{tabular}
\end{minipage}
\vspace{0.3cm} 

\begin{minipage}[t]{0.48\textwidth}
\centering
 \begin{tabular}{ccccc}
\specialrule{\heavyrulewidth}{0pt}{0pt}
\multirow{2}{*}{$\nu$} & \multicolumn{4}{c}{$E_1^x$} \\[-2pt]
\cmidrule(lr){2-5}
  & $E_{1}^1$ & $ E_{1}^i$ & $ E_{1}^j$ & $E_{1}^{k}$   \\ 
\midrule
$(0,1)$ &   $E_{1}^{j}$ & $E_{1}^{i}$ & $E_{1}^{1}$ & $E_{1}^{k}$  \\ 
 
$(0,1)(\infty,\frac{1}{2})$    & $E_{1}^{-j}$ & $E_{1}^{k}$ & $E_{1}^{1}$ & $E_{1}^{i}$   \\ 
 
$(0,\infty)(1,\frac{1}{2})$  & $E_{1}^{k}$ & $E_{1}^{j}$ & $E_{1}^{i}$  & $E_{1}^{1}$   \\ 
\bottomrule
\end{tabular}
\end{minipage}
\begin{minipage}[t]{0.48\textwidth}
\centering
 \begin{tabular}{ccccc}
\specialrule{\heavyrulewidth}{0pt}{0pt}
\multirow{2}{*}{$\vx$}& \multicolumn{4}{c}{$E_1^x$} \\[-2pt]
\cmidrule(lr){2-5}
  & $E_{1}^{1}$ & $E_{1}^{i}$ & $E_{1}^{j}$ & $E_{1}^{k}$ \\
\midrule
$\vx_1-\vx_2$ &  $E_{1}^{j}$ & $E_{1}^{-k}$ & $E_{1}^{1}$ & $E_{1}^{-i}$  \\ 
 
$\vx_1-\vx_3$&  $E_{1}^{k}$ & $E_{1}^{-j}$ & $E_{1}^{-i}$ & $E_{1}^{1}$ \\ 
 
$\vx_1-\vx_4$ &   $E_{1}^{i}$ & $E_{1}^{1}$ & $E_{1}^{-k}$ & $E_{1}^{-j}$     \\ 
\bottomrule
\end{tabular}
\end{minipage}
\vspace{0.3cm} 
 
\begin{minipage}[t]{0.48\textwidth}
\centering
\begin{tabular}{ccccc}
\specialrule{\heavyrulewidth}{0pt}{0pt}
\multirow{2}{*}{$\nu$}& \multicolumn{4}{c}{$E_\infty^x$} \\[-2pt]
\cmidrule(lr){2-5}
  & $ E_{\infty}^1$ & $E_{\infty}^i$ & $E_{\infty}^j$ & $E_{\infty}^k$  \\ 
\midrule
$(0,1)$ &   $E_{\infty}^{-i}$ & $E_{\infty}^{-1}$ & $E_{\infty}^{j}$ & $E_{\infty}^{k}$\\ 
 
$(0,1)(\infty,\frac{1}{2})$    & $E_{\infty}^{-i}$ & $E_{\infty}^{-1}$ & $E_{\infty}^{k}$ & $E_{\infty}^{j}$ \\ 
 
$(0,\infty)(1,\frac{1}{2})$    & $E_{\infty}^{-j}$ & $E_{\infty}^{k}$ & $E_{\infty}^{-1}$ & $E_{\infty}^{i}$ \\ 
\bottomrule
\end{tabular}
\end{minipage}
\begin{minipage}[t]{0.48\textwidth}
\centering
\begin{tabular}{ccccc}
\specialrule{\heavyrulewidth}{0pt}{0pt}
\multirow{2}{*}{$\vx$}& \multicolumn{4}{c}{$E_\infty^x$} \\[-2pt]
\cmidrule(lr){2-5}
  & $E_{\infty}^{1}$ & $E_{\infty}^{i}$ & $E_{\infty}^{j}$ & $E_{\infty}^{k}$ \\
\midrule
$\vx_1-\vx_2$   & $E_{\infty}^{-1}$ & $E_{\infty}^{-i}$ & $E_{\infty}^{j}$ & $E_{\infty}^{k}$\\ 
$\vx_1-\vx_3$&   $E_{\infty}^{-1}$ & $E_{\infty}^{i}$ & $E_{\infty}^{-j}$ & $E_{\infty}^{k}$ \\ 
$\vx_1-\vx_4$ &   $E_{\infty}^{-1}$ & $E_{\infty}^{i}$ & $E_{\infty}^{j}$ & $E_{\infty}^{-k}$ \\ 
\bottomrule
\end{tabular}
 \end{minipage}
\end{table}

 \begin{table}[htbp]
\centering
\small
\caption{Action of  $f_{1}(\nu)$ and $f_2(\vx)$ on the arcs $\varsigma(\alpha_{p,x},n)  $}
\label{a}
\renewcommand{\arraystretch}{1.3}
\setlength{\tabcolsep}{4pt}
\begin{minipage}[t]{0.48\textwidth}
\centering
 \begin{tabular}{ccccc} 
 \specialrule{\heavyrulewidth}{0pt}{0pt}
\multirow{2}{*}{$f_1(\nu)$ }& 
  \multicolumn{4}{c}{$\alpha_{0,x}$} \\[-2pt]
\cmidrule(lr){2-5}
 & $\alpha_{0,1}$ & $\alpha_{0,i}$ & $\alpha_{0,j}$ & $\alpha_{0,k}$   \\ 
\midrule
$( { \rm id}, \overline{\mathrm{D}}_{p_4})$ & $\alpha_{0,1}$ & $\alpha_{0,-j}$ & $\alpha_{0,-i}$ & $\alpha_{0,k}$  \\ 
 
$( { \rm id},\overline{\mathrm{D}}_{p_3}\overline{\mathrm{D}}_{p_4}) $ &$\alpha_{0,1}$ & $\alpha_{0,-i}$ & $\alpha_{0,-j}$ & $\alpha_{0,k}$  \\ 
 
$(\tilde\iota_1,\overline{\mathrm{D}}_{p_1}\overline{\mathrm{D}}_{p_2}) $ &$\alpha_{0,1}$ & $\alpha_{0,i}$ & $\alpha_{0,-j}$ & $\alpha_{0,-k}$  \\ 
\bottomrule
\end{tabular} 
\end{minipage}
\begin{minipage}[t]{0.48\textwidth}
\centering
\begin{tabular}{ccccc}
\specialrule{\heavyrulewidth}{0pt}{0pt}
\multirow{2}{*}{$f_2(\vx)$ }&  \multicolumn{4}{c}{$\alpha_{0,x}$} \\[-2pt]
\cmidrule(lr){2-5}
  & $\alpha_{0,1}$ & $\alpha_{0,i}$ & $\alpha_{0,j}$ & $\alpha_{0,k}$   \\ 
\midrule
$( { \rm id}, \overline{\mathrm{D}}_{p_1}\overline{\mathrm{D}}_{p_4})$ & $\alpha_{0,-k}$ & $\alpha_{0,-j}$ & $\alpha_{0,-i}$ & $\alpha_{0,-1}$ \\ 
 
$(\tilde\iota_2,\overline{\mathrm{D}}_{p_2}\overline{\mathrm{D}}_{p_3}) $ &$\alpha_{0,-i}$ & $\alpha_{0,-1}$ & $\alpha_{0,-k}$ & $\alpha_{0,-j}$  \\ 
 
$(\tilde\iota_2,\mathrm{id}) $ &  $\alpha_{0,-j}$ & $\alpha_{0,-k}$ & $\alpha_{0,-1}$ & $\alpha_{0,-i}$ \\ 
\bottomrule
\end{tabular}
\end{minipage}
\vspace{0.3cm} 

\begin{minipage}[t]{0.48\textwidth}
\centering
 \begin{tabular}{ccccc}
\specialrule{\heavyrulewidth}{0pt}{0pt}
\multirow{2}{*}{$f_1(\nu)$ }& \multicolumn{4}{c}{$\alpha_{1,x}$} \\[-2pt]
\cmidrule(lr){2-5}
  & $\alpha_{1,1}$ & $\alpha_{1,i}$ & $\alpha_{1,j}$ & $\alpha_{1,k}$   \\ 
\midrule
$( { \rm id}, \overline{\mathrm{D}}_{p_4})$   & $\alpha_{1,j}$ & $\alpha_{1,i}$ & $\alpha_{1,1}$ & $\alpha_{1,k}$  \\ 
 
$( { \rm id},\overline{\mathrm{D}}_{p_3}\overline{\mathrm{D}}_{p_4}) $   & $\alpha_{1,-j}$ & $\alpha_{1,k}$ & $\alpha_{1,1}$ & $\alpha_{1,i}$  \\ 
 
$(\tilde\iota_1,\overline{\mathrm{D}}_{p_1}\overline{\mathrm{D}}_{p_2}) $   & $\alpha_{1,k}$ & $\alpha_{1,j}$ & $\alpha_{1,i}$ & $\alpha_{1,1}$  \\ 
\bottomrule
\end{tabular} 
\end{minipage}
\begin{minipage}[t]{0.48\textwidth}
\centering
\begin{tabular}{ccccc}
\specialrule{\heavyrulewidth}{0pt}{0pt}
\multirow{2}{*}{$f_2(\vx)$ }& \multicolumn{4}{c}{$\alpha_{1,x}$} \\[-2pt]
\cmidrule(lr){2-5}
  & $\alpha_{1,1}$ & $\alpha_{1,i}$ & $\alpha_{1,j}$ & $\alpha_{1,k}$   \\ 
\midrule
$( { \rm id}, \overline{\mathrm{D}}_{p_1}\overline{\mathrm{D}}_{p_4})$  & $\alpha_{1,j}$ & $\alpha_{1,-k}$ & $\alpha_{1,1}$ & $\alpha_{1,-i}$ \\ 
 
$(\tilde\iota_2,\overline{\mathrm{D}}_{p_2}\overline{\mathrm{D}}_{p_3}) $  & $\alpha_{1,k}$ & $\alpha_{1,-j}$ & $\alpha_{1,-i}$ & $\alpha_{1,1}$   \\ 
 
$(\tilde\iota_2,\mathrm{id}) $   & $\alpha_{1,i}$ & $\alpha_{1,1}$ & $\alpha_{1,-k}$ & $\alpha_{1,-j}$    \\ 
\bottomrule
\end{tabular}
\end{minipage}
\vspace{0.3cm} 

\begin{minipage}[t]{0.48\textwidth}
\centering
 \begin{tabular}{ccccc}
\specialrule{\heavyrulewidth}{0pt}{0pt}
\multirow{2}{*}{$f_1(\nu)$ }& \multicolumn{4}{c}{$\alpha_{\infty,x}$} \\[-2pt]
\cmidrule(lr){2-5}
  &  $\alpha_{\infty,1}$ & $\alpha_{\infty,i}$ & $\alpha_{\infty,j}$ & $\alpha_{\infty,k}$ \\
\midrule
$( { \rm id}, \overline{\mathrm{D}}_{p_4})$ &    $\alpha_{\infty,-i}$ & $\alpha_{\infty,-1}$ & $\alpha_{\infty,j}$ & $\alpha_{\infty,k}$\\ 
 
$( { \rm id},\overline{\mathrm{D}}_{p_3}\overline{\mathrm{D}}_{p_4}) $  & $\alpha_{\infty,-i}$ & $\alpha_{\infty,-1}$ & $\alpha_{\infty,k}$ & $\alpha_{\infty,j}$ \\ 
 
$(\tilde\iota_1,\overline{\mathrm{D}}_{p_1}\overline{\mathrm{D}}_{p_2}) $ & $\alpha_{\infty,-j}$ & $\alpha_{\infty,k}$ & $\alpha_{\infty,-1}$ & $\alpha_{\infty,i}$ \\ 
\bottomrule
\end{tabular} 
\end{minipage}
\begin{minipage}[t]{0.48\textwidth}
\centering
\begin{tabular}{ccccc}
\specialrule{\heavyrulewidth}{0pt}{0pt}
\multirow{2}{*}{$f_2(\vx)$ }& \multicolumn{4}{c}{$\alpha_{\infty,x}$} \\[-2pt]
\cmidrule(lr){2-5}
  &  $\alpha_{\infty,1}$ & $\alpha_{\infty,i}$ & $\alpha_{\infty,j}$ & $\alpha_{\infty,k}$ \\
\midrule
$( { \rm id}, \overline{\mathrm{D}}_{p_1}\overline{\mathrm{D}}_{p_4})$ &  $\alpha_{\infty,-1}$ & $\alpha_{\infty,-i}$ & $\alpha_{\infty,j}$ & $\alpha_{\infty,k}$\\ 
 
$(\tilde\iota_2,\overline{\mathrm{D}}_{p_2}\overline{\mathrm{D}}_{p_3}) $   & $\alpha_{\infty,-1}$ & $\alpha_{\infty,i}$ & $\alpha_{\infty,-j}$ & $\alpha_{\infty,k}$ \\ 
 
$(\tilde\iota_2,\mathrm{id}) $ &      $\alpha_{\infty,-1}$ & $\alpha_{\infty,i}$ & $\alpha_{\infty,j}$ & $\alpha_{\infty,-k}$ \\ 
\bottomrule
\end{tabular}
\end{minipage}
 
\end{table}
In fact, Table~\ref{aut} and  Table~\ref{a}
      confirm  that  Equation~\eqref{communication} holds 
 for   $p\in\{0,1,\infty\}$.   To establish the general case, we proceed as follows. First,  for any $g\in \langle\tilde\iota_1,\tilde\iota_2\rangle \ltimes_{\rm conj}\overline{\mathrm{D}}_{\vot} $, suppose   $g( \varsigma(\alpha_{\typ{p},x},n)   )= \varsigma(\alpha_{\typ{p},y},n)  $.    Then by Definition~\ref{def:tarc} and  \Cref{bi:ba-ta}, we have   
$$g(\varsigma(\alpha_{{p},x},n)    )= \varsigma(\alpha_{ {p},y},n) . $$ 
Second,  for any $p=\frac{a(p)}{b(p)}\in\QQi$,  we have the decomposition
\[
\bv_{p}^{x}=\bv_{\typ{p}}^{x} +\lfloor\frac{b(p)}{2}\rfloor \mathbf{h}_0+\lfloor\frac{a(p)}{2}\rfloor \mathbf{h}_{\infty}.
\]   Consequently,  if  $(\nu,\vx)(E_{\typ{p}}^{x}[n] )=E_{\typ{p}}^{z}[n]$,  then it follows that
$(\nu,\vx)(E_{p}^{x}[n])=E_{p}^{z}[n]$. 
This completes the proof of the claim.
 
Now we define group  homomorphism
\[
\phi :\Br_3\ltimes_{\rm conj}(\Aut(\CPone) \ltimes_2 \Pic_0\CPone)\to \Br_3\ltimes_{\rm conj}(\langle\tilde\iota_1,\tilde\iota_2\rangle \ltimes_{\rm conj}\overline{\mathrm{D}}_{\vot}).
\]
Define a  group  homomorphism $\beta:\Br_3\longrightarrow\<B_\infty,B_0\>$ satisfying
\[
\beta(\tubR)=B_\infty,\quad \beta(\tubL)=B_0.
\]
The braid relation for $\tubR,\tubL$  together with
\Cref{prop:mcgS42}  imply that  $\beta$ is an isomorphism. 
 
For
$s\in\{\tubR,\tubL\}$ and $\vartheta\in \Aut(\CPone) \ltimes_2 \Pic_0\CPone$, the equation
\eqref{communication}, together with
\Cref{cor:graded-tag-action} and \Cref{tubular left mutations}, yield
\begin{equation}\label{eq:rho-equivariant}
\rho(s^{-1}\vartheta s)
=\beta(s)^{-1}\rho(\vartheta)\beta(s).
\end{equation}
Every element of $\Aut\DC{\CPone}$ has a unique form $h\vartheta$, with
$h\in\Br_3$ and $\vartheta\in \Aut(\CPone) \ltimes_2 \Pic_0\CPone$. We define
\begin{equation}\label{eq:phi-normal-form}
\phi(h\vartheta)\coloneqq\beta(h)\rho(\vartheta).
\end{equation}
For $h,h'\in\Br_3$ and $\vartheta,\vartheta'\in \Aut(\CPone) \ltimes_2 \Pic_0\CPone$, the multiplication
convention for the semidirect product and \eqref{eq:rho-equivariant} give
\[
\begin{aligned}
\phi\bigl((h\vartheta)(h'\vartheta')\bigr)
&=\phi\bigl(hh'(h'^{-1}\vartheta h')\vartheta'\bigr)\\
&=\beta(hh')\rho(h'^{-1}\vartheta h')\rho(\vartheta')\\
&=\beta(h)\rho(\vartheta)\beta(h')\rho(\vartheta')\\
&=\phi(h\vartheta)\phi(h'\vartheta').
\end{aligned}
\]
Hence $\phi$ is a homomorphism. Since $\beta$ and $\rho$ are isomorphisms, $\phi$ is a group isomorphism.

Finally, \eqref{communication},
\Cref{cor:graded-tag-action}, and
\Cref{tubular left mutations} imply  that
\[
\phi(\xi)\bigl(\wX^{-1}(E_p^x[n])\bigr)
=\wX^{-1}\bigl(\xi(E_p^x[n])\bigr),
\] for
 any $\xi\in\Aut\DC{\CPone}$    and  indecomposable rigid object $E_p^x[n]$.
\end{proof}

\section{A surface model  for the rigid objects in cluster category $\CCP$}\label{sec:5}
 In this  section, we provide a geometric realization of the 
 indecomposable  rigid  objects in $\CCP$ via tagged arcs on a sphere with four punctures $S_{0,4}$, and prove that 
 the dimensions of Hom-spaces between any two such objects equal the tagged intersection numbers of the corresponding tagged arcs.
Moreover, we show that   if  $\CPone$ is the  weighted projective line $$(\mathbb{P}^1,(0,1,\infty,\frac{1}{2}),(2,2,2,2))$$
  over the  field  $\CC$,
 the automorphism group of the cluster category $\CCP$ is isomorphic to the tagged mapping class group $\MCG^\times(S_{0,4})$ of the sphere with four punctures $S_{0,4}$. 
  
Let 
  $ \Ind^\circ\CCP$  denote     the set of isomorphism classes of   indecomposable rigid objects in $\CCP$. 

\begin{theorem}\label{thm:C}
There is a bijection
$$X_c\colon \TA^\times(S_{0,4}) \to \Ind^\circ\CCP$$
sending a  simple arc $\alpha_{p,x}$ to  the rigid object ${E}_p^x$.
\end{theorem}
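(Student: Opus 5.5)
The plan is to assemble $X_c$ from two bijections already available and check that they compose correctly. Concretely, I would write $X_c$ as the composite
\[
\TA^\times(S_{0,4})\xrightarrow{\;\alpha_{p,x}\mapsto (p,x)\;}\QQi\times\bfH\xrightarrow{\;(p,x)\mapsto E_p^x\;}\Ind^\circ\coh(\CPone)\longrightarrow \Ind^\circ\CCP .
\]
The first map is a bijection by the Remark after Definition~\ref{def:tarc}. There it is recorded that $\TA^\times(S_{0,4})=\{\alpha_{p,x}\mid p\in\QQi,x\in\bfH\}$ and that $\alpha_{p,x}$ and $\alpha_{q,y}$ are isotopic exactly when $(p,x)=(q,y)$. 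The second map is a bijection by Proposition~\ref{prop:rigid-to-vectors}. So the content of the theorem lies entirely in the third map, which is induced by the canonical functor $\coh(\CPone)\hookrightarrow\Dcoh\to\CCP$.

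To show the third map is a bijection, I would first establish that it is well defined and surjective onto indecomposables. Following \cite{BKL}, every indecomposable object of $\CCP$ is isomorphic to the image of an indecomposable sheaf. The reason is that every indecomposable of $\Dcoh$ has the form $E[n]$ with $E\in\coh(\CPone)$ indecomposable. Moreover, $F:=\tau^{-1}[1]$ is an autoequivalence of $\Dcoh$ restricting to $\tau^{-1}$ on $\coh(\CPone)$, so in the orbit category $E[n]\cong \tau^{n}E$, and $\tau^nE$ is again a sheaf.

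Next I would compare rigidity. For a sheaf $E$,
\[
\Hom_{\cC}(E,E[1])=\bigoplus_{i}\Hom_{\cD}(E,F^iE[1]).
\]
Because $\coh(\CPone)$ is hereditary and $\Hom_{\cD}(\coh[m],\coh[m'])=0$ unless $m'-m\in\{0,1\}$ (Proposition~\ref{prop:db}), only $i=0$ and $i=-1$ contribute. The $i=0$ term is $\Ext^1(E,E)$. The $i=-1$ term is $\Hom(E,\tau E)$, which equals $D\Ext^1(E,E)$ by Serre duality. Hence $E$ is rigid in $\CCP$ if and only if it is rigid in $\coh(\CPone)$, matching the remark in Section~2.3.

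The remaining step, which I expect to be the main obstacle, is injectivity: two non-isomorphic indecomposable sheaves must stay non-isomorphic in $\CCP$. My approach is to argue that if $E\cong F^iE'$ in $\Dcoh$, then the shift $[i]$ puts $F^iE'=\tau^{-i}E'[i]$ into $\coh[i]$. Since $E$ lies in $\coh[0]$, this forces $i=0$, so $E\cong E'$. To make this rigorous I also need isomorphism in the orbit category to lift to isomorphism in $\Dcoh$ for indecomposables. That follows because $\End_{\cC}(E)$ is local and the components of an isomorphism lie in finitely many $\Hom_{\cD}(E,F^iE')$, one of which must already be an isomorphism; this is the standard Keller/BKL argument.

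Composing the three bijections gives $X_c(\alpha_{p,x})=E_p^x$, as stated. I would also note that the choice of sign convention $E_p^{\pm x}$ in Theorem~\ref{thm:X} plays no role here, since we use only the degree-zero identification.
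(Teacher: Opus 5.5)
Your proposal is correct and takes essentially the paper's approach: both sides are identified with $\QQi\times\bfH$ via the Remark after Definition~\ref{def:tarc} and Proposition~\ref{prop:rigid-to-vectors}. The paper simply quotes from Section~2.3 and \cite{BKL} that the indecomposable and rigid objects of $\CCP$ are those of $\coh(\CPone)$, and your orbit-category arguments (surjectivity via $E[n]\cong\tau^{n}E$, the Serre-duality comparison of rigidity, and lifting isomorphisms from $\CCP$ to $\DC{\CPone}$, which is cleanest via locality of $\operatorname{End}_{\cD}(E)$) make that quoted fact explicit.
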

\begin{proof}
Since $\TA^\times(S_{0,4})=\{\alpha_{p,x}|p\in\QQi,x \in\bfH\}$, 
this theorem follows from  the fact  that  the rigid objects in  $\CCP$  are precisely  
$$\{E_p^x |p\in\QQi, x\in \bfH\},$$
where $E_{p}^{x}$ is the unique (up to isomorphism) rigid sheaf over $\CPone$ satisfying $ [E^x_p]=\bv_p^x$.
\end{proof}

Now we turn to study the dimensions of Hom-spaces between
two indecomposable rigid objects. We first introduce the following definitions.

 \begin{definition}\label{def:oint}
Let $(\gamma_1,\kappa_1)$ and $(\gamma_2,\kappa_2)$ be two tagged arcs on $\surf$ in minimal position.
The \emph{tagged intersection of index $0$}, denoted by $\cap^0((\gamma_1,\kappa_1),(\gamma_2,\kappa_2))$,    
consists of intersection points $a$ between the arcs of the following three types:
\begin{enumerate}
    \item[(I)] The point $a$ lies in the interior of $\surf$;
    \item[(II)] The point $a$ is a puncture, $\kappa_1(a)=\kappa_2(a)$ and $\gamma_1\ne \gamma_2$.
    \item[(III)] The point $a$ is a puncture, $\kappa_1(a)=\kappa_2(a)$ and $(\gamma_1,\kappa_1)=(\gamma_2,\kappa_2)$.
\end{enumerate}
Similarly, the \emph{tagged intersection of index $1$}, denoted by
$\cap^1((\gamma_1,\kappa_1),(\gamma_2,\kappa_2))$,  
consists of intersection points $a$ of the following three types:
\begin{enumerate}
    \item[(I)] The point $a$ lies in the interior of $\surf$;
    \item[(II)] The point $a$ is a puncture,  $\kappa_1(a)\neq\kappa_2(a)$  and $\gamma_1\ne \gamma_2$.
    \item[(III)] The point $a$ is a puncture, $\kappa_1(a)\ne\kappa_2(a)$ and $(\gamma_1,\kappa_1)=(\gamma_2,-\kappa_2)$.
\end{enumerate}
\end{definition}
   
\begin{definition}
   For  any two tagged  arcs   $(\gamma_1,\kappa_1),(\gamma_2,\kappa_2)$ on $\surf$, and for any $d\in\ZZ$,  the \emph{tagged intersection number}  of index $d$ between  $(\gamma_1,\kappa_1)$ and $(\gamma_2,\kappa_2)$    is   defined by
    \[
     \Int^d((\gamma_1,\kappa_1),(\gamma_2,\kappa_2))=\begin{cases}
         |\cap^0((\gamma_1,\kappa_1),(\gamma_2,\kappa_2))|, & \text{if $d$ is even,}\\
         |\cap^1((\gamma_1,\kappa_1),(\gamma_2,\kappa_2))|, & \text{otherwise.} 
     \end{cases}
    \]
\end{definition}

Using the bijection $X_c$ from Theorem~\ref{thm:C}, we obtain the theorem.

\begin{theorem}\label{thm:C2}
For any $d\in\mathbb{Z}$,  $p,q\in\QQi$  and $x,y\in\bfH$,   the following equality holds:
\[
    \Int^d( \alpha_{p,x},\alpha_{q,y})=\dim\Hom_\cC  ( {E}_p^x, {E}_q^y[d]  ).
\]
\end{theorem}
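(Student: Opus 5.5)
The plan is to deduce the statement from Theorem~\ref{thm:X2} by passing from $\DC{\CPone}$ to the orbit category $\CCP=\DC{\CPone}/\tau^{-1}[1]$. Since $\Int^d$ depends only on the parity of $d$, and $[2]\cong(\tau^{-1}[1])^2\circ\tau^2$ with $\tau^2E_q^y=E_q^y$ (because $\tau E_q^y=E_q^{-y}$), the right-hand side is also $2$-periodic in $d$. It therefore suffices to treat $d=0$ and $d=1$.

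\textbf{Step 1 (reduction to the derived category).} By definition,
\[
\Hom_\cC(E_p^x,E_q^y[d])=\bigoplus_{i\in\ZZ}\Hom_\cD\bigl(E_p^x,\tau^{-i}E_q^y[d+i]\bigr).
\]
Since $\coh(\CPone)$ is hereditary, Proposition~\ref{prop:db} shows that only $d+i\in\{0,1\}$ contributes. For $d=0$ this gives
\[
\dim\Hom_\cC(E_p^x,E_q^y)=\dim\Hom_\cD(E_p^x,E_q^y)+\dim\Hom_\cD(E_p^x,E_q^{-y}[1]),
\]
where we used $\tau^{-1}E_q^y=E_q^{-y}$. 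Similarly, for $d=1$ we get $\dim\Hom_\cD(E_p^x,E_q^{-y})+\dim\Hom_\cD(E_p^x,E_q^{y}[1])$.

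\textbf{Step 2 (translation to arcs).} Use Theorem~\ref{thm:X} with $n=0$ to write $E_p^x=\wX(\varsigma(\alpha_{p,x},0))$ and $E_q^{\pm y}=\wX(\varsigma(\alpha_{q,\pm y},0))$. Theorem~\ref{thm:X2} then turns each summand into an oriented intersection number:
\[
\dim\Hom_\cC(E_p^x,E_q^y)=|\overrightarrow{\cap}^0(\widetilde{\alpha}_{p,x},\widetilde{\alpha}_{q,y})|+|\overrightarrow{\cap}^1(\widetilde{\alpha}_{p,x},\widetilde{\alpha}_{q,-y})|,
\]
and analogously for $d=1$ with $y$ replaced by $-y$.

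\textbf{Step 3 (comparison of intersections).} Match the two binary sets with $\cap^0(\alpha_{p,x},\alpha_{q,y})$ type by type, using Definition~\ref{def:boint} and Definition~\ref{def:oint}.
\begin{enumerate}
\item[(a)] \emph{Interior points.} An interior point lies in $\overrightarrow{\cap}^0$ if $p<q$ and in $\overrightarrow{\cap}^1$ if $p>q$. Replacing $y$ by $-y$ does not change the underlying arc, so each interior point of $\alpha_{p,x}\cap\alpha_{q,y}$ is counted exactly once. This matches type (I) of $\cap^0$.
\item[(b)] \emph{Common punctures when $p\neq q$.} By Proposition~\ref{bi:ba-ta}, the tag equals $-(-1)^{f}$, so equal winding parity means equal tag. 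Passing from $y$ to $-y$ flips all tags. If $p<q$, the point is counted in $\overrightarrow{\cap}^0$ precisely when the tags of $\alpha_{p,x}$ and $\alpha_{q,y}$ agree. If $p>q$, it is counted in $\overrightarrow{\cap}^1(\cdot,\widetilde{\alpha}_{q,-y})$ precisely when the tags of $\alpha_{p,x}$ and $\alpha_{q,-y}$ differ, i.e.\ when $\kappa_x=\kappa_y$. Either way this is type (II) of $\cap^0$.
\item[(c)] \emph{The case $p=q$.} Use the direct computation from the $p=q$ case of the proof of Theorem~\ref{thm:X2}. It gives $\dim\Hom_\cC=1$ when $y=x$, from the identity, and also when $y=-x$, via $\Hom_\cD(E,\tau E[1])$. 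This should agree with type (III) and type (II) for coinciding underlying arcs.
\end{enumerate}
The case $d=1$ is identical after replacing $y$ by $-y$. Note that $\alpha_{q,-y}$ has the same underlying arc as $\alpha_{q,y}$ with all tags negated, so $\cap^1(\alpha_{p,x},\alpha_{q,y})$ corresponds to $\cap^0(\alpha_{p,x},\alpha_{q,-y})$.

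The main obstacle is step (c): the bookkeeping at punctures when the underlying arcs coincide. Here $\alpha_{p,x}$ and $\alpha_{q,y}$ have the same underlying arc but tags that may differ at one or both ends. The counts from types (II) and (III) must reproduce the Hom-dimensions computed directly from Proposition~\ref{hom dim}, including $y\neq\pm x$ with the same underlying arc, where the dimension is $0$. I would verify this by checking $p=q\in\{0,1,\infty\}$ against Table~\ref{kappa} and then using $p\mapsto\typ{p}$ invariance of tags. If the literal count in Definition~\ref{def:oint} overcounts there, I would interpret types (II)/(III) as contributing one point per coinciding pair rather than one per endpoint.
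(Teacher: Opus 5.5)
Your route is the paper's: expand $\Hom_\cC$ via the orbit category into two derived summands, convert each with Theorem~\ref{thm:X2}, and match interior points and common punctures. Steps 1, 2, 3(a) and 3(b) are correct and amount to the paper's formulas $\dim\Hom_\cC(E_p^x,E_q^y)=i(\alpha_{p,x}^\circ,\alpha_{q,y}^\circ)+s$ and $\dim\Hom_\cC(E_p^x,E_q^y[1])=i(\alpha_{p,x}^\circ,\alpha_{q,y}^\circ)+t$.

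The genuine problem is Step 3(c), the case $p=q$, where your Hom-dimensions are wrong. By your own Step 1 formula, the second summand is $\Hom_\cD(E_p^x,E_p^{-y}[1])=\Ext^1(E_p^x,\tau E_p^y)$. By Serre duality this is $D\Hom(E_p^y,E_p^x)$, so it is nonzero exactly when $y=x$, not when $y=-x$. You dropped the sign change $y\mapsto -y$ when placing the contribution of $\Ext^1(E,\tau E)$. Hence $\dim\Hom_\cC(E_p^x,E_p^y)=2$ if $y=x$ and $0$ otherwise, in particular $0$ for $y=-x$. Dually, $\dim\Hom_\cC(E_p^x,E_p^y[1])=2$ if $y=-x$ and $0$ otherwise. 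The endomorphism algebra of $E_p^x$ in $\CCP$ is $2$-dimensional because $\tau^2E_p^x\cong E_p^x$; this is the paper's direct computation for $p=q$.

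These correct values match Definition~\ref{def:oint} literally, case by case:
\begin{itemize}
\item For $y=x$, type (III) contributes both endpoints, giving $2$.
\item For $y=-x$, the tags differ at both ends, so type (III) fails and type (II) is excluded by $\gamma_1=\gamma_2$, giving $0$.
\item For $y\neq\pm x$, either the underlying arcs coincide and the tags agree at exactly one end, so neither (II) nor (III) applies; or the underlying arcs are $\alpha_p^{+}$ and $\alpha_p^{-}$, which are disjoint with no common endpoint.
\end{itemize}
So nothing overcounts. Your proposed fallback, counting one point per coinciding pair instead of one per endpoint, would break the identity: it gives $\Int^0(\alpha_{p,x},\alpha_{p,x})=1$ against the true value $2$. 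It also still would not produce the value $1$ you expected for $y=-x$, since neither type (II) nor type (III) applies there. Replace Step 3(c) by the corrected direct computation, and keep the definition as stated.
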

\begin{proof} 
Since $\coh(\CPone)$ is hereditary and $\tau {E}_q^y={E}_q^{-y}$,  
\[
\Hom_\cC  (  {E}_p^x ,  {E}_q^y [d]  )=\begin{cases}
     \Hom_{\cD} (  E_p^x[0] , E_q^y[0]   )\oplus\Hom_{\cD} (  E_p^x[0]  ,  E_q^{-y}[1]  ), & \text{$d$ is even,}\\
     \Hom_{\cD} (  E_p^x[0]  , E_q^y[1]   )\oplus\Hom_{\cD} (  E_p^x[0]  ,  E_q^{-y}[0]  ), & \text{otherwise.} 
\end{cases}
\]
In particular, for $p=q$, we have   
\[
\dim\Hom_\cC  (   {E}_p^x ,  {E}_p^y  )=\begin{cases}
2 & \text{if $y=x$,}\\
 0 & \text{otherwise;}   
\end{cases}
\text{ and }
\dim\Hom_\cC  (  {E}_p^x , {E}_p^y[1]   )=\begin{cases}
2 & \text{if $y=-x$,}\\
 0 & \text{otherwise.}   
\end{cases}
\]

Let $s$ (resp. $t$) denote the number of common endpoints $a$ with $\kappa_1(a)=\kappa_2(a)$ (resp. $\kappa_1(a)\ne\kappa_2(a)$). By   Theorem~\ref{thm:X2}, we obtain that
\[
\dim\Hom_\cC (   {E}_p^x ,  {E}_q^y  )= i(\alpha_{p,x}^\circ,\alpha_{q,y}^\circ)+s\text{ and }
\dim\Hom_\cC  (   {E}_p^x ,  {E}_q^y [1]  )= i(\alpha_{p,x}^\circ,\alpha_{q,y}^\circ)+t.
\]
Thus the result follows from Definition~\ref{def:oint}.
\end{proof}

This theorem implies that a set   of tagged arcs is a tagged triangulation of $S_{0,4}$  if and only if its image under $X_c $ is a cluster-tilting object in $\CCP$.  Consequently, the number of (pairwise non-isomorphic) indecomposable direct summands of any cluster-tilting object in   $\CCP$  is  6. 

Moreover,  the bijection $X_c$  induces an isomorphism 
 between the exchange graph 
 of $S_{0,4}$ and the exchange graph 
 for cluster-tilting objects in  $\CCP$,  which has been previously stated in \cite[Theorem 5.11]{BG}.
Next, we consider its local shape.
 \begin{proposition}\label{prop:commutation-relations} 
 Let $T=\bigoplus_{i=1}^{6}T_{i}$ be  a cluster-tilting object  in  $\CCP$, with tagged arc $(\gamma_i,\kappa_i)$ associated to $T_i$.    Fix a lift  $\overline{\gamma}_i \subset \mathbb{R}^2$   of  $\gamma_i$, and denote by $\mu_i(T)$   the mutation of $T$ at $T_i$. 
Then  for any distinct  $1\le i,j\le 6$, one of the following holds:

\begin{itemize}
    \item[(1)]  
 If $\gamma_i$ and $\gamma_j$ are disjoint (in particular, if $\gamma_i=\gamma_j$), or  if the lifts $\overline{\gamma}_i,\overline{\gamma}_j$   are as shown in \Cref{fig:commutation-cases}(I), then  \[\mu_{i}\mu_{j}(T)=\mu_{j}\mu_{i}(T).\]

    \item[(2)] If the lifts $\overline{\gamma}_i,\overline{\gamma}_j$ are arranged as in \Cref{fig:commutation-cases}(II) or (III),   then  
    \[\mu_i \mu_j \mu_i(T) = \mu_j \mu_i  (T).\]
\end{itemize}
 \end{proposition}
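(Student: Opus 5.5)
The plan is to move the statement to the surface side and prove it there. By \Cref{thm:C2}, a set of tagged arcs is a tagged triangulation exactly when its image under $X_c$ is cluster-tilting. Mutation of $T$ at $T_i$ then corresponds to the flip of the tagged triangulation $\mathcal{T}=\{(\gamma_k,\kappa_k)\}$ along $(\gamma_i,\kappa_i)$. So it suffices to show that, in the exchange graph of $S_{0,4}$, the two flips at $\gamma_i,\gamma_j$ generate a square in case (1) and a pentagon in case (2). The relation is stated in the form $\mu_i\mu_j\mu_i(T)=\mu_j\mu_i(T)$, i.e. after relabeling the sequence returns after five flips. In both cases everything depends only on the quadrilateral (or degenerate once-punctured digon) containing $\gamma_i$, together with its position relative to $\gamma_j$. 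We read that position off the lifts $\overline{\gamma}_i,\overline{\gamma}_j$ in $\mathbb{R}^2$ via $\tilde\pi$ of \Cref{arcs pair}.

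\textbf{Case (1).} The flip of $\gamma_i$ replaces it by the other diagonal of the quadrilateral $Q_i$ formed by the two triangles adjacent to $\gamma_i$, or by the tag change if $\gamma_i$ sits in a self-folded/digon configuration. If $\gamma_i$ and $\gamma_j$ are disjoint and not sides of a common triangle, then $Q_i$ and $Q_j$ have disjoint interiors. The two flips then act on disjoint pieces, so they commute. This includes the case $\gamma_i=\gamma_j$ with different tags: flipping one tagged copy changes only the tag of the other copy at one end, and the two tag changes occur at different punctures. When the lifts are in configuration (I) of \Cref{fig:commutation-cases}, I would check that $\gamma_j$ is not a side of $Q_i$. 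Equivalently, $\gamma_i,\gamma_j$ are opposite or unrelated in the triangulation, so their tagged intersection number after flipping is still zero. Using \Cref{inner int} with $|\Delta|$ computed from the lifts, the new arcs $\gamma_i'$ and $\gamma_j'$ have $\Int^1=0$ and hence are compatible. Thus $\mathcal{T}\setminus\{\gamma_i,\gamma_j\}\cup\{\gamma_i',\gamma_j'\}$ is a triangulation, which gives the square.

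\textbf{Case (2).} Here $\gamma_i,\gamma_j$ are two sides of a common triangle, which is the configurations (II)/(III). I would use the action of $\MCG^\times(S_{0,4})$, i.e. $\PSL(2,\ZZ)$, the involutions $\iota_1,\iota_2$ and tag changes, which acts on triangulations compatibly with flips. This reduces to a normal form where $\overline{\gamma}_i,\overline{\gamma}_j$ have slopes $0$ and $\infty$ (or $0$ and $1$) with specified endpoints. In that normal form the pentagon is an explicit computation of five flips using \Cref{prop:braid-twist-slope-action} and \Cref{cor:graded-tag-action}, which track slope and tag. The slopes run through a Farey pentagon $0,\infty,1,\dots$, with the tag labels read from Table~\ref{tab:bt-tag-action}. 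Each step is verified by the compatibility criterion of \Cref{thm:C2}: $\Int^1=0$ between the new arc and the remaining four arcs.

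The main obstacle is the tagged/degenerate situation. When $\gamma_i$ and $\gamma_j$ share both endpoints, the local picture is not a genuine quadrilateral: the sphere's type IV puzzle piece is exactly where FST's local arguments fail. I would handle it by using the lifts to enumerate all possible relative positions up to the $\MCG^\times$-action. There should be finitely many, namely shared endpoints $r\in\{0,1,2\}$ and $|\Delta(p,q)|\le 2$, since compatible arcs have zero interior intersection by \Cref{inner int}. I would then check the square or pentagon directly in each such case.
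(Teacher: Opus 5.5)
Your reduction to flips of tagged triangulations is fine. So is the device ``if the flipped arcs $\gamma_i',\gamma_j'$ are compatible, then $\mu_i\mu_j(T)=\mu_j\mu_i(T)$ by uniqueness of flips''. The gap is in the geometry you feed into it.

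\textbf{Configuration (I) is misdescribed.} The two arcs there are not unrelated. In the bottom-left diagram of \Cref{fig:i}, the four triangles around $M$ project to two triangles of $S_{0,4}$, and each of them has both $\gamma_i$ and $\gamma_j$ as sides. So $\gamma_j$ occurs twice on the boundary of the quadrilateral of $\gamma_i$. The flips commute because these two common triangles carry opposite orientations ($b_{ij}=0$), not because the quadrilaterals are disjoint.

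\textbf{Sharing a triangle does not give a pentagon.} Take the cluster-tilting object $\mathcal{O}\oplus\bigoplus_{l=1}^{4}\mathcal{O}(\vec{x}_l)\oplus\mathcal{O}(\vec{c})$. Its summands $\mathcal{O}=E_0^1$ and $\mathcal{O}(\vec{c})=E_2^1$ correspond to $\alpha_{0,1}$ and $\alpha_{2,1}$. These share both endpoints with equal tags and bound two common triangles of the same orientation. Indeed $\dim\Hom_{\cC}(\mathcal{O}(\vec{c}),\mathcal{O})=2$ gives a double arrow, so alternating flips at these two positions never close up. The same pair gives a square in a triangulation where each of the two digons it bounds is cut by two arcs from its interior puncture.

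\textbf{Your final enumeration therefore fails.} Sorting pairs by $r\in\{0,1,2\}$ and $|\Delta|\le 2$ and expecting a square or pentagon in each case breaks down at $r=2$, $|\Delta|=2$. More generally, the pair alone does not determine the outcome, because whether two arcs are consecutive at a common endpoint depends on the other four arcs of $T$. The relevant invariant is the number of common triangles counted with orientation.

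\textbf{The flips themselves are never computed.} \Cref{prop:braid-twist-slope-action} and \Cref{cor:graded-tag-action} describe how the mapping classes $B_\infty,B_0$ move arcs; they do not describe flips. A flip of $\gamma_i$ is determined by the two triangles adjacent to $\gamma_i$, hence by the rest of $T$. Normalising only the pair $(\gamma_i,\gamma_j)$ by $\MCG^\times(S_{0,4})$ does not control those triangles.

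\textbf{The case $\gamma_i=\gamma_j$ is also misdescribed.} Flipping one tagged copy does not merely retag the other copy. In \Cref{fig:i}, flipping the plain copy of $P$--$M$ produces a different arc, $Q$--$M$ notched at $M$. Flipping the notched copy produces $Q$--$M$ plain. Both changes happen at the same puncture $M$. In the ideal picture the two arcs share a self-folded triangle, so the disjoint-quadrilateral argument does not apply.

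What is missing is what the paper supplies in Figures~\ref{fig:i}--\ref{fig:iii}. In each configuration, identify the union of the triangles adjacent to $\gamma_i$ or $\gamma_j$: a once-punctured digon for $\gamma_i=\gamma_j$ and for (I), and a possibly degenerate pentagon for (II) and (III). Then carry out the flips explicitly there.
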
 
 
\begin{figure}[h]
     \centering
         \begin{tikzpicture}[scale=1.5 , >=Stealth]
    \filldraw[black] (0,0) circle (0.6pt);
    \filldraw[green] ( 1,0) circle (0.6pt);
    \filldraw[black] ( 1,1) circle (0.6pt);
    \filldraw[green] (0,1) circle (0.6pt);
     \filldraw[red] (0.5,0.5) circle (0.6pt);
     \node[left] at (0,0) {\tiny$P$};
     \node[right] at ( 1,1) {\tiny$P$};
      \node[left] at (0,1) {\tiny$Q$};
     \node[right] at ( 1,0) {\tiny$Q$};
     \node[right] at (0.5,0.5) {\tiny$M$};
      \node[blue] at (0.35,0.15) {\scriptsize$ \overline{\gamma}_i$};
     \node[orange] at (0.7,0.15) {\scriptsize$\overline{\gamma}_j$};
         \node[blue] at (0.65,0.85) {\scriptsize$ \overline{\gamma}_i$};
     \node[orange] at (0.3,0.85) {\scriptsize$\overline{\gamma}_j$};
    \draw[  thick ] (0,0) --  (0,1);
    \draw[ thick] ( 1,0) --  ( 1,1);
   \draw[  thick] (0,1) --  (  1,1);
    \draw[  thick ] (0,0) --  (  1,0); 
    \draw[  thick, blue] (0,0) --  (  1,1);
 \draw[ thick,orange ] ( 1,0) --  (0,1);
  \node  at (0.5,-0.3) {(I)};
\end{tikzpicture}  
\begin{tikzpicture}[scale=1.5  , >=Stealth]
    \filldraw[black] (0,0) circle (0.6pt);
    \filldraw[green] (0.8,0) circle (0.6pt);
    \filldraw[black] (1.2,1) circle (0.6pt);
     \filldraw[red] (0.6,0.5) circle (0.6pt);
      \filldraw[purple] (1.3,0.5) circle (0.6pt);
     \node[left] at (0,0) {\tiny$P$};
     \node[right] at (1.2,1) {\tiny$P$};
     \node[right] at (0.8,0) {\tiny$Q$};
     \node[left] at (0.6,0.5) {\tiny$M$};
      \node[right] at (1.3,0.5) {\tiny$N$};
      \node[orange] at (.87,0.5) {\scriptsize$\overline{\gamma}_j$};
     \node[blue] at (0.6,0.2) {\scriptsize$\overline{\gamma}_i$};
    \draw[ thick,orange] (0.8,0) --  (1.2,1);
    \draw[  thick] (0,0) --  ( 0.8,0); 
    \draw[  thick] (0,0) --  ( 1.2,1);
 \draw[ thick,blue ] (0.8,0) --  (0.6,0.5);
 \draw  [thick](0.8,0) -- (1.3,0.5);
  \draw [thick](1.2,1) -- (1.3,0.5);
  \node  at (0.4,-0.3) {(II)};
\end{tikzpicture}  
 \begin{tikzpicture}[scale=1.5 , >=Stealth]
    \filldraw[black] (0,0) circle (0.6pt);
    \filldraw[green] (0.8,0) circle (0.6pt);
    \filldraw[black] (1.2,1) circle (0.6pt);
     \filldraw[red] (0.6,0.5) circle (0.6pt);
      \filldraw[purple] (1.3,0.5) circle (0.6pt);
     \node[left] at (0,0) {\tiny$P$};
     \node[right] at (1.2,1) {\tiny$P$};
     \node[right] at (0.8,0) {\tiny$Q$};
     \node[left] at (0.6,0.5) {\tiny$M$};
      \node[right] at (1.3,0.5) {\tiny$N$};
      \node[orange] at (.87,0.5) {\scriptsize$\overline{\gamma}_j$};
     \node[blue] at (0.48,0.2) {\scriptsize$\overline{\gamma}_i$};
     \node[blue] at (0.75,0.8) {\scriptsize$\overline{\gamma}_i$};
    \draw[ thick,orange] (0.8,0) --  (1.2,1);
    \draw[  thick] (0,0) --  ( 0.8,0); 
    \draw[ blue, thick] (0,0) --  ( 1.2,1);
 \draw  [thick](0.8,0) -- (1.3,0.5);
  \draw [thick](1.2,1) -- (1.3,0.5);
   \node  at (0.4,-0.3) {(III)};
\end{tikzpicture} 
 \caption{All possible geometric configurations of the lifts $\overline{\gamma}_i $ and $\overline{\gamma}_j $ in the universal cover $\mathbb{R}^2$,  they share an endpoint.}
    \label{fig:commutation-cases}
\end{figure}
 \begin{proof}
Note that if  $\gamma_i$ and $\gamma_j$ belong to different triangles of a tagged triangulation, then the mutations $\mu_i(T)$ and $\mu_j(T)$ are independent; consequently, $\mu_i\mu_j(T)=\mu_j\mu_i(T)$.
  
In the following diagrams, we adopt the  convention: 
for a tagged arc $(\gamma,\kappa)$, an endpoint $p$ with $\kappa(p)=1$ is marked by the symbol $\times$, while the case $\kappa(p)=-1$ is indicated by the absence of such a mark.

 (1)  $\gamma_i = \gamma_j$. 
By definition,    we may assume that the lifts $\overline{\gamma}_i$ and $\overline{\gamma}_j$ in the universal cover are positioned as shown in the left‑most diagram of \Cref{fig:i}.   Applying the flips $\mu_i$   and $\mu_j$   in either order yields the same tagged triangulation.
Therefore, if $\gamma_i=\gamma_j$  or if the configuration of their lifts  $\overline{\gamma}_i$ and $\overline{\gamma}_j$  in $\mathbb{R}^2$ is as shown in \Cref{fig:commutation-cases}(I), then
 $\mu_{i}\mu_{j}(T)=\mu_{j}\mu_{i}(T)$.
 \begin{figure}[h]
     \centering
      \begin{tikzpicture}  [scale=1.5, >=Stealth]
    \filldraw[black] (0,0) circle (0.5pt);
    \filldraw[green] (1,0) circle (0.5pt);
    \filldraw[black] (1,1) circle (0.5pt);
    \filldraw[green] (0,1) circle (0.5pt);
     \filldraw[red] (0.5,0.5) circle (0.5pt); 
       \node[left] at (0,0) {\tiny$P$};
     \node[right] at (1,1) {\tiny$P$};
      \node[left] at (0,1) {\tiny$Q$};
     \node[right] at (1,0) {\tiny$Q$};
     \node[right] at (0.5,0.5) {\tiny$M$};
      \node[blue] at (0.2,0.4) {\tiny$ \overline{\gamma}_i$};
     \node[orange] at (0.55,0.15) {\tiny$\overline{\gamma}_j$};
    \draw[  thick ] (0,0) --  (0,1);
    \draw[ thick] (1,0) --  (1,1);
   \draw[  thick] (0,1) --  ( 1,1);
    \draw[  thick ] (0,0) --  ( 1,0); 
    \draw[  thick, blue] (0,0) --  ( 1,1);
  \draw [thick ,orange](0,0) .. controls
(0.45 ,0.25)  ..(0.5,0.5);
 \draw [thick ,orange](1,1) .. controls
(0.55 ,0.75)  ..(0.5,0.5);
\node[orange] at (0.49,0.4) {\tiny$\times$};
\node[orange] at (0.51,0.6) {\tiny$\times$};
\draw[-> ] (1.3,.5) --  (  1.8,.5);
      \node[above] at (1.55,0.5) {\small$\mu_i$};
     \draw[-> ] (0.5,-0.15) --  (  0.5,-0.55);
      \node[above] at (0.7,-0.5) {\small$\mu_j$}; 
        \filldraw[green] ( 2.1+1,0) circle (0.5pt);
    \filldraw[black] ( 2.1+1,1) circle (0.5pt);
    \filldraw[green] ( 2.1+0,1) circle (0.5pt);
     \filldraw[red] ( 2.1+0.5,0.5) circle (0.5pt); 
      \node[blue] at ( 2.1+0.65,0.2) {\tiny$ \overline{\gamma}_i$};
     \node[orange] at ( 2.1+0.2,0.3) {\tiny$\overline{\gamma}_j$};
    \draw[  thick ] ( 2.1+0,0) --  ( 2.1+0,1);
    \draw[ thick] ( 2.1+1,0) --  ( 2.1+1,1);
   \draw[  thick] ( 2.1+0,1) --  ( 2.1+ 1,1);
    \draw[  thick ] ( 2.1+0,0) --  ( 2.1+ 1,0); 
  \draw[ thick,blue] ( 2.1+1,0) --  ( 2.1+0,1);
  \node[blue] at ( 2.1+0.6,0.4) {\tiny$\times$};
\node[blue] at ( 2.1+0.4,0.6) {\tiny$\times$};
  \draw [thick ,orange](2.1+0,0) .. controls
(2.1+0.45 ,0.25)  ..(2.1+0.5,0.5);
 \draw [thick ,orange](2.1+1,1) .. controls
(2.1+0.55 ,0.75)  ..(2.1+0.5,0.5);
\node[orange] at (2.1+0.49,0.4) {\tiny$\times$};
\node[orange] at (2.1+0.51,0.6) {\tiny$\times$};
\draw[-> ] ( 2.1+0.5,-0.15) --  (  2.1+ 0.5,-0.55);
      \node[above] at ( 2.1+0.7,-0.5) {\small $\mu_j$ };
     \filldraw[black] (0,0 -2+0.3)circle (0.5pt);
    \filldraw[green] (1,0 -2+0.3)circle (0.5pt);
    \filldraw[black] (1,1 -2+0.3)circle (0.5pt);
    \filldraw[green] (0,1 -2+0.3)circle (0.5pt);
     \filldraw[red] (0.5,0.5 -2+0.3)circle (0.5pt); 
      \node[blue] at (0.4,0.2 -2+0.3){\tiny$ \overline{\gamma}_i$};
     \node[orange] at (0.65,0.2 -2+0.3){\tiny$\overline{\gamma}_j$};
    \draw[  thick ] (0,0 -2+0.3)--  (0,1 -2+0.3);
    \draw[ thick] (1,0 -2+0.3)--  (1,1 -2+0.3);
   \draw[  thick] (0,1 -2+0.3)--  ( 1,1 -2+0.3);
    \draw[  thick ] (0,0 -2+0.3)--  ( 1,0 -2+0.3); 
    \draw[  thick, blue] (0,0 -2+0.3)--  ( 1,1 -2+0.3);
   \draw[ thick,orange ] (1,0 -2+0.3)--  (0,1-2+0.3);
\draw[-> ] (1.3,.5 -2+0.3)--  (  1.8,.5 -2+0.3);
      \node[above] at (1.55,0.5 -2+0.3){\small $\mu_i$ }; 
       \filldraw[black] ( 2.1+0,0 -2+0.3)circle (0.5pt);
    \filldraw[green] ( 2.1+1,0 -2+0.3)circle ( 0.5pt);
    \filldraw[black] ( 2.1+1,1 -2+0.3)circle ( 0.5pt);
    \filldraw[green] ( 2.1+0,1 -2+0.3)circle (0.5pt);
     \filldraw[red] ( 2.1+0.5,0.5 -2+0.3)circle ( 0.5pt); 
    \draw[  thick ] ( 2.1+0,0 -2+0.3)--  ( 2.1+0,1 -2+0.3);
    \draw[ thick] ( 2.1+1,0 -2+0.3)--  ( 2.1+1,1 -2+0.3);
   \draw[  thick] ( 2.1+0,1 -2+0.3)--  ( 2.1+ 1,1 -2+0.3);
    \draw[  thick ] ( 2.1+0,0 -2+0.3)--  ( 2.1+ 1,0 -2+0.3); 
\draw[ thick,] ( 2.1+1,0-2+0.3) --  ( 2.1+0,1-2+0.3);
  \draw [thick ]( 2.1+1,0-2+0.3).. controls
( 2.1+0.55 ,0.25 -2+0.3) ..( 2.1+0.5,0.5  -2+0.3);

  \draw [thick ]( 2.1+0,1-2+0.3).. controls
( 2.1+0.55 ,0.75 -2+0.3) ..( 2.1+0.5,0.5  -2+0.3); 
\node  at ( 2.1+0.52,0.4 -2+0.3){\tiny$\times$};
\node  at ( 2.1+0.52,0.55 -2+0.3){\tiny$\times$};
\end{tikzpicture}  
 \caption{$\gamma_i = \gamma_j$}
    \label{fig:i}
\end{figure}

(2) The lifts $\overline{\gamma}_i$ and $\overline{\gamma}_j$ are configured as in \Cref{fig:commutation-cases}(II).  
Assume they are positioned as in the left‑most diagram of \Cref{fig:ii}. 
In this situation the flips satisfy the relation
$\mu_i \mu_j \mu_i(T) = \mu_j \mu_i  (T),$
as is verified by following the diagram.

  \begin{figure}[h]
    \centering
    \begin{tikzpicture}[scale=1.5 , >=Stealth]
    \filldraw[black] (0,0) circle (0.6pt);
    \filldraw[green] (0.8,0) circle (0.6pt);
    \filldraw[black] (1.2,1) circle (0.6pt);
     \filldraw[red] (0.6,0.5) circle (0.6pt);
      \filldraw[purple] (1.3,0.5) circle (0.6pt);
      \node[orange] at (1-0.12,0.55) {\tiny$\overline{\gamma}_j$};
     \node[blue] at (0.6,0.2) {\tiny$\overline{\gamma}_i$};
    \draw[ thick,orange] (0.8,0) --  (1.2,1);
    \draw[  thick] (0,0) --  ( 0.8,0); 
    \draw[  thick] (0,0) --  ( 1.2,1);
  \draw[ thick,blue ] (0.8,0) --  (0.6,0.5);
 \draw  [thick] (0.8,0) -- (1.3,0.5);
  \draw [thick] (1.2,1) -- (1.3,0.5);
 \draw[-> ] (1.5,.5) --  (  2,.5);
      \node[above] at (1.75,0.5) {\small$\mu_i$};
     \draw[-> ] (0.5,-0.2) --  (  0.5,-0.7);
      \node[above] at (0.7,-0.6) {\small$\mu_j$}; 

 \filldraw[black]  (2+0,0) circle  ( 0.6pt);
    \filldraw[green]  (2+0.8,0) circle  ( 0.6pt);
    \filldraw[black]  (2+1.2,1) circle  ( 0.6pt);
     \filldraw[red]  (2+0.6,0.5) circle  ( 0.6pt);
      \filldraw[purple]  (2+1.3,0.5) circle  ( 0.6pt);
      \node[orange] at  (2+.9,0.55) {\tiny$\overline{\gamma}_j$};
     \node[blue] at  (2+0.6,0.2) {\tiny$\overline{\gamma}_i$};
    \draw[ thick,orange]  (2+0.8,0) --   (2+1.2,1);
    \draw[  thick]  (2+0,0) --   (2+ 0.8,0); 
    \draw[  thick]  (2+0,0) --   (2+ 1.2,1);
 \draw [thick ,blue](2+0,0) .. controls
(2+0.5 ,0.25)  ..(2+0.6,0.5);
 \draw [thick ,blue](2+1.2,1) .. controls
(2+0.7 ,0.75)  ..(2+0.6,0.5);
\node[blue] at (2+0.55,0.4) {\tiny$\times$};
\node[blue] at (2+0.65,0.65) {\tiny$\times$};
 \draw  [thick] (2 +0.8,0) --  (2+1.3,0.5);
  \draw [thick] (2 +1.2,1) --  (2+1.3,0.5);
 \draw[-> ]  (2+1.5,.5) --   (2+  2,.5);
      \node[above] at  (2+1.75,0.5) {\small$\mu_j$}; 
  \filldraw[black]  (4+0,0) circle  ( 0.6pt);
    \filldraw[green]  (4+0.8,0) circle  ( 0.6pt);
    \filldraw[black]  (4+1.2,1) circle  ( 0.6pt);
     \filldraw[red]  (4+0.6,0.5) circle  ( 0.6pt);
      \filldraw[purple]  (4+1.3,0.5) circle  ( 0.6pt);
      \node[orange] at  (4+0.88,0.48) {\tiny$\overline{\gamma}_j$};
     \node[blue] at  (4+0.4,0.1) {\tiny$\overline{\gamma}_i$};
    \draw[ thick,orange]  (4+0,0) --   (4+1.3,0.5);
    
    \draw[  thick]  (4+0,0) --   (4+ 0.8,0); 
    \draw[  thick]  (4+0,0) --   (4+ 1.2,1);
  \draw [thick ,blue](4+0,0) .. controls
(4+0.5 ,0.25)  ..(4+0.6,0.5);
 \draw [thick ,blue](4+1.2,1) .. controls
(4+0.7 ,0.75)  ..(4+0.6,0.5);
\node[blue] at (4+0.55,0.4) {\tiny$\times$};
\node[blue] at (4+0.65,0.65) {\tiny$\times$};

 \draw  [thick] (4+0.8,0) --  (4+1.3,0.5);
  \draw [thick] (4+1.2,1) --  (4+1.3,0.5);
           \draw[-> ] (4+0.5,-0.2) --  (  4+0.5,-0.7);
      \node[above] at (4+0.7,-0.6) {\small$\mu_j$}; 
    \filldraw[black]  (0,0-2+0.3) circle  ( 0.6pt );
    \filldraw[green]  (0.8,0-2+0.3) circle  ( 0.6pt);
    \filldraw[black]  (1.2,1-2+0.3) circle  ( 0.6pt);
     \filldraw[red]  (0.6,0.5-2+0.3) circle  ( 0.6pt);
      \filldraw[purple]  (1.3,0.5-2+0.3) circle  ( 0.6pt);
      \node[orange] at  (1,0.6-2+0.33) {\tiny$\overline{\gamma}_j$};
     \node[blue] at  (0.6,0.2-2+0.3) {\tiny$\overline{\gamma}_i$};
    \draw[ thick,orange]  (0.6,0.5-2+0.3) --   (1.3,0.5-2+0.3);
    
    \draw[  thick]  (0,0-2+0.3) --   ( 0.8,0-2+0.3); 
    \draw[  thick]  (0,0-2+0.3) --   ( 1.2,1-2+0.3);
 \draw[ thick,blue ]  (0.8,0-2+0.3) --   (0.6,0.5-2+0.3);
 \draw  [thick] (0.8,0-2+0.3) --  (1.3,0.5-2+0.3);
  \draw [thick] (1.2,1-2+0.3) --  (1.3,0.5-2+0.3);
   \draw[-> ]  (1.6,.5-2+0.3) --   (  3.8,.5-2+0.3);
      \node[above] at  (2+.7,0.5-2+0.3) {\small$\mu_i$};

        \filldraw[black]  (4+0,0-2+0.3) circle  ( 0.6pt );
    \filldraw[green]  (4+0.8,0-2+0.3) circle  ( 0.6pt);
    \filldraw[black]  (4+1.2,1-2+0.3) circle  ( 0.6pt);
     \filldraw[red]  (4+0.6,0.5-2+0.3) circle  ( 0.6pt);
      \filldraw[purple]  (4+1.3,0.5-2+0.3) circle  ( 0.6pt);
       \draw[ thick]  (4+0.6,0.5-2+0.3) --   (4+1.3,0.5-2+0.3);
    \draw[  thick]  (4+0,0-2+0.3) --   (4+ 0.8,0-2+0.3); 
    \draw[  thick]  (4+0,0-2+0.3) --   (4+ 1.2,1-2+0.3);
 \draw[ thick ]  (4+0,0-2+0.3) --   (4+1.3,0.5-2+0.3);
 
 \draw  [thick] (4+0.8,0-2+0.3) --  (4+1.3,0.5-2+0.3);
  \draw [thick] (4+1.2,1-2+0.3) --  (4+1.3,0.5-2+0.3);  
\end{tikzpicture}  
    \caption{Configuration as in \Cref{fig:commutation-cases}(II)}
    \label{fig:ii}
\end{figure}
 
(3) The lifts $\overline{\gamma}_i$ and $\overline{\gamma}_j$ are configured as in \Cref{fig:commutation-cases}(III). 
Assume they are positioned as in the left‑most diagram of \Cref{fig:iii}. 
Again one obtains
$\mu_i \mu_j \mu_i(T) = \mu_j \mu_i  (T),$
which is exhibited by the corresponding diagram.

  \begin{figure}[h]
    \centering
    \begin{tikzpicture}[scale=1.5 , >=Stealth]
    \filldraw[black] (0,0) circle (0.6pt);
    \filldraw[green] (0.8,0) circle (0.6pt);
    \filldraw[black] (1.2,1) circle (0.6pt);
     \filldraw[red] (0.6,0.5) circle (0.6pt);
      \filldraw[purple] (1.3,0.5) circle (0.6pt);
      \node[orange] at (1.15,0.52) {\tiny$\overline{\gamma}_j$};
     \node[blue] at (0.8,0.5) {\tiny$\overline{\gamma}_i$};
    \draw[ thick,orange] (0.8,0) --  (1.2,1);
    \draw[  thick] (0,0) --  ( 0.8,0); 
    \draw[  thick, blue ] (0,0) --  ( 1.2,1);

      \draw[-> ] (1.5,.5) --  (  2,.5);
      \node[above] at (1.75,0.5) {\small$\mu_i$};
     \draw[-> ] (0.5,-0.2) --  (  0.5,-0.7);
      \node[above] at (0.7,-0.6) {\small$\mu_j$}; 
 \filldraw[black]  (2+0,0) circle  ( 0.6pt);
    \filldraw[green]  (2+0.8,0) circle  ( 0.6pt);
    \filldraw[black]  (2+1.2,1) circle  ( 0.6pt);
     \filldraw[red]  (2+0.6,0.5) circle  ( 0.6pt);
      \filldraw[purple]  (2+1.3,0.5) circle  ( 0.6pt);
      \node[orange] at  (2+1.15,0.52) {\tiny$\overline{\gamma}_j$};
     \node[blue] at  (2+0.6,0.2) {\tiny$\overline{\gamma}_i$};
    \draw[ thick,orange]  (2+0.8,0) --   (2+1.2,1);
    \draw[  thick]  (2+0,0) --   (2+ 0.8,0); 
 \draw[ thick,blue ]  (2+0.8,0) --   (2+0.6,0.5);
 \node[blue]  at (2+0.65,0.4) {\tiny$\times$};
 
 \draw  [thick] (2 +0.8,0) --  (2+1.3,0.5);
  \draw [thick] (2 +1.2,1) --  (2+1.3,0.5);
 \draw[-> ]  (2+1.5,.5) --   (2+  2,.5);
      \node[above] at  (2+1.75,0.5) {\small$\mu_j$}; 
  \filldraw[black]  (4+0,0) circle  ( 0.6pt);
    \filldraw[green]  (4+0.8,0) circle  ( 0.6pt);
    \filldraw[black]  (4+1.2,1) circle  ( 0.6pt);
     \filldraw[red]  (4+0.6,0.5) circle  ( 0.6pt);
      \filldraw[purple]  (4+1.3,0.5) circle  ( 0.6pt);
      \node[orange] at  (4+1,0.63) {\tiny$\overline{\gamma}_j$};
     \node[blue] at  (4+0.6,0.2) {\tiny$\overline{\gamma}_i$};
    \draw[ thick,orange]  (4+0.6,0.5) --   (4+1.3,0.5);
    \node[orange]  at (4+0.7,0.5) {\tiny$\times$};
    
    \draw[  thick]  (4+0,0) --   (4+ 0.8,0); 
 \draw[ thick,blue ]  (4+0.8,0) --   (4+0.6,0.5);
 \node[blue]  at (4+0.65,0.4) {\tiny$\times$};
 \draw  [thick] (4+0.8,0) --  (4+1.3,0.5);
  \draw [thick] (4+1.2,1) --  (4+1.3,0.5);
 \draw[-> ] (4+0.5,-0.2) --  (4+  0.5,-0.7);
      \node[above] at (4+0.7,-0.6) {\small$\mu_i$}; 
 
  \filldraw[black]  (0,0-2+0.3) circle  ( 0.6pt );
    \filldraw[green]  (0.8,0-2+0.3) circle  ( 0.6pt);
    \filldraw[black]  (1.2,1-2+0.3) circle  ( 0.6pt);
     \filldraw[red]  (0.6,0.5-2+0.3) circle  ( 0.6pt);
      \filldraw[purple]  (1.3,0.5-2+0.3) circle  ( 0.6pt);
      \node[orange] at  (1,0.5-2+0.3) {\tiny$\overline{\gamma}_j$};
     \node[blue] at  (0.27,0.7-2) {\tiny$\overline{\gamma}_i$};
    \draw[ thick,orange]  (0.0,0-2+0.3) --   (1.3,0.5-2+0.3);
    \draw[  thick]  (0,0-2+0.3) --   ( 0.8,0-2+0.3); 
    \draw[blue,  thick]  (0,0-2+0.3) --   ( 1.2,1-2+0.3);
 \draw  [thick] (0.8,0-2+0.3) --  (1.3,0.5-2+0.3);
  \draw [thick] (1.2,1-2+0.3) --  (1.3,0.5-2+0.3);
   \draw[-> ]  (1.6,.5-2+0.3) --   (  3.8,.5-2+0.3);
      \node[above] at  (2+.7,0.5-2+0.3) {\small$\mu_i$};

        \filldraw[black]  (4+0,0-2+0.3) circle  ( 0.6pt );
    \filldraw[green]  (4+0.8,0-2+0.3) circle  ( 0.6pt);
    \filldraw[black]  (4+1.2,1-2+0.3) circle  ( 0.6pt);
     \filldraw[red]  (4+0.6,0.5-2+0.3) circle  ( 0.6pt);
      \filldraw[purple]  (4+1.3,0.5-2+0.3) circle  ( 0.6pt);
    \draw[ thick ]  (4+0.0,0-2+0.3) --   (4+1.3,0.5-2+0.3);
    \draw[ thick ]  (4+0.6,0.5-2+0.3) --   (4+1.3,0.5-2+0.3);
    \node   at (4+0.7,0.5-2+0.3) {\tiny$\times$};
    \draw[  thick]  (4+0,0-2+0.3) --   (4+ 0.8,0-2+0.3); 
 \draw  [thick] (4+0.8,0-2+0.3) --  (4+1.3,0.5-2+0.3);
  \draw [thick] (4+1.2,1-2+0.3) --  (4+1.3,0.5-2+0.3);
 \draw  [thick] (0.8,0) -- (1.3,0.5);
  \draw [thick] (1.2,1) -- (1.3,0.5);
 \draw[-> ] (1.5,.5) --  (  2,.5);
 
\draw [thick  ]( 0,0) .. controls
(0.5 ,0.25)  ..( 0.6,0.5);
 \draw [thick  ]( 1.2,1) .. controls
(0.7 ,0.75)  ..(0.6,0.5);
\node  at ( 0.55,0.4) {\tiny$\times$};
\node  at ( 0.65,0.65) {\tiny$\times$};

\draw [thick  ](2+0,0) .. controls
(2+0.5 ,0.25)  ..(2+0.6,0.5);
 \draw [thick  ](2+1.2,1) .. controls
(2+0.7 ,0.75)  ..(2+0.6,0.5);
\node  at (2+0.55,0.4) {\tiny$\times$};
\node  at (2+0.65,0.65) {\tiny$\times$};

\draw [thick  ](4+0,0) .. controls
(4+0.5 ,0.25)  ..(4+0.6,0.5);
 \draw [thick  ](4+1.2,1) .. controls
(4+0.7 ,0.75)  ..(4+0.6,0.5);
\node  at (4+0.55,0.4) {\tiny$\times$};
\node  at (4+0.65,0.65) {\tiny$\times$};
 \draw [thick  ]( 0,0-2+0.3) .. controls
(0.5 ,0.25-2+0.3)  ..( 0.6,0.5-2+0.3);
 \draw [thick  ]( 1.2,1-2+0.3) .. controls
(0.7 ,0.75-2+0.3)  ..(0.6,0.5-2+0.3);
\node  at ( 0.55,0.4-2+0.3) {\tiny$\times$};
\node  at ( 0.65,0.65-2+0.3) {\tiny$\times$};

\draw [thick  ](4+0,0-2+0.3) .. controls
(4+0.5 ,0.25-2+0.3)  ..(4+0.6,0.5-2+0.3);
 \draw [thick  ](4+1.2,1-2+0.3) .. controls
(4+0.7 ,0.75-2+0.3)  ..(4+0.6,0.5-2+0.3);
\node  at (4+0.55,0.4-2+0.3) {\tiny$\times$};
\node  at (4+0.65,0.65-2+0.3) {\tiny$\times$};
\end{tikzpicture}  
    \caption{Configuration as in \Cref{fig:commutation-cases}(III)}
    \label{fig:iii}
\end{figure}
\end{proof}
According to  \cite[Proposition~7.10]{FST},  the exchange graph of cluster-tilting objects in $\CCP$ is  connected.   Thus, we  deduce the following corollary    from  Proposition~\ref{prop:commutation-relations}.
\begin{corollary}\label{cor:5.6}
Any loop in the exchange graph of cluster-tilting objects in $\CCP$ decomposes into squares and pentagons. 
\end{corollary}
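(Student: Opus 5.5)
Since the exchange graph of cluster-tilting objects in $\CCP$ is identified with the exchange graph of tagged triangulations of $S_{0,4}$ via \Cref{thm:C2}, and the latter is connected by \cite[Proposition~7.10]{FST}, I will use the standard criterion that the fundamental group of an exchange graph is generated by its local cycles. Concretely, I take the $2$-dimensional cell complex whose vertices are tagged triangulations, whose edges are flips, and whose $2$-cells are attached along every cycle of the form $\mu_i\mu_j\mu_i\mu_j(T)=T$ (square) or $\mu_i\mu_j\mu_i\mu_j\mu_i(T)=T$ (pentagon) arising in \Cref{prop:commutation-relations}. The claim is that this complex is simply connected, i.e.\ every loop is a product of conjugates of such squares and pentagons.

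The plan is to invoke the result of Fomin--Shapiro--Thurston, \cite[Theorem~10.2]{FST} (there stated for tagged triangulations of an arbitrary marked surface other than the exceptional closed ones, and proved for the exchange graph of tagged arcs), which says exactly that the fundamental group of the exchange graph is generated by cycles of length $4$ and $5$ supported at codimension-two faces, i.e.\ by the cycles obtained by fixing all but two arcs of a triangulation. For $S_{0,4}$ one has to check this still applies (the sphere with four punctures is not among the excluded exceptional surfaces, which are the once-punctured closed surfaces). Given this, the remaining task is to identify those codimension-two cycles: fixing a cluster-tilting object $T$ and two indices $i\neq j$, the link of the codimension-two face $T\setminus\{T_i,T_j\}$ is a cycle, and \Cref{prop:commutation-relations} shows it is a square when $\gamma_i,\gamma_j$ are disjoint or in configuration (I), and a pentagon in configurations (II), (III). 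I must also confirm that the configurations in \Cref{fig:commutation-cases} exhaust all possibilities for two compatible arcs sharing a triangle; I would do this by lifting to $\mathbb{R}^2$ via $\tilde\pi$ and using that two compatible arcs with a common endpoint either bound a common triangle in the lift or not, giving finitely many cases up to the action of $\MCG(S_{0,4})$ from \Cref{prop:MCGofS42}.

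The main obstacle is the applicability of the simple-connectivity theorem to this closed surface with only four punctures, since FST's proof passes through ideal triangulations and handles punctured closed surfaces carefully. If citing it directly is questionable, I would instead argue directly: any loop in the exchange graph lifts to a loop of flips, and using the $\MCG^\times(S_{0,4})$-action (transitive on triangulations of a fixed combinatorial type) together with the fact that the arcs of slopes in a Farey triangle determine a triangulation, reduce loops to loops in the Farey graph decorated by tags, where simple connectivity of the Farey tessellation (each cycle bounds triangles) translates into pentagon relations and the independent tag changes into squares.
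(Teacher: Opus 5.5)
Your argument is correct, but it rests on a different input from the paper's. The paper deduces the corollary from connectivity of the exchange graph \cite[Proposition~7.10]{FST} together with the local relations of Proposition~\ref{prop:commutation-relations}. You instead transport the Fomin--Shapiro--Thurston theorem through the exchange-graph isomorphism induced by $X_c$; that theorem (which covers $S_{0,4}$) says the fundamental group of the exchange graph of tagged triangulations is generated by the $4$- and $5$-cycles formed by the triangulations containing a fixed codimension-two face. This global simple-connectivity input is what the statement actually needs. Connectivity plus the existence of some local squares and pentagons does not by itself show that every loop is a product of conjugates of them. Once the theorem is cited, Proposition~\ref{prop:commutation-relations} is no longer needed for the corollary. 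What it adds is an explicit recognition, via lifts to $\mathbb{R}^2$, of which pairs of arcs give squares and which give pentagons.

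You should drop, rather than try to verify, the claim that the link of every codimension-two face is a cycle and that the configurations of \Cref{fig:commutation-cases} exhaust all cases; this is false here. Take $T=\mathcal{O}\oplus\mathcal{O}(\vec{x}_1)\oplus\cdots\oplus\mathcal{O}(\vec{x}_4)\oplus\mathcal{O}(\vec{c})$. This corresponds to the arcs $\alpha_{0,1}$ and $\alpha_{2,1}$ (both joining $p_1$ and $p_2$, since $\mathcal{O}(\vec{c})=E_2^1$) together with $\alpha_{1,1},\alpha_{1,i},\alpha_{1,j},\alpha_{1,k}$. By Proposition~\ref{inner int} and the tags, the other tagged arcs compatible with the four slope-$1$ arcs are exactly the $\alpha_{p,1}$ with $p=a/b$, $a$ even and $|a-b|=1$. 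Two of these are compatible iff $|\Delta(p,q)|=2$. So the link of this face is the infinite path $\cdots,\tfrac45,\tfrac23,0,2,\tfrac43,\tfrac65,\cdots$, of Kronecker type, matching the double arrow between $\mathcal{O}$ and $\mathcal{O}(\vec{c})$. Neither conclusion of Proposition~\ref{prop:commutation-relations} holds for this pair. This does not damage your proof, because the cited theorem only involves the finite links, and infinite links contribute no cycles. For the same reason, your Farey-triangle fallback would not work as sketched: this $T$ uses slopes $0,1,2$, which do not form a Farey triangle. The citation is what carries the argument.
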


At the end of this section, we provide a geometric interpretation of the automorphism group $\Aut\CCP$ of the cluster category $\CCP$.
Recall from \cite{LM,BKL} that the automorphism group of $\CCP$ admits a semidirect product decomposition
  \[
  \Aut\CCP\cong \PSL(2,\ZZ)\ltimes_{\rm conj}(\Aut(\CPone) \ltimes_2 \Pic_0\CPone). 
  \]  
Moreover, by \Cref{tubular left mutations}, the subgroup $\PSL(2,\ZZ)$ of $\Aut\CCP$ is generated by automorphisms $r$ and $\ell$, whose action on indecomposable rigid objects $E_p^x$ is given by
\[
r(E_p^x)=E_{p+1}^y,\qquad
\ell(E_p^x)=E_{\frac{p}{1-p}}^z,
\]
where the labels $y$ and $z$ are specified in Table~\ref{tab:bt-tag-action} (for $-x$ they are replaced by $-y$ and $-z$, respectively).

 Combining \Cref{tagmcg} and \Cref{prop:MCGofS42}, we have
\[
\MCG^\times(S_{0,4})
\cong
\bigl(\PSL(2,\ZZ)\ltimes_{\rm conj}
\langle\iota_1,\iota_2\rangle\bigr)
\ltimes_1\{\pm1\}^{\P},
\]
where $\P$ is the set of punctures   on $S_{0,4}$. The strategy used in the proof of \Cref{autd=mag}, with $\Br_3$ replaced by $\PSL(2,\ZZ)$, gives the following result.
\begin{theorem}\label{autc=mac}
For $\CPone=(\mathbb{P}^1,(0,1,\infty,\frac{1}{2}),(2,2,2,2))$ over $\CC$, there is a group isomorphism
 \[ \phi': \Aut\CCP  \to   \MCG^\times  (S_{0,4}) , \]   such that for any $\xi\in\Aut(\CCP)$ and any indecomposable rigid object $E_p^x$ in $\CCP$,
 \[\phi'(\xi )(X_c^{-1}(E_{p}^x))=X_c^{-1}(\xi (E_{p}^x)).\] 
\end{theorem}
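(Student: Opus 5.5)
The proof will mirror that of \Cref{autd=mag}, replacing $\Br_3$ by its quotient $\PSL(2,\ZZ)=\Br_3/\langle(\tubR\tubL)^3\rangle$ and the binary Dehn-twist group $\overline{\mathrm{D}}_{\vot}\cong\ZZ_2^4$ by the tagging group $\{\pm1\}^{\P}$. The plan has three steps: match the normal subgroups, match the $\PSL(2,\ZZ)$-factors, and glue the two matchings into a homomorphism on normal forms.

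\textbf{Step 1: the normal subgroups.} I will identify $\overline{\mathrm{D}}_{p_i}$ with the element of $\{\pm1\}^{\P}$ that is $-1$ at $p_i$ and $1$ elsewhere. This is justified by \Cref{bi:ba-ta}: a Dehn twist at $\sx_{p_i}$ changes the winding parity at $p_i$, and hence flips the tag $\kappa(p_i)$. Under this identification, and using that the forgetful map $\langle\tilde\iota_1,\tilde\iota_2\rangle\to\langle\iota_1,\iota_2\rangle$ is an isomorphism, the group $\langle\tilde\iota_1,\tilde\iota_2\rangle\ltimes_{\rm conj}\overline{\mathrm{D}}_{\vot}$ becomes $\langle\iota_1,\iota_2\rangle\ltimes_1\{\pm1\}^{\P}$. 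The multiplication rules agree: in both groups the second component is twisted by the action of the first component on punctures. Composing with $\rho$ from \Cref{keep slope} gives an isomorphism
\[
\rho'\colon \Aut(\CPone)\ltimes_2\Pic_0\CPone\to\langle\iota_1,\iota_2\rangle\ltimes_1\{\pm1\}^{\P}.
\]
Its compatibility with $X_c$ follows from \eqref{communication} by forgetting the shift $[n]$. Here one uses that $E_p^x$ and $E_p^{-x}=\tau E_p^x$ become distinct objects in $\CCP$, and that $X_c(\alpha_{p,x})=E_p^x$ is consistent with $\wX(\varsigma(\alpha_{p,x},0))$.

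\textbf{Step 2: the $\PSL(2,\ZZ)$-factors.} I will define $\beta'\colon\langle r,\ell\rangle\to\PSL(2,\ZZ)\subset\MCG(S_{0,4})$ by $r\mapsto t_1$ and $\ell\mapsto t_2$. This map is well defined and bijective because it is the map induced by $\beta$ on quotients. Under $\beta$, the element $(\tubR\tubL)^3$ corresponds to $(B_\infty B_0)^3$. In $\DC{\CPone}$, $(\tubR\tubL)^3$ acts as a shift composed with a power of $\tau$, namely as $\tau^{\pm1}[\mp1]$ or a similar combination. On the surface side, $(B_0B_\infty)^3$ maps under $F_*$ to the central element, which is trivial in $\PSL(2,\ZZ)$. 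It also acts trivially on tagged arcs up to the $D_{p_i}D_{p_j}$ parity, which matches $\tau[1]$ becoming trivial in the orbit category.

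I expect this to be the main obstacle: one must check that $(\tubR\tubL)^3$ becomes exactly the identity in $\Aut\CCP$, rather than something like $\tau$. I would verify it by evaluating on $E_0^x,E_1^x,E_\infty^x$ using \Cref{tubular left mutations} and Table~\ref{tab:bt-tag-action}. The extra $\pm z$ sign appearing in the $1<p$ case of $\tubL$ is the bookkeeping that has to work out here.

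\textbf{Step 3: gluing.} I will set $\phi'(h\vartheta)=\beta'(h)\rho'(\vartheta)$ on the normal forms given by the decomposition $\Aut\CCP\cong\PSL(2,\ZZ)\ltimes(\Aut(\CPone)\ltimes_2\Pic_0\CPone)$. The conjugation compatibility $\rho'(s^{-1}\vartheta s)=\beta'(s)^{-1}\rho'(\vartheta)\beta'(s)$ is the image of \eqref{eq:rho-equivariant} under the quotient maps. With it, the same computation as in \Cref{autd=mag} shows that $\phi'$ is a homomorphism, and it is an isomorphism because $\beta'$ and $\rho'$ are. Finally, the equivariance $\phi'(\xi)(X_c^{-1}(E_p^x))=X_c^{-1}(\xi(E_p^x))$ holds on generators: for $r$ and $\ell$ it follows from \Cref{cor:graded-tag-action} with gradings forgotten, and for the normal subgroup it is Step 1. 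It therefore holds for all of $\Aut\CCP$.
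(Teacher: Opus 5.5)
Your proposal is correct and is essentially the paper's argument: the paper just reruns the proof of \Cref{autd=mag} with $\Br_3$ replaced by $\PSL(2,\ZZ)=\langle r,\ell\rangle$, taking that subgroup from the cited decomposition of $\Aut\CCP$, and your identification of $\overline{\mathrm{D}}_{\vot}$ with $\{\pm1\}^{\P}$ is the intended one. The check you flag does go through: \Cref{tubular left mutations} and Table~\ref{tab:bt-tag-action} give $(\tubL\tubR)^3(E_0^x)=E_0^{-x}[1]=\tau E_0^x[1]$, so the central element acts as $\tau^{-1}[1]$, which is trivial in the orbit category, just as $(B_0B_\infty)^3=[1]$ becomes trivial in $\MCG(S_{0,4})$.
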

 
\appendix
\section{Even Continued-Fraction Expansions}\label{app:even-cf}
For $p\in\mathbb Q$, we write
\[
[a_1,\ldots,a_m]=a_1+\cfrac{1}{a_2+\cfrac{1}{\ddots+\cfrac{1}{a_m}}}.
\]
We also use the conventions $0=[-1,1]$ and $\infty=[]$.

\begin{lemma}\label{lem:even-cf}
Every nonzero rational number $p$ has a unique expansion $p=[a_1,\ldots,a_{2n}]$ of even length satisfying
\[
\begin{cases}
a_1\in\mathbb Z_{\geq0},\quad a_i\in\mathbb Z_{>0}  \text{ for }i>1, & \text{if }p>0,\\
a_1\in\mathbb Z_{\leq0},\quad a_i\in\mathbb Z_{<0} \text{ for }i>1, & \text{if }p<0.
\end{cases}
\]
\end{lemma}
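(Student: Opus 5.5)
The statement to prove is \Cref{lem:even-cf}: every nonzero rational $p$ has a unique even-length expansion $[a_1,\ldots,a_{2n}]$ with the stated sign conditions. The plan is to reduce to the classical regular continued fraction and then adjust the length parity. By the symmetry $[a_1,\ldots,a_m]\mapsto[-a_1,\ldots,-a_m]$, which sends the value $p$ to $-p$ (induct on $m$, using $-(a+1/x)=-a+1/(-x)$), the case $p<0$ follows from the case $p>0$. So it suffices to treat $p>0$, with $a_1\ge 0$ and $a_i>0$ for $i>1$.

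\textbf{Existence.} Run the Euclidean algorithm on $p=r/s$ with $r,s>0$ coprime. It produces the standard expansion $p=[b_1,\ldots,b_m]$ with $b_1=\lfloor p\rfloor\ge 0$, $b_i\ge1$ for $1<i<m$, and, when $m\ge2$, $b_m\ge2$. If $m=1$, then $p=b_1\ge1$ because $p>0$.

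Next use the identity $[\ldots,b]=[\ldots,b-1,1]$, valid for $b\ge 2$ as the last entry (and also for $m=1$ with $b_1\ge 1$, since $b_1=(b_1-1)+1/1$). This always gives a second expansion of length $m+1$ satisfying the positivity conditions. Note that $b_1-1\ge0$ when $m=1$, and $b_m-1\ge1$ when $m\ge2$. Exactly one of the two expansions, of length $m$ or $m+1$, has even length.

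\textbf{Uniqueness.} Suppose $p=[a_1,\ldots,a_k]$ with $a_1\ge0$ and $a_i\ge1$ for $i>1$. Show by induction that the tail $t=[a_2,\ldots,a_k]$ satisfies $t\ge1$, with $t=1$ only if $(a_2,\ldots,a_k)=(1)$. This follows because $[a,\ldots]=a+1/(\text{tail})$, where the tail is $\ge1$ and hence $1/\text{tail}\in(0,1]$.

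Consequently $a_1=\lfloor p\rfloor$, unless $p-a_1=1$, which happens exactly when the expansion ends as $[a_1,1]$. Inductively, any admissible expansion is either the Euclidean one or that expansion with its last entry $b$ replaced by $b-1,1$. These are the only two possibilities, and they have lengths of opposite parity, so the even-length one is unique.

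\textbf{Main obstacle.} The only delicate point is the edge cases: $p$ a positive integer, where $m=1$, and small values such as $p=1=[0,1]$. These must be handled so that $a_1\ge0$ (rather than $a_1>0$) is respected. The positive integer case needs $b_1-1\ge0$, which holds, so the conventions above cover it.
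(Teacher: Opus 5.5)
Your proof is correct and follows the paper's argument. You reduce to $p>0$ by negating entries, get existence from the Euclidean expansion together with $[\ldots,b]=[\ldots,b-1,1]$, and get uniqueness from the estimate that a tail with entries $\geq 1$ is $\geq 1$, with equality only for the tail $(1)$. The only difference is in how uniqueness is organized: the paper uses the parity of the remaining length to read off each entry directly ($c_1=\lceil q\rceil-1$ for even length, $c_1=\lfloor q\rfloor$ for odd length), whereas you show that the Euclidean expansion and its modification are the only admissible expansions of any length and then select the even one.
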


\begin{proof}
Suppose first that $p>0$. The Euclidean algorithm gives a regular continued fraction $p=[b_1,\ldots,b_l]$, where $b_1\geq0$, $b_i>0$ for $i>1$, and $b_l>1$ if $l>1$. If $l$ is even, there is nothing to prove. If $l$ is odd, use $[b_1,\ldots,b_l]=[b_1,\ldots,b_l-1,1]$. For $l=1$, this reads $[b_1]=[b_1-1,1]$; otherwise $b_l-1>0$. Thus an expansion of the required form always exists.

For uniqueness, consider a positive tail $q=[c_1,\ldots,c_r]$. If $r$ is even, then $c_1<q\leq c_1+1$, so $c_1=\lceil q\rceil-1$. If $r$ is odd, then $c_1\leq q<c_1+1$, so $c_1=\lfloor q\rfloor$; equality $q=c_1$ occurs precisely when $r=1$. Starting with an even-length expansion, these two rules determine the entries successively: after finding $c_1$, the next tail is $1/(q-c_1)$, and its parity is opposite to that of $r$. The process stops exactly when an odd-length tail is an integer. Hence both the entries and the point at which the expansion ends are uniquely determined.

For $p<0$, apply the positive case to $-p$ and negate each entry; both existence and uniqueness follow.
\end{proof}
\paragraph{\textbf{Farey sums.}}
Let $\frac{r}{s}$ and $\frac{u}{v}$ be reduced fractions with $s,v\geq0$,
allowing the signed infinite fractions $\pm\infty=\pm\frac{1}{0}$. If
$|rv-us|=1$, their \emph{Farey sum} is
\[
\frac{r}{s}\oplus\frac{u}{v}\coloneqq\frac{r+u}{s+v}.
\]

\begin{lemma}[{\cite[Section 2]{MGO}, \cite[Lemma/Definition 2.3]{FQ}}]\label{lem:farey-decomposition}
Every $p\in\mathbb Q\setminus\{0\}$ has a unique Farey decomposition
\[
p=p_-\oplus p_+,
\qquad p_-<p<p_+,
\]
such that
\[
\begin{cases}
 p_-,p_+\in\mathbb Q_{\geq0}\cup\{\infty\},&\text{if }p>0,\\
 p_-,p_+\in\mathbb Q_{\leq0}\cup\{-\infty\},&\text{if }p<0.
\end{cases}
\]
More explicitly, if $p=[a_1,\ldots,a_{2m}]>0$, then
\[
p_-=
\begin{cases}
[a_1,\ldots,a_{2m-1}-1,1],&\text{if }a_{2m-1}\ne1,\\
[a_1,\ldots,a_{2m-2}+1],&\text{if }a_{2m-1}=1\text{ and }m>1,\\
[0,1],&\text{if }a_{2m-1}=1\text{ and }m=1,
\end{cases}
\]
and
\[
p_+=
\begin{cases}
[a_1,\ldots,a_{2m-1},a_{2m}-1],&\text{if }a_{2m}\ne1,\\
[a_1,\ldots,a_{2m-2}],&\text{if }a_{2m}=1\text{ and }m>1,\\
[],&\text{if }a_{2m}=1\text{ and }m=1.
\end{cases}
\]
For $p<0$, apply these formulas to $-p=q_-\oplus q_+$ and set
$p_-=-q_+$ and $p_+=-q_-$.
\end{lemma}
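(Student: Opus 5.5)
The proof has two parts. First, a parametrisation-free uniqueness argument gives the decomposition $p=p_-\oplus p_+$. Second, a convergent computation produces the explicit formulas, which are then rewritten in the even normal form of \Cref{lem:even-cf}. By the symmetry $p\mapsto -p$, which reverses order and preserves Farey sums, it suffices to treat $p>0$, and I will do so throughout.

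\emph{Existence and uniqueness.} Write $p=\frac{r}{s}$ in lowest terms with $r,s\geq 1$. A decomposition $\frac{r}{s}=\frac{u}{v}\oplus\frac{u'}{v'}$ with $\frac{u}{v}<\frac{r}{s}<\frac{u'}{v'}$ and $u,u',v,v'\geq 0$ means $u+u'=r$ and $v+v'=s$. The unimodularity condition then says $u's-r v'=1$, since the two determinants $u'v-uv'$, $rv-us$ and $u's-rv'$ coincide once the sums are substituted. The solutions of $u's-rv'=1$ form one coset $(u'_0,v'_0)+\ZZ(r,s)$. The constraints $0\le v'\le s$ and $0\le u'\le r$ are violated by every member of this coset except possibly one with $0\le v'<s$. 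Equality $v'=s$ would force $s\mid 1$, and in that case the degenerate parent $\infty=\frac{1}{0}$ is allowed. Hence exactly one admissible pair exists. This gives existence and uniqueness of $(p_-,p_+)$ inside $\QQ_{\ge0}\cup\{\infty\}$.

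\emph{Identifying the parents.} Let $p=[a_1,\ldots,a_{2m}]$ be the even expansion from \Cref{lem:even-cf}, and let $\frac{P_k}{Q_k}$ denote its convergents. I will use the matrix identity
\[
\prod_{i=1}^{n}\begin{pmatrix}a_i&1\\1&0\end{pmatrix}=\begin{pmatrix}P_n&P_{n-1}\\Q_n&Q_{n-1}\end{pmatrix},
\]
whose determinant is $(-1)^n$. From it, $[a_1,\ldots,a_{2m}-1]=\frac{P_{2m}-P_{2m-1}}{Q_{2m}-Q_{2m-1}}$. It follows that
\[
p=\frac{P_{2m}-P_{2m-1}}{Q_{2m}-Q_{2m-1}}\oplus\frac{P_{2m-1}}{Q_{2m-1}},
\]
and the relevant determinant is $\pm1$. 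Every numerator and denominator here is nonnegative. Since $n=2m$ is even, the odd convergent $\frac{P_{2m-1}}{Q_{2m-1}}=[a_1,\ldots,a_{2m-1}]$ lies on the opposite side of $p$ from $[a_1,\ldots,a_{2m}-1]$. Checking the sign of the determinant places $[a_1,\ldots,a_{2m}-1]$ above $p$ and $[a_1,\ldots,a_{2m-1}]$ below it; the test case $[0,2]=\frac12=0\oplus 1$ confirms the orientation. By uniqueness, $p_+=[a_1,\ldots,a_{2m}-1]$ and $p_-=[a_1,\ldots,a_{2m-1}]$.

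\emph{Rewriting in normal form.} It remains to put these parents into the even normal form, which produces the listed cases. For $p_-$: if $a_{2m-1}\neq1$, use $[\ldots,a_{2m-1}]=[\ldots,a_{2m-1}-1,1]$. If $a_{2m-1}=1$ and $m>1$, use $[\ldots,a_{2m-2},1]=[\ldots,a_{2m-2}+1]$. If $m=1$, then $[1]=[0,1]$. For $p_+$: if $a_{2m}\neq1$ the expression is already even. If $a_{2m}=1$, a trailing $0$ appears, and $[\ldots,a_{2m-2},a_{2m-1},0]=[\ldots,a_{2m-2}]$ because $x+\frac{1}{y+1/0}=x$; when $m=1$ this gives the empty expansion $[]=\infty$.

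The main obstacle is bookkeeping rather than ideas. I expect it in three places: fixing the orientation, that is, which parent is larger, via the sign of $(-1)^{n}$; handling the boundary cases $a_1=0$ and the degenerate parent $\infty$; and confirming that the positivity hypotheses of \Cref{lem:even-cf} survive each rewriting. I would verify the orientation on the small examples $[0,2]$, $[1,1]$ and $[0,1,1,1]$ before writing the general argument.
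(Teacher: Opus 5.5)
The paper does not prove this lemma; it imports it from \cite{MGO} and \cite{FQ}, where Farey parents are usually read off the Stern--Brocot (Farey) tree. Your argument is a correct, self-contained replacement that takes a different route. You get uniqueness from the fact that the solutions of $u's-rv'=1$ form a single coset of $\ZZ(r,s)$. You get existence and the explicit formulas from the convergent identity, which shows that the parents are always $[a_1,\ldots,a_{2m-1}]$ (below $p$) and $[a_1,\ldots,a_{2m}-1]$ (above $p$), the determinant being $(-1)^{2m}=1$. What your route buys is a transparent reading of the statement: the three-way case splits for $p_\pm$ are nothing but renormalizations of these two expressions into even form, which the tree description hides. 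I checked the orientation, the nonnegativity of all numerators and denominators (from $a_1\ge 0$, $a_i\ge 1$), and each renormalization; all are right.

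Two points need tidying.

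First, your sentence about $v'=s$ is backwards. When $s=1$ the coset contains both $(u',v')=(1,0)$ and $(r+1,1)$. The first gives the admissible parent $p_+=\infty$ and already lies in the range $0\le v'<s$. The second has $u'=r+1>r$ (equivalently $p_-=-1/0$), so it must be discarded, not ``allowed''. With that fixed, the coset argument shows there is at most one decomposition. Existence does not need the coset argument, because your second step exhibits an admissible decomposition explicitly. Alternatively, check directly that $u'=(1+rv')/s$ lies in $[1,r]$ whenever $0\le v'\le s-1$.

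Second, when $m=1$ and $a_1=0$ (that is, $p=1/n$), the rewriting $[a_1]=[a_1-1,1]$ produces $[-1,1]$. This lies outside the range of Lemma~\ref{lem:even-cf}; it is the paper's convention $0=[-1,1]$. So your planned check that the positivity hypotheses survive each rewriting must exempt this case rather than verify it.
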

\section{Braid-Twist Word Identities}\label{app:bt-words}
In this appendix we record the braid-word calculations used in
\Cref{prop:braid-twist-slope-action}.
For $p=[a_1,\ldots,a_{2m}]\in\QQ$, set
\[
W_p=B_\infty^{a_1}B_0^{-a_2}\cdots B_\infty^{a_{2m-1}}B_0^{-a_{2m}},
\]
and set $W_\infty=1$. We also use the conventions $0=[-1,1]$ and
$\infty=[]$.

\begin{lemma}\label{lem:bt-word-identities}
The braid twists $B_0$ and $B_\infty$ satisfy
\begin{equation}\label{eq:braid-relation-B0-Binfty}
B_0B_\infty B_0=B_\infty B_0B_\infty
\end{equation}
and
\begin{equation}\label{eq:central-shift}
(B_0B_\infty)^3=(B_\infty B_0)^3=[1].
\end{equation}
Moreover, for every $k\in\ZZ$,
\begin{align}
B_0B_\infty^kB_0^{-1}&=B_\infty^{-1}B_0^kB_\infty,\label{eq:conj-1}\\
B_0^{-1}B_\infty^kB_0&=B_\infty B_0^kB_\infty^{-1},\label{eq:conj-2}\\
B_\infty B_0B_\infty^k&=B_0^kB_\infty B_0,\label{eq:conj-3}\\
B_\infty^kB_0B_\infty&=B_0B_\infty B_0^k.\label{eq:conj-4}
\end{align}
\end{lemma}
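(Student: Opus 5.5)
The plan is to prove all six identities of \Cref{lem:bt-word-identities} from two inputs: the standard braid relation between braid twists along two arcs meeting in exactly one common endpoint, and a direct computation of the central element $(B_0B_\infty)^3$ in the grading cover. Each statement is then derived from the previous ones, without further geometry.

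\textbf{Step 1: the braid relation.} The arcs $\alpha_0^-$ and $\alpha_\infty^-$ share exactly one endpoint and are otherwise disjoint. Their endpoint sets are $\{p_1,p_2\}$ and $\{p_2,p_3\}$ by the table in the proof of \Cref{inner int}, and their interiors are disjoint because $\Delta(0,\infty)=1$, giving $i=0$. For two simple arcs in this configuration the associated half twists satisfy $B_0B_\infty B_0=B_\infty B_0B_\infty$ in the ungraded mapping class group. To lift this to $\MCG(\widetilde S_{0,4})$, I use \eqref{eq:bt}, which gives
\[
B_0B_\infty B_0^{-1}=B_{B_0(\widetilde\alpha_\infty^-)}.
\]
Ungraded, $B_0(\alpha_\infty)=B_\infty^{-1}(\alpha_0)$, so the two graded arcs $B_0(\widetilde\alpha_\infty^-)$ and $B_\infty^{-1}(\widetilde\alpha_0^-)$ differ at most by a shift. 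Since $B_{\widetilde\sigma[m]}=B_{\widetilde\sigma}$, the shift is irrelevant. Hence
\[
B_0B_\infty B_0^{-1}=B_\infty^{-1}B_0B_\infty,
\]
which is exactly \eqref{eq:braid-relation-B0-Binfty}.

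\textbf{Step 2: the central shift.} The braid relation implies that $(B_0B_\infty)^3=(B_0B_\infty B_0)^2$ and that this element is central in $\langle B_0,B_\infty\rangle$; the same holds for $(B_\infty B_0)^3$, and the two are equal.

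Under $F_*$ this element maps to $(t_2t_1)^3$. In $\PSL(2,\ZZ)$ we have $(t_2t_1)^3=1$, because $t_2t_1$ has order $3$. It therefore lies in $\ker F_*\cap\ker(\text{action on }\MCG(S_{0,4}))$, and by \Cref{lem:1.13} it equals $[n]$ for some $n$. This uses the fact that the image in $\MCG(S_{0,4})$ is trivial and not merely a hyperelliptic involution: the relation $(\sigma_1\sigma_2)^3$ on three strands acts on $S_{0,4}$ as the hyperelliptic involution fixing the fourth puncture, which is trivial on $\PSL(2,\ZZ)$.

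\textbf{Main obstacle: fixing $n=1$ and the ungraded part.} To determine $n$, evaluate $(B_\infty B_0)^3$ on $\widetilde\alpha_\infty^-$ using the smoothing description of braid twists. Each twist $B_{\widetilde\sigma}$ applied to an arc $\widetilde\tau$ sharing an endpoint produces $\widetilde\tau\wedge\widetilde\sigma$ with a shift determined by $\gind$. Tracking the normalization
\[
\gind_{p_2}(\widetilde\alpha_0^-,\widetilde\alpha_\infty^-)=0
\]
through the six twists, the net shift should be $1$: the total rotation of the triangle configuration is $\pi$.

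Alternatively, lift to the torus double cover. There $(B_\infty B_0)^3$ lifts to the product of Dehn twists $(T_aT_b)^3$, which is the elliptic involution, rotation by $\pi$. Acting on tangent lines, this rotation gives the shift by $1$ in the $\ZZ$-cover.

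This step is where I expect the real difficulty. One must also confirm that the ungraded map is genuinely the identity and not $\iota_1$ or $\iota_2$. I would check this on the two arcs $\widetilde\alpha_0^-$ and $\widetilde\alpha_\infty^-$ directly, where each arc is fixed up to shift.

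\textbf{Step 3: the conjugation identities.} These follow formally from \eqref{eq:braid-relation-B0-Binfty}, so no further geometry is needed.

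For \eqref{eq:conj-1}, Step 1 gives $B_0B_\infty B_0^{-1}=B_\infty^{-1}B_0B_\infty$. Raising both sides to the $k$-th power yields \eqref{eq:conj-1} for all $k\in\ZZ$. Identity \eqref{eq:conj-2} follows in the same way from $B_0^{-1}B_\infty B_0=B_\infty B_0B_\infty^{-1}$.

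For \eqref{eq:conj-3}, start from $B_\infty B_0B_\infty\cdot(B_\infty B_0)^{-1}=B_0$, which is a rewriting of the braid relation. This gives
\[
(B_\infty B_0)B_\infty(B_\infty B_0)^{-1}=B_0.
\]
Taking $k$-th powers gives $B_\infty B_0B_\infty^k=B_0^kB_\infty B_0$, which is \eqref{eq:conj-3}.

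Identity \eqref{eq:conj-4} is the mirror version, obtained by conjugating with $B_0B_\infty$ instead of $B_\infty B_0$.
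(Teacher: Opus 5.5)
Your Steps 1 and 3 are correct. Step 1 takes a different route from the paper, which only asserts that ``the two sides give the same lifted tangent path'' in the local smoothing model. You instead use \eqref{eq:bt} and the shift-invariance $B_{\widetilde\sigma[m]}=B_{\widetilde\sigma}$ to reduce the graded relation $B_0B_\infty B_0^{-1}=B_\infty^{-1}B_0B_\infty$ to the ungraded equality of arcs $B_0(\alpha_\infty^-)=B_\infty^{-1}(\alpha_0^-)$. This genuinely proves that no shift can appear, which the paper does not. Your derivations of \eqref{eq:conj-1}--\eqref{eq:conj-4}, by conjugating with $B_0$, $B_0^{-1}$, $B_\infty B_0$ and $B_0B_\infty$ and taking $k$-th powers, are fine.

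The gap is in Step 2, which carries the actual content of \eqref{eq:central-shift}. There are two problems.

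\textbf{The ungraded image.} Your stated reason that the image in $\MCG(S_{0,4})$ is trivial is false. $(\sigma_1\sigma_2)^3$ does not act on $S_{0,4}$ as a hyperelliptic involution; none of $\iota_1,\iota_2,\iota_1\iota_2$ fixes a puncture. It is the Dehn twist about the curve enclosing $p_1,p_2,p_3$, i.e.\ about a curve around $p_4$ alone, and so is trivial. The involution you have in mind is $(T_aT_b)^3=-I$ on the torus, which is the deck transformation and descends to the identity. Your worry that trivial image in $\PSL(2,\ZZ)$ does not by itself give triviality in $\MCG(S_{0,4})$ is legitimate, but it is settled immediately. $B_0B_\infty$ permutes $p_1,p_2,p_3$ cyclically and fixes $p_4$, so its cube fixes every puncture, while $\langle\iota_1,\iota_2\rangle$ acts freely on the punctures.

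\textbf{The value of $n$.} More seriously, you never determine $n$. Saying ``the net shift should be $1$'' is not an argument, and the sign is exactly the point. Made precise, your torus picture gives $n=\pm1$, and only if ``rotation by $\pi$'' is read as the path traced by the lift along an isotopy from $(T_aT_b)^3$ to $-I$; the differential of $-I$ itself acts trivially on $\mathbb{P}T$. You would still have to match the direction of that rotation against the convention in \Cref{def:gcurve} that $[1]$ is clockwise rotation by $\pi$.

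The paper supplies this step directly. It observes that $(B_0B_\infty)^3$ fixes the underlying arc while dragging each end once clockwise around its endpoint. The lifted tangent line therefore makes one clockwise loop in the projectivized fibre, which is $[1]$ by that convention. Without an argument of this kind, or an explicit computation of $(B_\infty B_0)^3(\widetilde\alpha_\infty^-)$ via the smoothing description of braid twists, your proposal only establishes $(B_0B_\infty)^3=[n]$ for an undetermined $n$.
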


\begin{proof}
The relation \eqref{eq:braid-relation-B0-Binfty} is the usual braid relation for the two adjacent braid twists associated with the arcs of slopes $0$ and $\infty$. In the local smoothing model, the two sides also give the same lifted tangent path, so no extra grading shift occurs.

For \eqref{eq:central-shift}, first forget the grading. The images of $B_0$ and
$B_\infty$ in the $\PSL(2,\ZZ)$-factor are $t_2$ and $t_1$, respectively, and
$(t_2t_1)^3=1$ in $\PSL(2,\ZZ)$. Hence $(B_0B_\infty)^3$ acts trivially on the
underlying surface. By \Cref{lem:1.13}, the kernel of the forgetful map from the
graded mapping class group to the ungraded one is generated by the grading shift
$[1]$. Thus
\[
(B_0B_\infty)^3=[m]
\]
for some $m\in\ZZ$.

It remains to determine $m$. Locally near an endpoint, the composition $(B_0B_\infty)^3$ fixes the underlying arc and moves each end once clockwise around the endpoint. Thus the lifted tangent line makes one clockwise loop in the projectivized tangent fiber. In an angle coordinate $\theta\in\mathbb{R}/\pi\mathbb{Z}$, this means $\theta\mapsto\theta-\pi$. By the convention in \Cref{def:gcurve}, this is exactly the shift $[1]$. Hence $m=1$, and $(B_0B_\infty)^3=[1]$. The equality $(B_\infty B_0)^3=[1]$ follows from the braid relation.

The identities \eqref{eq:conj-1}--\eqref{eq:conj-4} are consequences of
\eqref{eq:braid-relation-B0-Binfty}. For example, from
$B_0B_\infty B_0=B_\infty B_0B_\infty$ we get
$B_0B_\infty B_0^{-1}=B_\infty^{-1}B_0B_\infty$. Taking powers gives
\[
B_0B_\infty^kB_0^{-1}
=(B_0B_\infty B_0^{-1})^k
=(B_\infty^{-1}B_0B_\infty)^k
=B_\infty^{-1}B_0^kB_\infty
\]
for $k\ge 0$, and the case $k<0$ follows by taking inverses. The other three
formulas follow similarly, or by interchanging $B_0$ and $B_\infty$ and taking
inverses.
\end{proof}

For the next lemma, put
\[
\nu(p,n)=
\begin{cases}
n,& p\ge 0\text{ or }p=\infty,\\
n-1,& p<0.
\end{cases}
\]
Thus the definition of the graded arcs can be written as
\[
\widetilde{\alpha}_p^\epsilon[n]
=W_p\bigl(\widetilde{\alpha}_\infty^\epsilon[\nu(p,n)]\bigr).
\]

\begin{lemma}\label{lem:bt-normal-forms}
For every $p\in\QQi$ and $n\in\ZZ$, the following identities hold for
$\epsilon=-$:
\begin{equation}\label{eq:B-infty-normal-form}
B_\infty W_p\bigl(\widetilde{\alpha}_\infty^-[\nu(p,n)]\bigr)
=W_{p+1}\bigl(\widetilde{\alpha}_\infty^-[\nu(p+1,n)]\bigr),
\end{equation}
and, writing $q=\frac{p}{1-p}$ with $q=-1$ when $p=\infty$,
\begin{equation}\label{eq:B-zero-normal-form}
B_0W_p\bigl(\widetilde{\alpha}_\infty^-[\nu(p,n)]\bigr)
=
\begin{cases}
W_q\bigl(\widetilde{\alpha}_\infty^-[\nu(q,n)]\bigr),& p\le 1,\\[4pt]
W_q\bigl(\widetilde{\alpha}_\infty^-[\nu(q,n+1)]\bigr),& 1<p\le\infty.
\end{cases}
\end{equation}
The same identities hold with $\widetilde{\alpha}_\infty^-$ replaced by
$\widetilde{\alpha}_\infty^+$.
\end{lemma}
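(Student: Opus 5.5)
We will prove Lemma~\ref{lem:bt-normal-forms} by induction along the even continued-fraction expansion of $p$ from Lemma~\ref{lem:even-cf}, using only the word identities of Lemma~\ref{lem:bt-word-identities}, the centrality of $[1]=(B_0B_\infty)^3$, and two facts about the initial arcs. These facts are that $B_\infty$ fixes $\widetilde{\alpha}_\infty^\pm$ as a graded arc, since it is the braid twist along $\widetilde{\alpha}_\infty^-$ and twisting does not change the arc or shift its grading. The second is the value of $B_0$ on $\widetilde{\alpha}_\infty^\pm$, to be read off from the smoothing description $B_{\widetilde\tau}^{-1}(\widetilde\sigma[1-n])=\widetilde\tau\wedge\widetilde\sigma$ together with the normalization $\gind_{p_2}(\widetilde{\alpha}_0^-,\widetilde{\alpha}_\infty^-)=0$. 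Since the $\pm$ arcs are exchanged by $\tilde\iota_1$ or $\tilde\iota_2$, which commute with $B_0,B_\infty$ up to the conjugation action fixing these twists, it suffices to treat $\epsilon=-$.

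For \eqref{eq:B-infty-normal-form}, write $p=[a_1,\ldots,a_{2m}]$. If $p\ge 0$ and $p+1\ge 0$, then $p+1=[a_1+1,a_2,\ldots,a_{2m}]$, so $B_\infty W_p=W_{p+1}$ and $\nu$ is unchanged. If $p\le -1$, the same holds with both shifts equal to $n-1$. The crossing $-1<p<0$ is where the expansion changes sign pattern. Here $a_1=0$, and $p+1>0$ must be rewritten using the identity $[0,-b,\ldots]=1-[\ldots]$-type manipulations. On the braid side we use \eqref{eq:conj-3}, \eqref{eq:conj-4} and $(B_0B_\infty)^3=[1]$ to convert $B_\infty B_0^{-a_2}\cdots$ with negative exponents into a positive word times $[1]$. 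That factor of $[1]$ is exactly absorbed by $\nu(p,n)=n-1$ versus $\nu(p+1,n)=n$. The cases $p=\infty$ and $p=-1$, where $W_0=B_\infty^{-1}B_0^{-1}$, are checked directly.

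For \eqref{eq:B-zero-normal-form} the roles swap. For $p<0$ with $q=p/(1-p)$, prepending $B_0$ corresponds to incrementing $-a_2$ after rewriting via $B_0B_\infty^{a_1}=\ldots$ using \eqref{eq:conj-1}, \eqref{eq:conj-2}. When $a_1\ne 0$, one pushes $B_0$ past $B_\infty^{a_1}$ at the cost of conjugation, and the leftover $B_\infty^{\pm1}$ acts trivially at the end on $\widetilde{\alpha}_\infty^-$. For $0\le p\le1$ the result stays in the same sign regime. For $1<p\le\infty$, $q$ becomes negative, and the normal form of $B_0W_p$ acquires one extra $[1]$. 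This is the $\nu(q,n+1)$ term, and with $q<0$ it contributes $\nu(q,n+1)=n$ relative to the untwisted count, matching Proposition~\ref{prop:braid-twist-slope-action}. The base case $p=\infty$ gives $B_0(\widetilde{\alpha}_\infty^-[n])=W_{-1}(\widetilde{\alpha}_\infty^-[n])$, which is to be verified directly from the smoothing convention.

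The main obstacle is the bookkeeping of the grading shift when the sign of the slope changes. There the even expansion is not obtained from the old one by a single exponent change, so one must produce an explicit braid identity of the form $B_\infty W_p=W_{p+1}[\pm1]\cdot(\text{stabilizer of }\widetilde{\alpha}_\infty^-)$. I would first attempt this via the Farey decomposition of Lemma~\ref{lem:farey-decomposition}, inducting on the denominator and comparing $W_p$ with $W_{p_\pm}$, so that each step changes only one exponent. If that becomes messy, I would fall back on checking the sign-crossing cases directly at the level of grading indices, using Remark~\ref{rem:gind} to pin down the shift.
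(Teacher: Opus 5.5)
Your overall strategy (treat the lemma as a word problem in $\langle B_0,B_\infty\rangle$ along the even continued fraction, modulo the stabilizer $\langle B_\infty\rangle$ of the base arc and the central shift $[1]=(B_0B_\infty)^3$, with sign changes handled separately) is the paper's. There is, however, a genuine gap. The step you single out as the main obstacle is analysed incorrectly, and the identities that make up the proof are not supplied.

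In the $B_0$ case with $1<p\le\infty$, no central factor appears: $B_0W_p\in W_q\langle B_\infty\rangle$ exactly. For instance, $B_0W_\infty=B_0=W_{-1}$ literally. For $p=2=[1,1]$, \eqref{eq:conj-1} gives $B_0W_2=B_0B_\infty B_0^{-1}=B_\infty^{-1}B_0B_\infty=W_{-2}B_\infty$. In general the paper rewrites $B_0W_p(\widetilde\alpha_\infty^-[n])$ as $W_q(\widetilde\alpha_\infty^-[n])$ with the same base shift. The ``$n+1$'' in \eqref{eq:B-zero-normal-form} is pure notation: since $q<0$, $\nu(q,n+1)=n=\nu(p,n)$. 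If $B_0W_p$ really acquired an extra $[1]$, as you claim, you would get $W_q(\widetilde\alpha_\infty^-[n+1])=\widetilde\alpha_q^-[n+2]$, which contradicts the statement. So that step cannot be carried out as planned.

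A genuine $(B_0B_\infty)^3$ occurs only in the $B_\infty$ crossing $-1\le p<0$. There your mechanism is right, but the explicit identities are missing. The paper supplies them, e.g.\ $B_\infty B_0^{b}(\widetilde\alpha_\infty^-[n-1])=B_0^{-1}B_\infty^{b-2}B_0^{-1}(\widetilde\alpha_\infty^-[n])$ for $b=-a_2\ge 3$. This comes from the cases $k=a+1$ of \eqref{eq:conj-3} and \eqref{eq:conj-4} together with \eqref{eq:central-shift}. It then gives case tables for longer expansions and for the $B_0$ identity with $p>1$. Your ``$[0,-b,\ldots]=1-[\ldots]$-type manipulations'' and the Farey fallback are placeholders for exactly this content.

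Second, your reduction to $\epsilon=-$ rests on a false claim. The involutions exchanging $\alpha_\infty^\pm$ are $\iota_2$ and $\iota_1\iota_2$. Each maps $\alpha_\infty^-$ (endpoints $p_2,p_3$) to $\alpha_\infty^+$ (endpoints $p_1,p_4$). By \eqref{eq:bt}, each therefore conjugates $B_\infty$ into the half-twist along $\alpha_\infty^+$, which is a different element. Likewise $\iota_1$ conjugates $B_0$ into the twist along $\alpha_0^+$.

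No transport is needed. Once each identity is proved in $\langle B_0,B_\infty\rangle$ up to right multiplication by $\langle B_\infty\rangle$, it applies verbatim to $\widetilde\alpha_\infty^+$. This is because $B_\infty$ also fixes $\widetilde\alpha_\infty^+$, as $\alpha_\infty^+$ is disjoint from $\alpha_\infty^-$.

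For the same reason, the smoothing description and the normalization of $\widetilde\alpha_0^\pm$ play no role. The identity $B_0(\widetilde\alpha_\infty^-[n])=W_{-1}(\widetilde\alpha_\infty^-[n])$ is a tautology, since $W_{-1}=W_{[0,-1]}=B_0$. For $p<0$ with $a_1<0$, one has literally $B_0W_p=W_q$ with $q=[0,-1,a_1,\ldots,a_{2m}]$, so no conjugation of $B_0$ past $B_\infty^{a_1}$ occurs.
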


\begin{proof}
We prove the identities for $\widetilde{\alpha}_\infty^-$. The proof for
$\widetilde{\alpha}_\infty^+$ is the same, since the braid-word calculations do
not depend on the sign of the initial arc.

We first prove \eqref{eq:B-infty-normal-form}. If $p=\infty$, then $B_\infty$
is the braid twist along $\widetilde{\alpha}_\infty^-$ itself, so it preserves
$\widetilde{\alpha}_\infty^-[n]$. If $p=[a_1,\ldots,a_{2m}]$ and $p,p+1$ obey
the same sign convention, then
\[
p+1=[a_1+1,a_2,\ldots,a_{2m}]
\]
is the even continued fraction of $p+1$, and hence
\[
B_\infty W_p
=B_\infty^{a_1+1}B_0^{-a_2}\cdots B_\infty^{a_{2m-1}}B_0^{-a_{2m}}
=W_{p+1}.
\]
This proves the claim for $p>0$ and for $p<-1$.

It remains to check the crossing from negative to nonnegative slopes. For
$p=-1$ we have $p=[0,-1]$ and $p+1=0=[-1,1]$. Hence
$W_p=B_0$ and $W_{p+1}=B_\infty^{-1}B_0^{-1}$. Using
\eqref{eq:central-shift} and the braid relation,
\[
\begin{aligned}
B_\infty B_0\bigl(\widetilde{\alpha}_\infty^-[n-1]\bigr)
&=B_\infty B_0B_\infty\bigl(\widetilde{\alpha}_\infty^-[n-1]\bigr)\\
&=B_0B_\infty B_0\bigl(\widetilde{\alpha}_\infty^-[n-1]\bigr)\\
&=B_0B_\infty B_0B_\infty\bigl(\widetilde{\alpha}_\infty^-[n-1]\bigr)\\
&=B_\infty^{-1}B_0^{-1}(B_0B_\infty)^3
\bigl(\widetilde{\alpha}_\infty^-[n-1]\bigr)\\
&=B_\infty^{-1}B_0^{-1}\bigl(\widetilde{\alpha}_\infty^-[n]\bigr),
\end{aligned}
\]
which is exactly
$W_0(\widetilde{\alpha}_\infty^-[\nu(0,n)])$.

Now assume $-1<p<0$. Then the even continued fraction of $p$ starts with
$a_1=0$ and has negative remaining entries. The base case
$p=[0,a_2]$ is obtained as follows. If $a_2=-1$, this is the preceding
calculation. If $a_2=-2$, then repeated use of
\eqref{eq:braid-relation-B0-Binfty} and \eqref{eq:central-shift} gives
\[
B_\infty B_0^2\bigl(\widetilde{\alpha}_\infty^-[n-1]\bigr)
=B_0^{-2}\bigl(\widetilde{\alpha}_\infty^-[n]\bigr)
=W_{[0,2]}\bigl(\widetilde{\alpha}_\infty^-[n]\bigr).
\]
Since $[0,2]=[0,-2]+1$, this is the desired arc.
If $a_2\le -3$, then
we first use the braid relations to obtain the right hand word. We claim that
\[
B_\infty B_0^{-a_2}\bigl(\widetilde{\alpha}_\infty^-[n-1]\bigr)
=B_0^{-1}B_\infty^{-a_2-2}B_0^{-1}
\bigl(\widetilde{\alpha}_\infty^-[n]\bigr).
\]
Equivalently, it is enough to prove
\[
B_0B_\infty^{a_2+2}B_0B_\infty B_0^{-a_2}
\bigl(\widetilde{\alpha}_\infty^-[n-1]\bigr)
=\widetilde{\alpha}_\infty^-[n].
\]
We use the following elementary claim. For every $a\in\ZZ$,
\[
B_\infty^aB_\infty B_0B_\infty B_0^{-a}
=B_\infty B_0B_\infty B_0^aB_0^{-a}
=B_\infty B_0B_\infty,
\]
and
\[
B_0^{-a}B_\infty B_0B_\infty B_\infty^a
=B_\infty B_0B_\infty B_\infty^{-a}B_\infty^a
=B_\infty B_0B_\infty.
\]
These are the special cases $k=a+1$ of \eqref{eq:conj-4} and \eqref{eq:conj-3}, respectively.
Hence, since $B_\infty$ fixes
$\widetilde{\alpha}_\infty^-[n-1]$, the left hand side of the preceding display
is
\[
\begin{aligned}
&B_0B_\infty^{a_2+2}B_0B_\infty B_0^{-a_2}
\bigl(\widetilde{\alpha}_\infty^-[n-1]\bigr)\\
&\quad =
B_0B_\infty^{a_2+1}(B_\infty B_0B_\infty)B_0^{-a_2}B_\infty
\bigl(\widetilde{\alpha}_\infty^-[n-1]\bigr)\\
&\quad =
B_0B_\infty (B_\infty B_0B_\infty)B_\infty
\bigl(\widetilde{\alpha}_\infty^-[n-1]\bigr)\\
&\quad =
B_0B_\infty B_0B_\infty B_0B_\infty
\bigl(\widetilde{\alpha}_\infty^-[n-1]\bigr)\\
&\quad =
(B_0B_\infty)^3
\bigl(\widetilde{\alpha}_\infty^-[n-1]\bigr)
=\widetilde{\alpha}_\infty^-[n].
\end{aligned}
\]
Thus the claim is proved, and the resulting word is
$W_{\beta}(\widetilde{\alpha}_\infty^-[n])$ with
\[
\beta=[0,1,-a_2-2,1].
\]
Now we identify this slope:
\[
[0,1,-a_2-2,1]
=\frac{1}{1+\frac{1}{-a_2-1}}
=\frac{a_2+1}{a_2}
=[0,a_2]+1
=p+1.
\]
For longer continued fractions, 
 let $p=[0,a_2,\ldots,a_{2m}]$ with
$m>1$. In the case $a_2\le -2$ and $a_{2m}\le -2$, the braid calculation gives
\[
\begin{aligned}
B_\infty W_p\bigl(\widetilde{\alpha}_\infty^-[n-1]\bigr)
&=B_0^{-1}B_\infty^{-a_2-1}B_0^{a_3}B_\infty^{-a_4}\cdots
  B_0^{a_{2m-1}}B_\infty^{-a_{2m}-1}B_0^{-1}
  \bigl(\widetilde{\alpha}_\infty^-[n]\bigr).
\end{aligned}
\]
Here the initial block is obtained by the same claim as above with
$a=-a_2+1$, and the terminal block is checked in the same way after reading the
word from the right. Thus this word is $W_\beta$ for
\[
\beta=[0,1,-a_2-1,-a_3,\ldots,-a_{2m-1},-a_{2m}-1,1].
\]
It remains only to identify the slope represented by this word. Put
$u=[a_2,\ldots,a_{2m}]$. Then $p=1/u$, and the tail
\[
[-a_2-1,-a_3,\ldots,-a_{2m-1},-a_{2m}-1,1]
\]
is equal to $-u-1$ by expanding the continued fraction from the right. Hence
\[
\beta=[0,1,-u-1]
=\frac{-u-1}{-u}
=1+\frac{1}{u}
=p+1.
\]

If $a_2=-1$ or $a_{2m-1}=-1$, 
the same braid computation yields the corresponding word with the zero block removed:
\[
\begin{array}{c|c}
\text{condition} & \beta \text{ obtained from the braid word} \\ \hline
a_2=-1,\ a_{2m}\le -2,\ m>2
& [0,1-a_3,-a_4,\ldots,-a_{2m-1},-a_{2m}-1,1]\\[2pt]
a_2\le -2,\ a_{2m}=-1,\ m>2
& [0,1,-a_2-1,-a_3,\ldots,-a_{2m-2},1-a_{2m-1}]\\[2pt]
a_2=-1,\ a_{2m}=-1,\ m>2
& [0,1-a_3,-a_4,\ldots,-a_{2m-2},1-a_{2m-1}]\\[2pt]

a_2=-1,\ a_4\le -2,\ m=2
& [0,1-a_3,-a_4-1,1]\\[2pt]
a_2\le -2,\ a_4=-1,\ m=2
& [0,1,-a_2-1,1-a_3]\\[2pt]
a_2=-1,\ a_4=-1,\ m=2
& [0,2-a_3].
\end{array}
\]
In each row the same continued-fraction computation gives $\beta=p+1$. This proves
\eqref{eq:B-infty-normal-form}.

We now prove \eqref{eq:B-zero-normal-form}. The boundary values are immediate.
For $p=\infty$, one has $q=-1=[0,-1]$ and
\[
B_0\bigl(\widetilde{\alpha}_\infty^-[n]\bigr)
=W_{-1}\bigl(\widetilde{\alpha}_\infty^-[n]\bigr)
=\widetilde{\alpha}_{-1}^-[n+1].
\]
For $p=1=[0,1]$, $W_p=B_0^{-1}$ and $q=\infty$, so
\[
B_0W_p\bigl(\widetilde{\alpha}_\infty^-[n]\bigr)
=\widetilde{\alpha}_\infty^-[n].
\]
For $p=0=[-1,1]$, the arc
$W_0(\widetilde{\alpha}_\infty^-[n])$ is $\widetilde{\alpha}_0^-[n]$, and the
braid twist $B_0=B_{\widetilde{\alpha}_0^-}$ preserves this graded arc.

It remains to treat the non-boundary cases. Let
$p=[a_1,\ldots,a_{2m}]$ and put $q=\frac{p}{1-p}$. We first determine the
right hand word by braid relations, and only afterwards identify the slope.

Assume first that $p<0$. Then the base arc is
$\widetilde{\alpha}_\infty^-[n-1]$. Moving the initial $B_0$ across $W_p$ gives
a word $W_\beta$ with
\[
\beta=
\begin{cases}
[0,-1,a_1,a_2,\ldots,a_{2m}],& a_1<0,\\[2pt]
[0,a_2-1,a_3,\ldots,a_{2m}],& a_1=0.
\end{cases}
\]
These continued fractions satisfy $\beta=p/(1-p)=q$. Thus the braid calculation
gives
\[
B_0W_p\bigl(\widetilde{\alpha}_\infty^-[n-1]\bigr)
=W_q\bigl(\widetilde{\alpha}_\infty^-[n-1]\bigr).
\]
Both $\widetilde{\alpha}_p^-[n]$ and $\widetilde{\alpha}_q^-[n]$ are defined
using the base shift $[n-1]$, so no extra grading shift appears.

Now assume $0<p< 1$. Then $a_1=0$ and the base arc is
$\widetilde{\alpha}_\infty^-[n]$. The braid calculation gives $W_\beta$, where
\[
\beta=
\begin{cases}
[0,a_2-1,a_3,\ldots,a_{2m}],& a_2>1,\\[2pt]
[a_3,a_4,\ldots,a_{2m}],& a_2=1.
\end{cases}
\]
Again one checks directly from the continued fractions that
$\beta=p/(1-p)=q$. Hence
\[
B_0W_p\bigl(\widetilde{\alpha}_\infty^-[n]\bigr)
=W_q\bigl(\widetilde{\alpha}_\infty^-[n]\bigr),
\]
and no extra grading shift appears.

Finally assume 
$p=[a_1,\ldots,a_{2m}]>1$. Moving the initial $B_0$ through $W_p$ by
\eqref{eq:conj-1}--\eqref{eq:conj-4}, and using that $B_\infty$ fixes
$\widetilde{\alpha}_\infty^-[n]$, gives
\[
\begin{aligned}
B_0W_p\bigl(\widetilde{\alpha}_\infty^-[n]\bigr)
&=B_\infty^{-1}B_0^{a_1-1}B_\infty^{-a_2}B_0^{a_3}\cdots
  B_0^{a_{2m-1}}B_\infty^{1-a_{2m}}B_0^{1}
  \bigl(\widetilde{\alpha}_\infty^-[n]\bigr),
\end{aligned}
\]
where the first and last factors are the boundary terms produced in this
calculation, with the evident deletion of such a term when the corresponding
exponent is zero. Thus, for $m>1$, the resulting word is $W_\beta$, where
$\beta$ is read from the word as follows:
\[
\begin{array}{c|c}
\text{condition} & \beta \text{ obtained from the braid word}\\ \hline
a_1\ge 2,\ a_{2m}\ge 2
& [-1,1-a_1,-a_2,\ldots,-a_{2m-1},1-a_{2m},-1]\\[2pt]
a_1\ge 2,\ a_{2m}=1
& [-1,1-a_1,-a_2,\ldots,-a_{2m-1}-1]\\[2pt]
a_1=1,\ a_{2m}\ge 2
& [-a_2-1,-a_3,\ldots,-a_{2m-1},1-a_{2m},-1]\\[2pt]
a_1=1,\ a_{2m}=1
& [-a_2-1,-a_3,\ldots,-a_{2m-2},-a_{2m-1}-1].
\end{array}
\]
For $m=1$, the same deletion gives
\[
\beta=
\begin{cases}
[-a_2,-1],& a_1=1,\\
[-1,-a_1],& a_1\ge2,\ a_2=1,\\
[-1,1-a_1,1-a_2,-1],& a_1\ge2,\ a_2\ge2.
\end{cases}
\]
In all these cases the displayed continued fraction is the normalized expansion of
$q=p/(1-p)$.  Indeed, the tail after the initial $-1$ is
$1-p$. Hence the word above is $W_q$. Since $q<0$, the normalized lift
$\widetilde{\alpha}_q^-[n+1]$ is defined using the base shift $[n]$, which gives
exactly the extra grading shift. Together with the cases above, this proves \eqref{eq:B-zero-normal-form}.
\end{proof}
\begin{proposition}\label{prop:farey-braid-recursion}
Let $p\in\mathbb Q\setminus\{0\}$, and write $p=p_-\oplus p_+$ as in
\Cref{lem:farey-decomposition}. For the signed endpoint $-\infty$, use the
convention
\[
\widetilde{\alpha}_{-\infty}^\epsilon[n]
\coloneqq\widetilde{\alpha}_\infty^\epsilon[n-1].
\]
Then, for $\epsilon\in\{+,-\}$ and $n\in\ZZ$, we have
\[
\widetilde{\alpha}_p^\epsilon[n]
=B_{\widetilde{\alpha}_{p_+}^{-}}
\bigl(\widetilde{\alpha}_{p_-}^\epsilon[n]\bigr).
\]
\end{proposition}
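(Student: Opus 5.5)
We reduce the statement to braid-word identities of the kind already established in \Cref{lem:bt-normal-forms}. By \eqref{eq:bt}, $B_{\widetilde{\alpha}_{p_+}^-}=B_{W_{p_+}(\widetilde{\alpha}_\infty^-[\nu(p_+,0)])}=W_{p_+}B_\infty W_{p_+}^{-1}$. Shifts do not change braid twists, so the shift part of the normalized lift plays no role here. The identity to prove is therefore
\[
W_{p_+}B_\infty W_{p_+}^{-1}W_{p_-}\bigl(\widetilde{\alpha}_\infty^\epsilon[\nu(p_-,n)]\bigr)
=W_p\bigl(\widetilde{\alpha}_\infty^\epsilon[\nu(p,n)]\bigr).
\]
Here we use $W_{-\infty}:=1$ with the base shift $[n-1]$, which matches the stated convention for $-\infty$. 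Since the base arc appears on both sides, the choice of $\epsilon$ is irrelevant, just as in \Cref{lem:bt-normal-forms}.

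\textbf{Step 1: reduction to $p>0$.} Lift the involution $p\mapsto -p$ (induced by the reflection $(a,b)\mapsto(a,-b)$ of $\mathbb{R}^2$) to an orientation-reversing symmetry of the picture. This symmetry interchanges $B_\infty\leftrightarrow B_\infty^{-1}$ and $B_0\leftrightarrow B_0^{-1}$ and reverses the index of \Cref{def:gind}. It is better not to rely on this symmetry for gradings. Instead, treat $p<0$ by the same word manipulation as below, tracking the base shift $[n-1]$ through $\nu$ exactly as the case $p<0$ is handled in \Cref{lem:bt-normal-forms}.

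\textbf{Step 2: induction on the length of the even continued fraction $p=[a_1,\dots,a_{2m}]$.} Use the explicit formulas in \Cref{lem:farey-decomposition}.
\begin{itemize}
\item If $a_{2m}\ne1$, then $p_+=[a_1,\dots,a_{2m}-1]$, so $W_{p_+}=W_pB_0$. In the subcase $a_{2m-1}\neq1$, we have $W_{p_-}=W_{p'}B_\infty^{-1}B_0^{-1}$ with $p'=[a_1,\dots,a_{2m-1}]$. The left-hand word then becomes
\[
W_pB_0B_\infty B_0^{-1}W_p^{-1}W_{p_-}.
\]
Inserting $W_p^{-1}W_{p_-}=B_0^{a_{2m}}B_\infty^{-1}B_0^{-1}$ and applying the braid relation together with \eqref{eq:conj-1}--\eqref{eq:conj-4}, the word collapses to $W_p$ times an element fixing $\widetilde{\alpha}_\infty^\epsilon$ up to a power of $(B_0B_\infty)^3=[1]$ from \eqref{eq:central-shift}.
\item The other cases ($a_{2m-1}=1$, $a_{2m}=1$, $m=1$) are analogous finite word computations. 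Each reduces to the normal forms already recorded.
\end{itemize}
In every case the extra factor on the right is a product of $B_\infty^{\pm1}$ (fixing the base arc) and powers of $[1]$. The claim then follows by comparing the accumulated shift with $\nu(p,n)-\nu(p_-,n)$.

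\textbf{Main obstacle: the shift count.} The hardest part is showing that the exponent of $[1]$ produced equals exactly $\nu(p,n)-\nu(p_-,n)$. This is $0$ except when $p_-=-\infty$ or when $p_\pm$ crosses $0$, and the crossing case occurs only for $p<0$ with $p_+=0$ or $p_-=-\infty$.

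I would first handle the base cases directly:
\begin{itemize}
\item $p=1=0\oplus\infty$;
\item $p=[0,a]$, i.e. $p=1/a$;
\item $p=-1=(-\infty)\oplus0$.
\end{itemize}
These use the same computation as the $p=-1$ step in the proof of \eqref{eq:B-infty-normal-form}. I would then check that the inductive step introduces no new factor $(B_0B_\infty)^3$.

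As a cross-check on the grading, I would verify that the smoothing description of $B_{\widetilde{\sigma}}$ (the extension $\widetilde{\tau}\wedge\widetilde{\sigma}$) gives index $0$ at the common endpoint. This is consistent with the index rule $\gind=0$ for $p_-<p_+$ recorded before \Cref{def:boint}.
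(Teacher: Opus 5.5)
Your proposal is correct and follows the paper's proof: write $B_{\widetilde{\alpha}_{p_+}^-}=W_{p_+}B_\infty W_{p_+}^{-1}$, insert the explicit parent words from \Cref{lem:farey-decomposition}, and collapse the word with \eqref{eq:conj-1}--\eqref{eq:conj-4}, treating $p<0$ through the sign-reversed words on the base shift $[n-1]$. The collapse is exact, giving $W_pB_\infty^{a_{2m}-1}$ (or $W_p$ when $a_{2m}=1$), so no induction is needed and no power of $[1]$ appears, which settles the shift count you flagged as the main obstacle.
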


\begin{proof}
By the definition of $\widetilde{\alpha}_r^-[n]$, \eqref{eq:bt}, and the
invariance of a braid twist under grading shifts, we have
\[
B_{\widetilde{\alpha}_r^-[n]}
=W_rB_\infty W_r^{-1}
\]
for all $r\in\QQi$ and $n\in\ZZ$.

Let $p=[a_1,\ldots,a_{2m}]>0$. Assume first that $m>1$. By
\Cref{lem:farey-decomposition},
\[
W_{p_-}=
\begin{cases}
B_\infty^{a_1}B_0^{-a_2}\cdots B_0^{-a_{2m-2}}
B_\infty^{a_{2m-1}-1}B_0^{-1},&a_{2m-1}\ne1,\\
B_\infty^{a_1}B_0^{-a_2}\cdots
B_\infty^{a_{2m-3}}B_0^{-a_{2m-2}-1},&a_{2m-1}=1,
\end{cases}
\]
and
\[
W_{p_+}=
\begin{cases}
B_\infty^{a_1}B_0^{-a_2}\cdots B_0^{-a_{2m-2}}
B_\infty^{a_{2m-1}}B_0^{1-a_{2m}},&a_{2m}\ne1,\\
B_\infty^{a_1}B_0^{-a_2}\cdots B_0^{-a_{2m-2}},&a_{2m}=1.
\end{cases}
\]
In both cases, $W_{p_-}$ has the form in the first line, allowing
$a_{2m-1}-1=0$. Hence, if
$a_{2m}\ne1$, \eqref{eq:conj-1}--\eqref{eq:conj-4} give
\[
\begin{aligned}
B_{\widetilde{\alpha}_{p_+}^{-}}
\bigl(\widetilde{\alpha}_{p_-}^\epsilon[n]\bigr)
&=W_{p_+}B_\infty W_{p_+}^{-1}W_{p_-}
  \bigl(\widetilde{\alpha}_\infty^\epsilon[n]\bigr)\\
&=\Bigl(B_\infty^{a_1}B_0^{-a_2}\cdots B_0^{-a_{2m-2}}
  B_\infty^{a_{2m-1}}B_0^{1-a_{2m}}\Bigr)B_\infty\\
&\quad\cdot\Bigl(B_0^{a_{2m}-1}B_\infty^{-a_{2m-1}}
  B_0^{a_{2m-2}}\cdots B_0^{a_2}B_\infty^{-a_1}\Bigr)\\
&\quad\cdot\Bigl(B_\infty^{a_1}B_0^{-a_2}\cdots B_0^{-a_{2m-2}}
  B_\infty^{a_{2m-1}-1}B_0^{-1}\Bigr)
  \bigl(\widetilde{\alpha}_\infty^\epsilon[n]\bigr)\\
&=B_\infty^{a_1}B_0^{-a_2}\cdots B_0^{-a_{2m-2}}
  B_\infty^{a_{2m-1}}B_0^{1-a_{2m}}B_\infty
  B_0^{a_{2m}-1}B_\infty^{-1}B_0^{-1}
  \bigl(\widetilde{\alpha}_\infty^\epsilon[n]\bigr)\\
&=W_pB_0B_\infty B_0^{a_{2m}-1}B_\infty^{-1}B_0^{-1}
  \bigl(\widetilde{\alpha}_\infty^\epsilon[n]\bigr)\\
&=W_pB_\infty^{a_{2m}-1}
  \bigl(\widetilde{\alpha}_\infty^\epsilon[n]\bigr)
=\widetilde{\alpha}_p^\epsilon[n].
\end{aligned}
\]
If $a_{2m}=1$, the same calculation gives
\[
B_{\widetilde{\alpha}_{p_+}^{-}}
\bigl(\widetilde{\alpha}_{p_-}^\epsilon[n]\bigr)
=W_p\bigl(\widetilde{\alpha}_\infty^\epsilon[n]\bigr)
=\widetilde{\alpha}_p^\epsilon[n].
\]

Now let $m=1$, so $p=[a_1,a_2]$. The two parent words are
\[
W_{p_-}=
\begin{cases}
B_\infty^{a_1-1}B_0^{-1},&a_1\ne1,\\
B_0^{-1},&a_1=1,
\end{cases}
\qquad
W_{p_+}=
\begin{cases}
B_\infty^{a_1}B_0^{1-a_2},&a_2\ne1,\\
1,&a_2=1.
\end{cases}
\]
Here $B_0^{-1}=W_{[0,1]}$ and $1=W_{[]}$. For $a_2\ne1$, the calculation above
reduces to
\[
\begin{aligned}
B_{\widetilde{\alpha}_{p_+}^{-}}
\bigl(\widetilde{\alpha}_{p_-}^\epsilon[n]\bigr)
&=B_\infty^{a_1}B_0^{1-a_2}B_\infty
  B_0^{a_2-1}B_\infty^{-a_1}B_\infty^{a_1-1}B_0^{-1}
  \bigl(\widetilde{\alpha}_\infty^\epsilon[n]\bigr)\\
&=W_pB_0B_\infty B_0^{a_2-1}B_\infty^{-1}B_0^{-1}
  \bigl(\widetilde{\alpha}_\infty^\epsilon[n]\bigr)\\
&=W_pB_\infty^{a_2-1}
  \bigl(\widetilde{\alpha}_\infty^\epsilon[n]\bigr)
 =\widetilde{\alpha}_p^\epsilon[n].
\end{aligned}
\]
For $a_2=1$, it becomes
\[
B_\infty W_{p_-}\bigl(\widetilde{\alpha}_\infty^\epsilon[n]\bigr)
=B_\infty^{a_1}B_0^{-1}
\bigl(\widetilde{\alpha}_\infty^\epsilon[n]\bigr)
=\widetilde{\alpha}_p^\epsilon[n].
\]

For $p<0$, write $-p=q_-\oplus q_+$. By
\Cref{lem:farey-decomposition}, we have $p_-=-q_+$ and $p_+=-q_-$. The
corresponding braid words are obtained from those above by changing the signs
of all exponents and interchanging the two parents. Substitution followed by
the same braid-word simplification gives
\[
B_{\widetilde{\alpha}_{p_+}^{-}}
\bigl(\widetilde{\alpha}_{p_-}^\epsilon[n]\bigr)
=W_p\bigl(\widetilde{\alpha}_\infty^\epsilon[n-1]\bigr)
=\widetilde{\alpha}_p^\epsilon[n].
\]
The cases where one of the Farey parents is $0$ or $-\infty$ are covered by
the conventions $0=[-1,1]$ and
$\widetilde{\alpha}_{-\infty}^\epsilon[n]
=\widetilde{\alpha}_\infty^\epsilon[n-1]$.
\end{proof}
\begin{remark}\label{rem:farey-smoothing-index}
By the smoothing description of a braid twist, the grading of
$\widetilde{\alpha}_p^\epsilon[n]$ along the branch inherited from
$\widetilde{\alpha}_{p_-}^\epsilon[n]$ is unchanged. Hence, at their common
endpoint $z$,
\[
\gind_z\bigl(\widetilde{\alpha}_{p_-}^\epsilon[n],
\widetilde{\alpha}_p^\epsilon[n]\bigr)=0.
\]
\end{remark}

\end{document}